 \let\mathbb\mathbf
 \newcommand{\mybf}{\mathbf}
 \renewcommand{\AA}{\mybf{A}}
 \newcommand{\BB}{\mybf{B}}
 \newcommand{\CC}{\mybf{C}}
 \newcommand{\DD}{\mybf{D}}
 \newcommand{\FF}{\mybf{F}}
 \newcommand{\QQ}{\mybf{Q}}
 \newcommand{\ZZ}{\mybf{Z}}
 \newcommand{\NN}{\mybf{N}}
 \newcommand{\cD}{\mathcal{D}}
 \newcommand{\cH}{\mathcal{H}}
 \newcommand{\cL}{\mathcal{L}}
 \newcommand{\cK}{\mathcal{K}}
 \newcommand{\cO}{\mathcal{O}}
 \newcommand{\into}{\hookrightarrow}
 \newcommand{\vp}{\varphi}
 \newcommand{\Qp}{\QQ_p}
 \newcommand{\Zp}{\ZZ_p}
 \newcommand{\Qpi}{\QQ_{p, \infty}}
 \newcommand{\Qpn}{\QQ_{p, n}}
 \newcommand{\Qpnr}{\Qp^{\mathrm{nr}}}
 \newcommand{\Qpb}{\overline{\QQ}_p}
 \newcommand{\Cp}{\CC_p}
 \newcommand{\Dcris}{\DD_{\cris}}
 \newcommand{\DdR}{\DD_{\dR}}
 \newcommand{\Bcris}{\BB_{\cris}}
 \newcommand{\BdR}{\BB_{\dR}}
 \newcommand{\Brig}{\BB^+_{\rig,\Qp}}
 \newcommand{\Dpst}{\DD_{\pst}}
 \newcommand{\Lt}{\widetilde{L}}
 \newcommand{\Rt}{\widetilde{R}}
 \newcommand{\cOt}{\widetilde{\cO}}
 \newcommand{\cris}{\mathrm{cris}}
 \newcommand{\dR}{\mathrm{dR}}
 \newcommand{\Iw}{\mathrm{Iw}}
 \newcommand{\rig}{\mathrm{rig}}
 \newcommand{\perf}{\mathrm{perf}}
 \newcommand{\pst}{\mathrm{pst}}
 \DeclareMathOperator{\gr}{gr}
 \DeclareMathOperator{\ev}{ev}
  \DeclareMathOperator{\mysp}{sp}
  \renewcommand{\sp}{\mysp}
 \DeclareMathOperator{\Fil}{Fil}
 \DeclareMathOperator{\Det}{Det}
 \DeclareMathOperator{\Gal}{Gal}
 \DeclareMathOperator{\id}{id}
 \DeclareMathOperator{\Ch}{Ch}
 \DeclareMathOperator{\Hom}{Hom}
 \DeclareMathOperator{\Isom}{Isom}
 \DeclareMathOperator{\cores}{cores}
 \DeclareMathOperator{\Char}{char}
 \DeclareMathOperator{\tw}{Tw}
\newcommand{\htimes}{\mathop{\widehat\otimes}}
\renewcommand{\det}{\operatorname{det}}
 \newtheorem{definition}{Definition}[subsection]
 \newtheorem{proposition}[definition]{Proposition}
 \newtheorem{theorem}[definition]{Theorem}
 \newtheorem{corollary}[definition]{Corollary}
 \newtheorem{lemma}[definition]{Lemma}
 \theoremstyle{remark}
 \newtheorem{remark}[definition]{Remark}
 \renewcommand{\u}{\mathbf{1}}
 \renewcommand{\H}{\mathrm{H}}
 \newcommand{\lu}{\mathbb{U}}
 \newcommand{\T}{\mathbb{ T}}
\begin{document}

\title{Local epsilon isomorphisms}

\author{David Loeffler}
\address[Loeffler]{Mathematics Institute\\
Zeeman Building\\
University of Warwick\\
Coventry CV4 7AL, UK}
\email{d.a.loeffler@warwick.ac.uk}
\urladdr{\url{http://www2.warwick.ac.uk/fac/sci/maths/people/staff/david_loeffler/}}

\author{Otmar Venjakob}
\address[Venjakob]{Mathematisches Institut \\ Universit\"at Heidelberg \\ Im Neuenheimer Feld 288 \\69120 Heidelberg, Germany}
\email{otmar@mathi.uni-heidelberg.de}
\urladdr{\url{http://www.mathi.uni-heidelberg.de/~otmar/}}

\author{Sarah Livia Zerbes}
\address[Zerbes]{Department of Mathematics\\
University College London\\
Gower Street, London WC1E 6BT, UK}
\email{s.zerbes@ucl.ac.uk}

\thanks{The first author's research is supported by a Royal Society University Research Fellowship, the second author's by an ERC grant and the third author's by the EPSRC First Grant EP/J018716/1.}

\subjclass[2010]{Primary: 11R23. 
Secondary: 11S40.
}

\begin{abstract}
 In this paper, we prove the ``local $\varepsilon$-isomorphism'' conjecture of Fukaya and Kato \cite{fukayakato06} for a particular class of Galois modules obtained by tensoring a $\Zp$-lattice in a crystalline representation of $G_{\Qp}$ with a representation of an abelian quotient of $G_{\Qp}$ with values in a suitable $p$-adic local ring $R$. This can be regarded as a local analogue of the Iwasawa main conjecture for abelian $p$-adic Lie extensions of $\Qp$. We show that such an $\varepsilon$-isomorphism can be constructed using the Perrin-Riou regulator map, or its extension to the 2-variable case due to the first and third author.
\end{abstract}

\maketitle
\tableofcontents

\section{Introduction}

 \subsection{Aims}

  In this paper, we prove a special case of the ``local $\varepsilon$-isomorphism conjecture'' of Fukaya and Kato (Conjecture 3.4.3 of \cite{fukayakato06}). This conjecture asserts the existence of a canonical trivialization of the determinant of a cohomology complex associated to any representation $M$ of the Galois group $G_{\Qp} = \Gal(\Qpb / \Qp)$ with coefficients in a $p$-adically complete local ring $R$, which should be compatible with a specific ``standard'' trivialization of the corresponding complex for representations with coefficients in finite extensions of $\Qp$ obtained by specializing $M$ at ideals of $R$; this ``standard'' trivialization contains information about the $\varepsilon$-factor of the corresponding Weil--Deligne representation, hence the terminology ``$\varepsilon$-isomorphism''.

  The main result of this paper is a proof of this conjecture for a specific class of modules $M$: we consider the case where $M$ is obtained by tensoring a lattice in a crystalline Galois representation with an $R$-linear representation of the abelianization $\mathcal{G} = \Gal(\Qp^{\mathrm{ab}} / \Qp)$. This specific instance of the local $\varepsilon$-isomorphism conjecture can be thought of as a ``local Iwasawa main conjecture'' for $T$ over the extension $\Qp^{\mathrm{ab}} / \Qp$.

  The key ingredient in the construction of the local $\varepsilon$-isomorphism is the two-variable regulator map introduced by the first and third authors \cite{loefflerzerbes11}. Essentially, the local $\varepsilon$-isomorphism is obtained by taking the determinant of the two-variable regulator, after dividing out by certain correction factors determined purely by the Hodge--Tate weights of the Galois representation. The details of the construction are slightly tortuous, so we give an outline below.

  In particular, our results generalize those of \cite{benoisberger08} (and contain independent proofs of those): via the usual functorial properties of determinants, our results imply the conjectures $C_{IW}(K(\mu_{p^\infty})/K,V)$ and $C_{EP}(F/K,V)$ of \emph{loc.\ cit.} for all finite subextensions $\Qp \subseteq K \subseteq F \subseteq \Qp^{ab}$. However, we also obtain compatibility with the standard $\varepsilon$-isomorphisms for unramified characters which are not necessarily of finite order, a case which is not considered in \cite{benoisberger08}.

 \subsection{Outline of the paper}

  In sections \ref{sect:notation}-\ref{sect:epsfactors} we fix notation, recall the formalism of determinants used in the formulation of the conjecture, and recall some definitions and results concerning $\varepsilon$-factors of de Rham representations. In section \ref{sect:deRhamepsilon} we describe the ``standard'' $\varepsilon$-isomorphism, for representations with coefficients in a finite extension of $\Qp$, with which the general $\varepsilon$-isomorphism is required to be compatible. In section \ref{sect:specialcases} we give an alternative, more convenient description of this isomorphism in the cases relevant to us.

  In section \ref{sect:regulators} we recall the definition and properties of the cyclotomic regulator map and of the two-variable version defined in \cite{loefflerzerbes11}. Some of the formulae we need for these maps are a little stronger than those that can be found in the literature, so we give the proofs of these formulae in appendix \ref{sect:app-formulary}.

  In section \ref{sect:constructionG} we construct our determinant isomorphism for the ``universal'' case when $R$ is the Iwasawa algebra of an abelian $p$-adic Lie quotient $G$ of $G_{\Qp}$. We begin (in section \ref{sect:construction1}) by constructing an isomorphism of determinants over a rather enormous ring (the total ring of quotients of the distribution algebra of $G$); in section \ref{sect:descenttoLambda} we show that this descends to the Iwasawa algebra (but only after inverting $p$). The final descent to an isomorphism over $\Lambda_{\cO}(G)$ is accomplished in section \ref{sect:powerofp}. We then obtain isomorphisms for more general $R$ by (derived) base change.

  Section \ref{sect:evaluation} deals with the compatibility of our determinant isomorphism over $\Lambda_{\cO}(G)$ with the ``standard'' isomorphisms for $V$ and all of its twists by de Rham characters of $G$. We divide up the set of characters into classes depending on $V$, which we refer to as \emph{good}, \emph{somewhat bad}, and \emph{extremely bad}; we deal with each of these classes separately. The first two cases can be handled by direct computation using the properties of the one-variable and two-variable regulator maps; for the extremely bad characters, we use induction on the dimension and the results of the second author in the case of one-dimensional representations (cf.~\cite{venjakob11}).

\section{Preliminaries}

 \subsection{Notation}
  \label{sect:notation}

  Let $p$ be an odd prime. For any $p$-adic analytic group $H$ without $p$-torsion, and $L$ a complete discretely valued extension of $\Qp$ with ring of integers $\cO$, we write $\Lambda_{\cO}(H)$ and $\Lambda_L(H)=L\otimes \Lambda_\cO(H)$ for the Iwasawa algebras of $H$ with $\cO$ and $L$ coefficients, and $\cH_L(H)$ for the algebra of $L$-valued distributions on $H$. We shall only use these constructions in cases where $H$ is abelian and $p$-torsion-free, in which case all of these algebras are reduced commutative semi-local rings, and can be interpreted as algebras of functions on the $p$-adic analytic space parametrising characters of $H$.

  We shall also need the notation $\cK_{L}(H)$, signifying the total ring of quotients of $\cH_L(H)$, which is a finite direct product of fields.

  Let $\Qpi = \Qp(\mu_{p^\infty})$ and $\Gamma = \Gal(\Qpi / \Qp)$, and let $\chi: \Gamma \to \Zp^\times$ be the cyclotomic character. For $j \in \ZZ$, we define the element $\ell_j$ of $\cH_{\Qp}(\Gamma)$ by
  \[ \ell_j = \frac{\log(\gamma)}{\log \chi(\gamma)}-j\]
  for any non-torsion element $\gamma \in \Gamma$ (the element $\ell_j$ is independent of this choice). We also fix a norm-compatible system of $p$-power roots of unity $\xi = (\xi_n)_{n \ge 1} \in \Qpi$.

  If $\mu \in \cH_L(\Gamma)$, and $\eta$ is a character of $\Gamma$, we define $\mu'(\eta)$ by
  \[ \mu'(\eta) = \lim_{s \to 0} \frac{\mu(\eta \langle \chi\rangle^s) - \mu(\eta)}{s},\]
  where $\langle . \rangle$ denotes the projection $\Zp^\times \to 1 + p\Zp$. This limit exists for all $\mu$, and we have
  \begin{equation}
   \label{derivation}
   \mu'(\eta) = \int_{\Gamma} \eta(\tau) \log \chi(\tau)\, \mathrm{d}\mu(\tau).\end{equation}
  If $\mu$ is not a zero-divisor, we may define $\mu^*(\eta)$ to be the value of the lowest non-vanishing derivative of $\mu$ at $\tau$.

  We write $\gamma_{-1}$ for the unique element of $\Gamma$ such that $\chi(\gamma_{-1}) = -1$.

 \subsection{K-theory and determinants}
  \label{sect:dets}

  Let $R$ be a ring. We define $K_0(R)$ and $K_1(R)$ in the usual way, as in \cite[\S 1.1]{fukayakato06}. If $R$ is commutative, then there is a canonical surjective map $K_1(R) \to R^\times$, and the kernel of this is the special Whitehead group $SK_1(R)$.

  The following statements are well known:

  \begin{proposition}
   Let $L$ be a complete discretely valued subfield of $\Cp$ and $\cO$ its ring of integers. Then for any $p$-torsion-free abelian $p$-adic Lie group $G$, the Iwasawa algebras $\Lambda_L(G)$ and $\Lambda_{\cO}(G)$ have trivial $SK_1$.
  \end{proposition}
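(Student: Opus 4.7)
The plan is to reduce to a finite product of complete regular local rings, where the vanishing of $SK_1$ is standard. The main obstacle will be the $\Lambda_L(G)$ case, as the corresponding factors are no longer local.

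First I will invoke the structure theorem for abelian $p$-torsion-free $p$-adic Lie groups, which gives a (non-canonical) decomposition $G \cong \Delta \times \Zp^d$ with $\Delta$ a finite abelian group of order prime to $p$. Choosing topological generators $\gamma_1, \ldots, \gamma_d$ of the $\Zp^d$-factor and setting $T_j := \gamma_j - 1$ identifies $\Lambda_{\cO}(G) \cong \cO[\Delta][[T_1, \ldots, T_d]]$.

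Next, because $|\Delta|$ is invertible in $\cO$, the primitive idempotents of $\overline{L}[\Delta]$ indexed by characters of $\Delta$ descend, in $\Gal(\overline{L}/L)$-orbits, to an orthogonal idempotent decomposition $\cO[\Delta] \cong \prod_i \cO_i$, where each $\cO_i$ is the valuation ring of a finite extension $L_i$ of $L$. Hence
\[ \Lambda_{\cO}(G) \;\cong\; \prod_i \cO_i[[T_1, \ldots, T_d]], \qquad \Lambda_L(G) \;\cong\; \prod_i \cO_i[[T_1, \ldots, T_d]]\bigl[\tfrac{1}{\pi_i}\bigr], \]
where $\pi_i$ is a uniformizer of $\cO_i$. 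Each factor of the first product is a complete regular local Noetherian ring. Since $K_1$ (and hence $SK_1$) commutes with finite direct products, it suffices to handle each factor individually.

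For the $\cO$-coefficient case, each factor is a commutative local ring, so Gaussian elimination gives $GL_n(R) = E_n(R) \cdot D_n(R)$ (every element of $R$ is either a unit or in the maximal ideal, so pivots always exist); elementary matrices vanish in $R^\times$, so $\det\colon K_1(R) \to R^\times$ is an isomorphism and $SK_1 = 0$. The harder case is $\Lambda_L$, where the factors $\cO_i[[T_1, \ldots, T_d]][1/\pi_i]$ are not local for $d \geq 1$. Here I would invoke the Quillen localization exact sequence associated to the multiplicative system $\{\pi_i^n\}$: the cokernel of the map $K_1(\cO_i[[T_1, \ldots, T_d]]) \to K_1(\cO_i[[T_1, \ldots, T_d]][1/\pi_i])$ sits inside $K_0$ of the abelian category of finitely generated $\pi_i$-power-torsion modules, and a d\'evissage along the $\pi_i$-filtration identifies this $K_0$ with $K_0((\cO_i/\pi_i)[[T_1, \ldots, T_d]]) \cong \ZZ$. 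Tracking the determinant through the sequence shows that this cokernel is exactly the free group generated by $\pi_i$, and hence the induced map is an isomorphism on the $SK_1$-level; combined with $SK_1 = 0$ for the local factor, this gives $SK_1(\Lambda_L(G)) = 0$.
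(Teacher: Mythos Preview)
Your argument is correct. The paper itself gives no proof of this proposition: it is introduced with the phrase ``The following statements are well known'' and left at that, so there is no approach in the paper to compare yours against.

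For what it is worth, your route is a standard and clean one. The decomposition $G \cong \Delta \times \Zp^d$ with $|\Delta|$ prime to $p$, the splitting $\cO[\Delta] \cong \prod_i \cO_i$ into unramified DVR extensions of $\cO$, and the reduction to individual power-series factors are all unproblematic. For $\Lambda_{\cO}(G)$ the local argument is immediate. For $\Lambda_L(G)$ your use of the localization sequence is the natural move; to make the final step fully explicit you might spell out that any $\alpha \in K_1(R[1/\pi])$ decomposes as $\beta \cdot [\pi]^n$ with $\beta$ in the image of $K_1(R)$, so that $\det(\alpha)=1$ forces $\pi^{-n} \in R^\times$, hence $n=0$, and then $\beta$ lifts to an element of $SK_1(R)=0$. (You also implicitly use that $R$ is a UFD, so $R[1/\pi]^\times = R^\times \times \pi^{\ZZ}$, and that $R^\times \hookrightarrow R[1/\pi]^\times$.) All of this is fine.
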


  \begin{remark}
   We do not know whether, in the above setting, the distribution algebra $\cH_L(G)$ also has trivial $SK_1$. If this were true, it would allow certain arguments below to be shortened somewhat.
  \end{remark}

  For $R$ a ring, let $\Ch(R)_{\perf}$ denote the category of perfect complexes of $R$-modules (chain complexes quasi-isomorphic to a bounded complex of finitely-generated projective $R$-modules). We denote by $\underline{\Det}(R)$ the category denoted by $\mathcal{C}_R$ in \cite{fukayakato06}, which is equivalent to the universal Picard category for the category of finitely-generated projective $R$-modules.

  We denote by $\Det$ the canonical functor $\Ch(R)_{\perf} \to \underline{\Det}(R)$. This factors through the derived category $D(R)_{\perf}$ of perfect complexes.

 \subsection{Epsilon-factors}
  \label{sect:epsfactors}

  We recall the definition of $\varepsilon$-factors associated to representations of the Weil group of $\Qp$, for which the canonical reference is \cite{deligne73}. These are constants
  \[ \varepsilon_E(\Qp, D, \psi, \mathrm{d}x) \in E^\times \]
  where $E$ is a field of characteristic 0 containing $\mu_{p^\infty}$, $\psi$ is a locally constant $E$-valued character of $\Qp$, $\mathrm{d}x$ is a Haar measure on $\Qp$, and $D$ is a finite-dimensional $E$-linear representation of the Weil group $W(\Qpb / \Qp)$ which is locally constant (i.e.~the image of the inertia group $I(\Qpb / \Qp)$ is finite).

  Following \cite[\S 3.2]{fukayakato06}, we shall restrict to the case when $\mathrm{d}x$ is the usual Haar measure giving $\Zp$ measure 1, and $\psi$ has kernel equal to $\Zp$; the data of such a character $\psi$ is equivalent to the data of a compatible system of $p$-power roots of unity $\xi = (\xi_n)_{n \ge 1}$, via the map sending $\psi$ to $\big(\psi(p^{-n})\big)_{n \ge 1}$. Since $\mathrm{d}x$ and $\Qp$ are fixed, and $\psi$ is determined by $\xi$, we shall drop them from the notation and write the $\varepsilon$-factor as $\varepsilon_E(D, \xi)$.

  \begin{remark}
   Note that our conventions here, which were chosen for compatibility with \cite{fukayakato06}, differ slightly from the conventions of \cite{loefflerzerbes11}, which were chosen for compatibility with \cite{cfksv}: in \cite{loefflerzerbes11} we defined $\varepsilon$-factors using the additive character mapping $p^{-n}$ to $\xi_n^{-1}$, which is more convenient for global purposes.
  \end{remark}

  We are interested in the case when $D = \Dpst(W)$ for a de Rham representation $W$ of $G_{\Qp}$, with the linearized action of the Weil group given as in \cite{fontaine94b}. If $W$ is an $L$-linear representation of dimension $d$, for $L$ a finite extension of $\Qp$, then $\Dpst(W)$ is naturally a free module of rank $d$ over $\Qpnr \otimes_{\Qp} L$, and we may obtain the necessary roots of unity by extending scalars to $\Qpb \otimes_{\Qp} L$; but this is, of course, not a field but rather a finite product of fields indexed by embeddings $f: L \into \Qpb$. Following \cite[\S 3.3.4]{fukayakato06}, we define
  \[ \varepsilon_L(\Dpst(W), \xi) = \left( \varepsilon_{\Qpb} \left(\Qpb \otimes_{(L \otimes \Qpnr, f)} \Dpst(W), \xi\right) \right)_{f} \in \left(\Qpb \otimes_{\Qp} L\right)^\times = \prod_{f} \Qpb^\times.\]

  Any such character can be uniquely written as $\eta = \chi^j \eta_0 \eta_1$, where $j \in \ZZ$, $\eta_0$ is a finite-order character factoring through $\Gamma$, and $\eta_1$ is an unramified charater.

  \begin{proposition}
   The functor $\Dpst$ has the following properties:
   \begin{itemize}
    \item it commutes with tensor products;
    \item if $V$ is crystalline, then the linearized action of $W_{\Qp}$ on $\Dpst(V) \cong \Qpnr \otimes_{\Qp} \Dcris(V)$ is unramified, and the action of arithmetic Frobenius $\sigma \in W_{\Qp} / I_{\Qp}$ coincides on $\Dcris(V)$ with the inverse of the crystalline Frobenius;
    \item if $\eta$ is finitely ramified, then $W_{\Qp}$ acts on $\Dpst(\eta)$ via the character $\eta$;
    \item if $\eta = \chi$, then $W_{\Qp}$ acts on $\Dpst(\eta)$ via the unramified character mapping arithmetic Frobenius to $p$.
   \end{itemize}
  \end{proposition}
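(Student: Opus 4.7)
My plan is to verify each of the four statements directly from the definitions of $\Dpst$ and its linearized Weil group action as given in \cite{fontaine94b}.

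The first property, compatibility with tensor products, is essentially formal: the multiplication map $\BB_{\pst} \otimes_{\Qpnr} \BB_{\pst} \to \BB_{\pst}$ induces, for potentially semistable $W_1$ and $W_2$, a natural map $\Dpst(W_1) \otimes_{\Qpnr} \Dpst(W_2) \to \Dpst(W_1 \otimes W_2)$, which is an isomorphism by a dimension count, and which carries the diagonal linearized Weil action on the left to the ambient one on the right.

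For the second property, the identity $\Bcris^{I_{\Qp}} = \Qpnr$ yields $\Dpst(V) = \Qpnr \otimes_{\Qp} \Dcris(V)$ when $V$ is crystalline. By definition the linearization sends $w \in W_{\Qp}$ to $w|_{\Qpnr} \otimes \vp^{-n(w)}$, where $n(w)$ is determined by $w \equiv \sigma^{n(w)} \pmod{I_{\Qp}}$ for $\sigma$ arithmetic Frobenius; hence inertia acts trivially, and $\sigma$ acts on $1 \otimes \Dcris(V) \cong \Dcris(V)$ as $\vp^{-1}$.

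The last two properties follow from the second. A finitely ramified character $\eta$ is potentially unramified, hence potentially crystalline, and $\Dpst(\eta)$ is a free rank-one $\Qpnr \otimes_{\Qp} L$-module on which a direct check using the formula above confirms that the linearized $W_{\Qp}$-action equals $\eta$ itself. For $\eta = \chi$, crystallinity yields $\Dcris(\Qp(1)) = \Qp \cdot t^{-1}$ with $\vp(t^{-1}) = p^{-1} t^{-1}$, so the second property makes arithmetic Frobenius act on $\Dpst(\chi)$ as multiplication by $p$, i.e., via the unramified character $\sigma \mapsto p$. None of these steps presents a genuine obstacle; the proposition is a compendium of standard facts from Fontaine's theory, and the only pitfall to beware of is keeping track of the $\vp \leftrightarrow \vp^{-1}$ convention introduced by the linearization.
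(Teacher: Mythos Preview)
Your proposal is correct and follows essentially the same approach as the paper: both verify the four claims directly from the definition of the linearized Weil action on $\Dpst$, with the paper's proof being considerably terser (it simply asserts that tensor compatibility reduces to the standard case of $\DD_{\mathrm{st}}$ and that the rest is immediate from the definition). Your expanded version, including the explicit formula $w \mapsto w|_{\Qpnr} \otimes \vp^{-n(w)}$ and the computation for $\Qp(1)$ via $\vp(t^{-1}) = p^{-1} t^{-1}$, is a faithful unpacking of what the paper leaves implicit.
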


  \begin{proof}
   The compatibility of $\Dpst$ with tensor products follows from the corresponding statement for $\DD_{\mathrm{st}}$, which is standard. The remaining statements follow immediately from the definition of the linearized action of the Weil group on $\Dpst$.
  \end{proof}

  \begin{proposition}
   \label{prop:gaussums}
   Let $\eta$ be a de Rham character of $G_{\Qp}$ with values in $L$. Write $\eta = \eta_0\eta_1 \chi^j$ for some finite-order character $\eta_0$ of $\Gamma$ of conductor $n$, some unramified character $\eta_1$, and some $j \in \ZZ$. Then we have
   \[ \varepsilon_L(\Dpst(L(\eta)), \xi) = \eta_1(\sigma)^{-n} p^{-nj} \tau(\eta_0, \xi),\]
   where $\sigma$ denotes the arithmetic Frobenius of $\Gal(\Qpnr / \Qp)$ and the Gauss sum $\tau(\eta_0, \xi)$ is defined by
   \[ \tau(\eta_0, \xi) \coloneqq \sum_{\gamma \in \Gamma / \Gamma_n} \eta_0(\sigma)^{-1} \xi_n^\sigma.\]
  \end{proposition}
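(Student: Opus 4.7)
The plan is to reduce the computation to a twisting formula for $\varepsilon$-factors of characters, combined with the standard Gauss sum formula for the $\varepsilon$-factor of a finitely ramified character.

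First I would exploit the compatibility of $\Dpst$ with tensor products (the first bullet of the preceding proposition) to decompose
\[ L(\eta) = L(\eta_0) \otimes_L L(\eta_1) \otimes_L L(\chi^j), \]
so that $\Dpst(L(\eta))$ is the tensor product of the three factors over $\Qpnr \otimes_{\Qp} L$. The remaining bullets of the preceding proposition then identify each factor as a 1-dimensional Weil representation: $\Dpst(L(\eta_0))$ carries the action of $\eta_0$ (finitely ramified, hence with Artin conductor $n$); $\Dpst(L(\eta_1))$ carries the unramified character $\eta_1$; and $\Dpst(L(\chi^j))$ carries the unramified character $\sigma \mapsto p^j$. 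Thus $\Dpst(L(\eta))$ is 1-dimensional, with the same restriction to inertia as $\Dpst(L(\eta_0))$ and with Frobenius twisted by the unramified character $\chi_u \colon \sigma \mapsto \eta_1(\sigma)\, p^j$.

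The next step is to apply Deligne's twisting formula: for a Weil representation $V$ of dimension $d$ and an unramified character $\chi_u$,
\[ \varepsilon(\chi_u \otimes V, \psi) = \chi_u(\pi)^{a(V) + d\cdot n(\psi)}\, \varepsilon(V, \psi), \]
where $a(V)$ is the Artin conductor, $n(\psi)$ the conductor of $\psi$, and $\pi$ is a uniformiser corresponding (under local class field theory with Deligne's convention) to geometric Frobenius $\sigma^{-1}$. Because $\psi$ has kernel exactly $\Zp$ we have $n(\psi) = 0$, so with $V = \Dpst(L(\eta_0))$ and $\chi_u(\pi) = \eta_1(\sigma)^{-1}\, p^{-j}$, this yields
\[ \varepsilon_L(\Dpst(L(\eta)), \xi) = \eta_1(\sigma)^{-n}\, p^{-nj}\, \varepsilon_L(\Dpst(L(\eta_0)), \xi). \]

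Finally I would invoke the classical Gauss sum formula expressing the $\varepsilon$-factor of a finitely ramified character of the Weil group: viewing $\eta_0$ via local class field theory and unwinding Deligne's definition with the additive character $\psi$ determined by $\xi$, one recovers exactly $\tau(\eta_0, \xi)$ as defined in the statement. The main obstacle in the whole argument is not conceptual but bookkeeping: one has to be careful about the convention that $\pi$ corresponds to \emph{geometric} Frobenius (producing the minus signs in the exponents), about the conductor normalisations for $\psi$, and about the precise formula for the additive character attached to the system $\xi$ — these are exactly the places where the conventions of \cite{fukayakato06} and \cite{loefflerzerbes11} diverge, as noted in the preceding remark.
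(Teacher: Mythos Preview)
Your argument is correct and essentially the same as the paper's. The only difference is cosmetic: the paper packages the identification of the Weil action and the twisting step into a single citation of property (7) of \cite[\S 3.2.2]{fukayakato06} (the explicit formula for one-dimensional representations), whereas you unwind this into the Deligne twisting formula (property (5) in the same reference) followed by the Gauss sum computation; the paper's footnote even remarks that (7) can be checked against (5), which is exactly your route.
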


  \begin{proof}
   From the previous proposition, the action of $W_{\Qp}$ on $\Dpst(L(\eta))$ is given by the character of $W_{\Qp}$ whose restriction to $\Gal(\Qpb / \Qpnr)$ coincides with $\eta_0$, and which takes the value $p^j \eta_1(\sigma)$ on the arithmetic Frobenius element of $\Gal(\Qpnr(\mu_{p^\infty}) / \Qp(\mu_{p^\infty}))$.

   Thus we may apply property (7) of local $\varepsilon$-factors in \cite[\S 3.2.2]{fukayakato06} to see that\footnote{There appears to be a minor error in \cite{fukayakato06} in item (7) of \S 3.2.2: the factor $\chi(\tau)^n$ is not well-defined, since $\tau$ is chosen as an arbitrary element of $W(\Qpb/\Qp)$ such that $v(\tau) = 1$, so $\tau$ is only determined up to multiplication by an element of $I(\Qpb/\Qp)$; one needs to assume that $\tau$ acts trivially on $\QQ_{p, n}$. With this modification, the formula is correct modulo a sign error: the factor $\chi(\tau)^n$ should be $\chi(\tau)^{-n}$, as one sees by comparison with (5).}
   \[ \varepsilon_L(\Dpst(L(\eta)), \xi) = \eta_1(\sigma)^{-n} p^{-nj} \tau(\eta_0, \xi) = \eta_1(\sigma)^{-n} p^{-nj} \varepsilon_L(L(\eta_0), \xi).\]
  \end{proof}

  We shall write $\varepsilon_L(\eta, \xi)$ for $\varepsilon_L(\Dpst(L(\eta), \xi))$; this should not cause confusion, since $\varepsilon_L(\Dpst(L(\eta), \xi))$ agrees with $\varepsilon_L(L(\eta), \xi)$ whenever the latter is defined.

  \begin{proposition}
   Let $V$ be a $d$-dimensional $L$-linear crystalline representation of $G_{\Qp}$, and let $\eta$ be a de Rham character of $G_{\Qp}$ with values in $L$. Write $\eta = \eta_0\eta_1 \chi^j$ as in the previous proposition. Then we have
   \[ \varepsilon_L(\Dpst(V(\eta)), \xi) = \varepsilon_L(\eta, \xi)^d
   \cdot \det_L\left(\vp : \Dcris(V) \to \Dcris(V)\right)^n.\]
  \end{proposition}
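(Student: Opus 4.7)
The plan is to reduce the formula to the previous proposition by decomposing $\Dpst(V)$ into $1$-dimensional unramified pieces indexed by the eigenvalues of the crystalline Frobenius.

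First I would use compatibility of $\Dpst$ with tensor products to write
\[ \Dpst(V(\eta)) \;=\; \Dpst(V) \otimes_{L \otimes \Qpnr} \Dpst(L(\eta)),\]
and invoke the fact that $V$ is crystalline, so that $\Dpst(V) \cong \Qpnr \otimes_{\Qp} \Dcris(V)$ carries an unramified linearized $W_{\Qp}$-action in which arithmetic Frobenius $\sigma$ acts as $\vp^{-1}$. Since the $\varepsilon$-factor is defined after extension of scalars to $\Qpb \otimes_{\Qp} L = \prod_{f\colon L \hookrightarrow \Qpb} \Qpb$, I may work factor-by-factor over $\Qpb$. On each such factor the endomorphism $\vp$ can be put in upper-triangular form, so the representation $\Qpb \otimes_{(L\otimes\Qpnr,f)} \Dpst(V)$ admits a filtration whose graded pieces are one-dimensional unramified representations $D_{\alpha_i}$, where $\alpha_1, \dots, \alpha_d$ are the eigenvalues (with multiplicity) of $\vp$ acting on $\Qpb \otimes_{(L,f)} \Dcris(V)$; here $D_\alpha$ denotes the unramified character sending arithmetic Frobenius to $\alpha^{-1}$.

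Next I would use the multiplicativity of $\varepsilon$-factors in short exact sequences (property (5) of \cite[\S 3.2.2]{fukayakato06}) to get
\[ \varepsilon_{\Qpb}\!\left(\Qpb \otimes_{(L\otimes\Qpnr,f)}\!\Dpst(V(\eta)),\, \xi\right) \;=\; \prod_{i=1}^{d} \varepsilon_{\Qpb}\bigl(D_{\alpha_i} \otimes \Dpst(L(\eta)),\, \xi\bigr).\]
Each factor $D_{\alpha_i} \otimes \Dpst(L(\eta))$ is the $\Dpst$ of the de Rham character $\eta_0 \cdot (\alpha_i^{-1}\eta_1)' \cdot \chi^j$, where $(\alpha_i^{-1}\eta_1)'$ denotes the unramified character of $G_{\Qp}$ sending arithmetic Frobenius to $\alpha_i^{-1}\eta_1(\sigma)$. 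The conductor of the finite-order part is still $n$, so Proposition \ref{prop:gaussums} yields
\[ \varepsilon_{\Qpb}\bigl(D_{\alpha_i} \otimes \Dpst(L(\eta)),\, \xi\bigr) \;=\; \bigl(\alpha_i^{-1}\eta_1(\sigma)\bigr)^{-n} p^{-nj}\, \tau(\eta_0,\xi) \;=\; \alpha_i^{n}\cdot \varepsilon_L(\eta,\xi)_f.\]

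Finally, taking the product over $i$ on each embedding factor $f$ and recognising $\prod_i \alpha_i = \det_L(\vp \mid \Dcris(V))$ (mapped via $f$), I collect
\[ \prod_{i=1}^{d} \alpha_i^{n}\cdot \varepsilon_L(\eta,\xi)_f^d \;=\; \det_L\!\bigl(\vp \mid \Dcris(V)\bigr)^{n}_{f} \cdot \varepsilon_L(\eta,\xi)^{d}_{f},\]
and reassembling across the embeddings $f$ gives exactly the claimed identity in $(\Qpb \otimes_{\Qp} L)^\times$. I don't expect a genuine obstacle here: the main point is to carry out the eigenvalue decomposition carefully over the product ring $\Qpb \otimes_{\Qp} L$ and to verify that the resulting $1$-dimensional pieces are themselves the $\Dpst$ of de Rham characters so that Proposition \ref{prop:gaussums} applies; everything else is bookkeeping.
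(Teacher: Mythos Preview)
Your argument is correct, but it takes a longer route than the paper. The paper simply invokes property~(5) of \cite[\S 3.2.2]{fukayakato06} directly: that property is not multiplicativity in short exact sequences but the formula for the $\varepsilon$-factor of a twist by an \emph{unramified} representation, namely
\[
 \varepsilon(D \otimes D', \xi) = \varepsilon(D', \xi)^{\dim D} \cdot \det(\sigma^{-1}\mid D)^{n},
\]
where $D$ is unramified and $n$ is the conductor of $D'$. Applying this with $D = \Dpst(V)$ and $D' = \Dpst(L(\eta))$, and noting that $\sigma^{-1}$ acts on $\Dcris(V)$ as $\vp$, gives the result in one line.

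What you do instead is essentially to reprove this twist formula by hand in the special case at hand: you triangularise $\vp$ over each factor $\Qpb$ of $\Qpb \otimes_{\Qp} L$, split $\Dpst(V)$ into one-dimensional unramified pieces $D_{\alpha_i}$, and then apply Proposition~\ref{prop:gaussums} to each $D_{\alpha_i} \otimes \Dpst(L(\eta))$. This is valid (the formula of Proposition~\ref{prop:gaussums} is explicit enough to hold with coefficients enlarged to $\Qpb$, even though the $\alpha_i$ need not lie in $L$), and the product over $i$ recovers $\det(\vp)^n$. The advantage of the paper's approach is brevity; the advantage of yours is that it makes explicit why the determinant of Frobenius appears, at the cost of the bookkeeping you mention. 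Note also your citation of ``property~(5)'' for multiplicativity in exact sequences is a mislabel --- that is a different property in the same list.
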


  \begin{proof}
   Using the property (5) of $\varepsilon$-factors stated in \cite[\S 3.2.2]{fukayakato06}, we have
    \begin{align*}
    \varepsilon_L(\Dpst(V(\eta)), \xi) &= \varepsilon_L(\Dpst(V) \otimes_{\Qpnr \otimes L} \Dpst(L(\eta)), \xi)\\
    &= \varepsilon_L(\Dpst(L(\eta)), \xi)^d \cdot \det_{(\Qpnr \otimes L)}(\sigma^{-1}: \Dpst(V) \to \Dpst(V))^n\\
    &= \varepsilon_L(\Dpst(L(\eta)), \xi)^d \cdot \det_L(\vp: \Dcris(V) \to \Dcris(V))^n
   \end{align*}
   since the arithmetic Frobenius $\sigma$ on $\Dpst(V)$ acts as the inverse of the crystalline Frobenius $\vp$ on $\Dcris(V)$.
  \end{proof}

 \subsection{Epsilon-isomorphisms for de Rham representations}
  \label{sect:deRhamepsilon}
   Let $V$ be a de Rham representation of $G_{\Qp}$ with coefficients in a finite extension $L / \Qp$. Let $\Lt = L \otimes_{\Qp} \widehat{\Qp^{\mathrm{nr}}}$, and let $\xi = (\xi_n)_{n \ge 0}$ be a compatible system of $p$-power roots of unity as before.

  Then Fukaya and Kato have shown in \cite[\S 3.3]{fukayakato06} how to construct a canonical isomorphism
  \[ \varepsilon_{L, \xi}(V): \Det_{\Lt}(0) \rTo^\cong \Lt\otimes_L \left\{ \Det_L(R\Gamma(\Qp, V)) \cdot \Det_L(V)\right\}.\]

  This isomorphism is defined as a product of three terms
  \begin{equation}\label{eq:epsilon}
   \varepsilon_{L, \xi}(V) = \Gamma_L(V) \cdot \varepsilon_{L, \xi, \dR}(V) \cdot \theta_L(V),
   \end{equation}
  where
  \begin{gather*}
   \theta_L(V) : \Det_L(0) \rTo^\cong \Det_L(R\Gamma(\Qp, V)) \cdot \Det_L(\DdR(V)),\\
   \varepsilon_{L, \xi, \dR}(V) : \Lt \otimes_L \Det_L(\DdR(V)) \rTo^\cong \Lt \otimes_{L} \Det_L(V),\\
   \Gamma_L(V) \in \QQ^\times.
  \end{gather*}

  As it will be important for the remainder of the present paper, let us recall in detail the definitions of these terms.

  Firstly, we define $\Gamma_L(V)$, which depends only on the Hodge--Tate weights of $V$. For $r \in \ZZ$ let
  \[ n(r) = \dim_L \gr^{-r} \DdR(V),\]
  so $n_r$ is the multiplicity of $r$ as a Hodge--Tate weight\footnote{We adopt the convention in this paper that the Hodge--Tate weight of the cyclotomic character is 1.} of $V$. We define
  \[ \Gamma^*(r) = \begin{cases} (r-1)! & \text{if $r > 0$,}\\ \frac{(-1)^r}{(-r)!} & \text{if $r \le 0$,}\end{cases}\]
  the leading coefficient of the Taylor series of $\Gamma(s)$ at $s = r$.
  Then
  \[ \Gamma_L(V) = \prod_{r \in \ZZ} \Gamma^*(r)^{-n(r)}.\]

  Secondly, we define $\varepsilon_{L,\xi,\dR}(V)$. Let $\varepsilon_L(\DD_{\mathrm{pst}}(V), \xi) \in \Qpb \otimes_{\Qp} L$ be the $\varepsilon$-factor of $\Dpst(V)$, as defined in the previous section. Then \( \varepsilon_L(\DD_{\mathrm{pst}}(V), \xi) \) clearly lies in $\Qpi \otimes_{\Qp} L$, and it transforms under $\Gamma$ via
  \[ \sigma \cdot \varepsilon_L(\DD_{\mathrm{pst}}(V), \xi) = \varepsilon(\DD_{\mathrm{pst}}(V), \xi^\sigma) = \eta(\tilde\sigma) \varepsilon_L(\DD_{\mathrm{pst}}(V), \xi),\]
  where $\eta$ is the finitely-ramified character by which $G_{\Qp}^{\mathrm{ab}}$ acts on $\det(V)(-w)$, where $w =\sum n_i$, and $\tilde\sigma$ is the unique lifting of $\sigma$ to $\Gal(\Qpnr(\mu_{p^\infty}) / \Qpnr)$.

  If we let $t$ denote the element $\log([\xi])$ of $\BdR$, then it follows that multiplying by $t^w \varepsilon_L(\DD_{\mathrm{pst}}(V), \xi)$ defines an isomorphism $\Lt \otimes_L \Det_L(\DdR(V)) \to \Lt\otimes_L \Det_L(V)$
  (regarding both as submodules of $\BB_{\dR} \otimes_{\Qp} \Det(V)$), and we take this to be the definition of $\varepsilon_{L, \xi, \dR}(V)$.

  Thirdly, we define $\theta_L(V)$. The general definition is rather complicated. Let $C(\Qp, V)$ be the complex of continuous cochains with values in $V$ (so $R\Gamma(\Qp, V)$ is the image of $C(\Qp, V)$ in the derived category). Then $C_f(\Qp, V)$ is a certain subcomplex of $C(\Qp, V)$ which is nonzero only in degrees 0 and 1, and whose cohomology in degree $0$ is $H^0(\Qp, V)$ and in degree 1 is $H^1_f(\Qp, V)$. Hence
  \[ \Det_L C_f(\Qp, V) = \left\{\Det_L H^0(\Qp, V)\right\} \cdot \left\{ \Det_L H^1_f(\Qp, V) \right\}^{-1}.\]
  The fundamental exact sequence
  \begin{equation}
   \label{eq:fundseq}
   0 \to H^0(\Qp, V) \to \Dcris(V) \rTo^{(1 - \vp, 1)} \Dcris(V) \oplus t(V) \to H^1_f(\Qp, V) \to 0
  \end{equation}
  gives rise to a quasi-isomorphism
  \[ C_f(\Qp, V) \cong [ \Dcris(V) \rTo^{(1-\vp, 1)} \Dcris(V) \oplus t(V)], \]
  where $t(V)$, the ``tangent space'' of $V$, is defined as $\DdR(V) / \Fil^0 \DdR(V)$. This gives an isomorphism of determinants
  \[ \eta(\Qp, V) : \Det_L(0) \to \left\{ \Det_L C_f(\Qp, V)\right\} \cdot \left\{ \Det_L t(V)\right\}.\]
  We also have a corresponding isomorphism $\eta(\Qp, V^*(1))$. Furthermore, there is an isomorphism
  \[ \Psi_f(\Qp, V) : C_f(\Qp, V) \cong \left( C(\Qp, V^*(1)\right) / \left( C_f(\Qp, V^*(1)) \right)^*[-2].\]
  On homology groups this says that
  \begin{gather*}
   H^0(\Qp, V)^* = H^2(\Qp, V^*(1))\\
   H^1_f(\Qp, V)^* =\frac{ H^1(\Qp, V^*(1))} {H^1_f\left(\Qp, V^*(1)\right).}
  \end{gather*}
  Also, there is a canonical exact sequence
  \begin{equation}
  \label{tangent}
   0 \rTo t(V^*(1))^* \rTo \DdR(V) \rTo t(V) \rTo 0,\end{equation}
  arising from the compatibility of the functor $\DdR(-)$ with tensor products and the canonical isomorphism $\DdR(L(1)) \cong L$. Putting all of these together, we have an isomorphism
  \[ \theta_L(V) : \Det_L(0) \rTo \Det_L R\Gamma(\Qp, V) \cdot \Det_L \DdR(V)\]
  defined by
  \[ \eta(\Qp, V) \cdot \left\{\eta(\Qp, V^*(1))^*\right\}^{-1} \cdot \left\{ \Det_L \Psi_f(\Qp, V^*(1))^*\right\}^{-1}.\]

  This completes the definition of the Fukaya--Kato $\varepsilon$-isomorphism $\varepsilon_{L, \xi}(V)$ for a de Rham representation $V$.

  For future reference, we observe that the objects defined in this section are well-behaved in short exact sequences:

  \begin{lemma}
   \label{lem:SEScompatible}
   Suppose that we have a short exact sequence
   \[ 0\rTo V'\rTo V\rTo V''\rTo 0\]
   of crystalline representations of $G_{\Qp}$ with coefficients in a finite extension $L$ of $\Qp$.
   Then
   \begin{itemize}
    \item $\varepsilon_{L,\xi,\dR}(V)=\varepsilon_{L,\xi,\dR}(V')\cdot\varepsilon_{L,\xi,\dR}(V'')$,
    \item $\theta_L(V)=\theta_L(V')\cdot\theta_L(V'')$,
    \item $\Gamma_L(V)=\Gamma_L(V')\cdot\Gamma_L(V'')$.
   \end{itemize}
  \end{lemma}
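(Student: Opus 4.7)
The proof is essentially a formal verification that each of the three constructions respects short exact sequences. I would handle the three statements in order of increasing complexity.

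For $\Gamma_L$, observe that since $\DdR$ is exact on de Rham representations (and in particular on crystalline ones), and the Hodge filtration is strict, one has $n(r)_V = n(r)_{V'} + n(r)_{V''}$ for each $r \in \ZZ$. The product formula $\Gamma_L(V) = \prod_r \Gamma^*(r)^{-n(r)}$ then gives multiplicativity immediately.

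For $\varepsilon_{L,\xi,\dR}$, recall that this is multiplication by $t^w \varepsilon_L(\Dpst(V),\xi)$ on $\Lt \otimes_L \Det_L \DdR(V)$. The exponent $w = \sum_r r\cdot n(r)$ is additive by the previous paragraph. The factor $\varepsilon_L(\Dpst(V),\xi)$ is additive in short exact sequences by property (5) of local $\varepsilon$-factors from \cite[\S 3.2.2]{fukayakato06} (already invoked just above in the proof of the formula for $\varepsilon_L(\Dpst(V(\eta)),\xi)$), applied to the short exact sequence $0 \to \Dpst(V') \to \Dpst(V) \to \Dpst(V'') \to 0$. Finally, the fact that the determinant functor is multiplicative on short exact sequences (i.e.\ $\Det_L \DdR(V) = \Det_L \DdR(V') \cdot \Det_L \DdR(V'')$, and similarly for $V$ itself) makes the comparison well-defined and compatible.

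For $\theta_L$, the argument is the same in spirit but more involved, as $\theta_L$ is built from three pieces: $\eta(\Qp,V)$, $\eta(\Qp,V^*(1))^*$, and $\Det_L \Psi_f(\Qp,V^*(1))^*$. The plan is to treat each piece separately. The map $\eta(\Qp,V)$ is read off the fundamental exact sequence \eqref{eq:fundseq}; a short exact sequence of crystalline representations gives rise to a $3 \times 3$ diagram of such fundamental sequences, all of whose columns are short exact (using exactness of $\Dcris$, of $t$, and the long exact sequence of local Galois cohomology together with the fact that $H^1_f$ is defined so as to sit in this picture). Taking determinants of the three rows and of the three columns yields the identity $\eta(\Qp,V) = \eta(\Qp,V')\cdot \eta(\Qp,V'')$, after which the dual case follows by applying the same reasoning to $V^*(1)$. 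The factor $\Psi_f(\Qp,V)$ comes from local Tate duality, which is a natural transformation of $\delta$-functors, so is likewise compatible with short exact sequences; the tangent space sequence \eqref{tangent} is functorial in $V$ for the same reason. Multiplying these three compatibilities together gives the desired equality.

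The only step with any genuine content is the $3\times 3$ diagram chase for $\eta(\Qp,V)$: one must verify that the vertical short exact sequences of complexes really do assemble into a short exact sequence of complexes quasi-isomorphic to $C_f(\Qp,V')$, $C_f(\Qp,V)$, $C_f(\Qp,V'')$. This is routine given the exactness properties of $\Dcris$ and $t(-)$ on crystalline representations, and given that $H^1_f(\Qp,-)$ forms a sub-$\delta$-functor of $H^1(\Qp,-)$ stable under the connecting maps in the present situation (both $V'$ and $V''$ being crystalline, so that $H^0(\Qp,V'')$ lifts to $\Dcris(V'')$ and hence the connecting map $H^0(\Qp,V'') \to H^1(\Qp,V')$ lands in the crystalline subspace). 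I expect no further obstacle.
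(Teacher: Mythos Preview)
Your proposal is correct and follows exactly the same approach as the paper: the paper dismisses the first and third items as ``true by construction'' and for $\theta_L$ says only that ``it follows from tedious diagram chasing that each of the factors is multiplicative in short exact sequences,'' which is precisely the outline you give. Your write-up simply fills in the details the paper omits; one minor point is that the multiplicativity of $\varepsilon$-factors in short exact sequences is really the basic additivity property of $\varepsilon$-factors rather than property (5) of \cite[\S 3.2.2]{fukayakato06} (which concerns twisting), but this does not affect the argument.
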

  \begin{proof}
   The equalities $\varepsilon_{L,\xi,\dR}(V)=\varepsilon_{L,\xi,\dR}(V')\cdot\varepsilon_{L,\xi,\dR}(V'')$ and $\Gamma_L(V)=\Gamma_L(V')\cdot\Gamma_L(V'')$ are true by construction.
   Recall that $\theta_L(V)$ is defined as
   \[ \eta(\Qp,V)\cdot\{\eta(\Qp,V^*(1))^*\}^{-1}\cdot \{\Det_L\Psi_f(\Qp,V^*(1))^*\}^{-1}.\]
   It follows from tedious diagram chasing that each of the factors is multiplicative in short exact sequences, which implies the result for $\theta_L(\Qp,V)$.
  \end{proof}

  \begin{remark}
   The statements for $\varepsilon_{L,\xi,\dR}(V)$ and $\Gamma_L(V)$ are clearly also true for de Rham representations. However, the statement for $\theta_L(V)$ is less clear in this generality, since the sequence
   \[ 0\rTo \Dcris(V')\rTo \Dcris(V)\rTo \Dcris(V'')\rTo 0\]
   is not necessarily exact in this case. Of course, if all of the $V$'s are purely non-crystalline (in the sense that their $\Dcris$ is zero) then we evidently have such an exact sequence, and the result holds in this case too.
  \end{remark}

  As a corollary, we obtain the following result:

  \begin{proposition}\label{prop:SEScompatible}
   Under the same assumptions as in Lemma \ref{lem:SEScompatible}, we have
   \[ \varepsilon_{L,\xi}(V)=\varepsilon_{L,\xi}(V')\cdot\varepsilon_{L,\xi}(V'').\]
  \end{proposition}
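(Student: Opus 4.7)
The plan is to observe that Proposition \ref{prop:SEScompatible} is essentially a formal consequence of Lemma \ref{lem:SEScompatible}, together with the product decomposition \eqref{eq:epsilon}
\[ \varepsilon_{L,\xi}(V) = \Gamma_L(V) \cdot \varepsilon_{L,\xi,\dR}(V) \cdot \theta_L(V). \]
Each of the three factors on the right-hand side has already been shown in Lemma \ref{lem:SEScompatible} to be multiplicative in the short exact sequence $0 \to V' \to V \to V'' \to 0$, so I expect the proof to amount to little more than combining these three identities.

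Concretely, the first step is to write down $\varepsilon_{L,\xi}(V)$, $\varepsilon_{L,\xi}(V')$ and $\varepsilon_{L,\xi}(V'')$ using the decomposition \eqref{eq:epsilon}. The second step is to apply each of the three bullet points in Lemma \ref{lem:SEScompatible} in turn, replacing each factor at $V$ by the product of the corresponding factors at $V'$ and $V''$. Finally, one rearranges the resulting six-fold product (using that $\Gamma_L$ is scalar and the other two factors are morphisms in a symmetric Picard category, hence commute up to the canonical commutativity constraint) to group together the three factors at $V'$ and the three factors at $V''$, which then recombine to $\varepsilon_{L,\xi}(V')$ and $\varepsilon_{L,\xi}(V'')$ respectively.

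The only point that needs a moment's care is the implicit identification of determinants: the cohomological determinant $\Det_L R\Gamma(\Qp,V)$ is compared with the product $\Det_L R\Gamma(\Qp,V') \cdot \Det_L R\Gamma(\Qp,V'')$ via the long exact sequence of Galois cohomology, and similarly $\Det_L V \cong \Det_L V' \cdot \Det_L V''$. One should verify that the identifications used implicitly in Lemma \ref{lem:SEScompatible} (for $\theta_L$, $\varepsilon_{L,\xi,\dR}$ and the scalar $\Gamma_L$) are taken with respect to compatible choices of these canonical isomorphisms. This is the only potential obstacle, but it is routine: all identifications are the standard ones coming from the additivity of $\Det$ on short exact sequences of perfect complexes, so no hidden sign or twist enters. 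Once this is in place, the proposition follows by inspection.
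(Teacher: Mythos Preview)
Your proposal is correct and follows exactly the paper's own approach: the paper's proof is simply ``Immediate from Lemma \ref{lem:SEScompatible} and \eqref{eq:epsilon}'', which is precisely the argument you have spelled out in more detail. Your additional remarks about the compatibility of the determinant identifications are a reasonable elaboration of what ``immediate'' means here, but add nothing beyond what the paper intends.
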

  \begin{proof}
   Immediate from Lemma \ref{lem:SEScompatible} and \eqref{eq:epsilon}.
  \end{proof}


 \subsection{Two special cases}
  \label{sect:specialcases}

  In this section we'll give a different description of the isomorphism $\theta_L(V)$ when $V$ is either purely crystalline or purely non-crystalline, which are the only two cases we shall need to consider.

  Recall that the dual exponential map $\exp^*_{\Qp,V^*(1)}: H^1(\Qp, V) \rTo \Fil^0 \DdR(V)$ is defined by the commutativity of the following diagram
  \begin{equation}
  \label{defexptranspose}
  \xymatrix{
    H^1(\Qp,V) \ar[d]_{\exp^*_{\Qp,V^*(1)}}  \ar@{}[r]|{\times}  & H^1(\Qp,V^*(1))   \ar[rr]^{ (\;,\;)_V} && L\phantom{.} \ar@{=}[d]^{ } \\
    {\DdR(V)}  \ar@{}[r]|{\times} & \DdR(V^*(1))\ar[u]_{\exp_{\Qp,V^*(1)}} \ar[rr]^{[\;,\;]_V} && L.   }\end{equation} The involved pairings induce isomorphisms
  \begin{gather*}
   \psi_V:H^1(\Qp,V)\to H^1(\Qp,V^*(1))^*,\;\;\;h\mapsto (h,-)_V,\\
   \psi_{V,/f}:H^1(\Qp,V)/H^1_f(\Qp,V)\to H^1_f(\Qp,V^*(1))^*,\;\;\;[h]\mapsto (h,-)_V,\\
   \psi_{V,f}:H^1(\Qp,V^*(1))/H^1_f(\Qp,V^*(1))\to H^1_f(\Qp,V)^*,\;\;\;[h]\mapsto (-,h)_V,\\
   \psi_{\DdR(V)}:\DdR(V)\to \DdR(V^*(1))^*,\;\;\;d\mapsto [d,-]_V,\\
  \end{gather*}
  and
  \[
   \psi_{\Fil^0\DdR(V)}:\Fil^0 \DdR(V)\to t(V^*(1))^*,\;\;\;d\mapsto [d,-]_V,.
  \]
  Hence, $\exp^*_{\Qp,V^*(1)}$ is the composite
  \begin{equation}
  \label{exptranspose}
  H^1(\Qp,V)/H^1_f(\Qp,V)\rTo^{\psi_{V,/f}} H^1_f(\Qp,V^*(1))^*\rTo^{\left(\exp_{\Qp,V^*(1)}\right)^*}\DdR(V^*(1))^*\rTo^{\psi_{\Fil^0\DdR(V)}^{-1}} \Fil^0\DdR(V).
  \end{equation}

  \begin{proposition}
   \label{prop:noncrystallinedet}
   If $V$ is a de Rham representation such that $\Dcris(V) = \Dcris(V^*(1)) = 0$, then we have
   \begin{align*}
    H^0(\Qp, V) & = H^2(\Qp, V) = 0,\\
     H^1_e(\Qp, V) & = H^1_f(\Qp, V) = H^1_g(\Qp, V),
     \end{align*}
   and the morphism $\theta_L(V)$ is given by
   \begin{multline*}
       \Det_L \left( \log_{\Qp, V} : H^1_f(\Qp, V) \rTo^\cong \frac{\DdR(V)}{\Fil^0 \DdR(V)}\right) \\ \cdot \Det_L \left(-\exp^*_{\Qp, V^*(1)}: \frac{H^1(\Qp, V)}{H^1_f(\Qp, V)} \rTo^\cong \Fil^0 \DdR(V)\right).
   \end{multline*}
  \end{proposition}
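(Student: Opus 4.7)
The plan is to unpack the three factors that make up $\theta_L(V)$, taking advantage of the drastic simplification that the hypothesis $\Dcris(V) = \Dcris(V^*(1)) = 0$ produces in each of them.

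First I would dispose of the vanishing claims. One has $H^0(\Qp, V) \hookrightarrow \Dcris(V) = 0$, and by Tate local duality $H^2(\Qp, V) \cong H^0(\Qp, V^*(1))^* \hookrightarrow \Dcris(V^*(1))^* = 0$. The standard Bloch--Kato exact sequences show that $H^1_e(V) = H^1_f(V)$ whenever $\Dcris(V)=0$ (both are isomorphic to $t(V)$ via $\exp_{\Qp,V}$). Applying this to $V^*(1)$ and taking orthogonal complements under the Tate pairing, using $H^1_e(V)^\perp = H^1_g(V^*(1))$ and $H^1_f(V)^\perp = H^1_f(V^*(1))$, yields $H^1_f(V) = H^1_g(V)$ as well.

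Next I would analyse each of the three factors of
\[
 \theta_L(V) = \eta(\Qp, V) \cdot \{\eta(\Qp, V^*(1))^*\}^{-1} \cdot \{\Det_L \Psi_f(\Qp, V^*(1))^*\}^{-1}.
\]
When $\Dcris(V)=0$, the fundamental exact sequence \eqref{eq:fundseq} collapses to the statement that $\exp_{\Qp,V}: t(V) \to H^1_f(\Qp,V)$ is an isomorphism, so $\eta(\Qp, V)$ is precisely the determinant isomorphism induced by $\log_{\Qp, V}: H^1_f(\Qp,V) \to t(V)$. The same reasoning applied to $V^*(1)$ identifies $\eta(\Qp, V^*(1))$ with $\Det_L(\log_{\Qp, V^*(1)})$. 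Finally, the duality $\Psi_f(\Qp, V^*(1))$ induces on cohomology exactly the Tate-pairing identification $\psi_{V,/f}: H^1(\Qp,V)/H^1_f(\Qp,V) \cong H^1_f(\Qp,V^*(1))^*$ already recalled above.

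The assembly step combines $\{\eta(\Qp, V^*(1))^*\}^{-1}$ with $\{\Det_L \Psi_f(\Qp, V^*(1))^*\}^{-1}$: the former contributes the dual of $\exp_{\Qp, V^*(1)}$ on $H^1_f(\Qp,V^*(1))^*$, while the latter feeds $H^1(\Qp,V)/H^1_f(\Qp,V)$ into $H^1_f(\Qp,V^*(1))^*$. Composing with the canonical identification $t(V^*(1))^* \cong \Fil^0\DdR(V)$ coming from \eqref{tangent} and comparing with the description of the dual exponential in \eqref{exptranspose} matches the product with $\Det_L(\exp^*_{\Qp, V^*(1)})$ up to an overall sign. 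Combined with $\eta(\Qp, V) = \Det_L(\log_{\Qp, V})$, this gives the stated formula.

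The main obstacle is the bookkeeping of signs: the minus sign in front of $\exp^*_{\Qp, V^*(1)}$ is not visible in the individual factors and appears only after carefully tracking the dualization conventions of Fukaya--Kato through the definition of $\Psi_f$, in particular through the shift $[-2]$ and the interplay between the transposed $\eta$ and the Tate-pairing identification. Every other step is essentially a diagram chase, but isolating this sign requires explicit comparison with the symmetry property of the Tate pairing and the sign conventions governing $\Det_L$ on duals.
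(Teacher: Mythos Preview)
Your approach is essentially the same as the paper's: reduce each of the three factors of $\theta_L(V)$ using the vanishing of $\Dcris$, identify $\eta(\Qp,V)$ with $\Det_L(\log_{\Qp,V})$, and then combine the remaining two factors into the dual exponential. The paper does not spell out the vanishing claims you prove first (they are taken as immediate), but the core computation is identical.

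The one place where your account diverges slightly from the paper is the localisation of the sign. You say $\Psi_f(\Qp,V^*(1))$ induces on $H^1$ the map $\psi_{V,/f}$ and then attribute the minus sign to the shift $[-2]$ and to determinant conventions under dualisation. In the paper's bookkeeping, $\Psi_f(\Qp,V^*(1))^*$ instead gives the map $\psi_{V^*(1),f}:[h]\mapsto (-,h)_{V^*(1)}$, and the entire sign comes from the single identity $\psi_{V^*(1),f}=-\psi_{V,/f}$, which is just the skew-symmetry of the cup-product pairing on $H^1$. Once this is plugged into the factorisation \eqref{exptranspose} of $\exp^*$, the minus sign appears immediately; neither the shift $[-2]$ nor the sign rules for $\Det_L$ on duals contribute anything further. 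So your hedging in the last paragraph is unnecessary: the sign has a single, clean source, and naming it makes the proof considerably shorter.
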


  \begin{proof}
   Since the complex $C_f(\Qp, V)$ is reduced to $H^1_f(\Qp, V)[1]$, the isomorphism $\eta(\Qp, V)$ is the map induced by $\log_{\Qp, V}$. Similarly, $ \eta(\Qp, V^*(1))^* \cdot \Det_L \Psi_f(\Qp, V^*(1))^* $ is induced by the map
   \[\psi_{\Fil^0\DdR(V)}^{-1}\circ\left(\exp_{\Qp,V^*(1)}\right)^*\circ \psi_{V^*(1),f}=- \exp^*_{\Qp,V^*(1)}, \]
   where the last equality follows from comparison with \eqref{exptranspose} and taking into account that $\psi_{V^*(1),f}=-\psi_{V,/f}$ by the skew-symmetry of the cup-product.
  \end{proof}

  \begin{proposition}\label{prop:nophidet}
   If $V$ is a crystalline representation such that $\Dcris(V)^{\vp = 1} = \Dcris(V)^{\vp = p^{-1}} = 0$, then we again have
   $H^0(\Qp, V) = H^2(\Qp, V) = 0, H^1_e(\Qp, V) = H^1_f(\Qp, V) = H^1_g(\Qp, V)$, and the morphism $\theta_L(V)$ is given by
   \begin{multline*}
       \Det_L\left( (1 - \vp)(1 - p^{-1}\vp^{-1})^{-1} : \Dcris(V) \rTo \Dcris(V) \right) \\ \cdot \Det_L \left( \log_{\Qp, V} : H^1_f(\Qp, V) \rTo^\cong \frac{\Dcris(V)}{\Fil^0 \Dcris(V)}\right) \\ \cdot \Det_L \left(-\exp^*_{\Qp, V^*(1)}: \frac{H^1(\Qp, V)}{H^1_f(\Qp, V)} \rTo^\cong \Fil^0 \Dcris(V)\right).
   \end{multline*}
  \end{proposition}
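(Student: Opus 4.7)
The plan is to follow the template of the proof of Proposition \ref{prop:noncrystallinedet}, but with additional bookkeeping because $C_f(\Qp,V)$ is no longer quasi-isomorphic to a module concentrated in degree $1$. First I verify the cohomology statements: $H^0(\Qp,V) \into \Dcris(V)^{\vp=1} = 0$; local duality identifies $H^2(\Qp,V) \cong H^0(\Qp,V^*(1))^*$, which embeds in $\Dcris(V^*(1))^{\vp=1}$, and since the eigenvalues of $\vp$ on $\Dcris(V^*(1))$ are the $p^{-1}\alpha^{-1}$ for $\alpha$ an eigenvalue of $\vp$ on $\Dcris(V)$, the hypothesis $\Dcris(V)^{\vp=p^{-1}}=0$ gives $H^2=0$. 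The identities $H^1_e = H^1_f$ and $H^1_f = H^1_g$ then follow from the BK definitions, since the respective quotients are computed by $\Dcris(V)^{\vp=1}$ and (dually) $\Dcris(V^*(1))^{\vp=1}$, both vanishing under our hypotheses.

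Next I unwind the three factors of $\theta_L(V)$. Under our hypotheses $(1-\vp)$ is an automorphism of $\Dcris(V)$, so the fundamental exact sequence \eqref{eq:fundseq} presents $C_f(\Qp,V)$ as the two-term complex $[\Dcris(V) \rTo^{(1-\vp,1)} \Dcris(V) \oplus t(V)]$, which splits as the direct sum of the acyclic complex $[\Dcris(V) \rTo^{1-\vp} \Dcris(V)]$ and $t(V)[-1]$. The trivialization $\eta(\Qp,V)$ therefore factorises canonically as the product of $\Det_L((1-\vp):\Dcris(V)\to\Dcris(V))$ (arising from the acyclic summand) with the determinant of the induced isomorphism $H^1_f(\Qp,V) \cong t(V)$, which is $\log_{\Qp,V}$ by the very definition of the BK exponential. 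Performing the analogous analysis for $V^*(1)$ and then dualising, $\eta(\Qp,V^*(1))^*$ contributes $\Det_L((1-\vp)|\Dcris(V^*(1)))^{-1} = \Det_L((1-p^{-1}\vp^{-1})|\Dcris(V))^{-1}$ (which, combined with the previous contribution, yields the Euler factor $(1-\vp)(1-p^{-1}\vp^{-1})^{-1}$ in the statement) together with a dualised log term for $V^*(1)$.

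The dualised log term, composed with $\Psi_f(\Qp,V^*(1))^*$, becomes $-\exp^*_{\Qp,V^*(1)}$ by the diagram \eqref{defexptranspose} and the skew-symmetry of the cup product, exactly as in the proof of Proposition \ref{prop:noncrystallinedet}. The tangent space sequence \eqref{tangent} then glues the targets $t(V)$ and $\Fil^0\DdR(V)$ into $\DdR(V) = \Dcris(V)$, on which the Euler factor acts as an endomorphism. The main obstacle is the careful bookkeeping of the two $(1-\vp)$ contributions from $V$ and $V^*(1)$ and verifying that they combine to exactly $(1-\vp)(1-p^{-1}\vp^{-1})^{-1}$ via the duality of Frobenius eigenvalues; everything else runs in parallel with the purely non-crystalline case of Proposition \ref{prop:noncrystallinedet}.
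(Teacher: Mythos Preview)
Your approach is correct and is essentially the same as the paper's: the paper also uses the invertibility of $(1-\vp)$ to replace the complex $[\Dcris(V) \xrightarrow{(1-\vp,1)} \Dcris(V) \oplus t(V)]$ by the split complex $[\Dcris(V) \xrightarrow{(1-\vp,0)} \Dcris(V) \oplus t(V)]$ (via what it calls a chain homotopy, i.e.\ an isomorphism of complexes that is the identity on determinants), and then refers back to the argument of Proposition~\ref{prop:noncrystallinedet}. Your more explicit account of the preliminary cohomology vanishing and of how the two $(1-\vp)$ contributions from $V$ and $V^*(1)$ combine into the single Euler factor $(1-\vp)(1-p^{-1}\vp^{-1})^{-1}$ is accurate and simply spells out what the paper leaves implicit.
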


  \begin{proof}
   Since $(1 - \vp)$ is invertible on $\Dcris(V)$, there is a natural morphism of complexes between $C_f(\Qp, V)$ and the corresponding complex with $(1 - \vp, 1)$ replaced by $(1-\vp, 0)$, which is a chain homotopy and hence induces an isomorphism on determinants; and similarly for $C_f(\Qp, V^*(1))$. The proof now concludes via the same argument as in Proposition \ref{prop:noncrystallinedet}.
  \end{proof}

  To handle the bad cases when $V$ is crystalline but the hypotheses of Proposition \ref{prop:nophidet} are not satisfied, it will be convenient to introduce a slight modification of the exponential and dual-exponential maps.

  \begin{definition}
   For $V$ a crystalline representation, let
   \[ \widetilde\exp_{\Qp, V} : \frac{\Dcris(V)}{(1 - \vp)\Fil^0 \Dcris(V)} \rTo^\cong H^1_f(\Qp, V)\]
   be the map obtained by restricting the boundary map of the fundamental exact sequence \eqref{eq:fundseq} to the summand $\Dcris(V) \subseteq \Dcris(V) \oplus t(V)$, while we write $\widetilde\log_{\Qp, V}$ for its inverse.
  \end{definition}

  \begin{remark}
   The map
   \[ \widetilde\exp_{\Qp, V} \oplus \exp_{\Qp, V} : \Dcris(V) \oplus t(V) \rTo H^1_f(\Qp, V)\]
   (which is just the boundary map of \eqref{eq:fundseq}) is the map denoted by $\exp_{V, f}$ in \cite{perrinriou99} (see p.~231).
  \end{remark}

  It is clear that the kernel of $\widetilde\exp_{\Qp, V}$ is exactly the subspace $(1 - \vp) \Fil^0 \Dcris$, and we have
  \begin{equation}
   \label{eq:signexptilde}
   \widetilde\exp_{\Qp, V} \circ (1 - \vp) = -\exp_{\Qp, V}.
  \end{equation}
  However, $\widetilde\exp_{\Qp, V}$ may be non-trivial even when $\exp_{\Qp, V}$ is the zero map, as the following example shows:

  \begin{proposition}
   If $V = L$ is the trivial representation, then for any $x \in V = \Dcris(V)$, $\widetilde\exp_{\Qp, V}(x)$ is the element of $\Hom(G_{\Qp}, L)$ which is trivial on inertia and maps the arithmetic Frobenius to $-x$.
  \end{proposition}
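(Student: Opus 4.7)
My plan is to compute $\widetilde\exp_{\Qp, L}$ by unwinding it as an explicit boundary map in Galois cohomology. The fundamental exact sequence \eqref{eq:fundseq} arises as the long exact cohomology sequence associated to the short exact sequence of $G_{\Qp}$-modules
\[ 0 \to V \to V \otimes_{\Qp} \Bcris \xrightarrow{(1-\vp,\, \mathrm{can})} \bigl(V \otimes_{\Qp} \Bcris\bigr) \,\oplus\, \bigl(V \otimes_{\Qp} (\BdR/\BdR^+)\bigr) \to 0, \]
so for $x \in \Dcris(V)$ and $(x, 0) \in \Dcris(V) \oplus t(V)$, the class $\widetilde\exp_{\Qp, V}(x) \in H^1_f(\Qp, V) \subseteq H^1(\Qp, V)$ is represented by the $1$-cocycle $g \mapsto (g-1)\tilde y$, where $\tilde y \in V \otimes_{\Qp} \Bcris$ is any lift satisfying $(1-\vp)\tilde y = x$ and $\tilde y \in V \otimes_{\Qp} \BdR^+$.

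For $V = L$ trivial, the plan is to construct $\tilde y$ inside the unramified subring $\widehat{\Qpnr} = W(\overline{\FF_p})[1/p] \subset \Bcris$. The crystalline Frobenius $\vp$ restricts on $\widehat{\Qpnr}$ to the Witt-vector Frobenius, which here coincides with the arithmetic Frobenius of $\widehat{\Qpnr}/\Qp$. The map $(1-\vp)\colon \widehat{\Qpnr} \to \widehat{\Qpnr}$ is surjective, either directly by Artin--Schreier surjectivity of $(1-\mathrm{Frob})$ on $\overline{\FF_p}$ followed by Hensel lifting, or equivalently by the vanishing of $H^1(\Gal(\Qpnr/\Qp), \widehat{\Qpnr})$. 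Thus there exists $\tilde y \in \widehat{\Qpnr}$ with $(1-\vp)\tilde y = x$. Since $p$ is a unit in $\BdR^+$, we have $\widehat{\Qpnr} \subset \BdR^+$, so this $\tilde y$ is a valid lift.

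It then remains to compute the cocycle $g \mapsto (g-1)\tilde y$. The inertia subgroup $I_{\Qp}$ acts trivially on $\widehat{\Qpnr}$, so the cocycle vanishes on inertia; in particular, the class automatically lies in the unramified line $H^1_f(\Qp, L) \subseteq H^1(\Qp, L) = \Hom(G_{\Qp}, L)$. For the arithmetic Frobenius $\sigma$, its Galois action on $\widehat{\Qpnr}$ agrees with $\vp$, so $\sigma(\tilde y) = \vp(\tilde y) = \tilde y - x$, which gives $(\sigma - 1)\tilde y = -x$, yielding the claim. The delicate point in the argument is the identification of the crystalline Frobenius on $\Bcris$ with the Galois action of arithmetic Frobenius on the unramified subring $\widehat{\Qpnr}$; this compatibility is standard in $p$-adic Hodge theory, but is precisely what drives the sign appearing in the statement.
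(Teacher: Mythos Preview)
Your proof is correct and follows essentially the same approach as the paper: both construct the preimage $\tilde y$ of $x$ under $1-\vp$ inside the unramified period ring (the paper uses $\widetilde\cO = W(\overline{\FF}_p)$, you use its fraction field $\widehat{\Qpnr}$), then read off the cocycle $\sigma \mapsto (\sigma-1)\tilde y$ using that the Galois action of arithmetic Frobenius coincides with $\vp$ there. Your write-up is a bit more explicit about why $\tilde y$ lands in $\BdR^+$ and about the origin of the boundary map, but the underlying argument is identical.
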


  \begin{proof}
   It suffices to assume $x = 1$. By Hensel's lemma there exists $y \in \widetilde\cO = W(\overline{\FF}_p)$ such that $(1 - \vp)y = 1$, where $\vp$ is the arithmetic Frobenius. Since $\widetilde\cO$ is $(\vp, G_{\Qp})$-equivariantly a submodule of $\Bcris^+$, the class $\widetilde\exp_{\Qp, V}(x)$ is given by $\sigma \mapsto (\sigma - 1) y$, which is clearly unramified and maps the arithmetic Frobenius to $-1$.
  \end{proof}

  We define
  \[ \widetilde\exp^*_{\Qp, V^*(1)} : \frac{H^1(\Qp, V)}{H^1_f(\Qp, V)} \rTo^\cong (1 - p^{-1}\vp^{-1})^{-1} \Fil^0 \Dcris(V)\]
  to be the transpose of $\widetilde{\exp}_{\Qp, V^*(1)}$, by the analogue of diagram \eqref{defexptranspose}. By construction, we have
  \begin{equation}
   \label{eq:signexptildestar}
   \exp^*_{\Qp, V^*(1)} = -(1 - p^{-1}\vp^{-1})\, \widetilde\exp^*_{\Qp, V^*(1)},
  \end{equation}
  since $(1 - p^{-1} \vp^{-1})$ is the transpose of $(1 - \vp)$.

  \begin{definition}
   We define the following subspaces of $H^1(\Qp, V)$:
   \begin{itemize}
    \item The space $H^1_a(\Qp, V)$ is defined by
    \[ H^1_a(\Qp, V) = \left\{ x \in H^1(\Qp, V) : \widetilde\exp^*_{\Qp, V^*(1)}(x) \in \Dcris(V)^{\vp = 1}\right\}.\]
    By construction, this contains the kernel of $\widetilde\exp^*_{\Qp, V^*(1)}$, which is $H^1_f(\Qp, V)$.
    \item The space $H^1_b(\Qp, V)$ is defined by
    \[ H^1_b(\Qp, V) = \left\{ x \in H^1_f(\Qp, V): \widetilde\log_{\Qp, V}(x) \in \frac{(1 - p^{-1}\vp^{-1})\Dcris(V) + (1 - \vp)\Fil^0 \Dcris(V)}{(1 - \vp)\Fil^0 \Dcris(V)}\right\}.\]
   \end{itemize}
  \end{definition}

  We now note that $\widetilde\exp^*_{\Qp, V^*(1)}$ defines an isomorphism
  \begin{align*}
   \frac{H^1_a(\Qp, V)}{H^1_f(\Qp, V)} \rTo^\cong& \Dcris(V)^{\vp = 1} \cap (1 - p^{-1}\vp^{-1})^{-1} \Fil^0 \Dcris(V)\\
   =& \Dcris(V)^{\vp = 1} \cap \Fil^0 \Dcris(V)\\
   =& H^0(\Qp, V).
  \end{align*}
  On the other hand $\widetilde\log_{\Qp, V}$ gives an isomorphism
  \[ \frac{H^1_f(\Qp, V)}{H^1_b(\Qp, V)} \rTo^\cong \frac{\Dcris(V)}{(1 - p^{-1}\vp^{-1})\Dcris(V) + (1 - \vp)\Fil^0 \Dcris(V)}\]
  which Tate duality identifies with $H^0(\Qp, V^*(1))^* = H^2(\Qp, V)$.

  We therefore have the following isomorphisms:
  \begin{subequations}
  \begin{align}
   \label{iso1}
   -(1 - \vp)\widetilde\exp^*_{\Qp, V^*(1)} &:& \dfrac{H^1(\Qp, V)}{H^1_a(\Qp, V)} \rTo^\cong & (1 - \vp)(1 - p^{-1}\vp^{-1})^{-1} \Fil^0 \Dcris(V)\\
   \label{iso2}
   -\widetilde\exp^*_{\Qp, V^*(1)} & : & \dfrac{H^1_a(\Qp, V)}{H^1_f(\Qp, V)} \rTo^\cong & H^0(\Qp, V) \\
   \label{iso3}
   \widetilde\log_{\Qp, V} & : & \dfrac{H^1_f(\Qp, V)}{H^1_b(\Qp, V)} \rTo^\cong & H^2(\Qp, V) \\
   \label{iso4} (1 - p^{-1} \vp^{-1})^{-1} \widetilde\log_{\Qp, V} & : & H^1_b(\Qp, V) \rTo^\cong & \frac{\Dcris(V)}{(1 - p^{-1} \vp^{-1})^{-1} (1 - \vp) \Fil^0 \Dcris(V)}.
  \end{align}
  \end{subequations}

  For future use we note the following corollary of these isomorphisms:

  \begin{corollary}
   \label{corr:dimsv}
   Let $V$ be crystalline. Let $s(V)$ denote the quotient of $\Dcris(V)$ given by
   \[ s(V) \coloneqq \frac{\Dcris(V)}{(1 - \vp)(1 - p^{-1} \vp^{-1})^{-1} \Fil^0 \Dcris(V)}.\]
   Then we have
   \[ \dim s(V) = \dim t(V) + \dim H^0(\Qp, V) - \dim H^2(\Qp, V),\]
   where $t(V) = \DdR(V) / \Fil^0 \DdR(V) \cong \Dcris(V) / \Fil^0 \Dcris(V)$ as above.
  \end{corollary}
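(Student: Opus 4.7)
The plan is to obtain the identity by assembling three ingredients — the isomorphisms \eqref{iso1} and \eqref{iso2}, the fundamental exact sequence \eqref{eq:fundseq}, and the local Tate Euler characteristic formula for $\Qp$ — and then performing a dimension count. Everything needed is already in place in the preceding subsection, so the argument is essentially formal.

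Concretely, \eqref{iso1} identifies
\[ \dim[H^1(\Qp, V)/H^1_a(\Qp, V)] = \dim[(1-\vp)(1-p^{-1}\vp^{-1})^{-1}\Fil^0\Dcris(V)] = \dim\Dcris(V) - \dim s(V) \]
by the very definition of $s(V)$; \eqref{iso2} gives $\dim[H^1_a(\Qp, V)/H^1_f(\Qp, V)] = \dim H^0(\Qp, V)$; and the alternating sum of dimensions in the fundamental exact sequence \eqref{eq:fundseq} (in which the two copies of $\Dcris(V)$ cancel) yields $\dim H^1_f(\Qp, V) = \dim t(V) + \dim H^0(\Qp, V)$. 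Adding these three consecutive-quotient contributions along the filtration $H^1_f \subseteq H^1_a \subseteq H^1$, and using $\dim\Dcris(V) = \dim V$, produces
\[ \dim H^1(\Qp, V) = \dim V - \dim s(V) + 2\dim H^0(\Qp, V) + \dim t(V). \]

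Finally, the local Tate Euler characteristic formula for $\Qp$ gives the alternative expression $\dim H^1(\Qp, V) = \dim V + \dim H^0(\Qp, V) + \dim H^2(\Qp, V)$. Equating with the previous expression and cancelling $\dim V + \dim H^0(\Qp, V)$ from both sides leaves precisely the claimed identity $\dim s(V) = \dim t(V) + \dim H^0(\Qp, V) - \dim H^2(\Qp, V)$. There is no substantive obstacle: the argument is pure bookkeeping once \eqref{iso1}, \eqref{iso2} and \eqref{eq:fundseq} are in hand. (Note that one could equivalently read off $\dim s(V)$ directly as $\dim H^1_b(\Qp, V)$ by combining Euler characteristic with all four isomorphisms \eqref{iso1}--\eqref{iso4}, but this would require checking that the two \emph{a priori} distinct subspaces $(1-\vp)(1-p^{-1}\vp^{-1})^{-1}\Fil^0\Dcris(V)$ and $(1-p^{-1}\vp^{-1})^{-1}(1-\vp)\Fil^0\Dcris(V)$ of $\Dcris(V)$ have the same codimension, and the route above sidesteps this comparison.)
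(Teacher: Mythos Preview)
Your proof is correct and follows essentially the same route as the paper: both combine \eqref{iso1}, \eqref{iso2}, the fundamental exact sequence \eqref{eq:fundseq}, and the local Euler characteristic formula. The paper compresses the Euler-characteristic step into its first displayed equation (the assertion $\dim s(V) + \dim H^2 = \dim H^1_f$ does not follow from \eqref{iso1} and \eqref{iso2} alone), whereas you spell it out explicitly.
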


  \begin{proof}
   From the isomorphisms \eqref{iso1} and \eqref{iso2}, we have
   \[ \dim s(V) + \dim H^2(\Qp, V) = \dim H^1_f(\Qp, V).\]
   However, the exactness of the sequence \eqref{eq:fundseq} shows that
   \[ \dim H^1_f(\Qp, V) = \dim t(V) + \dim H^0(\Qp, V).\]
   Combining these gives $\dim s(V) = \dim t(V) + \dim H^0(\Qp, V) - \dim H^2(\Qp, V)$ as required.
  \end{proof}

  As promised above, we can use the isomorphisms \eqref{iso1}-\eqref{iso4} to give a simpler expression for $\theta_L(V)$.

  \begin{theorem}
   \label{thm:sigmas}
   The isomorphism
   \[ \Det_L(0) \rTo^\cong   \Det_L R\Gamma(\Qp, V)\cdot\Det_L \Dcris(V)\]
   defined by composing the determinants of \eqref{iso1}-\eqref{iso4}     coincides with the isomorphism $\theta_L(V)$ defined above up to the factor $(-1)^{\dim_L(V)} $. In particular, if  $H^0(\Qp, V) = H^2(\Qp, V) = 0$, i.e., $H^1_a(\Qp, V) = H^1_b(\Qp, V) = H^1_f(\Qp, V)$, then $\theta_L(V)$ coincides with the isomorphism
   defined by
   \begin{multline*}
         \Det\left[ (1 - \vp)\widetilde\exp^*_{\Qp, V^*(1)}: \dfrac{H^1(\Qp, V)}{H^1_f(\Qp, V)} \rTo^\cong (1 - \vp)(1 - p^{-1}\vp^{-1})^{-1} \Fil^0 \Dcris(V)\right]\\
    \cdot \Det\left[ -(1 - p^{-1} \vp^{-1})^{-1} \widetilde\log_{\Qp, V} : H^1_f(\Qp, V) \rTo^\cong \frac{\Dcris(V)}{(1 - p^{-1} \vp^{-1})^{-1} (1 - \vp) \Fil^0 \Dcris(V)} \right].
   \end{multline*}
       \end{theorem}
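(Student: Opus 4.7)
The plan is to derive the theorem from Proposition \ref{prop:nophidet} by a dévissage argument. I would first treat the ``good'' case in which $H^0(\Qp, V) = H^2(\Qp, V) = 0$. Under this hypothesis, $H^1_a(\Qp, V) = H^1_b(\Qp, V) = H^1_f(\Qp, V)$, so the isomorphisms \eqref{iso2} and \eqref{iso3} are between zero spaces and contribute trivially, leaving only the product of the determinants of \eqref{iso1} and \eqref{iso4}. Proposition \ref{prop:nophidet} already supplies an explicit formula for $\theta_L(V)$ in this setting involving $\log_{\Qp,V}$ and $\exp^*_{\Qp, V^*(1)}$. The identities \eqref{eq:signexptilde} and \eqref{eq:signexptildestar} translate between these maps and their tilde-versions, and one needs to verify that after substituting $\widetilde{\exp}$ and $\widetilde{\exp}^*$ into the Proposition \ref{prop:nophidet} formula and simplifying, one lands precisely on the product of \eqref{iso1} and \eqref{iso4} with an overall sign of $(-1)^{\dim_L V}$; this accounts for the particular case stated at the end of the theorem.

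For the general case I would use dévissage. Lemma \ref{lem:SEScompatible} gives multiplicativity of $\theta_L$ in short exact sequences of crystalline representations. The corresponding multiplicativity of the isomorphism assembled from \eqref{iso1}-\eqref{iso4} must be verified separately: this reduces, via the long exact sequence in Galois cohomology and the fundamental sequence \eqref{eq:fundseq}, to showing that the spaces $H^1_a$, $H^1_b$ and the filtered $\varphi$-module subquotients of $\Dcris$ appearing in \eqref{iso1}-\eqref{iso4} respect short exact sequences of crystalline representations. Granting this multiplicativity, one can split off the $\varphi=1$ and $\varphi=p^{-1}$ eigenparts of $\Dcris(V)$ (i.e., the obstructions to $H^0$ and $H^2$ vanishing), reducing everything either to the good case treated above or to the easier situation of one-dimensional crystalline pieces with $\varphi=1$ or $\varphi=p^{-1}$, which can be handled by an explicit hand calculation.

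The main obstacle will be the sign bookkeeping: both sides accumulate several minus signs — on the $\theta_L$ side through $\eta(\Qp, V^*(1))^*$, the dual of $\Psi_f$, and the skew-symmetry $\psi_{V^*(1),f}=-\psi_{V,/f}$; on the sigma side through the explicit minus sign in the $-(1-p^{-1}\vp^{-1})^{-1}\widetilde{\log}_{\Qp, V}$ factor and through $\widetilde{\exp}\circ(1-\vp)=-\exp$ — and these must combine to exactly $(-1)^{\dim_L V}$ across all four factors. A secondary obstacle is that splitting off $\varphi$-eigenspaces as crystalline subrepresentations is not automatic (one needs the sub-filtered $\varphi$-module to be admissible), so in practice the reduction may need to be performed at the level of filtered $\varphi$-modules, or the verification carried out by analyzing the complex $C_f(\Qp,V)$ directly via the refined filtration $H^1 \supset H^1_a \supset H^1_f \supset H^1_b$ rather than by choosing a global filtration of $V$.
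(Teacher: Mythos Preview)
Your dévissage strategy has a genuine gap at the very first step. You propose to handle the case $H^0(\Qp,V)=H^2(\Qp,V)=0$ by invoking Proposition~\ref{prop:nophidet}, but that proposition assumes the strictly stronger hypothesis $\Dcris(V)^{\vp=1}=\Dcris(V)^{\vp=p^{-1}}=0$. These conditions are not equivalent: one can have $\Dcris(V)^{\vp=1}\neq 0$ while $\Dcris(V)^{\vp=1}\cap\Fil^0\Dcris(V)=H^0(\Qp,V)=0$. In that situation the operators $1-\vp$ and $1-p^{-1}\vp^{-1}$ are not invertible on $\Dcris(V)$, so the formula of Proposition~\ref{prop:nophidet} is not available, yet the subspaces $(1-\vp)(1-p^{-1}\vp^{-1})^{-1}\Fil^0\Dcris(V)$ appearing in \eqref{iso1} and \eqref{iso4} already reflect this degeneracy. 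The remark immediately after the theorem makes the logical direction explicit: the theorem \emph{extends} Proposition~\ref{prop:nophidet}, not the other way around.

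Your dévissage cannot repair this. Splitting off $H^0(\Qp,V)$ or dually $L(1)$-quotients only reduces to $H^0=H^2=0$; to reach the hypotheses of Proposition~\ref{prop:nophidet} you would then need to split off the $\vp=1$ or $\vp=p^{-1}$ eigenspaces of $\Dcris(V)$ as crystalline subrepresentations. But, as you yourself note, such an eigenspace with the induced filtration has $t_N\neq t_H$ (e.g.\ $\vp=1$ with $\Fil^0=0$ gives $t_N=0>t_H$) and is therefore not weakly admissible, so it does not come from a subrepresentation. Your fallback suggestion---work at the level of filtered $\vp$-modules or analyse $C_f$ directly---is effectively abandoning dévissage for a direct argument, and that direct argument is precisely what the paper carries out. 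The paper's proof (Appendix~\ref{appendix:diagrams}) builds a $3\times 3\times 3$ cube of distinguished triangles from the commuting endomorphisms $g=1-\vp$ and $h=1-p^{-1}\vp^{-1}$ acting on four copies of $\Dcris(V)$ with carefully chosen filtrations $F_i$, then tracks the resulting determinant isomorphisms and commutativity constraints to identify the two sides up to $(-1)^{\dim_L V}$. No induction on $\dim V$ and no short exact sequences of representations are used.
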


  \begin{remark}
   Note that this result extends Proposition \ref{prop:nophidet}, because of equations \eqref{eq:signexptilde} and \eqref{eq:signexptildestar}.
  \end{remark}

  As the proof of this theorem requires some rather elaborate diagram-chasing, we shall not give it here but relegate it to Appendix \ref{appendix:diagrams} below.

\section{Regulator maps}
 \label{sect:regulators}

 \subsection{The cyclotomic regulator map}

  For any de Rham character $\eta$ of $\Gamma$ (with values in $L$) we write $\pi_{\eta}: \cH(\Gamma)\to L$ for the $L$-algebra homomorphism which sends $g \in \Gamma$ to $\eta(g)$, and similarly for $\Lambda(\Gamma)$. Then we have the two projection maps
  \[ \operatorname{pr}_\eta:\cH(\Gamma) \otimes_{\Lambda(\Gamma)} H^1_{\Iw}(\Qp, V) \rTo L\otimes_{\Lambda(\Gamma),\pi_{\eta}} H^1_{\Iw}(\Qp, V)\rTo H^1(\Qp, V(\eta^{-1}))\]
  sending $h\otimes x$ to $\pi_{\eta}(h)\otimes x$ followed by the canonical map (arising from the associated spectral sequence) and where the tensor product is formed via $\pi_{\eta}$ as indicated;
  \[ \cH(\Gamma) \otimes_{\Qp} \Dcris(V) \rTo L\otimes_{\cH(\Gamma),\pi_{\eta}} \cH(\Gamma) \otimes_{\Qp} \Dcris(V)\cong L \otimes_{\Qp} \Dcris(V)\]
  sending $h\otimes d$ to $\pi_{\eta}(h)\otimes d$. We also write $z(\eta) $ for the image of $z\in \cH(\Gamma) \otimes_{\Qp} \Dcris(V)$ under the latter map.

  We now let $V$ be a crystalline representation, with all Hodge--Tate weights $\ge 0$. We shall write $H^1_{\Iw}(\Qpi, V)_0$ for the submodule of $H^1_{\Iw}(\Qpi, V)$ whose image under the Fontaine isomorphism
  \[ H^1_{\Iw}(\Qpi, V) \rTo^\cong \DD(V)^{\psi = 1}\]
  is contained in the Wach module $\NN(V)$. By the results of \cite[Appendix A]{berger03}, if $V$ has all Hodge--Tate weights $\ge 0$, then the quotient $H^1_{\Iw}(\Qpi, V) / H^1_{\Iw}(\Qpi, V)_0$ is identified with
  \[ \frac{\left(\pi^{-1} \NN(V)\right)^{\psi = 1}}{\NN(V)^{\psi = 1}} \hookrightarrow \Dcris(V)^{\vp = 1}(-1),\]
  and in particular if $V$ has no quotient isomorphic to $\Qp$, then $H^1_{\Iw}(\Qpi, V)_0 = H^1_{\Iw}(\Qpi, V)$.

  \begin{theorem}[Perrin-Riou, Berger]
   \label{thm:cycloregulator}
   Let $V$ be a crystalline representation of $G_{\Qp}$ with all Hodge--Tate weights $\ge 0$. Then there is a homomomorphism of $\Lambda_L(\Gamma)$-modules
   \[ \cL^\Gamma_{V, \xi}: H^1_{\Iw}(\Qpi, V)_0 \rTo \cH_{L}(G) \otimes_{L} \Dcris(V) \]
   whose values at de Rham characters $\eta$ are given by the following formulae. Let $W = V(\eta^{-1})$, and let $\eta = \chi^j \eta_0$ with $j \in \ZZ$ and $\eta_0$ a finite-order character of conductor $n$.
   \begin{enumerate}[(1)]
    \item If $\eta_0$ is non-trivial, with conductor $n \ge 1$, then
    \[ \cL^\Gamma_{V, \xi}(x)(\eta) = \Gamma^*(1 + j) \varepsilon_L(\eta^{-1}, -\xi) \vp^n
     \begin{cases}
      \exp^*_{\Qp, W^*(1)}(x_\eta) \otimes t^{-j} e_j & \text{if $j \ge 0$,} \\
      \log_{\Qp, W}(x_\eta) \otimes t^{-j} e_j & \text{if $j \le -1$.}
     \end{cases}
    \]
    \item If $\eta_0$ is trivial, so $\eta = \chi^j$, then
    \[
     (1 - p^{-1-j} \vp^{-1}) \cL^\Gamma_{V, \xi}(x)(\eta) = \Gamma^*(1 + j) (1 - p^j \vp)
      \begin{cases}
       \exp^*_{\Qp, W^*(1)}(x_\eta) \otimes t^{-j} e_j & \text{if $j \ge 0$,} \\
       \log_{\Qp, W}(x_\eta) \otimes t^{-j} e_j & \text{if $j \le -1$.}
     \end{cases}
    \]
   \end{enumerate}
  \end{theorem}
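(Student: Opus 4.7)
The plan is to construct $\cL^\Gamma_{V,\xi}$ via Berger's reformulation of the Perrin-Riou regulator using Wach modules, and then to verify the interpolation formulas by explicit computation on the Wach module side. First I would use the Fontaine isomorphism $H^1_{\Iw}(\Qpi, V) \cong \DD(V)^{\psi=1}$ to identify $H^1_{\Iw}(\Qpi, V)_0$ with $\NN(V)^{\psi=1}$, where $\NN(V)$ is the Wach module. Because $V$ has non-negative Hodge--Tate weights, Berger's comparison theorem gives a natural inclusion $\NN(V) \hookrightarrow \Brig^+ \otimes_{\Qp} \Dcris(V)$ compatible with the $(\vp,\Gamma)$-structure, and on $\NN(V)^{\psi=1}$ one has a factorisation $1 - \vp = \log(\gamma)/\log\chi(\gamma) \cdot (\text{something})$ that allows one to divide by $1 - \vp$ in a controlled way. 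Concretely, for $x \in \NN(V)^{\psi=1}$, one defines $\cL^\Gamma_{V,\xi}(x)$ as the image of $(1-\vp)x$ under the Mellin transform identifying $(\Brig^+)^{\psi=0}$ with $\cH_L(\Gamma)$, tensored with $\Dcris(V)$.

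Next I would establish the interpolation formulas, which is the technical heart of the theorem. The strategy is to reduce to the fundamental exact sequence \eqref{eq:fundseq} and the definition of $\widetilde\exp$ and $\widetilde\exp^*$ on the Wach module. The key input is Berger's explicit description of the Bloch--Kato exponential and dual exponential in terms of Wach modules: for a class $x \in H^1_{\Iw}(\Qpi, V)_0$ represented by $n \in \NN(V)^{\psi=1}$, the specialization at the character $\chi^j \eta_0$ (with $\eta_0$ of conductor $n\ge 1$) can be read off by applying $\vp^n$ and the Mellin transform at $\eta_0$, which produces the Gauss sum factor $\varepsilon_L(\eta^{-1}, -\xi)$; the factor $\Gamma^*(1+j)$ arises from the $j$-th derivative relating $\log_{\Qp,W}$ or $\exp^*_{\Qp,W^*(1)}$ (depending on the sign of $j$) to the Wach module description. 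For the case $\eta = \chi^j$ with $\eta_0$ trivial, no Gauss sum appears, but the Euler factors $(1 - p^{-1-j}\vp^{-1})$ and $(1 - p^j \vp)$ appear as the natural correction terms coming from the fact that one must invert $1 - \vp$ (after an appropriate twist) to define the regulator at trivial characters; these are precisely the local Euler factors predicted by Perrin-Riou's conjecture.

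The main obstacle is the careful tracking of normalisations and sign conventions: the formulas involve $\varepsilon_L(\eta^{-1}, -\xi)$ (note the minus sign on $\xi$) and the exact power $\vp^n$, both of which depend delicately on the chosen conventions for Fontaine's period ring, the additive character $\psi$ used to define $\varepsilon$-factors, and the direction of the Mellin transform. The proof proceeds by reducing to the case of the cyclotomic character and its twists, where both sides can be computed by hand using Coleman's isomorphism $(\Brig^+)^{\psi=0} \cong \cH_L(\Gamma)$, and then extending by $\Lambda_L(\Gamma)$-linearity. A secondary technicality is the restriction to $H^1_{\Iw}(\Qpi, V)_0$: one must verify that the formula on the Wach module side is well-defined precisely on this submodule and does not extend naively to all of $H^1_{\Iw}(\Qpi, V)$ when $V$ has a trivial quotient, consistent with the description of the cokernel via $\Dcris(V)^{\vp=1}(-1)$.

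Once the formulas are established on a dense set of de Rham characters, continuity of the Mellin transform and the density statement for such characters in the space of all characters of $\Gamma$ uniquely determine $\cL^\Gamma_{V,\xi}$ and yield the $\Lambda_L(\Gamma)$-linearity. The appendix referenced in the paper (appendix \ref{sect:app-formulary}) is presumably where the sharper-than-standard formulae are verified in detail; my proof plan would culminate by pointing to those computations, which extend Berger's original formulas by handling the case of $\eta_1$ possibly of infinite order through the general two-variable framework of \cite{loefflerzerbes11}.
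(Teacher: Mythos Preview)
The paper does not give a proof of this theorem at all: it simply records it as well-known and cites \cite[Appendix B]{loefflerzerbes11} for the special-value formulae in this precise form. Your proposal, by contrast, sketches the actual construction from the literature --- defining $\cL^\Gamma_{V,\xi}(x) = \mathfrak{M}^{-1}((1-\vp)x)$ via the Wach module and Mellin transform, then reading off the interpolation formulae from Berger's explicit description of the Bloch--Kato maps. That sketch is broadly faithful to how the result is proved in \cite{berger03} and \cite{loefflerzerbes11}, and indeed the paper later uses exactly this description of the regulator (see the proof of Proposition~\ref{prop:cyclo regulator twist invariance}).

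Two points of confusion in your final paragraph are worth flagging. First, characters of $\Gamma$ have no unramified component $\eta_1$; the decomposition $\eta = \chi^j \eta_0 \eta_1$ with $\eta_1$ unramified is only relevant for characters of the larger group $G$ in the two-variable setting, so your remark about ``$\eta_1$ possibly of infinite order'' does not pertain to this theorem. Second, Appendix~\ref{sect:app-formulary} of the present paper does not prove Theorem~\ref{thm:cycloregulator}; it proves the \emph{strengthened} formulae of Theorems~\ref{thm:strongcycloregulator} and~\ref{thm:regderiv} (involving $\widetilde\exp^*$ at the bad characters), and does so by a different method --- duality with Perrin-Riou's big exponential $\Omega_{V,h,\xi}$ and the reciprocity law --- rather than by direct Wach-module computation.
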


  In the above theorem $e_j$ denotes the basis of $\Qp(j)$ given by $\left[(\xi_n)_{n \ge 1}\right]^{\otimes j}$, and $t$ the element $\log([\xi])$ of $\Bcris$. (Thus both $t$ and $e_j$ depend on the choice of $\xi$, but $t^{-j} e_j \in \Dcris(\Qp(j))$ does not.)

  \begin{proof}
   Well-known; for a proof of the special value formulae in this form see e.g.~\cite[Appendix B]{loefflerzerbes11}.
  \end{proof}

  The presence of the factors $(1 - p^j \vp)$ and $(1 - p^{-1-j} \vp^{-1})$ is awkward for our present purposes, since they may fail to be invertible. We will therefore use the following strengthened version of the formulae of Theorem \ref{thm:cycloregulator}, using the map $\widetilde\exp^*$ introduced in \S \ref{sect:specialcases} above.

  \begin{theorem}
   \label{thm:strongcycloregulator}
   For any $j \ge 0$ we have
   \[ \cL^\Gamma_{V, \xi}(x)(\chi^j) = -\Gamma^*(1 + j) (1 - p^j \vp) \left[\widetilde\exp^*_{\Qp, V^*(1+j)}(x_{\chi^j}) \otimes t^{-j} e_{j}\right]. \]
  \end{theorem}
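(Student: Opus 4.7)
My approach is to combine Theorem \ref{thm:cycloregulator}(2) with a direct analysis of $\cL^\Gamma_{V,\xi}$ at $\chi^j$ via Berger's Wach-module construction, the point being to eliminate the non-invertible prefactor $(1 - p^{-1-j}\vp^{-1})$ from the left-hand side of the known formula.

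The first step is purely formal. Applying \eqref{eq:signexptildestar} to the twisted representation $W = V(-j)$ (so that $W^*(1) = V^*(1+j)$), and using the canonical identification $\Dcris(V(-j)) \otimes \Dcris(L(j)) \cong \Dcris(V)$ under which the Frobenius on $\Dcris(V(-j))$ corresponds to $p^j \vp$ acting on $\Dcris(V)$, one rewrites
\[
\exp^*_{\Qp, V^*(1+j)}(x_{\chi^j}) \otimes t^{-j}e_j = -(1 - p^{-1-j}\vp^{-1})\,\widetilde\exp^*_{\Qp, V^*(1+j)}(x_{\chi^j}) \otimes t^{-j}e_j.
\]
Substituting this into Theorem \ref{thm:cycloregulator}(2), and using that $(1-p^j\vp)$ and $(1-p^{-1-j}\vp^{-1})$ commute on $\Dcris(V)$, one finds that the difference
\[
\delta \coloneqq \cL^\Gamma_{V,\xi}(x)(\chi^j) + \Gamma^*(1+j)(1-p^j\vp)\,\widetilde\exp^*_{\Qp, V^*(1+j)}(x_{\chi^j}) \otimes t^{-j}e_j
\]
is annihilated by $(1 - p^{-1-j}\vp^{-1})$ and hence lies in the eigenspace $\Dcris(V)^{\vp = p^{-1-j}}$.

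It therefore remains to show that $\delta = 0$, which is strictly more information than Theorem \ref{thm:cycloregulator}(2) provides whenever $p^{-1-j}$ is an eigenvalue of $\vp$ on $\Dcris(V)$. For this I would revisit the construction of $\cL^\Gamma_{V,\xi}$ in Berger's framework: given $x \in H^1_{\Iw}(\Qpi, V)_0$ corresponding via Fontaine's isomorphism to $y \in \NN(V)^{\psi=1} \subseteq \DD(V)^{\psi=1}$, one builds a solution $h \in \Brig \htimes \Dcris(V)$ of an equation of the shape $(1-\vp)h = y$, and then takes its Perrin-Riou--Mellin transform to obtain $\cL^\Gamma_{V,\xi}(x) \in \cH_L(\Gamma) \otimes \Dcris(V)$. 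The value at $\chi^j$ is then given by an explicit formula, extracting (up to the sign and factorial $(-1)^j j!$ encoded in $\Gamma^*(1+j)$) the $j$-th Taylor coefficient of $h$ at $\pi = 0$, with $(1-p^j\vp)$ applied.

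The crucial step, and what I expect to be the main obstacle, is to identify this Taylor coefficient with $-\widetilde\exp^*_{\Qp, V^*(1+j)}(x_{\chi^j}) \otimes t^{-j}e_j$ \emph{on the nose}, rather than with merely its $(1-p^{-1-j}\vp^{-1})$-image in $\Dcris(V)$. This requires unravelling the characterisation of $\widetilde\exp^*$ as (the transpose of) the restriction of the boundary map of the fundamental exact sequence \eqref{eq:fundseq} to the $\Dcris$-summand, and then comparing it term-by-term with the formula recovering the class $x_{\chi^j} \in H^1(\Qp, V(-j))$ from Taylor-expansion data in the Wach module. Once this identification is carried out at the level of Wach modules --- where no spurious Euler factor of $(1-p^{-1}\vp^{-1})$-type appears --- one obtains the formula as stated, and in particular $\delta = 0$.
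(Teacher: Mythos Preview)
Your first step --- the formal reduction showing that the difference $\delta$ lies in $\Dcris(V)^{\vp = p^{-1-j}}$ --- is correct, but it only isolates the problem rather than solving it. The second step, where the actual work lies, is where the proposal becomes vague and, I think, takes a harder road than necessary.

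The difficulty is this: $\widetilde\exp^*_{\Qp,V^*(1+j)}$ is \emph{defined} as the transpose of $\widetilde\exp_{\Qp,V^*(1+j)}$ with respect to local Tate duality and the crystalline pairing. You propose to identify a Taylor coefficient of the Wach-module solution $h$ directly with $\widetilde\exp^*(x_{\chi^j})$, but there is no direct description of $\widetilde\exp^*$ available other than ``the adjoint of $\widetilde\exp$''. So any such identification will implicitly have to pass through the pairing anyway, and your sketch does not indicate how you would control what happens on the $\vp = p^{-1-j}$ eigenspace during that passage --- which is exactly the piece that Theorem~\ref{thm:cycloregulator}(2) already fails to pin down.

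The paper's argument (Theorem~\ref{thm:strongcycloregulator2} in the appendix) exploits the duality from the outset. The key external input is Perrin-Riou's strengthened formula for the big exponential $\Omega_{V^*(1),1,\xi}$ at $\chi^{-j}$, which is stated directly in terms of $\widetilde\exp$ (not $\exp$) and holds without any Euler-factor ambiguity. One then uses the reciprocity identity
\[
\langle \cL^\Gamma_V(x), w\rangle_{\Iw,\cris} = \gamma_{-1}\cdot \langle x, \Omega_{V^*(1),1,\xi}(w)\rangle_{\Iw}
\]
and specialises at $\chi^j$: the right-hand side becomes a Tate pairing of $x_{\chi^j}$ against $\widetilde\exp$ of an arbitrary test vector, and by the very definition of $\widetilde\exp^*$ as adjoint this equals the crystalline pairing of $\widetilde\exp^*(x_{\chi^j})$ against that vector. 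Since the test vector is arbitrary, the identity of elements follows. The point is that the ``on the nose'' statement you are after is already contained in Perrin-Riou's formula for $\Omega$ in terms of $\widetilde\exp$; transporting it to $\cL$ via the reciprocity pairing costs nothing, because that is precisely what the adjoint definition of $\widetilde\exp^*$ is built for.
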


  \begin{proof}
   See Theorem \ref{thm:strongcycloregulator2} in the appendix.
  \end{proof}

  From the above results we see that if $\eta$ has Hodge--Tate weight $\ge 0$, then $\cL^\Gamma_{V, \xi}(x)(\eta) = 0$ if and only if $x_\eta \in H^1_f(\Qp, V(\eta^{-1}))$ when $\eta$ is non-crystalline, and if and only if $x_\eta \in H^1_a(\Qp, V(\eta^{-1}))$ when $\eta$ is crystalline. In this case we have a formula for the derivative of $\cL^\Gamma_{V, \xi}(x)$ at $\eta$:

  \begin{theorem}
   \label{thm:regderiv}
   Suppose $x \in H^1_{\Iw}(\Qpi, V)_0$ satisfies $\cL^{\Gamma}_V(x)(\eta) = 0$, where $\eta$ has Hodge--Tate weight $j \ge 0$ and conductor $n$.
   \begin{enumerate}[(1)]
    \item If $n \ge 1$, then we have
    \[ \cL^{\Gamma}_V(x)'(\eta) = \Gamma^*(1 + j) \varepsilon_L(\eta^{-1}, -\xi) \vp^{n} \left[\log_{\Qp, V(\eta^{-1})}(x_\eta) \otimes t^{-j} e_{j}\right] \pmod{ \vp^n \Fil^{-j} \Dcris(V)}.\]
    \item If $n = 0$, so $\eta = \chi^j$, and $H^0(\Qp, V(-j)) = 0$, then we have
    \begin{multline*}
     \cL^\Gamma_{V, \xi}(x)'(\eta) = -\Gamma^*(1 + j) (1 - p^{-1-j} \vp^{-1})^{-1}\left[\widetilde\log_{\Qp, V(-j)}(x_\eta) \otimes t^{-j} e_{j}\right] \\
     \pmod{ (1 - p^{-1-j} \vp^{-1})^{-1} (1 - p^j \vp) \Fil^{-j} \Dcris(V)}
    \end{multline*}
   \end{enumerate}
  \end{theorem}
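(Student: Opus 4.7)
The plan is to derive both formulae by differentiating the interpolation formulae of Theorem~\ref{thm:cycloregulator} and Theorem~\ref{thm:strongcycloregulator} along the one-parameter family of twists $\eta\langle\chi\rangle^s$ at $s=0$, using $p$-adic analyticity to pass between integer values of $s$ (where the interpolation formulae apply directly) and the infinitesimal derivative. By \eqref{derivation} we have
\[ \cL^\Gamma_{V,\xi}(x)'(\eta) = \tfrac{d}{ds}\bigl|_{s=0}\,\cL^\Gamma_{V,\xi}(x)(\eta\langle\chi\rangle^s), \]
so it suffices to compute this $s$-derivative.

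First, for each positive integer $s$, the character $\eta\chi^s$ has the same conductor $n$ as $\eta$ and Hodge--Tate weight $j+s>0$, so Theorem~\ref{thm:cycloregulator}(1) (in case~(1)) or Theorem~\ref{thm:strongcycloregulator} (in case~(2)) expresses $\cL^\Gamma_{V,\xi}(x)(\eta\chi^s)$ as a product of $\Gamma^*(1+j+s)$, an $\varepsilon$-factor depending analytically on $s$, the Frobenius factor $\vp^n$ (respectively $(1-p^{j+s}\vp)$ in case~(2)), and the term $\exp^*_{\Qp,V(\eta^{-1}\chi^{-s})^*(1)}(x_{\eta\chi^s}) \otimes t^{-j-s}e_{j+s}$ (respectively its $\widetilde\exp^*$ counterpart). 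Each factor extends to a $p$-adic analytic function of $s$ on a disc around~$0$, and by uniqueness of analytic continuation the product equals $\cL^\Gamma_{V,\xi}(x)(\eta\langle\chi\rangle^s)$ there.

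Next I would apply the product rule at $s=0$. The hypothesis $\cL^\Gamma_{V,\xi}(x)(\eta)=0$ forces $\exp^*(x_\eta)=0$ in case~(1), and $(1-p^j\vp)\widetilde\exp^*(x_{\chi^j})=0$ in case~(2), so the derivatives of $\Gamma^*(1+j+s)$, the $\varepsilon$-factor, and the Frobenius factors all multiply this vanishing term and drop out. Thus $\cL^\Gamma_{V,\xi}(x)'(\eta)$ reduces to $\Gamma^*(1+j)$ times the remaining constant factors evaluated at $s=0$, multiplied by
\[ D := \tfrac{d}{ds}\big|_{s=0}\!\left[\exp^*_{\Qp,V(\eta^{-1}\chi^{-s})^*(1)}(x_{\eta\chi^s})\otimes t^{-j-s}e_{j+s}\right] \]
(or its $\widetilde\exp^*$ analogue in case~(2)).

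The main obstacle, and the heart of the proof, is to identify $D$ with the stated $\log$ or $\widetilde\log$ term, modulo the image of $\Fil^{-j}\Dcris(V)$ indicated in each case. The underlying infinitesimal identity is that along cyclotomic twists, when $\exp^*(x_\eta)=0$ the first-order variation of $\exp^*(x_{\eta\chi^s})$ coincides with $s\cdot\log_{\Qp,V(\eta^{-1})}(x_\eta)$, after the canonical identifications of the ambient de Rham spaces. This is a consequence of the compatibility of the fundamental exact sequence~\eqref{eq:fundseq} with the duality~\eqref{defexptranspose} under twisting, and is the standard ``$\mathcal{L}$-invariant'' derivative mechanism in Perrin-Riou's theory of the cyclotomic regulator. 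The residual ambiguity modulo $\vp^n\Fil^{-j}\Dcris(V)$ in case~(1) reflects the freedom in lifting the vanishing dual exponential; in case~(2), the hypothesis $H^0(\Qp,V(-j))=0$ makes $(1-p^{-1-j}\vp^{-1})$ invertible on the relevant quotient of $\Dcris(V(-j))$, and combining this with the sign identities \eqref{eq:signexptilde}--\eqref{eq:signexptildestar} rearranges the derivative into the stated form. I would expect the detailed computation of $D$ to be carried out via Wach modules in the same appendix as Theorem~\ref{thm:strongcycloregulator2}.
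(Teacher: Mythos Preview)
Your proposal has a genuine gap at the step you yourself flag as ``the heart of the proof'': the identification of
\[
D = \tfrac{d}{ds}\big|_{s=0}\Bigl[\exp^*_{\Qp,V(\eta^{-1}\chi^{-s})^*(1)}(x_{\eta\chi^s})\otimes t^{-j-s}e_{j+s}\Bigr]
\]
with $\log_{\Qp,V(\eta^{-1})}(x_\eta)$ (modulo the stated filtration). You appeal to a ``standard $\mathcal{L}$-invariant derivative mechanism'', but no such ready-made identity is available here; establishing it is essentially equivalent to the theorem. There is also a subtler problem with your analytic-continuation step: while $s\mapsto \cL^\Gamma_{V,\xi}(x)(\eta\langle\chi\rangle^s)$ is indeed analytic, the individual factors you isolate are not obviously so. The factor $\Gamma^*(1+j+s)=(j+s)!$ is only defined for integer $s$, and $\exp^*(x_{\eta\chi^s})$ takes values in a family of different $\Dcris$-spaces; writing the product as a pointwise product of analytic functions of $s$, and then applying the product rule, requires a uniform identification of these spaces which you have not supplied.

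The paper's proof (Proposition~\ref{prop:regderiv} in Appendix~\ref{sect:app-formulary}) avoids differentiating $\exp^*$ altogether. Instead it factors algebraically: since $\cL^\Gamma_{V,\xi}(x)(\eta)=0$, one writes $\cL^\Gamma_{V,\xi}(x)=(\gamma-\eta(\gamma))\,g$ for some $g\in\cH(\Gamma)\otimes\Dcris(V)$, so that $\cL^\Gamma_{V,\xi}(x)'(\eta)=\eta(\gamma)\log\chi(\gamma)\cdot g(\eta)$. Then one applies the Perrin-Riou big exponential $\Omega_{V,h,\xi}$ to both sides, using the relation $\Omega_{V,h,\xi}\circ\cL^\Gamma_{V,\xi}=\ell_{h-1}\cdots\ell_0$ (modulo torsion) to obtain $\Omega_{V,h,\xi}(g)=A_{h,\eta}\cdot x$ with $A_{h,\eta}=\dfrac{\ell_{h-1}\cdots\ell_0}{\gamma-\eta(\gamma)}\in\cH(\Gamma)$. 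Crucially, $A_{h,\eta}(\eta)\ne 0$, and Theorem~\ref{thm:expformula} gives $\operatorname{pr}_\eta(\Omega_{V,h,\xi}(g))$ directly in terms of $\exp_{\Qp,V(\eta^{-1})}$ (or $\widetilde\exp$), so the desired $\log$ (or $\widetilde\log$) appears by inverting $\exp$, not by differentiating $\exp^*$. The passage through $\Omega_{V,h,\xi}$ is the missing idea in your outline.
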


  We note also the following twist-compatibility property:

  \begin{proposition}
   \label{prop:cyclo regulator twist invariance}
   The regulator maps for $V$ and $V(1)$ are related by
   \[ \mathcal{L}^\Gamma_{V(1), \xi}(x \otimes e_1) = \ell_0 \cdot \left( \mathrm{Tw}_{\chi^{-1}}(\mathcal{L}^\Gamma_{V, \xi}(x))\otimes t^{-1} e_1 \right).\]
  \end{proposition}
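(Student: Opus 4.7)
The plan is to verify the identity by evaluating both sides at a Zariski-dense set of de Rham characters of $\Gamma$. Both sides lie in $\cH_L(\Gamma)\otimes_L \Dcris(V(1))$; decomposing $\Gamma=\Delta\times\Gamma_1$ with $\Delta$ finite and $\Gamma_1\cong\Zp$, the algebra $\cH_L(\Gamma)$ identifies with a finite product of rings of rigid-analytic functions on the open $p$-adic unit disk, inside each of which the integer characters are Zariski-dense. It will therefore suffice to check the equality at all characters $\eta=\chi^j\eta_0$ with $j\in\ZZ$ and $\eta_0$ a finite-order character of $\Gamma$.

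For such an $\eta$ of weight $j\ge 1$ with $\eta_0$ of conductor $n\ge 1$, I would apply Theorem~\ref{thm:cycloregulator}(1) to both sides. The natural identification $H^1_{\Iw}(\Qpi,V)\otimes e_1\cong H^1_{\Iw}(\Qpi,V(1))$ is $\mathrm{Tw}_\chi$-semilinear for the $\Lambda(\Gamma)$-actions, so that $(x\otimes e_1)_\eta=x_{\eta\chi^{-1}}\otimes e_1$ under the canonical identification $V(1)(\eta^{-1})=V((\eta\chi^{-1})^{-1})$; this accounts for the weight shift absorbed by the $\mathrm{Tw}_{\chi^{-1}}$ on the right. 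The Gamma-factor ratio $\Gamma^*(j+1)/\Gamma^*(j)=j$ then matches exactly the scalar $\ell_0(\eta)=j$. The $\varepsilon$-factor shift from $\varepsilon_L(\chi^{-j}\eta_0^{-1},-\xi)$ to $\varepsilon_L(\chi^{1-j}\eta_0^{-1},-\xi)$ contributes a factor $p^n$ (via Proposition~\ref{prop:gaussums}), which is precisely cancelled by the discrepancy $\vp^n|_{\Dcris(V(1))}=p^{-n}\vp^n|_{\Dcris(V)}$ coming from crystalline Frobenius on $\Dcris(\Qp(1))$ acting as $p^{-1}$. Finally the basis identity $t^{-j}e_j=(t^{-(j-1)}e_{j-1})\otimes(t^{-1}e_1)$, together with the equality of the $\exp^*$-maps on $V(1)(\eta^{-1})=V((\eta\chi^{-1})^{-1})$, completes the comparison.

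For $j\le -1$ an analogous calculation applies, now using the $\log$-interpolation half of Theorem~\ref{thm:cycloregulator}(1), and the intermediate weight $j=0$ is an instructive consistency check: the right-hand side vanishes because $\ell_0(\eta_0)=0$, while the left-hand side also vanishes because $V(1)(\eta_0^{-1})=V(\chi\eta_0^{-1})$ has all Hodge--Tate weights strictly positive, forcing $\Fil^0\DdR(V(1)(\eta_0^{-1}))=0$ and hence $\exp^*=0$ in the interpolation formula. The case of trivial $\eta_0$ (conductor $n=0$) proceeds identically via Theorem~\ref{thm:cycloregulator}(2), with the correction factors $(1-p^{-1-j}\vp^{-1})$ and $(1-p^j\vp)$ behaving consistently under the same shifts of weight and Frobenius-twist.

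The main obstacle is bookkeeping rather than conceptual: one must reconcile the convention for $\mathrm{Tw}_{\chi^{-1}}$ with the $\mathrm{Tw}_\chi$-semilinearity of $H^1_{\Iw}(\Qpi,V(1))=H^1_{\Iw}(\Qpi,V)\otimes e_1$, and keep careful track of how Frobenius on $\Dcris(V(1))$ differs from that on $\Dcris(V)$ by a factor of $p^{-1}$. Once these identifications are fixed, the emergence of $\ell_0$ from the $\Gamma^*$-ratio and the cancellation of the $p^{\pm n}$ factors from $\varepsilon$ and $\vp$ are automatic, and the density argument promotes the pointwise identity to an identity in $\cH_L(\Gamma)\otimes_L\Dcris(V(1))$.
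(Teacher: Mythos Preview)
Your argument is correct, and indeed the paper explicitly notes (in the Remark following its proof) that the identity can be derived from the evaluation formulae at de Rham characters, exactly as you do. However, the paper's own proof takes a different, more direct route: it unwinds the definition of $\cL^\Gamma_{V,\xi}$ as $\mathfrak{M}^{-1}\circ(1-\vp)$ on $(\Brig\otimes\Dcris(V))^{\psi=1}$, and uses the operator identity $\ell_0 = t\,\partial$ on $\Brig$ (where $\partial=(1+\pi)\tfrac{\mathrm{d}}{\mathrm{d}\pi}$), together with the fact that $\partial$ corresponds under the Mellin transform $\mathfrak{M}$ to $\tw_\chi$. This gives the formula in one line without any interpolation.

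The two approaches complement each other. The paper's proof is conceptually cleaner: it reveals that the factor $\ell_0$ is nothing but the incarnation of the differential operator $t\,\partial$ under $\mathfrak{M}$, so the twist relation is an immediate consequence of the construction and requires no case analysis. Your approach, by contrast, treats the regulator as a black box characterized by its special values, and so would work equally well in any setting where only the interpolation formulae are known; it also serves as a useful internal consistency check on the signs and normalizations in Theorem~\ref{thm:cycloregulator}. One minor caveat: in your $n=0$ case, Theorem~\ref{thm:cycloregulator}(2) only determines $\cL^\Gamma_{V,\xi}(x)(\chi^j)$ up to the possible noninvertibility of $(1-p^{-1-j}\vp^{-1})$, but since this affects at most finitely many $j$ the density argument is unaffected.
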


  \begin{proof}
   Recall that the regulator is defined by
   \[ \cL^\Gamma_{V, \xi}(y) = \mathfrak{M}^{-1}( (1 - \vp) y),\]
   where $\mathfrak{M}$ is the Mellin transform $\cH(\Gamma) \to (\Brig)^{\psi = 0}$, and
   \[ y \in \left(\Brig \otimes \Dcris(V)\right)^{\psi = 1} \cong H^1_{\Iw}(\Qpi, V)_0.\]

   We have the identity
   \[ \ell_0 (f) = t \partial(f)\]
   for $f \in \Brig$, where $\partial$ is the differential operator $(1 + \pi) \tfrac{\mathrm{d}}{\mathrm{d}\pi}$. Consequently we have
   \[ x \otimes e_1 = \ell_0 (\partial^{-1} x) \otimes t^{-1} e_1 \]
   for any $x \in (\Brig \otimes \Dcris(V))^{\psi = 0}$. But $\partial$ corresponds under $\mathfrak{M}$ to $\operatorname{Tw}_{\chi}$, so applying $\mathfrak{M}^{-1}$ to both sides of the above we have
   \[ \mathfrak{M}^{-1}(x \otimes e_1) = \ell_0 \operatorname{Tw}_{\chi^{-1}}\left(\mathfrak{M}^{-1}(x)\right) \otimes t^{-1} e_1.\]
   Letting $x = (1 - \vp) y$ for $y \in \left(\Brig \otimes \Dcris(V)\right)^{\psi = 1}$ gives the claimed formula.
  \end{proof}

  \begin{remark}
   One can also see this twist-compatibility as a consequence of the evaluation formulae above, since the two sides of Proposition \ref{prop:cyclo regulator twist invariance} must agree under evaluation at $\chi^j$ for all but finitely many $j \in \ZZ$ by Theorem \ref{thm:cycloregulator}.
  \end{remark}

 \subsection{The two-variable regulator map}

  We now recall the main result from \cite{loefflerzerbes11}. Let $F$ be any finite unramified extension of $\Qp$, and let $F_\infty$ be the unramified $\Zp$-extension of $F$. We set $K_\infty = F_\infty(\mu_{p^\infty})$ and $G = \Gal(K_\infty / \Qp)$. We regard $\Gamma$ as a subgroup of $G$, by identifying it with $\Gal(K_\infty / F_\infty)$. Let $V$ be a crystalline $L$-linear representation of $G_{\Qp}$ such that $\dim_L V = d$. Let $\Lt = L \otimes_{\Qp} \widehat{\Qp^\mathrm{nr}}$, as above.

  \begin{theorem}
   Assume that $V$ is crystalline with Hodge-Tate weights $\geq 0$. Then there exists a regulator map
   \[ \cL^G_{V, \xi}: H^1_{\Iw}(K_\infty,V)\rTo \mathcal{H}_{\Lt}(G)\otimes\DD_{\cris}(V)\]
   such that for any finite extension $E / \Qp$ contained in $F_\infty$, we have a commutative diagram
   \[
    \begin{diagram}
     H^1_{\Iw}(K_\infty / \Qp, V) &\rTo^{\mathcal{L}_{V,\xi}^G} &\cH_{\Lt}(G) \otimes_{\Qp} \Dcris(V)\\
     \dTo & &\dTo \\
     H^1_{\Iw}(E(\mu_{p^\infty}), V )& \rTo^{\mathcal{L}_{V,\xi}^{G'}} & \cH_{\Lt}(G') \otimes_{\Qp} \Dcris(V).
    \end{diagram}
   \]
   Here $G' = \Gal(E(\mu_{p^\infty}) / \Qp)$, the right-hand vertical arrow is the map on distributions corresponding to the projection $G \rightarrow G'$, and the map $\mathcal{L}_V^{G'}$ is defined by
   \[ \mathcal{L}_{V,\xi}^{G'} = \sum_{\sigma \in \Gal(E / \Qp)} [\sigma] \cdot \mathcal{L}^{\Gamma}_{E, V,\xi}(\sigma^{-1} \circ x),\]
   where $ \mathcal{L}^{\Gamma}_{E, V,\xi}$ is the cyclotomic regulator map for $E(\mu_{p^\infty}) / E$. Moreover, the map $\cL^G_{V, \xi}$ is injective.
  \end{theorem}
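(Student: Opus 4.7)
The plan is to construct $\cL^G_{V,\xi}$ by patching the finite-level formulae stated in the theorem and then verifying injectivity. For each finite subextension $\Qp \subseteq E \subseteq F_\infty$, write $G' = \Gal(E(\mu_{p^\infty})/\Qp)$ and define
\[ \cL^{G'}_{V,\xi}(x) = \sum_{\sigma \in \Gal(E/\Qp)} [\sigma] \cdot \cL^\Gamma_{E, V, \xi}(\sigma^{-1} \circ x), \]
where $\sigma^{-1} \circ x$ denotes the $\sigma^{-1}$-translate of the corestriction of $x$ to $H^1_{\Iw}(E(\mu_{p^\infty}), V)$. A short computation, using that $\Gamma$ is a normal subgroup of $G'$ with quotient $\Gal(E/\Qp)$, shows that this expression is well-defined in $\cH_{\Lt}(G') \otimes \Dcris(V)$ and independent of the choice of coset representatives.

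The central technical step is compatibility of the $\cL^{G'}_{V,\xi}$ under the natural surjections $G'_2 \twoheadrightarrow G'_1$ associated to $E_1 \subseteq E_2$. Unwinding the definitions, this reduces to a norm relation for the single-variable regulator: for $y \in H^1_{\Iw}(E_2(\mu_{p^\infty}), V)$, the sum $\sum_{\tau \in \Gal(E_2/E_1)} [\tau] \cdot \cL^\Gamma_{E_2, V, \xi}(y)$ should project, under the trace map of distribution algebras, to $\cL^\Gamma_{E_1, V, \xi}(\cores y)$, where $\cores$ denotes corestriction from $E_2(\mu_{p^\infty})$ to $E_1(\mu_{p^\infty})$. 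I would prove this either by direct evaluation at every de Rham character of $\Gamma$ via Theorem \ref{thm:cycloregulator} -- invoking density of such characters in the space of continuous characters -- or more intrinsically by realizing each $\cL^\Gamma_{E_i, V, \xi}$ as an inverse Mellin transform applied to $(1 - \vp) y$ inside the Wach module $\widehat{E_i^{\mathrm{nr}}} \htimes \NN(V)$ with extended coefficients, whereupon the claimed identity reflects the behaviour of Wach modules under unramified base change. Granted this compatibility, the finite-level maps assemble via the identification $\cH_{\Lt}(G) = \varprojlim_E \cH_{\Lt}(G')$ into the required map $\cL^G_{V,\xi}$, and the commutative diagram in the statement follows immediately.

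For injectivity, suppose $\cL^G_{V,\xi}(x) = 0$. Projecting to each finite quotient $G'$ gives $\cL^\Gamma_{E, V, \xi}(\sigma^{-1} \circ \cores_E(x)) = 0$ for every $E$ and every $\sigma \in \Gal(E/\Qp)$. Each $\cL^\Gamma_{E, V, \xi}$ is injective: under the Wach-module identification it factors as the inverse Mellin isomorphism composed with $(1 - \vp)$, which has no kernel on $\NN(V)^{\psi = 1} \hookrightarrow \DD(V)^{\psi = 1}$ by Fontaine's theory. Hence $\cores_E(x) = 0$ for every finite $E$, and so $x = 0$ by the inverse-limit description of $H^1_{\Iw}(K_\infty, V)$.

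The principal obstacle will be the compatibility step: the normalization factors in the special-value formulae of Theorem \ref{thm:cycloregulator} -- specifically the Gauss sum $\varepsilon_L(\eta^{-1}, -\xi)$, the Gamma factor $\Gamma^*(1+j)$, and the Frobenius twist $\vp^n$ -- must transform consistently under corestriction, and the uniform choice of $\xi$ across all levels must be tracked carefully so that summation over $\Gal(E_2/E_1)$ reproduces the factors appearing at the lower level.
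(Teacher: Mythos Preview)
The paper does not prove this theorem: it is introduced with ``We now recall the main result from \cite{loefflerzerbes11}'' and no proof is given. So there is no proof in the paper to compare against; the actual construction lives in \cite{loefflerzerbes11}.

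Your strategy of defining the finite-level maps $\cL^{G'}_{V,\xi}$ and then patching is natural, and the compatibility check under $G'_2 \twoheadrightarrow G'_1$ does reduce to a norm relation for the cyclotomic regulator as you say. But the step ``assemble via the identification $\cH_{\Lt}(G) = \varprojlim_E \cH_{\Lt}(G')$'' is where the real content hides, and as written it is a gap. The distribution algebra $\cH_{\Lt}(G)$ is not the inverse limit of the $\cH_{\Lt}(G')$: in the unramified direction the latter are just group rings of finite cyclic groups, and their inverse limit is an Iwasawa algebra, not a distribution algebra. What makes the construction work in \cite{loefflerzerbes11} is that the image of each $\cL^{G'}_{V,\xi}$ lands in the much smaller subring $\cH_L(\Gamma) \otimes_{\Qp} E \subset \cH_{\Lt}(G')$, and these subrings are related to $\cH_{\Lt}(G)$ via the Yager module $S_\infty$ and the embedding $\cH(\Gamma) \mathbin{\widehat\otimes} S_\infty \hookrightarrow \cH_{\Lt}(G)$ (cf.\ Appendix~\ref{App-comparison} of the present paper, where exactly this mechanism appears). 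Without isolating this submodule you cannot pass to the limit inside $\cH_{\Lt}(G)$.

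A smaller point: in your injectivity argument you assert that $(1-\vp)$ has no kernel on $\NN(V)^{\psi=1}$. This requires an argument (it is not automatic when $\Dcris(V)^{\vp=1} \neq 0$), and in any case the cyclotomic regulator is only defined on $H^1_{\Iw}(\Qpi,V)_0$, which may be a proper submodule when $V$ has a quotient isomorphic to $\Qp$; the two-variable injectivity in \cite{loefflerzerbes11} is handled differently.
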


  By abuse of notation we also write $\cL^G_{V, \xi}$ for the induced map with source $\mathcal{H}_{\Lt}(G)\otimes H^1_{\Iw}(K_\infty,V)$.

  We will need a twist-compatibility property which extends Proposition \ref{prop:cyclo regulator twist invariance} to the two-variable regulator map. To state this, we need to introduce a map relating $\Dcris$ for unramified twists:

  \begin{definition}
   \label{def:b_eta}
   If $\eta$ is a crystalline character of $G$, we let $b_\eta$ denote the unique isomorphism
   \[ \Lt \otimes_L \Dcris(V) \otimes_L L(\eta) \rTo^\cong \Lt \otimes_L \Dcris(V(\eta))\]
   such that extending scalars to $\Bcris$ gives a commutative diagram
   \begin{diagram}
    \Bcris \otimes_{\Qp} \Dcris(V) \otimes_L L(\eta) & \rTo^{b_\eta}_{\cong} & \Bcris \otimes_{\Qp} \Dcris(V(\eta))\\
    \dTo^{\operatorname{can}(V) \otimes 1} & & \dTo^{\operatorname{can}(V(\eta))}\\
    \Bcris \otimes_{\Qp} V \otimes_L L(\eta) & \rTo^{\times t^{-j}}_\cong & \Bcris \otimes_{\Qp} V(\eta).
   \end{diagram}
   where $\operatorname{can}(V)$ is the canonical isomorphism $\Bcris \otimes V \cong \Bcris \otimes \Dcris(V)$, and the bottom row denotes multiplication by $t^{-j}$ in $\Bcris$.
  \end{definition}

  The existence of such an isomorphism is not \emph{a priori} obvious, but it follows from the fact that we may write $\eta = \chi^j \eta_1$ for some unramified $\eta_1$, and the periods of $\eta_1$ lie in $\Lt \subseteq L \otimes \Bcris$ (see \cite[\S 4.3]{loefflerzerbes11}). Note that $b_\eta$ in fact depends on $\xi$ (since $\xi$ determines the cyclotomic period $t$).

  We can now state the twist-compatibility of $\cL^G_{V, \xi}$:

  \begin{proposition}
   \label{prop:regulator twist invariance}
   Let $V$ be crystalline with non-negative Hodge--Tate weights, and let $\eta$ be any crystalline $L$-valued character with Hodge--Tate weight $j \ge 0$. Then we have a $G$-equivariant commutative diagram:
   \begin{diagram}
    H^1_{\Iw}(K_\infty / \Qp, V) \otimes L(\eta) & \rTo^{\cL^G_{V, \xi} \otimes 1} & \cH_{\Lt}(G) \otimes \Dcris(V) \otimes L(\eta) \\
    \dTo^\cong & & \dTo_{a_\eta}\\
    H^1_{\Iw}(K_\infty / \Qp, V(\eta)) & \rTo^{(\ell_0 \cdots \ell_{j-1})^{-1} \cL^G_{V(\eta), \xi}} & \cH_{\Lt}(G) \otimes \Dcris(V(\eta))
   \end{diagram}
   Here the right-hand vertical map $a_\eta$ is given by
   \[ x \otimes y \otimes z \mapsto \mathrm{Tw}_{\eta^{-1}}(x) \otimes b_\eta(y \otimes z),\]
   where $b_\eta$ is as in Definition \ref{def:b_eta}.
  \end{proposition}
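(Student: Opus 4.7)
The plan is to reduce the claim to the one-variable cyclotomic case handled by Proposition \ref{prop:cyclo regulator twist invariance}, combined with a separate argument for the unramified part of $\eta$. Since every crystalline character $\eta$ of $G$ with Hodge--Tate weight $j\ge 0$ factors uniquely as $\eta = \chi^j \eta_1$ with $\eta_1$ an unramified character, and both sides of the diagram behave multiplicatively under successive twists, it suffices to treat the two extreme cases $\eta = \chi$ and $\eta = \eta_1$ unramified, and then iterate.

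For the purely cyclotomic case, I would use the characterisation of $\cL^G_{V,\xi}$ recalled after its definition: the map is determined by its images in $\cH_{\Lt}(G')\otimes \Dcris(V)$ for every finite unramified $E\subseteq F_\infty$, and over each such $E$ it is expressed as the $\Gal(E/\Qp)$-indexed sum $\sum_{\sigma}[\sigma]\cdot \cL^{\Gamma}_{E,V,\xi}(\sigma^{-1}x)$ of cyclotomic regulators. Both $\tw_{\chi^{-j}}$ and the map $b_{\chi^j}$ (which on the $\Dcris$ component is essentially multiplication by $t^{-j}e_j$) commute with projection to $\cH_{\Lt}(G')$, so equality of the two sides of the diagram can be tested after such projection, reducing the claim to an assertion purely about cyclotomic regulators. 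Iterating Proposition \ref{prop:cyclo regulator twist invariance} $j$ times, using that $\tw_{\chi^{-1}}(\ell_i)=\ell_{i+1}$ for each $i$, then produces the product $\ell_0\ell_1\cdots\ell_{j-1}$ appearing in the statement.

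For the unramified twist $\eta=\eta_1$ (so $j=0$ and the product $\ell_0\cdots\ell_{j-1}$ is empty), the key point is that the period of $\eta_1$ lies in $\Lt\subseteq L\otimes \Bcris$, as noted in Definition \ref{def:b_eta}. This means the canonical isomorphism $H^1_{\Iw}(K_\infty/\Qp,V)\otimes L(\eta_1)\cong H^1_{\Iw}(K_\infty/\Qp,V(\eta_1))$ is reflected on the distribution side by the combination of the group-theoretic twist $\tw_{\eta_1^{-1}}$ on $\cH_{\Lt}(G)$ with the period-theoretic twist $b_{\eta_1}$ on $\Dcris$. Concretely, the Fontaine/Wach-module machinery underlying the construction of $\cL^G$ intertwines these two operations, which one can verify either on generators or (again) by projecting to cyclotomic regulators over any finite $E$ large enough to contain the field of definition of $\eta_1$.

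The main obstacle I expect is the bookkeeping required to check that $a_\eta=\tw_{\eta^{-1}}\otimes b_\eta$ correctly intertwines the $H^1_{\Iw}$-twist with the $\cH_{\Lt}(G)\otimes\Dcris$-twist, particularly in the unramified case where the $\Lt$-period of $\eta_1$ must be tracked through the construction of $\cL^G$ and matched with the normalisation of $b_\eta$ determined by $\xi$. As a cleaner alternative, one could bypass the two-step decomposition entirely by evaluating both sides of the diagram at sufficiently many de Rham characters of $G$, using the evaluation formulae of Theorem \ref{thm:cycloregulator} together with the injectivity of $\cL^G_{V(\eta),\xi}$ and the density of de Rham characters in the distribution algebra.
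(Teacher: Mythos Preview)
Your approach is essentially the same as the paper's: decompose $\eta=\chi^j\eta_1$ and treat the cyclotomic and unramified twists separately, reducing the cyclotomic case to Proposition~\ref{prop:cyclo regulator twist invariance} via the compatibility of $\cL^G$ with the one-variable regulators. The only difference is that for the unramified twist the paper simply cites Proposition~4.13 of \cite{loefflerzerbes11} rather than re-deriving it; your sketch for this case is in the right direction, though note that $\eta_1$ need not have finite order, so the phrase ``finite $E$ large enough to contain the field of definition of $\eta_1$'' does not apply directly---one really needs the limit construction underlying $\cL^G$ (or your density-of-characters alternative, which the paper also acknowledges in the remark following Proposition~\ref{prop:cyclo regulator twist invariance}).
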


  \begin{remark}
   The diagram above is $G$-equivariant, if one equips $H^1_{\Iw}(K_\infty / \Qp, V) \otimes L(\eta)$ with the diagonal action of $G$, and $\cH_{\Lt}(G) \otimes \Dcris(V) \otimes L(\eta)$ with the action of $G$ given by
   \[ g \cdot (x \otimes y \otimes z) = ([g] x) \otimes y \otimes (\eta(g) z).\]
  \end{remark}

  \begin{proof}
   It suffices to consider two cases separately: the case where $\eta$ is an unramified character (so $j = 0$), and the case where $\eta = \chi^j$. In the unramified case, the statement to be proven is Proposition 4.13 of \cite{loefflerzerbes11}. In the case of a power of the cyclotomic character, the result follows from the twist-compatibility of the cyclotomic regulator (Proposition \ref{prop:cyclo regulator twist invariance}) and the compatibility of the cyclotomic and two-variable regulator maps.
  \end{proof}

  \begin{remark}
   We take the opportunity to correct an error in the preprint \cite{loefflerzerbes11}: in \S 4.5 of \emph{op.cit} the formula
   \[ \ell_0 \mathcal{L}^G_{V, \xi}(x)\otimes t^{-1}e_1 = \mathrm{Tw}_{\chi}(\mathcal{L}_{V(1)}(x\otimes e_1))\]
   is incorrect -- the $\ell_0$ should be $\ell_{-1} = \tw_{\chi} \ell_0$. The corrected version of the formula is equivalent to the case $\eta = \chi$ of the above proposition.
  \end{remark}

  We shall also need the following simple properties of the maps $\cL^G_{V, \xi}$:

  \begin{proposition}
   \label{prop:regulatorproperties}
   \mbox{~}
   \begin{enumerate}
    \item Let $c \in \Zp^\times$, and let $\gamma_c$ denote the unique element of $\Gamma$ such that $\chi(\gamma_c) = c$. Then we have
    \[ \cL^G_{V, c\xi} = [\gamma_c]^{-1} \cL^G_{V, \xi}.\]
    \item If $\vp_{\Lt}$ denotes the $L$-linear automorphism of $\Lt = L \otimes_{\Qp} \widehat{\Qpnr}$ given by extending scalars from the arithmetic Frobenius automorphism of $\widehat{\Qpnr}$, then we have
    \[ \left[\cL^G_{V, \xi}\right]^{\vp_{\Lt}} = [\sigma_p] \cL^G_{V, \xi}\]
    where $\sigma_p$ is the arithmetic Frobenius element of $\Gal(K_\infty / \Qpi)$.
   \end{enumerate}
  \end{proposition}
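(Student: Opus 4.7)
The plan is to reduce both identities to the one-variable cyclotomic regulator $\cL^\Gamma_{E, V, \xi}$ via the defining formula
\[ \cL^{G'}_{V,\xi} = \sum_{\sigma \in \Gal(E/\Qp)} [\sigma] \cdot \cL^\Gamma_{E, V, \xi}(\sigma^{-1} \circ x) \]
at each finite layer $E \subset F_\infty$, then exploit the compatibility with projections $G \to G'$ and the injectivity of $\cL^G_{V,\xi}$. Since elements of $\cH_{\Lt}(G) \otimes \Dcris(V)$ are determined by their specializations at sufficiently many de Rham characters, it will suffice to verify both claimed equalities after evaluation at de Rham characters $\eta = \chi^j \eta_0$, where the explicit formulae of Theorem \ref{thm:cycloregulator} apply.

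For part (1), I would track the $\xi$-dependence of the right-hand side of Theorem \ref{thm:cycloregulator}. The terms $\Gamma^*(1+j)$, $\vp^n$, $\exp^*_{\Qp, W^*(1)}$ and $\log_{\Qp, W}$ are manifestly $\xi$-independent, and a direct check shows that $t^{-j}e_j$ is invariant under $\xi \mapsto c\xi$ (since $t^{c\xi} = c\, t^\xi$ and $e_j^{c\xi} = c^j e_j^\xi$ cancel). The $\xi$-dependence therefore concentrates in the factor $\varepsilon_L(\eta^{-1}, -\xi)$ together with the implicit Gauss-sum identification of $\Dcris(L(\eta_0^{-1}))$ used to interpret $\exp^*(x_\eta) \otimes t^{-j}e_j$ as an element of $\Dcris(V)$. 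A direct substitution $\gamma \mapsto \gamma_c^{-1}\gamma$ in the Gauss sum formula of Proposition \ref{prop:gaussums} gives $\tau(\eta_0, c\xi) = \eta_0(\gamma_c) \tau(\eta_0, \xi)$, and assembling the factors shows that the net effect on the specialization is multiplication by $\eta(\gamma_c)^{-1}$, which is precisely the specialization at $\eta$ of $[\gamma_c]^{-1}\cL^\Gamma_{V,\xi}(x)$. Passing from $\cL^\Gamma$ to $\cL^G$ using the reduction formula above then yields part (1) in full.

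For part (2), I would apply $\vp_{\Lt}$ coefficient-wise to the finite-level formula. The cyclotomic regulator $\cL^\Gamma_{E, V, \xi}$ has values in $\cH_{L \cdot E}(\Gamma_E) \otimes_L \Dcris(V)$, and the $\widehat{\Qpnr}$-coefficients enter solely via the inclusion $E \subset \widehat{\Qpnr}$ in the Wach-module construction of \cite{loefflerzerbes11}. Thus $\vp_{\Lt}$ acts on these coefficients precisely through the arithmetic Frobenius of $\Gal(E/\Qp)$, i.e., via the element $\sigma_p$. Reindexing the sum $\sum_\sigma [\sigma]\cL^\Gamma_{E,V,\xi}(\sigma^{-1}x)$ by $\sigma \mapsto \sigma_p \sigma$ converts this coefficient-level Frobenius action into an outer factor $[\sigma_p]$, giving the required identity at finite level and hence, by passage to the limit, the identity for $\cL^G_{V,\xi}$.

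The main obstacle is the bookkeeping in part (1): one must carefully extract the Gauss-sum identification implicit in the target $\Dcris(V)$ of the formula in Theorem \ref{thm:cycloregulator} and confirm that the transformation of the $\varepsilon$-factor exactly absorbs the $\chi^j(\gamma_c) = c^j$ contribution so that the net scalar is $\eta(\gamma_c)^{-1}$ rather than $\eta_0(\gamma_c)^{-1}$. For part (2) the reindexing argument is essentially formal, and the key input is simply that the cyclotomic regulator values lie in $\cH_L(\Gamma) \otimes_L (L \cdot E) \otimes_L \Dcris(V)$ with no hidden $\widehat{\Qpnr}$-dependence beyond the $E$-factor.
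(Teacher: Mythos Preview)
The paper does not give an argument here at all; it simply cites Proposition~4.10 and Remark~4.17 of \cite{loefflerzerbes11}. So there is no in-paper proof to compare against, and your proposal is supplying genuine content where the authors defer to an external reference.

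That said, your route for part~(1) is considerably more indirect than necessary, and the gap you flag as ``the main obstacle'' is real and not obviously surmountable as written. Tracking the $\xi$-dependence through the evaluation formulae of Theorem~\ref{thm:cycloregulator} requires you to unwind not just the Gauss sum in $\varepsilon_L(\eta^{-1},-\xi)$ but also the $\xi$-dependent identification $\DdR(V(\eta_0^{-1})) \hookrightarrow \Qpn \otimes \Dcris(V)$ implicit in applying $\vp^n$ to $\exp^*(x_\eta)\otimes t^{-j}e_j$ and landing in $\Dcris(V)$. You correctly note that $t^{-j}e_j$ is $\xi$-invariant and that the Gauss sum contributes $\eta_0(\gamma_c)^{-1}$, but you do not exhibit where the remaining factor $\chi^j(\gamma_c)^{-1}=c^{-j}$ comes from; it is buried in the period identification and not in the $\varepsilon$-factor. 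A far cleaner argument goes through the \emph{definition} $\cL^\Gamma_{V,\xi}=\mathfrak{M}_\xi^{-1}\circ(1-\vp)$ recalled in the proof of Proposition~\ref{prop:cyclo regulator twist invariance}: since $(1+\pi_{c\xi})=\gamma_c\cdot(1+\pi_\xi)$ in $\Brig$, the Mellin transform satisfies $\mathfrak{M}_{c\xi}(\lambda)=\mathfrak{M}_\xi(\lambda\,[\gamma_c])$, hence $\mathfrak{M}_{c\xi}^{-1}=[\gamma_c]^{-1}\mathfrak{M}_\xi^{-1}$, and the identity for $\cL^\Gamma$ (and then $\cL^G$ by passage to the limit) is immediate. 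This is almost certainly what lies behind the cited Proposition~4.10, and it avoids the delicate bookkeeping entirely.

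For part~(2) your reindexing argument is the natural one and is essentially what one expects to find in Remark~4.17 of \cite{loefflerzerbes11}; no objection there.
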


  \begin{proof}
   See Proposition 4.10 and Remark 4.17 of \cite{loefflerzerbes11}.
  \end{proof}

 \subsection{The matrix of the cyclotomic regulator}

  We now study the matrix of the cyclotomic regulator map $\cL^\Gamma_{V, \xi}$, for a crystalline $L$-linear representation $V$ of $G_{\Qp}$. We shall first define an element $\ell(V) \in \cH_{\Qp}(\Gamma)$, depending only on the Hodge--Tate weights of $V$:

  \begin{definition}\label{def:ellV}
   \mbox{~}
   \begin{enumerate}
    \item For $n \in \ZZ$, define the element $\mu_n \in \operatorname{Frac} \cH_{\Qp}(\Gamma)$ by
    \[ \mu_n =
     \begin{cases}
      \ell_0 \cdots \ell_{n-1} & \text{ if $n \ge 1$}\\
      1 & \text{if $n = 0$}\\
      \left(\ell_{-1} \cdots \ell_{n}\right)^{-1} & \text{if $n \le -1$.}
     \end{cases}
    \]
    \item For $V$ a Hodge--Tate representation of $G_{\Qp}$, with Hodge--Tate weights $n_1, \dots, n_d$, let
    \[ \ell(V) = \prod_{i = 1}^d \mu_{n_i}.\]
   \end{enumerate}
  \end{definition}

  Our goal is the following proposition:

  \begin{proposition}
   \label{prop:det of cyclo regulator}
   Let $V$ be any $d$-dimensional crystalline $L$-linear representation of $G_{\Qp}$. Let $y_1, \dots, y_d \in H^1_{\Iw}(\Qpi, V)$ be such that the quotient
   \[ Q = \frac{H^1_{\Iw}(\Qpi, V)}{\langle y_1, \dots, y_d\rangle_{\Lambda_{L}(\Gamma)}}\]
   is $\Lambda_L(\Gamma)$-torsion. Then for any $L$-basis $v_1, \dots, v_d$ of $\Dcris(V)^\vee$, the determinant of the matrix with $(i, j)$ entry $\langle \cL^\Gamma_{V, \xi}(y_i), v_j\rangle$ is equal to
   \[ \frac{\ell(V) f_Q}{f_{H^2_{\Iw}(\Qpi, V)}} \pmod{\cH_L(\Gamma)^\times},\]
   where $f_Q \in \Lambda_L(\Gamma)$ is any characteristic element of the torsion $\Lambda_L(\Gamma)$-module $Q$, and similarly for $f_{H^2_{\Iw}(\Qpi, V)}$.
  \end{proposition}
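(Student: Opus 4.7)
The determinant in question equals, modulo $\cH_L(\Gamma)^\times$, the determinant of the composite
\[ \Lambda_L(\Gamma)^d \rTo^{(y_i)} H^1_{\Iw}(\Qpi, V) \rTo^{\cL^\Gamma_{V, \xi}} \cH_L(\Gamma) \otimes_L \Dcris(V) \rTo^{(v_j)^\vee} \cH_L(\Gamma)^d \]
after extending scalars to the total ring of quotients $\cK_L(\Gamma)$. Both source and target of the composite have $\Lambda_L(\Gamma)$-rank $d$; the first arrow has torsion cokernel $Q$ contributing a factor $f_Q$, and the last is an isomorphism over $\cK_L(\Gamma)$. Thus the task reduces to identifying the determinant of $\cL^\Gamma_{V,\xi}$, viewed as a map of rank-$d$ $\Lambda_L(\Gamma)$-modules, with $\ell(V)/f_{H^2_{\Iw}(\Qpi, V)}$ modulo units.

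The first step is a twisting reduction via Proposition \ref{prop:cyclo regulator twist invariance}. Passing from $V$ to $V(1)$ multiplies the determinant of $\cL^\Gamma_{V,\xi}$ by $\ell_0^d$ modulo the automorphism $\tw_{\chi^{-1}}$; and a short computation using the identity $\tw_{\chi^{-1}}(\ell_j) = \ell_{j+1}$ gives $\ell(V(1)) = \ell_0^d \cdot \tw_{\chi^{-1}}(\ell(V))$, while $f_{H^2_{\Iw}(\Qpi, V(1))} = \tw_{\chi^{-1}}(f_{H^2_{\Iw}(\Qpi, V)})$ modulo units. So the formula is stable under twisting by powers of $\chi$, and after twisting by $\chi^N$ for $N$ sufficiently large I may assume $V$ has all Hodge--Tate weights $\ge 1$ and admits no subquotient isomorphic to $\Qp$; then $H^1_{\Iw}(\Qpi, V)_0 = H^1_{\Iw}(\Qpi, V)$ is $\Lambda_L(\Gamma)$-torsion-free of rank $d$ and $\cL^\Gamma_{V,\xi}$ is injective.

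The second step is to compute the cokernel of $\cL^\Gamma_{V,\xi}$ using Berger's description via the Wach module $\NN(V)$ (cf.\ \cite[Appendix A]{berger03}). Under the Fontaine isomorphism $H^1_{\Iw}(\Qpi, V) \cong \DD(V)^{\psi=1}$, the submodule $H^1_{\Iw}(\Qpi, V)_0$ corresponds to $\NN(V)^{\psi=1}$, and $\cL^\Gamma_{V,\xi}$ is computed as the inverse Mellin transform of $(1-\vp)$ landing in $\cH_L(\Gamma) \otimes \Dcris(V)$. The cokernel of $\cL^\Gamma_{V,\xi}$ is then exhibited as an extension of two pieces: a finite-length $\Lambda_L(\Gamma)$-module supported on the zeros of the $\ell_j$ whose characteristic ideal is exactly $\ell(V)$ (reflecting the inclusion of $\NN(V)$ in the ambient $(\vp,\Gamma)$-module, which is dictated by the Hodge--Tate weights), and a quotient identified by local Tate duality with $H^2_{\Iw}(\Qpi, V)$. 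Taking the alternating product of characteristic ideals in this sequence yields the claimed formula.

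The main obstacle will be the explicit identification of the ``Hodge--Tate torsion'' factor with $\ell(V)$ itself, rather than merely with an ideal having the same support, requiring careful tracking of normalizations between Berger's comparison of $\NN(V)$ with the full $(\vp,\Gamma)$-module and Definition \ref{def:ellV}. The $H^2_{\Iw}$-contribution is, by contrast, comparatively formal and follows from local Tate/Poitou--Tate duality.
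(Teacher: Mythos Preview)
Your twisting reduction and the opening factorization are fine, and indeed close in spirit to the paper's first step. But the heart of your argument --- the second step --- takes a route different from the paper's, and in its present form it has a genuine gap.

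The paper does \emph{not} compute the cokernel of $\cL^\Gamma_{V,\xi}$ directly from the Wach-module description. Instead it proceeds in two halves. For the lower bound (divisibility of $\det A$ by $\ell(V)\,f_V/\iota(f_{V^*(1)})$), it evaluates the regulator at every de Rham character $\eta$ and shows, using Theorems \ref{thm:cycloregulator} and \ref{thm:strongcycloregulator} together with Corollary \ref{corr:dimsv}, that the image of the $\cL^\Gamma_{V,\xi}(x_i)(\eta)$ lies in a subspace of the correct codimension. For the upper bound (that this divisibility is sharp), it invokes Perrin-Riou's reciprocity law (Theorem \ref{thm:localrecip}) together with the perfectness of the Perrin-Riou pairing (Corollary \ref{corr:prpairingperfect}): applying the lower bound simultaneously to $V$ and to $V^*(1)$, one multiplies the two determinants together, uses the reciprocity identity to turn this into a matrix of Iwasawa pairings, and observes that the latter has determinant a unit. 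This duality argument is what nails down the exact power of each prime, including the contribution from $H^2_{\Iw}$.

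Your approach, by contrast, is essentially the elementary-divisor computation of \cite{leiloefflerzerbes11}. As the Remark immediately following the proposition notes, that method gives the full result only under the hypothesis that no eigenvalue of $\vp$ on $\Dcris(V)$ is a power of $p$, which forces $H^2_{\Iw}(\Qpi,V)=0$ and $H^1_{\Iw}(\Qpi,V)_{\mathrm{tors}}=0$. In that case the $f_{H^2_{\Iw}}$ factor is trivial and the Wach-module cokernel really does have characteristic ideal $\ell(V)$. But in general the claim that the cokernel of $\cL^\Gamma_{V,\xi}$ contains ``a quotient identified by local Tate duality with $H^2_{\Iw}(\Qpi,V)$'' is not at all formal: local Tate duality identifies $H^2_{\Iw}(\Qpi,V)$ with $H^0(\Qpi,V^*(1))^*$, which is the torsion in $H^1_{\Iw}(\Qpi,V^*(1))$, not obviously a piece of any cokernel on the $V$ side. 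I do not see how to extract the $f_{H^2_{\Iw}}$ denominator from the Wach module of $V$ alone without passing through something equivalent to the reciprocity law. (There is also a more minor point: the literal cokernel of $\cL^\Gamma_{V,\xi}$ in $\cH_L(\Gamma)\otimes\Dcris(V)$ is not a finitely generated $\Lambda_L(\Gamma)$-module, so ``characteristic ideal of the cokernel'' needs to be interpreted locally at each height-one prime; you should say this explicitly.)

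In short: what you identify as the ``main obstacle'' (pinning down the $\ell(V)$ factor precisely) is in fact the easier half, while what you dismiss as ``comparatively formal'' (the $H^2_{\Iw}$ contribution) is where the real work lies, and requires the reciprocity law or something of equivalent strength.
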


  \begin{remark}\mbox{~}
   \begin{enumerate}[(i)]
    \item Under the simplifying hypothesis that no eigenvalue of $\varphi$ on $\Dcris(V)$ is a power of $p$, which forces $H^2_{\Iw}(\Qpi, V) = H^1_{\Iw}(\Qpi, V)_{\mathrm{tors}} = 0$, this result is a consequence of \cite[Theorem D]{leiloefflerzerbes11}, which determines the elementary divisors of the matrix of $\cL^\Gamma_{V, \xi}$.
    \item One can also formulate a version of this statement with the regulator $\cL^\Gamma_{V, \xi}$ replaced by the Perrin-Riou exponential $\Omega_{V, h, \xi}$ for suitable $h$ (see Appendix \ref{sect:app-formulary}). In \cite{perrinriou94}, Perrin-Riou has calculated the determinant of $\Omega_{V, h, \xi}$, assuming her local reciprocity conjecture (later proved by Colmez \cite{colmez98}). We shall give an direct proof of the formula for $\cL^\Gamma_{V, \xi}$, rather than deducing it from the result for $\Omega_{V, h, \xi}$, but modulo this difference our argument follows Perrin-Riou's rather closely.
   \end{enumerate}
  \end{remark}

  We recall the well-known fact (cf. \cite[Proposition II.1.4]{colmez98} for example) that for any $p$-adic representation $V$ we have $H^2_{\Iw}(\Qpi, V) = H^0(\Qpi, V^*(1))^*$, and the $\Lambda_L(\Gamma)$-torsion submodule of $H^1_{\Iw}(\Qpi, V)$ is isomorphic to $H^0(\Qpi, V)$.

  \begin{lemma}\label{lemma:semisimple}
   If $V$ is crystalline, then $V^{G_{\Qpi}}$ is semisimple as a $\Lambda_L(\Gamma)$-module, isomorphic to a direct sum of powers of the cyclotomic character.
  \end{lemma}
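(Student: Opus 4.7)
The plan is to first observe that $W := V^{G_{\Qpi}}$ is itself a $G_{\Qp}$-subrepresentation of $V$: since $\Qpi/\Qp$ is Galois, $G_{\Qpi}$ is normal in $G_{\Qp}$, and $V^{G_{\Qpi}}$ is stable under the full $G_{\Qp}$-action. As a subrepresentation of a crystalline representation, $W$ is itself crystalline, and by construction its $G_{\Qp}$-action factors through $\Gamma$. I would then reduce to the following general statement: any crystalline $L$-linear representation of $G_{\Qp}$ on which $G_{\Qpi}$ acts trivially is isomorphic to a direct sum $\bigoplus_i L(\chi^{n_i})$. The one-dimensional case follows from the classification of crystalline characters of $G_{\Qp}$: any such character has the form $\chi^n \eta_{\mathrm{ur}}$ with $\eta_{\mathrm{ur}}$ unramified; if the character factors through $\Gamma$ then so does $\eta_{\mathrm{ur}}$, but $\Qpi/\Qp$ is totally ramified, forcing $\eta_{\mathrm{ur}} = 1$.

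For the induction step I want to show that the relevant $\mathrm{Ext}^1$-groups vanish in the category of crystalline $G_{\Qp}$-representations trivialized by $G_{\Qpi}$, so that every object is semisimple. For two distinct twists $L(\chi^m), L(\chi^n)$ with $n\neq m$, already $H^1(\Gamma, L(\chi^{n-m}))$ vanishes: decomposing $\Gamma = \Delta \times \Gamma_1$ with $|\Delta| = p-1$ invertible in $L$ and $\Gamma_1 \cong \Zp$ procyclic, the $\Delta$-cohomology is trivial in positive degree, and $H^1(\Gamma_1, L(\chi^k)) \cong L/(\chi^k(\gamma_1)-1)L$ vanishes since $\chi^k(\gamma_1) - 1$ is nonzero (hence a unit) in $L$ for $k\neq 0$. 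The subtler case is $n = m$, where $H^1(\Gamma, L)$ is one-dimensional, generated by the cyclotomic class. Inflated to $H^1(G_{\Qp}, L)$, this is represented by the ramified cocycle $\log\chi$, which does not lie in $H^1_f(\Qp, L)$: the latter is one-dimensional (by $\dim H^1_f = \dim t(L) + \dim H^0(\Qp, L) = 0+1$) and consists precisely of the unramified classes. Hence the corresponding $\Gamma$-extension of $L$ by $L$ is not crystalline as a $G_{\Qp}$-representation, so $\mathrm{Ext}^1$ in our category vanishes here as well.

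With the Ext-vanishing in hand, the induction concludes: picking a one-dimensional $\Gamma$-subrepresentation $L(\chi^n)\hookrightarrow W$ yields a short exact sequence $0 \to L(\chi^n) \to W \to W' \to 0$ which splits in our category, and one applies the inductive hypothesis to $W'$. The main obstacle is the diagonal case of the Ext-vanishing, since it is the only step genuinely using the crystallinity hypothesis rather than pure $\Gamma$-cohomology; everything hinges on identifying $H^1_f(\Qp, L)$ with unramified classes and noting that the cyclotomic class is ramified. Without this input one would merely obtain that $W$ is a successive extension of cyclotomic twists rather than a genuine direct sum.
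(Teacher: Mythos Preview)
Your proof is correct and follows essentially the same approach as the paper's. The paper's proof is a one-sentence sketch observing that $H^1_f(\Qp,\Qp)$ is one-dimensional and unramified, hence disjoint from the cyclotomic direction, so there are no nontrivial crystalline self-extensions of $\Qp(j)$ factoring through $\Gamma$; you have spelled this out in full, including the (easier) off-diagonal $\mathrm{Ext}^1$-vanishing that the paper leaves implicit.
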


  \begin{proof}
   This follows from the fact that $H^1_f(\Qp, \Qp)$ has dimension 1, corresponding to the unramified $\Zp$-extension of $\Qp$, which is linearly disjoint from the cyclotomic extension; hence there are no nontrivial crystalline extensions of $\Qp(j)$ by itself (for any $j \in \ZZ$) which factor through $\Gamma$.
  \end{proof}

  From this lemma and the remarks above, we see that if $V$ is crystalline, both $H^2_{\Iw}(\Qpi, V)$ and the torsion submodule of $H^1_{\Iw}(\Qpi, V)$ are semisimple.

  We note also the following facts regarding the structure of the cyclotomic Iwasawa cohomology. For each $n$ we have an injective inflation map \[ \left(V^{G_{\Qpi}}\right)_{\Gamma_n} \hookrightarrow H^1(\QQ_{p, n}, V),\]
  and these assemble to an injection
  \[ V^{G_{\Qpi}} \hookrightarrow H^1_{\Iw}(\Qpi, V).\]

  \begin{proposition}[Perrin-Riou, {\cite[\S 2.1.6]{perrinriou92}}]
   \label{prop:defPRpairing}
   There is a pairing
   \[ \langle -, -\rangle_{\Qpi,\Iw} : H^1_\Iw(\Qpi, V) \times H^1_\Iw(\Qpi, V^*(1)) \to \Lambda_L(\Gamma),\]
   linear in the first variable and antilinear (i.e.~$\Lambda(\Gamma)$ acts via the involution $\iota$ induced by $\gamma\mapsto \gamma^{-1}$) in the second; and this pairing induces an exact sequence
   \[ 0 \to V^{G_{\Qpi}} \to H^1_\Iw(\Qp, V) \to \Hom_{\Lambda_L(\Gamma)}(H^1_\Iw(\Qpi, V^*(1)), \Lambda_L(\Gamma))^\iota \to 0.\]
   (Here the superscript $\iota$ indicates that $\Gamma$ acts via the involution $\iota$.)
  \end{proposition}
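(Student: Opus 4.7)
The plan is to construct the Iwasawa pairing from local Tate duality level-by-level, check compatibility under corestriction to pass to the inverse limit, and then analyse the kernel and cokernel using standard facts about universal norms together with the semisimplicity result of Lemma~\ref{lemma:semisimple}.

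\textbf{Construction of the pairing.} At each finite level $n$, the cup product composed with the invariant map gives a perfect local Tate pairing
\[ (-,-)_n \colon H^1(\QQ_{p,n}, V) \times H^1(\QQ_{p,n}, V^*(1)) \to L. \]
For $x = (x_n)$ in $H^1_\Iw(\Qpi, V)$ and $y = (y_n)$ in $H^1_\Iw(\Qpi, V^*(1))$, I would set
\[ \langle x, y\rangle_{\Qpi, \Iw} \;=\; \varprojlim_n \;\sum_{\sigma \in \Gamma/\Gamma_n} (x_n, \sigma\cdot y_n)_n \,[\sigma^{-1}] \;\in\; \Lambda_L(\Gamma). \]
The compatibility of these partial sums under the natural projection $L[\Gamma/\Gamma_{n+1}] \to L[\Gamma/\Gamma_n]$ follows from the standard adjunction $(\operatorname{cor}\, a, b)_n = (a, \operatorname{res}\, b)_{n+1}$ for local Tate duality, combined with the fact that $x_n = \operatorname{cor}(x_{n+1})$ while $\sigma y_n = \operatorname{cor}_{n+1}^n(\sigma y_{n+1})$ is encoded by summing over the fibre of $\Gamma/\Gamma_{n+1} \twoheadrightarrow \Gamma/\Gamma_n$. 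A direct group-ring computation then shows that the resulting pairing is $\Lambda_L(\Gamma)$-linear in the first variable and $\iota$-antilinear in the second, because conjugation by $\tau \in \Gamma$ moves a coefficient indexed by $\sigma$ to one indexed by $\tau\sigma$.

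\textbf{Identifying the kernel.} Let $\Phi$ denote the induced map $H^1_\Iw(\Qpi, V) \to \Hom_\Lambda(H^1_\Iw(\Qpi, V^*(1)), \Lambda)^\iota$. I would first check that the image of $V^{G_{\Qpi}} \hookrightarrow H^1_\Iw(\Qpi, V)$ lies in $\ker\Phi$: a class in the image comes from inflation via the Hochschild--Serre spectral sequence, and inflated classes cup trivially with the restriction-image of $H^1_\Iw(\Qpi, V^*(1))$ in $H^1(\QQ_{p,n}, V^*(1))$ at each level. Conversely, suppose $\Phi(x) = 0$. Then $(x_n, \sigma y_n)_n = 0$ for all $n$, all $\sigma \in \Gamma/\Gamma_n$, and all $y \in H^1_\Iw(\Qpi, V^*(1))$; equivalently, $x_n$ is orthogonal to the image $U_n$ of corestriction from $H^1(\QQ_{p,m}, V^*(1))$ for all $m \ge n$. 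By the well-known description of universal norms (see e.g.~\cite[\S II.1]{colmez98}), the orthogonal complement of $U_n$ under Tate duality is exactly the inflation image of $(V^{G_{\Qpi}})_{\Gamma_n}$, so $x$ lies in the image of $V^{G_{\Qpi}} \to H^1_\Iw(\Qpi, V)$.

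\textbf{Surjectivity.} Here I would follow Perrin-Riou by combining the above orthogonality with the finiteness of $V^{G_{\Qpi}}$ (which is $L$-finite-dimensional). Dually, the cokernel of $\Phi$ is a finitely generated $\Lambda_L(\Gamma)$-module whose support is contained in the characters $\eta$ for which $V(\eta^{-1})$ has nonvanishing $H^0(\Qp, -)$, i.e., it is a torsion module annihilated by the characteristic ideal of $V^{G_{\Qpi}}$. Specialising at a character $\eta$ not in this finite exceptional set recovers the usual perfect Tate pairing at that character, so $\operatorname{coker}(\Phi)$ has trivial generic rank. A standard descent argument using Lemma~\ref{lemma:semisimple} (which gives that the torsion part of $H^1_\Iw$ and $H^2_\Iw$ is semisimple, hence the relevant $\operatorname{Ext}^1_\Lambda$-terms vanish) then forces $\operatorname{coker}(\Phi) = 0$.

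\textbf{Main obstacle.} The routine part is constructing the pairing and checking inflation classes lie in the kernel. The substantive step is the orthogonality statement that universal norms are exactly the annihilator of the inflation image, and the ensuing surjectivity, which is where one genuinely needs Jannsen/Perrin-Riou style duality rather than just finite-level Tate duality. This is the step I would expect to spend the most effort on.
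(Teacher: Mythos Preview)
The paper does not prove this proposition at all: it is stated with a citation to Perrin-Riou \cite[\S 2.1.6]{perrinriou92} and used as a black box. So there is no ``paper's own proof'' to compare against; your proposal is effectively an attempt to reconstruct Perrin-Riou's argument.

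Your construction of the pairing and the identification of $\ker\Phi$ with $V^{G_{\Qpi}}$ are the standard arguments and are fine as sketches. The gap is in surjectivity. You assert that $\operatorname{coker}\Phi$ is annihilated by the characteristic ideal of $V^{G_{\Qpi}}$, but this does not follow from anything you have written: knowing that specialisation at generic $\eta$ gives a perfect pairing only tells you the cokernel is torsion, not which torsion module it is. The appeal to Lemma~\ref{lemma:semisimple} is then a non sequitur---semisimplicity of the torsion submodules of $H^1_{\Iw}$ and $H^2_{\Iw}$ says nothing about the structure of $\operatorname{coker}\Phi$, which is a quotient of a free module and could \emph{a priori} be any torsion $\Lambda_L(\Gamma)$-module. (Also note that Lemma~\ref{lemma:semisimple} assumes $V$ crystalline, whereas Perrin-Riou's result holds for arbitrary $p$-adic $V$; this is harmless in the paper's context but means your argument would not recover the full statement.)

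The actual content you are missing is an Iwasawa-theoretic local duality statement: one shows that the map $\Phi$, after quotienting by torsion, is the $\Lambda$-dual of the corresponding map for $V^*(1)$, and then uses reflexivity over the one-dimensional regular ring $\Lambda_L(\Gamma)$ (a product of PIDs) together with a limit argument over the perfect finite-level pairings to see that both $\Phi$ and its transpose are surjective. This is what Perrin-Riou does in \cite[\S 2.1]{perrinriou92}; your ``standard descent argument'' gestures at this but does not supply it.
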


  \begin{corollary}\label{corr:prpairingperfect}
   Let $x_1, \dots, x_d$ be any basis of $H^1_\Iw(\Qpi, V) / V^{G_{\Qpi}}$, and let $x'_1, \dots, x'_d$ any basis of the corresponding module for $V^*(1)$. Then the matrix whose $(i, j)$ entry is $\langle x_i, x_j'\rangle_{\Qpi,\Iw}$ is invertible in $M_{d \times d}(\Lambda_L(\Gamma))$.
  \end{corollary}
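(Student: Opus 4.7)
The plan is to show that the matrix of pairing values is simply the matrix representing the isomorphism of Proposition \ref{prop:defPRpairing} (restricted to quotients by the $G_{\Qpi}$-invariants) in the chosen bases, and then to argue that both source and target are free $\Lambda_L(\Gamma)$-modules of rank $d$, so that the matrix of this isomorphism must be invertible.

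More precisely, first I would record that the ring $\Lambda_L(\Gamma)$ decomposes (since $\Gamma \cong \mu_{p-1} \times (1+p\Zp)$ and $p-1$ is invertible) as a finite product of discrete valuation rings, so finitely-generated torsion-free $\Lambda_L(\Gamma)$-modules are free. Since the passage from $H^1_{\Iw}(\Qpi, V)$ to $H^1_{\Iw}(\Qpi, V)/V^{G_{\Qpi}}$ kills the torsion submodule (recalled just before Lemma \ref{lemma:semisimple}), both this quotient and its analogue for $V^*(1)$ are free of rank $d = \dim_L V$, which justifies the use of the word ``basis'' in the statement.

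Next I would exploit the exact sequence of Proposition \ref{prop:defPRpairing}. It asserts that the pairing $\langle -,-\rangle_{\Qpi,\Iw}$ induces a $\Lambda_L(\Gamma)$-linear isomorphism
\[ \Phi : H^1_\Iw(\Qpi, V)/V^{G_{\Qpi}} \xrightarrow{\;\cong\;} \Hom_{\Lambda_L(\Gamma)}\!\bigl(H^1_\Iw(\Qpi, V^*(1)),\, \Lambda_L(\Gamma)\bigr)^\iota.\]
Because $\Hom_{\Lambda_L(\Gamma)}(-,\Lambda_L(\Gamma))$ vanishes on torsion modules, the target equals $\Hom_{\Lambda_L(\Gamma)}\!\bigl(H^1_\Iw(\Qpi, V^*(1))/V^*(1)^{G_{\Qpi}}, \Lambda_L(\Gamma)\bigr)^\iota$, the $\iota$-twisted $\Lambda$-dual of a free rank-$d$ module, hence itself free of rank $d$ with the basis $\{(x'_j)^*\}$ dual to $\{x'_j\}$. (The involution $\iota$ is a ring automorphism of $\Lambda_L(\Gamma)$, so it preserves freeness.)

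Finally, a direct unwinding of the definition of $\Phi$ shows that $\Phi(x_i) = \sum_j \langle x_i, x'_j\rangle_{\Qpi,\Iw}\, (x'_j)^*$, so the matrix of the isomorphism $\Phi$ relative to the bases $\{x_i\}$ and $\{(x'_j)^*\}$ is exactly $\bigl(\langle x_i, x'_j\rangle_{\Qpi,\Iw}\bigr)_{i,j}$. A $\Lambda_L(\Gamma)$-linear isomorphism between free modules of rank $d$ has invertible matrix in $M_{d\times d}(\Lambda_L(\Gamma))$, proving the claim. The only place where there is anything nontrivial to check is the freeness of the quotient modules, but this is handled cleanly by the semi-local PID structure of $\Lambda_L(\Gamma)$; everything else is formal from Proposition \ref{prop:defPRpairing}.
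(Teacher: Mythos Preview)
Your proposal is correct and is exactly the argument the paper intends: the corollary is stated without proof because it is immediate from Proposition~\ref{prop:defPRpairing}, and your write-up simply makes explicit the two ingredients (the pairing induces an isomorphism onto the $\iota$-twisted dual, and both sides are free of rank $d$ over the semi-local PID $\Lambda_L(\Gamma)$). There is nothing to add.
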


  We define a pairing
  \[ \langle-,-\rangle_{\cris} : \Big( \cH_L(\Gamma) \otimes_L \Dcris(V)\Big) \times \Big( \cH_L(\Gamma) \otimes_L \Dcris(V^*(1))\Big) \to \cH_L(\Gamma)\]
  by extending scalars from the natural pairing $\Dcris(V) \times \Dcris(V^*(1)) \to \Dcris(L(1)) = L$. We again make this linear in the first variable and antilinear in the second variable.

  The following theorem is Perrin-Riou's reciprocity formula, stated in terms of the regulator maps $\cL^\Gamma_{V, \xi}$, rather than their relatives the Perrin-Riou exponential maps $\Omega_{V,h}^\xi$. In this theorem, we extend the definition of $\cL^\Gamma_{V, \xi}$ to representations with arbitrary Hodge--Tate weights, at the cost of taking values in the total ring of fractions $\cK_L(\Gamma)$ of $\cH_L(\Gamma)$, as described \emph{loc.cit.}.

  \begin{theorem}[{\cite[Appendix B]{loefflerzerbes11}}]\label{thm:localrecip}
   For $x \in H^1_{\Iw}(\Qpi, V)$ and $y \in H^1_{\Iw}(\Qpi, V^*(1))$, we have
   \[ \langle \cL^\Gamma_{V, \xi}(x), \cL^\Gamma_{V^*(1), \xi}(y) \rangle_{\cris} = -\gamma_{-1} \cdot \ell_0 \cdot \langle x, y \rangle_{\Qpi, \Iw},\]
   where $\gamma_{-1}$ is the unique element of $\Gamma$ such that $\chi(\gamma_{-1}) = -1$.
  \end{theorem}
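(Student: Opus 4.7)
The plan is to reduce the identity to Perrin-Riou's original reciprocity formula for her exponential maps $\Omega_{V,h,\xi}$, which was conjectured in \cite{perrinriou94} and proved by Colmez in \cite{colmez98}. The regulator $\cL^\Gamma_{V,\xi}$ is obtained from $\Omega_{V,h,\xi}$ by dividing out the operator $\prod_{i=0}^{h-1}\ell_i$, valid for $h$ large enough that $\Fil^{-h}\Dcris(V)=\Dcris(V)$; so the translation between the two formulations amounts to systematic bookkeeping with the elements $\ell_j$ of Definition \ref{def:ellV}.

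First I would record the precise normalisation: Colmez's reciprocity law applied to the pair $(V,V^*(1))$ gives an identity of the form
\[ \langle \Omega_{V,h,\xi}(x), \Omega_{V^*(1),h',\xi}(y)\rangle_{\cris} = (\text{sign})\cdot [\gamma_{-1}]^{\bullet}\cdot \bigl(\textstyle\prod_j \ell_j\bigr) \cdot \langle x,y\rangle_{\Qpi,\Iw}, \]
where the product of $\ell_j$'s runs over the shifts introduced by the two $\Omega$'s. After dividing both sides by the products of $\ell_j$'s corresponding to $V$ and to $V^*(1)$, nearly everything cancels: since the Hodge--Tate weights of $V^*(1)$ are the negatives (shifted by one) of those of $V$, one is left with a single uncancelled factor of $\ell_0$, matching the right-hand side of the claim.

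Second, the coefficient $-\gamma_{-1}$ must be tracked with care. Part of it is intrinsic to Colmez's formula, reflecting the skew-symmetry of the cup-product together with the twist by $\chi$ needed to pair $V$ with $V^*(1)$. The remainder comes from the sign conventions for the additive character $\psi$: the normalisation of \cite{perrinriou94, colmez98} corresponds to $\xi \mapsto -\xi$ in our notation, and by Proposition \ref{prop:regulatorproperties}(1) this changes $\cL^\Gamma_{V,\xi}$ by $[\gamma_{-1}]^{-1}$, contributing a further $[\gamma_{-1}]$ after pairing.

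A self-contained alternative is to evaluate both sides at the Zariski-dense family of characters $\eta=\chi^j$ with $j\gg 0$ (and, if needed, twisted by finite-order $\eta_0$), using Theorem \ref{thm:cycloregulator} and Theorem \ref{thm:strongcycloregulator} for the left-hand side and the defining property \eqref{defexptranspose} of $\exp^*$ to recognise the resulting pairing of $\exp_{\Qp,V^*(1)(-j)}$ against $\exp^*_{\Qp,V(j+1)(-1)}$ as the specialisation of the local Tate pairing at $\chi^j$. The main obstacle in either approach is the painstaking tracking of signs, of Frobenius factors $\vp^n$ coming from Gauss sums in Proposition \ref{prop:gaussums}, and of the constants $\Gamma^*(1+j)$: the delicate point is to verify that all these contributions conspire to yield exactly $-\gamma_{-1}\,\ell_0$ with no stray unit factor.
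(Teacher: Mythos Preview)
The paper does not prove this statement; it is quoted from \cite[Appendix B]{loefflerzerbes11} as a known result, with no argument given here. Your first approach---deducing the formula from Colmez's proof of Perrin-Riou's reciprocity conjecture for the maps $\Omega_{V,h,\xi}$, using the relation $\Omega_{V,h,\xi}\circ\cL^\Gamma_{V,\xi}=\ell_{h-1}\cdots\ell_0$ and tracking the $\ell_j$'s and signs---is exactly the method of the cited reference, so your plan is correct and aligned with the intended proof. Your alternative (evaluation at a Zariski-dense set of characters) would also work in principle, but is more laborious and is not what the reference does; the first route is preferable.
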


  We use Theorem \ref{thm:localrecip} to calculate the determinant of the regulator map. One readily verifies that $\mu_{n + 1} = \ell_0 \operatorname{Tw}_{-1}(\mu_{n})$, where $\operatorname{Tw}_{-1}$ is the twisting map $\gamma \to \chi(\gamma)^{-1} \gamma$ on $\cH_{\Qp}(\Gamma)$. Let us write $f_{V}$ for a generator of the characteristic ideal of $V^{G_{\Qpi}}$, and similarly for $f_{V^*(1)}$.

  \begin{proposition}
   Let $x_1, \dots, x_d \in H^1_{\Iw}(\Qp, V)$, and $v_1, \dots, v_d$ a basis of $\Dcris(V^*(1))$. Then the matrix $A \in M_{d \times d}(\cK_L(\Gamma))$ with $(i, j)$ entry
   \[ A_{ij} = \langle \cL^\Gamma_{V, \xi}(x_i), v_j \rangle_{\cris}\]
   satisfies
   \[ \det(A) \in \ell(V) \cdot \frac{f_V}{\iota(f_{V^*(1)})} \cdot \cH_L(\Gamma).\]
  \end{proposition}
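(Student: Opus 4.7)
The plan is to apply Perrin--Riou's local reciprocity formula (Theorem~\ref{thm:localrecip}) to reduce the determinant computation to the Perrin--Riou pairing, and then to pin down $\det A$ by local analysis at de Rham characters via the evaluation formulas.

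Since $\det A$ depends $\Lambda_L(\Gamma)$-multilinearly on the $x_i$ and $\Lambda_L(\Gamma) \subseteq \cH_L(\Gamma)$, I first reduce to the case where $x_1, \ldots, x_d$ form a $\Lambda_L(\Gamma)$-basis of the torsion-free quotient $H^1_{\Iw}(\Qpi, V)/V^{G_{\Qpi}}$ (free of rank $d$ by Lemma~\ref{lemma:semisimple} and the discussion preceding Proposition~\ref{prop:defPRpairing}). Choose analogously a basis $y_1, \ldots, y_d$ of $H^1_{\Iw}(\Qpi, V^*(1))/(V^*(1))^{G_{\Qpi}}$ and define $B \in M_{d \times d}(\cK_L(\Gamma))$ by $\cL^\Gamma_{V^*(1),\xi}(y_j) = \sum_k B_{jk} v_k$. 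The antilinearity of $\langle -, -\rangle_{\cris}$ in the second variable, combined with Theorem~\ref{thm:localrecip}, translates into the matrix identity
\[ A \cdot \iota(B)^T = -\gamma_{-1} \ell_0 \cdot P, \qquad P_{ij} = \langle x_i, y_j\rangle_{\Qpi, \Iw}. \]
By Corollary~\ref{corr:prpairingperfect}, $\det P \in \Lambda_L(\Gamma)^\times$, and taking determinants gives $\det A \cdot \iota(\det B) = u \cdot \ell_0^d$ for some $u \in \Lambda_L(\Gamma)^\times$.

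It then remains to identify the pole and zero structure of $\det A$ in $\cK_L(\Gamma)$, which I would do using the evaluation formulas of Theorems~\ref{thm:cycloregulator} and~\ref{thm:strongcycloregulator}. At characters $\eta = \chi^j \eta_0$ with $\eta_0$ of conductor $\geq 1$, each $\cL^\Gamma_V(x_i)(\eta)$ is regular (given by a $\Gamma^*(1+j)\varepsilon$-factor times either $\exp^*$ or $\log$), so no poles of $\det A$ occur outside the integer powers of $\chi$. At $\eta = \chi^j$, the refined evaluation formula exhibits the Euler factor $(1 - p^{-1-j}\vp^{-1})^{-1}(1 - p^j \vp)$: this forces any pole of $\det A$ at $\chi^j$ to have order at most $\dim \Dcris(V)^{\vp = p^{j+1}}$, which by Lemma~\ref{lemma:semisimple} equals the order of $\iota(f_{V^*(1)})$ at $\chi^j$. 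The $\Gamma^*$-factors together with the vanishing of $\exp^*$ or $\log$ on the kernels $V(-j)^{G_{\Qp}}$ reproduce the zeros of $\ell(V) f_V$. Assembling these local contributions yields $\det A \in \ell(V) f_V / \iota(f_{V^*(1)}) \cdot \cH_L(\Gamma)$.

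\textbf{Main obstacle.} The principal difficulty is the local analysis at the ``bad'' integer characters $\chi^j$ where the Euler factor $(1 - p^{-1-j}\vp^{-1})$ is non-invertible on $\Dcris(V)$: the naive evaluation formula of Theorem~\ref{thm:cycloregulator} must be replaced by the refined Theorem~\ref{thm:strongcycloregulator}, and the resulting pole orders must be matched precisely with the multiplicities of the summands of $(V^*(1))^{G_{\Qpi}}$ described by Lemma~\ref{lemma:semisimple}. The reciprocity identity above, together with the symmetric computation $\iota(\mu_n)\mu_{1-n} = (-1)^n \ell_0$ (coming from $\iota(\ell_j) = -\ell_{-j}$, which gives $\iota(\ell(V^*(1)))\,\ell(V) = \pm \ell_0^d$), then provides a consistency check that these local pole orders assemble correctly into the predicted ratio $\ell(V) f_V / \iota(f_{V^*(1)})$.
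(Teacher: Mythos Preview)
Your proposal has the right outline---character-by-character analysis of $\det A$---but the execution contains a genuine gap. The reciprocity identity you derive is correct and is indeed the key to the \emph{next} lemma in the paper (pinning $\det A$ down to a unit); for the present divisibility statement it is, as you yourself note, only a consistency check. The content must therefore come from your ``local analysis,'' and that is where the argument is incomplete.

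The issue is the mechanism by which the zeros of $\det A$ arise. Your claim that ``the $\Gamma^*$-factors \ldots\ reproduce the zeros of $\ell(V) f_V$'' is wrong: each $\Gamma^*(1+j)$ is a nonzero rational number and contributes nothing to the vanishing. What actually happens (after twisting so that $V$ has non-negative Hodge--Tate weights and no quotient $\Qp$, so that $A$ has entries in $\cH_L(\Gamma)$ and there are no poles to discuss) is that for every de Rham $\eta$ of weight $j \ge 0$ and conductor $n$, the $d$ values $\cL^\Gamma_{V,\xi}(x_i)(\eta)$ all lie in the common subspace $\vp^n\Fil^{-j}\Dcris(V)$ if $n\ge 1$, or $(1-p^j\vp)(1-p^{-1-j}\vp^{-1})^{-1}\Fil^{-j}\Dcris(V)$ if $n=0$. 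The codimension of this subspace is precisely $\dim t(W) + \dim H^0(\Qp,W) - \dim H^2(\Qp,W)$ (for $n=0$ this is Corollary~\ref{corr:dimsv}). Since each $\Gamma_{\mathrm{tors}}$-isotypical component of $\cH_L(\Gamma)$ is an elementary-divisor domain, this codimension bounds the vanishing order of $\det A$ at $\eta$ from below, and it matches exactly the order of $\ell(V)\, f_V/\iota(f_{V^*(1)})$ there. You do not articulate this subspace constraint or the elementary-divisor step, and in particular you say nothing about the vanishing of $\det A$ at \emph{ramified} characters, where $\ell(V)$ still has zeros whenever $0 \le j < h$.
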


  \begin{proof}
   Since the statement is invariant under twisting, we may assume that the Hodge--Tate weights of $V$ are non-negative and that $V$ has no quotient isomorphic to $\Qp$, so in particular $A \in M_{d \times d}(\cH_L(\Gamma))$.

   We note that for each character $\eta$ of Hodge--Tate weight $j \ge 0$ and conductor $n$, the subspace of $\Dcris(V) \otimes_L L(\eta)$ spanned by the elements $\cL^\Gamma_{V, \xi}(x_i)(\eta)$ is contained in
   \[ L(\eta) \otimes_L \begin{cases}
       \vp^n \Fil^{-j} \Dcris(V) & \text{if $n \ge 1$,}\\
       (1 - p^j \vp)(1 - p^{-1-j} \vp^{-1})^{-1} \Fil^{-j} \Dcris(V) & \text{if $n = 0$.}
      \end{cases}\]
   The codimension of this subspace is exactly $\dim t(W) + \dim H^0(\Qp, W) - \dim H^2(\Qp, W)$, where $W = V(\eta^{-1})$. (This is obvious for $n > 0$, and follows from Corollary \ref{corr:dimsv} for $n = 0$.) From the fact that each of the $\Gamma_{\mathrm{tors}}$-isotypical components of $\cH_L(\Gamma)$ is an elementary divisor domain, we deduce that $\det(A)$ vanishes to order at least $\dim t(W) + \dim H^0(\Qp, W) - \dim H^2(\Qp, W)$ at $\eta$ (cf.~\cite[Proposition 4.2]{leiloefflerzerbes11}).

   By construction, $\ell(V)$ vanishes at $\eta$ to order $\dim t(W)$; and by the semisimplicitly lemma \ref{lemma:semisimple}, $\frac{f_V}{\iota(f_{V^*(1)})}$ vanishes at $\eta$ to order $\dim H^0(\Qp, W) - \dim H^2(\Qp, W)$. It follows that $\det(A)$ is divisible by $\frac{f_V}{\iota(f_{V^*(1)})} \ell(V)$, as required.
  \end{proof}

  To bound the determinant from above, we shall use Perrin-Riou's pairing and the local reciprocity formula.

  \begin{lemma}
   If $x_1, \dots, x_d$ map to a basis of the free module $H^1_{\Iw}(\Qpi, V) / V^{G_{\Qpi}}$, then $\det(A)$ generates the fractional ideal $\ell(V) \frac{f_V}{\iota(f_{V^*(1)})} \cH_L(\Gamma)$.
  \end{lemma}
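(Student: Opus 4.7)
The opposite divisibility follows by combining Perrin-Riou's local reciprocity formula with the first half. Fix any basis $y_1, \dots, y_d$ of $H^1_{\Iw}(\Qpi, V^*(1))/(V^*(1))^{G_{\Qpi}}$, and form the matrix $B$ for $V^*(1)$ whose $(j, k)$ entry is $\langle \cL^\Gamma_{V^*(1), \xi}(y_j), w_k\rangle_{\cris}$, where $w_1, \dots, w_d$ is a basis of $\Dcris(V)$ dual to $v_1, \dots, v_d$ under the natural pairing $\Dcris(V) \times \Dcris(V^*(1)) \to L$. By Corollary \ref{corr:prpairingperfect}, the Perrin-Riou pairing matrix $C$ with $C_{ij} = \langle x_i, y_j\rangle_{\Qpi, \Iw}$ is invertible over $\Lambda_L(\Gamma)$, so $\det(C) \in \Lambda_L(\Gamma)^\times$.

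Expanding both regulators in the dual bases $w_k$ and $v_\ell$, Theorem \ref{thm:localrecip} translates entry by entry into the matrix identity $A \cdot \iota(B)^{\mathrm{T}} = -\gamma_{-1} \ell_0 \cdot C$. Taking determinants gives $\det(A) \cdot \iota(\det(B)) = (-\gamma_{-1} \ell_0)^d \det(C)$, so this product generates the fractional ideal $\ell_0^d \cH_L(\Gamma)$. On the other hand, the previous proposition, applied to both $V$ and $V^*(1)$ (the latter after twisting back to the nonnegative-weights case, as in its proof), gives $\det(A) \in I_V \coloneqq \ell(V) \tfrac{f_V}{\iota(f_{V^*(1)})} \cH_L(\Gamma)$ and $\det(B) \in I_{V^*(1)} \coloneqq \ell(V^*(1)) \tfrac{f_{V^*(1)}}{\iota(f_V)} \cH_L(\Gamma)$. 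The product of these two fractional ideals, after applying $\iota$ to the second, collapses to $\ell(V) \iota(\ell(V^*(1))) \cH_L(\Gamma)$, because the characteristic factors $f_V$ and $\iota(f_{V^*(1)})$ cancel between the two.

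It then suffices to check that $\ell(V) \iota(\ell(V^*(1)))$ generates $\ell_0^d \cH_L(\Gamma)$. Using $\iota(\ell_j) = -\ell_{-j}$, a direct calculation distinguishing the cases $n \ge 1$, $n = 0$, $n \le -1$ establishes the key identity $\mu_n \cdot \iota(\mu_{1-n}) = \pm \ell_0$ for every $n \in \ZZ$. Multiplying over the Hodge--Tate weights of $V$ then yields $\ell(V) \iota(\ell(V^*(1))) = \pm \ell_0^d$ up to a unit. Combining, $\det(A) \iota(\det(B))$ both lies in and generates the product fractional ideal $I_V \cdot \iota(I_{V^*(1)}) = \ell_0^d \cH_L(\Gamma)$. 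The main technical point is the final ``separability'' step: that a product of two elements, each in a prescribed fractional ideal, which generates the product ideal, must generate each factor separately. This is handled by decomposing $\cH_L(\Gamma)$ according to its $\Gamma_{\mathrm{tors}}$-isotypical components, each of which is an integrally closed domain whose fractional ideal theory is well-behaved enough for the argument. Thus $\det(A)$ generates $I_V$, as claimed.
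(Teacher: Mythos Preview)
Your argument is essentially the same as the paper's: apply the previous proposition to both $V$ and $V^*(1)$, use the reciprocity formula to get $A \cdot \iota(B)^{\mathrm{T}} = -\gamma_{-1}\ell_0 \cdot C$ with $\det(C)$ a unit, and compare against the identity $\ell(V)\,\iota(\ell(V^*(1))) = \pm\ell_0^d$.

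One comment: your ``separability'' step is much easier than you make it sound. After dividing through, the situation is simply that $a \coloneqq \det(A)\big/\big(\ell(V)\tfrac{f_V}{\iota(f_{V^*(1)})}\big)$ and $b \coloneqq \iota(\det(B))\big/\iota\big(\ell(V^*(1))\tfrac{f_{V^*(1)}}{\iota(f_V)}\big)$ both lie in $\cH_L(\Gamma)$ (by the previous proposition) and their product $ab$ lies in $\cH_L(\Gamma)^\times$. In any commutative ring, if $ab$ is a unit then so are $a$ and $b$ separately; no appeal to integral closure, isotypical decomposition, or fractional ideal theory is needed. The paper simply writes down the quotient equation and observes that the right-hand side is a unit, concluding immediately.
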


  \begin{proof}
   Let us choose $x_1', \dots, x_d' \in H^1_{\Iw}(\Qpi, V^*(1))$. Let $v_1', \dots, v_d'$ be the basis of $\Dcris(V)$ dual to $v_1, \dots, v_d$. Then the matrix $A'$ with $A_{ij} = \langle \cL^\Gamma_{V^*(1), \xi}(x_i'), v_j'\rangle_{\mathrm{cris}}$ has determinant in $\ell(V^*(1))\frac{f_{V^*(1)}}{\iota(f_{V})} \cH_L(\Gamma)$, by applying the previous proposition to $V^*(1)$.

   The matrix $A \cdot \iota(A')^\mathrm{T}$ has $i, j$ entry
   \[ \langle \cL^\Gamma_{V, \xi}(x_i), \cL^\Gamma_{V^*(1), \xi}(x_i')\rangle_{\cris} = -\sigma_{-1} \cdot \ell_0 \cdot B_{ij},\]
   where $B_{ij} = \langle x_i, x_j'\rangle_{\Qpi, \Iw} \in \Lambda_L(\Gamma)$, by the reciprocity formula \ref{thm:localrecip}. Hence we have
   \[ \left( \frac{\det A}{\frac{f_V}{\iota(f_{V^*(1)})} \ell(V)}\right) \cdot \iota\left(\frac{\det A'}{\frac{f_{V^*(1)}}{\iota(f_{V})} \ell(V^*(1))}\right) = \frac{(-\sigma_{-1} \cdot \ell_0)^d}{\ell(V) \iota(\ell(V^*(1)))} \det B.\]
   Since $\iota(\ell_j) = -\ell_{-j}$, we have
   \[ \ell(V) \iota(\ell(V^*(1))) = (-1)^d \prod_{i = 1}^d \mu_{n_i} \iota(\mu_{1 - n_i}) = (-1)^{\sum_{i=1}^d n_i} \ell_0^d.\]
   Thus
   \[ \left( \frac{\det A}{\frac{f_V}{\iota(f_{V^*(1)})} \ell(V)}\right) \cdot \iota\left(\frac{\det A'}{\frac{f_{V^*(1)}}{\iota(f_{V})} \ell(V^*(1))}\right) = (-1)^{\sum_{i=1}^d n_i} \cdot (-\sigma_{-1})^d \cdot \det B.\]

   We may choose $x_1', \dots, x_d'$ to be a basis of $H^1_{\Iw}(\Qpi, V^*(1))$ modulo torsion, in which case $\det B \in \Lambda_L(\Gamma)^\times = \cH_L(\Gamma)^\times$, by Proposition \ref{corr:prpairingperfect}. So it follows that
   \[\det(A) \in \ell(V) \cdot \frac{f_V}{\iota(f_{V^*(1)})} \cdot \cH_L(\Gamma)^\times.\]
  \end{proof}

  Since $f_V = f_Q$ and $f_{V^*(1)} = f_{H^2_{\Iw}(\Qpi, V)}$ in the notation of Proposition \ref{prop:det of cyclo regulator}, this proves the proposition in the special case when $x_1, \dots, x_d$ are a basis of $H^1_{\Iw}$ modulo torsion; but the statement of Proposition \ref{prop:det of cyclo regulator} is clearly independent of the choice of $x_1, \dots, x_d$, so the proposition holds for all choices of the $x_i$. This completes the proof of Proposition \ref{prop:det of cyclo regulator}.

 \subsection{The matrix of the two-variable regulator}

  We now consider the two-variable regulator $\cL^G_{V, \xi}$. The result we shall prove is formally very close to Proposition \ref{prop:det of cyclo regulator}:

  \begin{proposition}
   \label{prop:det of regulator}
   Let $x_1, \dots, x_d \in H^1_{\Iw}(K_\infty, V)$, and let $v_1, \dots, v_d$ be an $L$-basis of $\Dcris(V)^\vee$. If the quotient
   \[ Q = H^1_{\Iw}(K_\infty, V) / \langle x_1, \dots, x_d \rangle\]
   is $\Lambda_L(G)$-torsion, then the determinant of the matrix $A$ whose $i, j$ entry is $\langle \cL^G_{V, \xi}(x_i), v_j \rangle$ lies in $\ell(V) \cdot f_Q \cdot \cH_{\Lt}(G)^\times$, where $f_Q$ is a characteristic element of $Q$.
  \end{proposition}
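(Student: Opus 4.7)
The plan is to follow the proof strategy of Proposition \ref{prop:det of cyclo regulator}, establishing the equality $\det A \cdot \cH_{\Lt}(G)^\times = \ell(V) \cdot f_Q \cdot \cH_{\Lt}(G)^\times$ via two divisibilities.

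For the divisibility $\ell(V) \cdot f_Q \mid \det A$, I would evaluate $\det A$ at de Rham characters $\eta$ of $G$ of non-negative Hodge--Tate weight. Using the compatibility of $\cL^G_{V, \xi}$ with its projections to cyclotomic regulators $\cL^{G'}_{V, \xi}$ for finite unramified quotients $G'$, together with the explicit evaluation formulae (Theorems \ref{thm:cycloregulator} and \ref{thm:strongcycloregulator}), all $\cL^G_{V, \xi}(x_i)(\eta)$ lie in a common subspace of $\Dcris(V(\eta^{-1}))$ of codimension $\dim t(W) + \dim H^0(\Qp, W) - \dim H^2(\Qp, W)$, where $W = V(\eta^{-1})$ (cf.~Corollary \ref{corr:dimsv}). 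Hence $\det A$ vanishes at $\eta$ to at least this order, matching that of $\ell(V) \cdot f_Q$ by (the analogue of) Lemma \ref{lemma:semisimple}, which yields the divisibility.

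For the reverse divisibility, I would reduce to Proposition \ref{prop:det of cyclo regulator}. For each finite unramified subextension $F_n \subset F_\infty$ with $G_n = \Gal(F_n(\mu_{p^\infty})/\Qp)$, the compatibility diagram implies that the image of $\det A$ in $\cH_{\Lt}(G_n)$ equals the determinant $\det A_n$ of the corresponding matrix for $\cL^{G_n}_{V, \xi}$. Writing $\cL^{G_n}_{V, \xi}$ as a $\Gal(F_n/\Qp)$-sum of twists of $\cL^\Gamma_{F_n, V, \xi}$ reduces $\det A_n$ to the setting of Proposition \ref{prop:det of cyclo regulator} applied over $F_n$. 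Passing to the limit $n \to \infty$ and invoking the injectivity of $\cL^G_{V, \xi}$ yields the reverse divisibility.

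The main obstacle is the absence in the two-variable formula of a factor $f_{H^2_{\Iw}(K_\infty, V)}^{-1}$, which features in the cyclotomic analogue. This is resolved by observing that $H^2_{\Iw}(K_\infty, V) \cong H^0(K_\infty, V^*(1))^*$ is pseudo-null as a $\Lambda_L(G)$-module: an analogue of Lemma \ref{lemma:semisimple} shows that the $G$-action decomposes into one-dimensional characters, each supported in codimension two inside $\operatorname{Spec} \Lambda_L(G)$ since $G$ has Lie rank two. Hence its characteristic ideal is trivial, and the corresponding factor is absorbed into $\cH_{\Lt}(G)^\times$.
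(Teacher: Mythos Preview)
Your proposal has a genuine gap in the first divisibility step. In the cyclotomic case (Proposition~\ref{prop:det of cyclo regulator}), the passage from ``$\det A$ vanishes to order at least $c$ at every $\eta$'' to ``$\ell(V)\cdot f_Q \mid \det A$'' works because each $\Gamma_{\mathrm{tors}}$-isotypical component of $\cH_L(\Gamma)$ is an elementary divisor domain (a one-variable Bezout-type ring), so a function is determined up to units by its divisor of zeros. The two-variable algebra $\cH_{\Lt}(G)$ does \emph{not} have this property --- the paper states explicitly that ``$\cH_{\Lt}(G)$ is not an elementary divisor domain, so we cannot proceed as before''. Pointwise vanishing orders at closed points do not control divisibility in a two-dimensional ring (think of $X$ and $Y$ in $L[[X,Y]]$). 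So your argument for $\ell(V)\cdot f_Q \mid \det A$ does not go through.

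Your reverse-divisibility step via finite levels $G_n$ and ``passing to the limit'' is too vague to be a proof; in particular $\cH_{\Lt}(G)$ is not the inverse limit of the $\cH_{\Lt}(G_n)$ in any way that would let you transport divisibility statements upward.

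The paper's route is quite different: it forms the class $C=[\det A/f_Q]\in\cK_{\Lt}(G)^\times/\cH_{\Lt}(G)^\times$, shows (nontrivially, via a characteristic-ideal computation in Lemma~\ref{lem:compatibleunderprojection}) that $C$ lies in the subgroup $\cK_{\Lt}(G)^\circ/\cH_{\Lt}(G)^\times$ of classes with well-defined specialisation at every unramified character $\tau$ of $U$, and then proves an injectivity result (Proposition~\ref{prop:jacobson}, using that Tate algebras are Jacobson UFDs) for the map into $\prod_\tau \cK_{\Lt(\tau)}(\Gamma)^\times/\cH_{\Lt(\tau)}(\Gamma)^\times$. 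This reduces the problem to computing the image of $C$ at each $\tau$, which by twist-compatibility of the regulator is exactly Proposition~\ref{prop:det of cyclo regulator} for $V(\tau^{-1})$. Your observation that $H^2_{\Iw}(K_\infty,V)$ is pseudo-null is correct and is indeed why no $f_{H^2}^{-1}$ appears; this enters the paper's argument when matching characteristic ideals under projection.
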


  Since $\cH_{\Lt}(G)$ is not an elementary divisor domain, we cannot proceed as before, so we shall prove the proposition by reducing it to Proposition \ref{prop:det of cyclo regulator} applied to the twist of $V$ by every unramified character of $G$.

  Let $x_1,\dots,x_d \in H^1_{\Iw}(K_\infty,V)$ be as in the proposition, and denote by $f_Q$ a generator of the characteristic ideal of the quotient $Q$. Define
  \[ C=\left[ \det\langle \cL^G_{V, \xi}(x_i), v_j\rangle\slash f_Q\right]\in \cK_{\Lt}(G)^\times\slash \cH_{\Lt}(G)^\times\]
  where as usual $\langle \cL^G_{V, \xi}(x_i), v_j\rangle$ denotes the matrix of $\cL^G_{V, \xi}$ with respect to the basis $(x_i)$ and $(v_i)$, respectively. So our goal is to prove that $C = [\ell(V)]$.

  \begin{lemma}
   \label{lem:independentofbases}
   The definition of $C$ is independent of the choice of $\{x_1,\dots,x_d\}$ and $\{v_1,\dots,v_d\}$.
  \end{lemma}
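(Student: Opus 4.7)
My plan is to verify invariance under the two kinds of basis change separately.

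Changing the basis $\{v_1,\dots,v_d\}$ of $\Dcris(V)^\vee$ by a matrix $M\in GL_d(L)$ multiplies $\det\langle \cL^G_{V,\xi}(x_i), v_j\rangle$ by $\det M\in L^\times$, while $f_Q$ is unaffected. Since $L^\times \subseteq \cH_{\Lt}(G)^\times$, the class of $C$ modulo $\cH_{\Lt}(G)^\times$ is unchanged, so this reduction is immediate.

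For a change $\{x_i\}\rightsquigarrow\{x_i'\}$, the injectivity of $\cL^G_{V,\xi}$ together with the fact that $\cH_{\Lt}(G)\otimes_L \Dcris(V)$ is $\cH_{\Lt}(G)$-free of rank $d$ forces $H^1_{\Iw}(K_\infty,V)$ to have generic rank exactly $d$ as a $\Lambda_L(G)$-module. The torsion hypothesis on $Q$ and $Q'$ then says that $\{x_i\}$ and $\{x_i'\}$ are both $\cK_L(G)$-bases of $H^1_{\Iw}(K_\infty,V)\otimes_{\Lambda_L(G)} \cK_L(G)$, so there is a transition matrix $N\in GL_d(\cK_L(G))$ with $x_i'=\sum_j N_{ij} x_j$, yielding
\[ \det\langle \cL^G_{V,\xi}(x_i'), v_j\rangle = \det N\cdot \det\langle \cL^G_{V,\xi}(x_i), v_j\rangle. \]
The claim therefore reduces to showing $f_{Q'}=\det N\cdot f_Q$ modulo $\Lambda_L(G)^\times$.

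To establish this relation, I would pick a non-zero-divisor $c\in\Lambda_L(G)$ clearing the denominators of $N$, so that $cN\in M_{d\times d}(\Lambda_L(G))$. Writing $M_0, M_0'$ for the $\Lambda_L(G)$-submodules of $H^1_{\Iw}(K_\infty,V)$ generated by $\{x_i\}, \{x_i'\}$ respectively (both free of rank $d$ by the generic rank count), we have $cM_0'\subseteq M_0$, and in the bases $\{cx_i'\}, \{x_j\}$ this inclusion is given (up to transpose) by $cN$, with determinant $c^d\det N$. The two short exact sequences
\[ 0\to M_0/cM_0' \to H^1_{\Iw}(K_\infty,V)/cM_0'\to Q\to 0, \]
\[ 0\to M_0'/cM_0' \to H^1_{\Iw}(K_\infty,V)/cM_0'\to Q'\to 0, \]
combined with $\Char(M_0/cM_0')=(c^d\det N)$ and $\Char(M_0'/cM_0')=(c^d)$, yield $c^d\det N\cdot f_Q = c^d\cdot f_{Q'}$ modulo units, i.e.\ $f_{Q'}=\det N\cdot f_Q$ as required. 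Together with the determinant transformation above, this shows that $\det A/f_Q$ and $\det A'/f_{Q'}$ agree in $\cK_{\Lt}(G)^\times/\cH_{\Lt}(G)^\times$.

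The main technical point is the careful handling of characteristic ideals over the semi-local ring $\Lambda_L(G)$, which is a finite product of Noetherian regular (hence Krull) local domains: the multiplicativity of $\Char$ in short exact sequences of finitely-generated torsion modules, and the computation $\Char(\Lambda_L(G)^d/B\Lambda_L(G)^d)=(\det B)$ for a square matrix $B$ of non-zero determinant, both hold factor by factor, and the inputs $f_Q, f_{Q'}$ are genuine characteristic elements by the torsion hypothesis. Nothing deeper is needed.
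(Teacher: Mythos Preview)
Your proposal is correct and essentially the same as the paper's approach: the paper's proof reads in its entirety ``Clear from the definition of $C$,'' and what you have written is precisely the routine verification that justifies this claim. The two ingredients you use---the transformation rule for the determinant under change of bases, and the relation $f_{Q'}=\det N\cdot f_Q$ modulo $\Lambda_L(G)^\times$ via multiplicativity of characteristic ideals---are exactly what one unpacks when checking that the quotient $\det A/f_Q$ is well defined in $\cK_{\Lt}(G)^\times/\cH_{\Lt}(G)^\times$; the paper simply takes these as understood.
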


  \begin{proof}
   Clear from the definition of $C$.
  \end{proof}

  \begin{definition}
   Denote by $\cK_{L}(G)^\circ$ the set of $f \in \cK_L(G)$ with the property that for each character $\tau$ of $U$, we can find an expression for $f$ in the form $u/v$ where the images of $u$ and $v$ in $\cK_{L(\tau)}(\Gamma)$ under the evaluation-at-$\tau$ map are not zero-divisors, so $f$ has a well-defined image in $\cK_{L(\tau)}(\Gamma)^\times$. It is clear that $\cK_L(G)^\circ$ is a subgroup of $\cK_L(G)^\times$ containing $\cH_L(G)^\times$.
  \end{definition}

  \begin{proposition}
   \label{prop:jacobson}
   For any coefficient field $L$, the natural map
   \[ \frac{\cK_L(G)^\circ}{\cH_L(G)^\times}
   \to \prod_{\tau} \frac{\cK_{L(\tau)}(\Gamma)^\times} {\cH_{L(\tau)}(\Gamma)^\times}\]
   is injective (where the product on the right is over all characters $\tau$ of $U$).
  \end{proposition}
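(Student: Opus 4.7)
The plan is to interpret $\cH_L(G)$ as a ring of rigid-analytic functions on the character variety $\widehat G$, reducing the claim to the geometric statement that an effective divisor on $\widehat G$ which is disjoint from every fibre of the projection $\widehat G \to \widehat U$ must be empty.

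First I would reduce to the torsion-free case. Since $p$ is odd, the torsion subgroup of $G = \Gamma \times U$ is finite cyclic of order prime to $p$; after extending $L$ if necessary to contain the values of its characters, $\cH_L(G)$ decomposes as a finite direct product of integral domains indexed by those characters, and the map in question decomposes compatibly. Thus we may assume that $G$, $\Gamma$, $U$ are all torsion-free, so that $\cH_L(G)$ is a domain which the Amice transform identifies with the ring of rigid-analytic functions on the open bidisk $\widehat G = \widehat\Gamma \times \widehat U$, and $\cK_L(G)$ with its field of meromorphic functions.

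Second, let $f \in \cK_L(G)^\circ$ be in the kernel and set $D = \operatorname{div}(f)$. The membership $f \in \cK_L(G)^\circ$ forbids $D$ from containing any entire horizontal fibre $\widehat\Gamma \times \{\tau\}$: if such a fibre were a component of $D$, then every representation $u/v$ of $f$ would necessarily have either $u(\tau) = 0$ or $v(\tau) = 0$ in $\cH_{L(\tau)}(\Gamma)$, contradicting the defining property of $\cK_L(G)^\circ$. The kernel hypothesis asserts further that $f(\tau) \in \cH_{L(\tau)}(\Gamma)^\times$ for every $\tau$, i.e.\ that $f$ restricts to a nowhere-vanishing analytic function on every fibre; combined with the previous observation this means $D$ is disjoint from every fibre.

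Finally I would conclude by a dimension argument. Every irreducible component of $D$ is a one-dimensional analytic subvariety of the two-dimensional open bidisk; its image in $\widehat U$ is either a single point -- in which case the component coincides with a horizontal fibre, excluded above -- or an analytically open subset of $\widehat U$, in which case the component meets the generic fibre, contradicting disjointness. Hence $D$ is empty, so $f$ is a nowhere-vanishing analytic function on $\widehat G$; its reciprocal is bounded on each closed sub-polydisk and therefore globally analytic, giving $f \in \cH_L(G)^\times$ as required.

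The main obstacle is the dichotomy invoked in the last step, that an irreducible analytic curve in the open bidisk is either a horizontal fibre or projects dominantly to $\widehat U$; this is a consequence of elementary dimension theory for rigid-analytic spaces, relying on the fact that $\cH_L(G)$ is a two-dimensional integral domain.
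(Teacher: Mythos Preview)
Your argument is essentially correct and arrives at the same conclusion, but it follows a different path from the paper. The paper does not develop a global divisor theory on the open bidisk $\widehat G$; instead it exhausts $\widehat G$ by closed affinoid polydisks $X_n$ and works in the Tate algebras $\cO(X_n)$, which are Noetherian Jacobson unique factorisation domains. There, given a non-unit $f = u/v$, one picks an irreducible $h$ dividing $u$ but not $v$ and uses the Jacobson property to locate a closed point $(\tau,\tau')$ in $V(h)\setminus V(v)$; this immediately exhibits a character $\tau$ of $U$ at which the image of $f$ fails to be a unit in $\cH_{L(\tau)}(\Gamma)$. The reduction to $\Delta$-isotypical components is the same as your torsion-free reduction.

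What your approach buys is a cleaner geometric picture: the statement becomes ``a divisor on $\widehat\Gamma\times\widehat U$ avoiding every horizontal fibre is empty,'' which is conceptually transparent. The cost is that you must justify the dichotomy in your last paragraph---that an irreducible one-dimensional analytic subset of the open bidisk is either a horizontal fibre or has image in $\widehat U$ meeting some (hence many) points---and this requires either dimension theory for rigid spaces or, ultimately, the same passage to affinoids that the paper performs directly. The paper's route is more self-contained: the Jacobson and UFD properties of Tate algebras are textbook facts (it cites \cite[\S 5.2.6]{boschguentzerremmert84}), and no global divisor formalism on quasi-Stein spaces is needed. Your acknowledgement of this step as the ``main obstacle'' is apt; once you unwind it, you are effectively reproducing the paper's affinoid argument in geometric language.
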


  \begin{proof}
   We identify $\cH_L(G)$ with the algebra of $L[\Delta]$-valued rigid-analytic functions on the product of two copies of the rigid-analytic open disc $B(0, 1)$, where $\Delta$ is the torsion subgroup of $\Gamma$. By passing to $\Delta$-isotypical components, it suffices to replace $\cH_L(G)$ with $\cO(B(0, 1)_L^{2})$.

   Let $X_n$ be an ascending family of open affinoid subdomains of $B(0, 1)_L^2$. Since $\cO(B(0, 1)_L^2) = \varprojlim_n \cO(X_n)$, we are reduced to proving the corresponding statement for the rings $\cO(X_n)$, which are isomorphic to Tate algebras (at least after a suitable extension of the field $L$). However, Tate algebras are Jacobson rings and unique factorization domains \cite[\S 5.2.6]{boschguentzerremmert84}. Thus for any non-unit $f \in \operatorname{Frac} \cO(X_n)$, we may write $f = u / v$ where (without loss of generality) there exists some irreducible element $h$ such that $h \mid u$ but $h \nmid v$. By the Jacobson property, there exists a maximal ideal containing $h$ but not containing $v$. This maximal ideal must correspond to a point of $X_n$ at which $u$ vanishes but $v$ does not; since all points of $X_n$ are of the form $(\tau, \tau')$ for $\tau$ a character of $U$ and $\tau'$ a character of $\Gamma$, this shows that the image of $f$ in $\cK_{L(\tau)}(\Gamma)^\times$ is not a unit
   either, as required.
  \end{proof}

  To apply this in our case, we need the following proposition.

  \begin{proposition}\label{prop:Cnicesubset}
   We have $C\in \mathcal{K}_{\Lt}(G)^\circ\slash \mathcal{H}_{\Lt}(G)^\times$.
  \end{proposition}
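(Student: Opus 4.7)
Fix a character $\tau$ of $U = \Gal(K_\infty/\Qpi)$. By Lemma \ref{lem:independentofbases}, $C$ is independent of the choice of $x_i$ and $v_j$, so it suffices to exhibit, for each such $\tau$, one choice of bases for which both $\det\langle\cL^G_{V,\xi}(x_i),v_j\rangle$ and $f_Q$ specialise at $\tau$ to non-zero-divisors.

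The plan is to reduce the specialisation to the cyclotomic regulator for the crystalline twist $V(\tau^{-1})$. Applying Proposition \ref{prop:regulator twist invariance} with $\eta = \tau^{-1}$, which has Hodge--Tate weight $0$ (so no $\ell_j$-factors appear), identifies $\cL^G_{V,\xi}$, after a twist by $\tau^{-1}$, with $\cL^G_{V(\tau^{-1}),\xi}$ up to the isomorphism $b_{\tau^{-1}}$ of Definition \ref{def:b_eta}. Applying the evaluation map $\cH_{\Lt}(G)\to\cH_{\Lt\otimes_L L(\tau)}(\Gamma)$ that sends $[u]\mapsto \tau(u)$ for $u\in U$ corresponds, on the other side, to evaluation of $\cL^G_{V(\tau^{-1}),\xi}$ at the trivial character of $U$; by the defining compatibility of $\cL^G$ (the commutative diagram with $E = \Qp$), this in turn equals the cyclotomic regulator $\cL^\Gamma_{V(\tau^{-1}),\xi}$. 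Thus the specialisation at $\tau$ of the matrix $\langle\cL^G_{V,\xi}(x_i),v_j\rangle$ coincides, up to a fixed linear change of basis, with the matrix $\langle\cL^\Gamma_{V(\tau^{-1}),\xi}(\bar x_i),\bar v_j\rangle$, where $\bar x_i$ denotes the image of $x_i$ in $H^1_\Iw(\Qpi, V(\tau^{-1}))$ under the corresponding base-change of Iwasawa cohomology.

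Now choose $x_1,\dots,x_d$ so that their specialisations $\bar x_i$ form a $\Lambda$-basis of $H^1_\Iw(\Qpi, V(\tau^{-1}))$ modulo torsion; this is possible because $H^1_\Iw(K_\infty, V)$ is free of rank $d$ modulo torsion as a $\Lambda_L(G)$-module. Proposition \ref{prop:det of cyclo regulator} applied to $V(\tau^{-1})$ then shows that the specialisation of $\det\langle\cL^G_{V,\xi}(x_i),v_j\rangle$ equals $\ell(V(\tau^{-1}))\,f_{V(\tau^{-1})^{G_{\Qpi}}}/f_{H^2_\Iw(\Qpi, V(\tau^{-1}))}$ up to a unit, which is a non-zero-divisor in the relevant fraction field. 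A parallel descent computation shows that $f_Q$ specialises to the characteristic element of the cyclotomic quotient $H^1_\Iw(\Qpi, V(\tau^{-1}))/\langle\bar x_i\rangle$ multiplied by the $H^0$- and $H^2$-correction factors from the base-change sequence, which is again a non-zero element of $\cH_{\Lt\otimes_L L(\tau)}(\Gamma)$. The principal obstacle is precisely this last bookkeeping: one must verify that the characteristic ideals behave coherently under descent from $H^*_\Iw(K_\infty,V)$ to $H^*_\Iw(\Qpi,V(\tau^{-1}))$, so that no degeneration occurs and both numerator and denominator specialise to non-zero-divisors, yielding $C \in \cK_{\Lt}(G)^\circ / \cH_{\Lt}(G)^\times$ as claimed.
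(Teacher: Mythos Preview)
Your overall strategy is exactly the paper's: reduce to the cyclotomic regulator for $V(\tau^{-1})$ via twist-compatibility, then invoke Proposition~\ref{prop:det of cyclo regulator} for the numerator and a descent argument for $f_Q$. However, what you call ``bookkeeping'' at the end is in fact the substantive step. The paper supplies it as Lemma~\ref{lem:compatibleunderprojection}: for a torsion $\Lambda_L(G)$-module $Q$ with $Q^U=0$, one has $\Char_{\Lambda_L(\Gamma)}(Q_U) = \Char_{\Lambda_L(G)}(Q) \pmod I$, and in particular the image of $f_Q$ modulo $I$ is non-zero. The proof uses the structure theorem for finitely generated $\Lambda_L(G)$-modules and a careful analysis of the pseudo-null pieces to rule out factors of the form $(u-1)$; it is not routine. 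Without this lemma your argument is incomplete, since the naive expectation that characteristic ideals commute with specialisation can fail precisely when $Q$ has support along $I$.

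A smaller point: your justification for being able to choose the $x_i$ so that the $\bar x_i$ span a full-rank submodule downstairs is not merely that $H^1_\Iw(K_\infty,V)$ is free of rank $d$ modulo torsion. You also need that the corestriction map $H^1_\Iw(K_\infty,V(\tau^{-1}))_U \to H^1_\Iw(\Qpi,V(\tau^{-1}))$ has $\Lambda_L(\Gamma)$-torsion cokernel; this is Proposition~\ref{lem:smallcokernel} in the paper, proved via local Tate duality and inflation--restriction. The paper in fact runs the argument in the opposite direction---choose $y_i$ downstairs first, then lift to $x_i$---precisely because Lemma~\ref{lem:compatibleunderprojection} then handles both the full-rank statement upstairs and the nonvanishing of $f_Q \bmod I$ at once.
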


  In order to prove the proposition, we need the following result:

  \begin{proposition}\label{lem:smallcokernel}
   Let $\tau$ be an unramified character of $G$. Then the corestriction map
   \[ \cores:H^1_{\Iw}(K_\infty,V(\tau^{-1}))\rTo H^1_{\Iw}(\QQ_{p,\infty},V(\tau^{-1}))\]
   induces an injection
   \[ \cores: H^1_{\Iw}(K_\infty,V(\tau^{-1}))_U\hookrightarrow H^1_{\Iw}(\QQ_{p,\infty},V(\tau^{-1})),\]
   and the cokernel is a finite dimensional $L$-vector space, where $U=\mathrm{Gal}(K_\infty/\QQ_{p,\infty})$.
  \end{proposition}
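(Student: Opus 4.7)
Write $W = V(\tau^{-1})$. I would first identify, via the Shapiro-type isomorphisms $H^i_{\Iw}(K_\infty, W) \cong H^i(G_{\Qp}, \Lambda_{\cO}(G) \htimes W)$ and $H^i_{\Iw}(\Qpi, W) \cong H^i(G_{\Qp}, \Lambda_{\cO}(\Gamma) \htimes W)$, the corestriction map $\cores$ with the map on $H^1$ induced by the natural surjection $\Lambda_{\cO}(G) \twoheadrightarrow \Lambda_{\cO}(\Gamma)$ obtained by augmentation along $U$. Since $F_\infty$ is unramified and $\Qpi$ is totally ramified, the two are linearly disjoint over $\Qp$, whence $G \cong U \times \Gamma$, and $U$ is a one-dimensional abelian $p$-adic Lie group.

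The next step is to resolve $\Lambda_{\cO}(\Gamma)$ as a $\Lambda_{\cO}(G)$-module. In the simplest case $U \cong \Zp$ (for example when $F = \Qp$) this takes the form
\[ 0 \to \Lambda_{\cO}(G) \xrightarrow{\gamma_U - 1} \Lambda_{\cO}(G) \to \Lambda_{\cO}(\Gamma) \to 0, \]
for $\gamma_U$ a topological generator of $U$; for general $U$ one handles the prime-to-$p$ torsion of $U$ by working $\Delta$-isotypically, since $p$ is invertible on those components. Tensoring this resolution with the $\cO$-flat module $W$ and then applying $R\Gamma(G_{\Qp}, -)$ yields a long exact sequence whose relevant segment
\[ H^1_{\Iw}(K_\infty, W) \xrightarrow{\gamma_U - 1} H^1_{\Iw}(K_\infty, W) \to H^1_{\Iw}(\Qpi, W) \to H^2_{\Iw}(K_\infty, W) \xrightarrow{\gamma_U - 1} H^2_{\Iw}(K_\infty, W) \]
collapses to the short exact sequence
\[ 0 \to H^1_{\Iw}(K_\infty, W)_U \xrightarrow{\cores} H^1_{\Iw}(\Qpi, W) \to H^2_{\Iw}(K_\infty, W)^U \to 0, \]
which immediately gives the asserted injectivity.

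For the finiteness of the cokernel, I would observe that $H^2_{\Iw}(K_\infty, W)^U$ is a subspace of $H^2_{\Iw}(K_\infty, W)$. By local Tate duality for Iwasawa cohomology, the latter is identified, after inverting $p$, with the $L$-linear dual of $H^0(K_\infty, W^*(1))$; since $W^*(1)$ is finite-dimensional over $L$, so is the subspace of $G_{K_\infty}$-invariants, and therefore so is the cokernel.

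The main delicacy is verifying that the map appearing in the exact sequence really coincides with the corestriction of the proposition, rather than merely some naturally occurring map; this reduces to unravelling the definition of corestriction as the trace along the finite layers and matching it with the augmentation $\Lambda_{\cO}(G) \twoheadrightarrow \Lambda_{\cO}(\Gamma)$, which is standard but worth a careful sentence. Constructing the explicit resolution in full generality is routine once one decomposes along the prime-to-$p$ torsion $\Delta$ of $U$.
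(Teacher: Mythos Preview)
Your argument is correct but follows a different route from the paper. The paper chooses a lattice $T\subset V$, passes by Pontryagin/local Tate duality to the discrete modules $H^1(K_\infty,V^*/T^*(1)(\tau))$ and $H^1(\Qpi,V^*/T^*(1)(\tau))$, applies the inflation--restriction sequence for $U\cong\Zp$ to the restriction map between these, and then dualizes back; the cokernel then appears as the dual of $(V^*/T^*(1)(\tau)^{H_\infty})_U$, visibly cofinitely generated. You instead stay on the Iwasawa side and use the two-term resolution $0\to\Lambda_\cO(G)\xrightarrow{\gamma_U-1}\Lambda_\cO(G)\to\Lambda_\cO(\Gamma)\to 0$ to obtain the descent exact sequence directly, with cokernel $H^2_{\Iw}(K_\infty,W)^U$. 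The two cokernels agree: the paper itself makes this identification a few lines later when it writes $(V^*/T^*(1)(\tau)^{H_\infty})_U = H^2_{\Iw}(K_\infty,V)^U$. Your approach is arguably cleaner, avoiding the round-trip through Pontryagin duality; the paper's route has the minor advantage that the cokernel is described from the start in terms of the dual representation, which is how it is used in the subsequent short exact sequence \eqref{ses}. Your caution about prime-to-$p$ torsion in $U$ is well placed, though note that the paper simply asserts $U\cong\Zp$ (true when $[F:\Qp]$ is a $p$-power, in particular when $F=\Qp$); your idempotent decomposition handles the general case without difficulty.
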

  \begin{proof}
   Let $T$ be a Galois-stable lattice in $V$. Recall that local Tate duality induces isomorphisms
   \[ H^1_{\Iw}(K_\infty,T(\tau^{-1}))^*\cong H^1(K_\infty,V^*/T^*(1)(\tau))\hspace{3ex}\text{and}\hspace{3ex}H^1_{\Iw}(\QQ_{p,\infty},T(\tau^{-1}))^*\cong H^1(\QQ_{p,\infty},V^*/T^*(1)(\tau)).\]
   Moreover, the transpose of corestriction is restriction. By inflation-restriction and the fact that $U\cong\Zp$, we have a short exact sequence
   \[ 0\rTo H^1(U,V^*/T^*(1)(\tau)^{H_\infty})\rTo H^1\big(\QQ_{p,\infty},V^*/T^*(1)(\tau))\rTo H^1(K_\infty,V^*/T^*(1)(\tau))^U\rTo 0,\]
   which we can dualize to get a short exact sequence
   \begin{equation}\label{ses}
    0\rTo H^1_{\Iw}(K_\infty,T)_U\rTo^{\cores} H^1_{\Iw}(\QQ_{p,\infty},T)\rTo H^1(U,V^*/T^*(1)(\tau)^{H_\infty})^*\rTo 0.
   \end{equation}
   Now $H^1(U,V^*/T^*(1)(\tau)^{H_\infty})=\big(V^*/T^*(1)(\tau)^{H_\infty}\big)_U$ is a confinitely generated $\cO$-module, which finishes the proof.
  \end{proof}

  \begin{corollary}
   The $\Lambda_L(\Gamma)$-module $H^1_{\Iw}(K_\infty,V(\tau^{-1}))_U$ is finitely generated of rank $d=\dim_LV$.
  \end{corollary}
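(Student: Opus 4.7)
The plan is to deduce the corollary directly from the exact sequence \eqref{ses} constructed in the preceding proposition, after inverting $p$.

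First I would tensor \eqref{ses} with $L$ over $\cO$, which is exact since $L$ is flat. This yields an exact sequence
\[ 0 \rTo H^1_{\Iw}(K_\infty, V(\tau^{-1}))_U \rTo^{\cores} H^1_{\Iw}(\Qpi, V(\tau^{-1})) \rTo C \rTo 0,\]
where $C$ is a finite-dimensional $L$-vector space (the $L$-tensor of the finitely generated $\cO$-module appearing as the cokernel in \eqref{ses}). In particular, $C$ is $\Lambda_L(\Gamma)$-torsion.

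Next I would invoke the standard fact from local Iwasawa theory (a consequence of Tate's local Euler characteristic formula, together with the vanishing of $H^2_{\Iw}(\Qpi, V(\tau^{-1}))$ up to torsion) that $H^1_{\Iw}(\Qpi, W)$ is a finitely generated $\Lambda_L(\Gamma)$-module of rank $\dim_L W$ for any finite-dimensional $L$-linear representation $W$ of $G_{\Qp}$. Applied to $W = V(\tau^{-1})$, this gives rank $d$ for the middle term of the sequence.

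Since $\Lambda_L(\Gamma)$ is Noetherian, the submodule $H^1_{\Iw}(K_\infty, V(\tau^{-1}))_U$ is finitely generated; and since $C$ is torsion, ranks are additive across the exact sequence, giving $\operatorname{rank}_{\Lambda_L(\Gamma)} H^1_{\Iw}(K_\infty, V(\tau^{-1}))_U = d$. The only subtlety worth checking is the standard rank statement for the cyclotomic Iwasawa cohomology in the presence of torsion from $V^{G_{\Qpi}}$, but this is classical and causes no difficulty.
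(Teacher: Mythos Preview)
Your proof is correct and follows essentially the same approach as the paper: the paper's proof reads simply ``Immediate from Proposition \ref{lem:smallcokernel} and the well-known fact that $H^1_{\Iw}(\QQ_{p,\infty},V(\tau^{-1}))$ has $\Lambda_L(\Gamma)$-rank $d$,'' and your argument is exactly a fleshing-out of this. The only cosmetic difference is that you tensor the integral sequence \eqref{ses} with $L$, whereas the paper's Proposition~\ref{lem:smallcokernel} has already stated the conclusion at the level of $V$; either way the reasoning is the same.
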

  \begin{proof}
   Immediate from Proposition \ref{lem:smallcokernel} and the well-known fact that $H^1_{\Iw}(\QQ_{p,\infty},V(\tau^{-1}))$ has $\Lambda_L(\Gamma)$-rank $d$.
  \end{proof}

  To simplify the notation, let $M=H^1_{\Iw}(K_\infty,V(\tau^{-1}))$, and let $I\subset \Lambda_L(G)$ be the augmentation ideal of $U$. As $I=\langle u-1\rangle$ for any topological generator $u$ of $U$, we have $M_U=M/IM$.

  \begin{lemma}\label{lem:compatibleunderprojection}
   Let $y_1,\dots,y_d\in M/IM$ such that $(M/IM)/\langle y_i\rangle$ is $\Lambda_L(\Gamma)$-torsion. If $x_1,\dots,x_d$ are any lifts of the $y_i$ to $M$, then $M/\langle x_1,\dots,x_d\rangle$ is $\Lambda_L(G)$-torsion. Moreover, if $f$ (resp. $g$) denotes the characteristic ideal of $M / \langle x_i \rangle$ (resp. $(M/IM) / \langle y_i \rangle$), then $g=f\pmod I$. In particular, $f\neq 0\pmod I$.
  \end{lemma}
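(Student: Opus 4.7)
The plan is first to establish the torsion claim for $N := M/\langle x_1,\dots,x_d\rangle$, and then to deduce the characteristic-ideal comparison via Fitting ideals and base change.

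For the torsion claim, the key input is that $M = H^1_{\Iw}(K_\infty, V(\tau^{-1}))$ has $\Lambda_L(G)$-rank equal to $d$, which is standard in the local Iwasawa theory of the 2-dimensional $p$-adic Lie extension $K_\infty/\Qp$: both $H^0_{\Iw}$ and $H^2_{\Iw}$ are torsion, forcing the $H^1_{\Iw}$ rank to equal $d = \dim_L V$. The corollary to Proposition \ref{lem:smallcokernel} gives $\operatorname{rank}_{\Lambda_L(\Gamma)} M_U = d$, and the hypothesis that $N_U = M_U/\langle y_i\rangle$ is $\Lambda_L(\Gamma)$-torsion says that the $y_i$ span a rank-$d$ submodule of $M_U$; equivalently, they are $\Lambda_L(\Gamma)$-independent after passage to the fraction field. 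Any $\Lambda_L(G)$-syzygy $\sum a_i x_i = 0$ would reduce modulo $I = (u-1)$ to $\sum \bar a_i y_i = 0$, forcing each $\bar a_i = 0$; writing $a_i = (u-1) a_i^{(1)}$ and iterating, using that $M[u-1]$ is pseudo-null (by the same inflation-restriction argument as in Proposition \ref{lem:smallcokernel}, this time invoking $H^2(U,-) = 0$), and then applying Krull's intersection theorem inside the 2-dimensional regular local components of $\Lambda_L(G)$, forces all $a_i = 0$. Hence $\langle x_i\rangle$ has $\Lambda_L(G)$-rank $d$, and $N$ has rank $0$.

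For the equality $g \equiv f \pmod I$, I would work with Fitting ideals, which are well-behaved under base change. Choosing any finite free presentation $\Lambda_L(G)^b \to \Lambda_L(G)^a \to M \to 0$ and splicing with the map $\Lambda_L(G)^d \to M$ sending standard generators to the $x_i$ yields a presentation $\Lambda_L(G)^{b+d} \to \Lambda_L(G)^a \to N \to 0$ whose reduction modulo $I$ is a presentation of $N_U$. Since Fitting ideals commute with base change, $\operatorname{Fitt}_0(N_U) = \operatorname{Fitt}_0(N) \bmod I$. The main obstacle, and the step requiring the most care, is translating Fitting ideals into the characteristic ideals $f$ and $g$: these can differ by pseudo-null contributions, so one must verify that $N$ has no nontrivial pseudo-null $\Lambda_L(G)$-submodule—using that it arises as a quotient of the pseudo-null-free $M$ by a submodule of the same rank—and similarly that $N_U$ has no nonzero finite $\Lambda_L(\Gamma)$-submodule. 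Granting this, $\operatorname{Fitt}_0 = \operatorname{char}$ on both sides and the desired equality follows. Finally, $f \not\equiv 0 \bmod I$ is immediate from the identification with $g$, which is a nonzero characteristic element of the honest torsion module $N_U$.
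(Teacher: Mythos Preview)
Your torsion argument follows the paper's idea---reduce a hypothetical syzygy $\sum a_i x_i = 0$ modulo $I$---but your iteration step does not go through as written: from $(u-1)\sum a_i^{(1)} x_i = 0$ you only obtain $\sum a_i^{(1)} x_i \in M[u-1]$, and ``pseudo-null'' is not enough to continue the reduction. What you actually need is $M[u-1]=0$, i.e.\ $M$ torsion-free, and this is exactly what the paper invokes; it then simply divides the $a_i$ by their highest common power of $u-1$ in one stroke, obtaining some $\bar a_i\neq 0$ and the contradiction immediately, with no appeal to Krull.

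For the characteristic-ideal comparison you take a genuinely different route. The paper argues via the structure theorem: it shows that for any torsion $\Lambda_L(G)$-module $Q$ with $Q^U=0$ one has $\operatorname{char}_{\Lambda_L(\Gamma)}(Q_U) \equiv \operatorname{char}_{\Lambda_L(G)}(Q) \pmod I$, by reducing to elementary modules $\Lambda_L(G)/f_i^{n_i}$ and checking that the pseudo-null error terms contribute equally to $U$-invariants and $U$-coinvariants. Your Fitting-ideal approach is a legitimate and arguably more conceptual alternative, but the step $\operatorname{Fitt}_0(N)=\operatorname{char}(N)$ needs a sharper justification than ``$N$ is a quotient of a pseudo-null-free module''---that alone does not preclude pseudo-null submodules. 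The clean argument is homological: $M$ torsion-free over the $2$-dimensional regular (semi-)local ring $\Lambda_L(G)$ forces $\operatorname{depth}(M)\geq 1$, hence $\operatorname{pd}(M)\leq 1$ by Auslander--Buchsbaum, and then the exact sequence $0\to\Lambda_L(G)^d\to M\to N\to 0$ gives $\operatorname{pd}(N)\leq 1$; this is precisely the condition making $\operatorname{Fitt}_0(N)$ principal and equal to $\operatorname{char}(N)$. On the $\Lambda_L(\Gamma)$ side no such care is needed: each component is a DVR, where Fitting and characteristic ideals of torsion modules always coincide.
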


  \begin{proof}
   Suppose that $M/\langle x_1,\dots,x_d\rangle$ is not $\Lambda_L(G)$-torsion. Then the rank of the submodule of $M$ spanned by the $x_i$ is $< d$, so there exist $a_1,\dots,a_d\in \Lambda_L(G)$ such that
   \[ a_1x_1 + \dots + a_d x_d = 0.\]
   Denote by $\bar{a}_i$ the image of $a_i$ in $\Lambda_L(\Gamma)$. As $M$ is torsion-free, the $I$-torsion submodule of $M$ is zero, so we may divide out powers of the generator of $I$ and assume without loss of generality that $\bar{a}_i\neq 0$ for some $i$. We then get a relation
   \[ \bar{a}_1y_1+\dots+\bar{a}_dy_d=0,\]
   which gives a contradiction since the $y_i$ span a $\Lambda_L(\Gamma)$-submodule of $M/I$ of rank $d$.

   We now prove the statement regarding the characteristic ideal of the quotients. We claim that if $Q$ is any torsion $\Lambda(G)$ module such that $Q^U = 0$, then $\operatorname{char}_{\Lambda_L(\Gamma)}(Q_U) = \operatorname{char}_{\Lambda_L(G)}(Q) \pmod I$.

   By the structure theorem for finitely-generated $\Lambda_L(G)$-modules, there exists a submodule $Q' \subseteq Q$ and short exact sequences of the form
   \[ 0 \rTo P \rTo \bigoplus_{i=1}^d (\Lambda_L(G) / f_i^{n_i}) \rTo Q' \rTo 0\]
   and
   \[ 0 \rTo Q' \rTo Q \rTo P' \rTo 0\]
   where $f_i$ are irreducible elements of $\Lambda_L(G)$, $n_i \in \NN$ and $P$ and $P'$ are pseudonull. Since $Q$ and hence $Q'$ have zero $U$-invariants, we have
   \[ 0 \rTo (P')^U \rTo Q'_U \rTo Q_U \rTo P'_U \rTo 0.\]
   Since $P'$ is pseudo-null as a $\Lambda_L(G)$-module, it is torsion as a $\Lambda_L(\Gamma)$-module, and from the short exact sequence
   \[ 0 \rTo (P')^U\rTo P' \rTo^{u-1} P' \rTo P'_U \rTo 0\]
   and the multiplicativity of characteristic elements in short exact sequences we have $\Char_{\Gamma}(P')^U = \Char_\Gamma P'_U$ and hence $\Char_\Gamma Q'_U = \Char_\Gamma Q_U$.

   Turning our attention to the first short exact sequence, and letting $R = \bigoplus_i (\Lambda_L(G) / f_i^{n_i})$, we see that since $Q^U = 0$ we have
   \[ 0 \rTo P^U \rTo R^U \rTo 0 \rTo P_U \rTo R_U \rTo Q'_U \rTo 0\]
   and hence $\Char_{\Gamma} Q'_U = \Char_\Gamma R_U / \Char_\Gamma P_U$; and since $P$ is pseudo-null, as above we have $\Char_\Gamma P_U = \Char_\Gamma P^U = \Char_\Gamma R^U$. Combining the above we conclude that
   \[ \Char_{\Gamma} Q_U = \Char_\Gamma R_U / \Char_\Gamma R^U.\]
   So it suffices to check that our hypotheses force $R^U$ to be zero. If $R^U$ is nonzero, then one of the $f_i$ must be divisible by $u-1$; but since the $f_i$ are irreducible, this forces $f_i = u-1$ and hence $R^U$ is infinite-dimensional over $L$, contradicting the fact that $R^U = P^U$ and $P$ is finite-dimensional.
  \end{proof}

  We can now complete the proof of Proposition \ref{prop:Cnicesubset}. Let $\tau$ be a character of $U$, inducing a homomorphism of $\Lambda(U)$-modules $\Lambda(U) \to L(\tau)$. Let $\pi_{\Gamma, \tau}:\Lambda_L(G)\rightarrow \Lambda_L(\Gamma)(\tau)$ denote the homomorphism of $\Lambda(G)$-modules induced by taking the completed tensor product of the above map with $\Lambda_L(\Gamma)$. Using the compatibility of the regulator with crystalline twists (Proposition \ref{prop:regulator twist invariance}), we have
  \[ \pi_{\Gamma, \tau} \big(\det\langle \cL^G_{V, \xi}(x_i), v_j\rangle\big)=\det\langle \cL^\Gamma_{V(\tau^{-1}), \xi}(y_i),v_j\rangle \otimes e_\tau \neq 0.\]
  Now Lemma \ref{lem:compatibleunderprojection} implies that the image of a characteristic element of $H^1_{\Iw}(K_\infty,V)/\langle x_1,\dots,x_d\rangle$ under $\pi_{\Gamma, \tau}$ is non-zero, which implies that $C$ has a well-defined image in $\cK(\Gamma)^\times$, as required.

 \begin{proposition}
  Let $\tau$ be any continuous character of $U$, as above. Then the image of $C$ in $\cK_{\Lt}(\Gamma)^\times / \cH_{\Lt}(\Gamma)^\times$ is $[\ell(V)]$.
 \end{proposition}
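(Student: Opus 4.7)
The plan is to reduce the claim to Proposition \ref{prop:det of cyclo regulator} by projecting both the numerator and denominator of $C$ under $\pi_{\Gamma,\tau}$ and matching the result against the cyclotomic determinant formula for $V(\tau^{-1})$; note that $\ell(V(\tau^{-1})) = \ell(V)$ since $\tau$ is unramified. By the twist-compatibility of the two-variable regulator (Proposition \ref{prop:regulator twist invariance}), $\pi_{\Gamma,\tau}\bigl(\det\langle \cL^G_{V,\xi}(x_i), v_j\rangle\bigr)$ coincides, up to units in $\cH_{\Lt}(\Gamma)$, with $\det\langle \cL^\Gamma_{V(\tau^{-1}),\xi}(y_i), v_j'\rangle$, where $y_i = \cores(\bar x_i)$ is as in the proof of Proposition \ref{prop:Cnicesubset}. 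By Proposition \ref{prop:det of cyclo regulator}, this equals $\ell(V) \cdot f_{Q'}/f_{H^2_{\Iw}(\Qpi, V(\tau^{-1}))}$ modulo units, where $Q' = H^1_{\Iw}(\Qpi, V(\tau^{-1}))/\langle y_1,\dots,y_d\rangle$.

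For the denominator, set $M = H^1_{\Iw}(K_\infty, V(\tau^{-1}))$. Lemma \ref{lem:compatibleunderprojection} identifies $\pi_{\Gamma,\tau}(f_Q)$ with a characteristic element of $M_U/\langle \bar x_i \rangle$. The injectivity of corestriction from Proposition \ref{lem:smallcokernel}, together with a short diagram chase, produces a short exact sequence $0 \to M_U/\langle\bar x_i\rangle \to Q' \to K \to 0$, where $K$ is the cokernel of corestriction; hence $\pi_{\Gamma,\tau}(f_Q) = f_{Q'}/f_K$ modulo units. Combining with the numerator analysis, the image of $C$ at $\tau$ equals $\ell(V) \cdot f_K/f_{H^2_{\Iw}(\Qpi, V(\tau^{-1}))}$ modulo units.

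It therefore remains to prove $f_K = f_{H^2_{\Iw}(\Qpi, V(\tau^{-1}))}$ modulo units, which will be the main obstacle. Set $W = V^*(1)(\tau)^{H_\infty}$, a finite-dimensional $L$-representation of $G$. Local Tate duality identifies the $L$-part of $K$ with an appropriate twisted dual of $W_U$, and identifies $H^2_{\Iw}(\Qpi, V(\tau^{-1}))$ with the same twisted dual of $W^U = H^0(\Qpi, V^*(1)(\tau))$; the equality of $f_K$ and $f_{H^2_{\Iw}}$ thus reduces to showing $\Char_\Gamma W^U = \Char_\Gamma W_U$ as $\Lambda_L(\Gamma)$-modules. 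Using that $F_\infty/\Qp$ and $\Qpi/\Qp$ are linearly disjoint, we have $G = U \times \Gamma$, so the $U$- and $\Gamma$-actions on $W$ commute and the four-term exact sequence
\[ 0 \to W^U \to W \xrightarrow{u-1} W \to W_U \to 0 \]
(for a topological generator $u$ of $U$) is $\Gamma$-equivariant. Splitting it into two short exact sequences and applying multiplicativity of characteristic elements yields the desired equality.
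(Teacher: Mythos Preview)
Your proof is correct and follows essentially the same route as the paper: the paper first twists to reduce to $\tau = 1$ and phrases the final comparison in terms of $H^2_{\Iw}(K_\infty, V)$ (identifying the cokernel of corestriction with $H^2_{\Iw}(K_\infty, V)^U$ and using $H^2_{\Iw}(K_\infty, V)_U = H^2_{\Iw}(\Qpi, V)$), applying the $(u-1)$ four-term sequence to that module rather than to its Tate dual $W$ --- but since $H^2_{\Iw}(K_\infty, V) \cong W^{*,\iota}$ this is literally the same argument in dual form. One small cleanup: make explicit the appeal to Lemma~\ref{lem:independentofbases} when choosing the $x_i$ so that the images $y_i$ span a full-rank $\Lambda_L(\Gamma)$-submodule, since Proposition~\ref{prop:det of cyclo regulator} requires $Q'$ to be torsion.
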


  \begin{proof}
   We shall deduce this from Proposition \ref{prop:det of cyclo regulator} using Proposition \ref{prop:jacobson}.

   More precisely, let $A$ be as above, and let $\chi$ be a character of $U$. We claim that the image of $\det(A)$ in $\cK_{\Lt(\tau)}(\Gamma)^\times$ under the evaluation-at-$\chi$ map (which is well defined by proposition \ref{prop:Cnicesubset}) lies in $\ell(V) \cdot \overline{f_Q} \cdot \cH_{\Lt(\tau)}(\Gamma)^\times$, where $\overline{f_Q}$ is the image of $f_Q$; by the injectivity result of Proposition \ref{prop:jacobson}, this claim implies the statement we want. By twisting, it suffices to consider the case $\chi = 1$.

   Now, the class $C = \left[\frac{\det(A)}{f_Q}\right] \in \cK_L(G)^\circ / \cH_L(G)^\times$ is known to be independent of the choice of the $x_i$, so we may assume the $x_i$ map to elements $y_i$ of $H^1_{\Iw}(\Qpi, V)$ which span a full rank submodule. Then, as shown in Lemma \ref{lem:compatibleunderprojection}, the characteristic element of the quotient $H^1_{\Iw}(K_\infty, V)_U / \langle y_i \rangle$ is $\overline{f_Q}$; and as in \eqref{ses}, we have a short exact sequence of torsion modules
   \[ 0 \rTo H^1_{\Iw}(K_\infty, V)_U / \langle y_i \rangle \rTo H^1_{\Iw}(\Qpi, V)_U / \langle y_i \rangle \rTo \big(V^*/T^*(1)(\tau)^{H_\infty}\big)_U\rightarrow 0.\]
   By Tate duality, we have $\big(V^*/T^*(1)(\tau)^{H_\infty}\big)_U=H^2_{\Iw}(K_\infty, V)^U$, so we can rewrite the short exact sequence as
   \[ 0 \rTo H^1_{\Iw}(K_\infty, V)_U / \langle y_i \rangle \rTo H^1_{\Iw}(\Qpi, V) / \langle y_i \rangle \rTo H^2_{\Iw}(K_\infty, V)^U\rightarrow 0.\]

   Hence if we define
   \[ Q' = H^1_{\Iw}(\Qpi, V) / \langle y_i \rangle, \]
   we have
   \[ \Char_{\Gamma}(Q') = \overline{f_Q} \cdot \Char_{\Gamma}(H^2_{\Iw}(K_\infty, V)^U).\]
   On the other hand, we have
   \[ \Char_{\Gamma}(H^2_{\Iw}(K_\infty, V)_U) = \Char_{\Gamma}(H^2_{\Iw}(K_\infty, V)^U),\]
   since $H^2_{\Iw}(K_\infty, V)$ is finite-dimensional over $L$, and $H^2_{\Iw}(K_\infty, V)_U = H^2_\Iw(\Qpi, V)$; hence we have
   \[ \frac{\Char_{\Gamma}(Q')}{\Char_{\Gamma}(H^2_\Iw(\Qpi, V))} = \overline{f_Q}.\]

   So the claim above, and hence the proposition, follows from Proposition \ref{prop:det of cyclo regulator}.
  \end{proof}

 Combining this with Proposition \ref{prop:jacobson} completes the proof of Proposition \ref{prop:det of regulator}.


\section{Construction of the isomorphism}
 \label{sect:constructionG}

 The aim of this section is as follows. Let $T$ be a $\cO$-lattice in a crystalline $L$-linear Galois representation $V$. Let $\cOt = \widehat{\Zp}^{\mathrm{nr}} \otimes_{\Zp} \cO$ be the ring of integers of $\Lt$. We shall construct a canonical isomorphism of determinants over the ring $\Lambda_{\cOt}(G)$,
 \begin{multline*}
  \Theta_{\Lambda_{\cO}(G), \xi}(T) : \Det_{\Lambda_{\cOt}(G)}(0) \rTo^\cong \\ \Lambda_{\cOt}(G) \otimes_{\Lambda_{\cO}(G)} \left\{ \Det_{\Lambda_{\cO}(G)} R\Gamma_{\Iw}(K_\infty, T) \cdot \Det_{\Lambda_{\cO}(G)} \left(\Lambda_{\cO}(G) \otimes_{\cO} \Dcris(T)\right)\right\},
 \end{multline*}
 where $\Dcris(T)$ is a certain $\cO$-lattice in $\Dcris(V)$ determined by $T$ (see \S \ref{sect:powerofp} below). Our construction is to descend step-by-step in the following tower of ring extensions:
 \[ \Lambda_{\cOt}(G) \subset \Lambda_{\tilde L}(G) \subset \cK_{\tilde L}(G)\]
 where $\cH_{\tilde L}(G)$ is the algebra of locally analytic $\Lt$-valued distributions on $G$, and $\cK_{\tilde L}(G)$ its total ring of quotients.


 \subsection{Construction over \texorpdfstring{$\mathcal{K}_{\Lt}(G)$}{a large ring}}
  \label{sect:construction1}

  Over $\mathcal{K}_{\Lt}(G)$, the construction of the isomorphism $\Theta$ is very simple:

  \begin{proposition}
   The $\Lambda_L(G)$-module $H^1_{\Iw}(K_\infty, V)$ is finitely generated of rank $d$, and $H^i_{\Iw}(K_\infty, V)$ is torsion for $i \ne 1$.
  \end{proposition}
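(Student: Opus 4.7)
The plan is to exploit the standard structure of local Iwasawa cohomology over the two-dimensional $p$-adic Lie extension $K_\infty/\Qp$. Each finite layer $K_n \subset K_\infty$ is a $p$-adic local field of cohomological dimension $2$; consequently $H^i_{\Iw}(K_\infty, V) = \varprojlim_n H^i(K_n, V)$ vanishes outside $\{0, 1, 2\}$, so only these three degrees need to be analyzed.

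In degrees $0$ and $2$ the torsion claim is easy. First, $H^0_{\Iw}(K_\infty, V) = V^{G_{K_\infty}}$ is an $L$-subspace of $V$, hence finite-dimensional over $L$, and so $\Lambda_L(G)$-torsion (since $\Lambda_L(G)$ has Krull dimension at least $2$). Second, the Iwasawa-theoretic form of local Tate duality identifies $H^2_{\Iw}(K_\infty, V)$ (up to Pontryagin dualization and a Tate twist) with $H^0(K_\infty, V^*(1))$, which is likewise finite-dimensional over $L$ and therefore $\Lambda_L(G)$-torsion.

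The rank statement in degree $1$ is the substantive part, and I would prove it by descent to the cyclotomic setting via Proposition~\ref{lem:smallcokernel}: with $\tau = 1$ that proposition produces an embedding
\[ H^1_{\Iw}(K_\infty, V)_U \hookrightarrow H^1_{\Iw}(\Qpi, V)\]
with finite-dimensional cokernel. The target has $\Lambda_L(\Gamma)$-rank $d$ by the classical work of Perrin-Riou, so the $U$-coinvariants of $H^1_{\Iw}(K_\infty, V)$ do as well. A Nakayama-style rank comparison, essentially identical to the argument used in Lemma~\ref{lem:compatibleunderprojection} (or alternatively an Euler characteristic computation invoking perfectness of $R\Gamma_{\Iw}(K_\infty, V)$ over $\Lambda_L(G)$ together with the local Euler--Poincar\'e formula $\chi\bigl(R\Gamma(\Qp, V)\bigr) = -d$), then promotes this to the desired equality $\mathrm{rank}_{\Lambda_L(G)} H^1_{\Iw}(K_\infty, V) = d$.

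I expect the only technical obstacle to be this last step, namely verifying that the $\Lambda_L(\Gamma)$-rank of the $U$-coinvariants matches the $\Lambda_L(G)$-rank of the original module. The cleanest approach is probably to first quotient out the $\Lambda_L(G)$-torsion submodule of $H^1_{\Iw}(K_\infty, V)$ (which is itself finite-dimensional over $L$, by the same kind of argument used in degree $0$, and hence contributes nothing to either rank); on the resulting torsion-free module, the rank comparison follows from the structure theorem for finitely generated $\Lambda_L(G)$-modules exactly as in Lemma~\ref{lem:compatibleunderprojection}.
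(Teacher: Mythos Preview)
The paper does not prove this statement directly: it simply cites \cite[Proposition~A.6]{loefflerzerbes11}. Your sketch is a reasonable reconstruction of the standard argument, and is very likely close in spirit to what that reference contains.

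Two comments on the details. First, the Euler-characteristic route you mention parenthetically is in fact the cleanest way to finish: perfectness of $R\Gamma_{\Iw}(K_\infty, T)$ over $\Lambda_{\cO}(G)$, together with base change to the trivial character and the local Euler--Poincar\'e formula, immediately gives that the alternating sum of ranks equals $-d$, and since $H^0$ and $H^2$ are torsion this forces $\operatorname{rank} H^1 = d$. This avoids the rank-comparison issue entirely. Second, your proposed workaround for that issue --- quotienting by the $\Lambda_L(G)$-torsion submodule of $H^1_{\Iw}(K_\infty, V)$ and asserting it is finite-dimensional over $L$ ``by the same kind of argument used in degree $0$'' --- is not quite right as stated. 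In degree $0$ the finiteness is trivial because $H^0 \subseteq V$; for the torsion in $H^1_{\Iw}$ one needs an identification with $V^{G_{K_\infty}}$ analogous to Perrin-Riou's result in the cyclotomic case, which is true but is additional input rather than the same argument. So I would recommend promoting the Euler-characteristic argument from a parenthetical to the main line of the proof.
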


  \begin{proof}
   See \cite[Proposition A.6]{loefflerzerbes11}.
  \end{proof}

  \begin{proposition}
   The regulator $\cL^G_{V, \xi}$ induces an isomorphism
   \[ \Det_{\mathcal{K}_{\Lt}(G)}(0) \rTo^\cong \Det_{\mathcal{K}_{\Lt}(G)}\left(\mathcal{K}_{\Lt}(G) \otimes_{\Lambda_L(G)} R\Gamma_{\Iw}(K_\infty, V)\right) \cdot \Det_{\mathcal{K}_{\Lt}(G)}\left(\mathcal{K}_{\Lt}(G) \otimes_{L} \Dcris(V)\right).\]
  \end{proposition}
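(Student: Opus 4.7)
The plan is to use Proposition \ref{prop:det of regulator} to reduce this to a direct computation. First, I observe that since $H^i_{\Iw}(K_\infty, V)$ is $\Lambda_L(G)$-torsion for $i \ne 1$, the base change $\cK_{\Lt}(G) \otimes_{\Lambda_L(G)} R\Gamma_{\Iw}(K_\infty, V)$ is quasi-isomorphic to $(\cK_{\Lt}(G) \otimes_{\Lambda_L(G)} H^1_{\Iw}(K_\infty, V))[-1]$ in the derived category of perfect $\cK_{\Lt}(G)$-complexes. Consequently one has a canonical identification
\[
  \Det_{\cK_{\Lt}(G)}\!\left(\cK_{\Lt}(G) \otimes R\Gamma_{\Iw}(K_\infty, V)\right) \;\cong\; \Det_{\cK_{\Lt}(G)}\!\left(\cK_{\Lt}(G) \otimes_{\Lambda_L(G)} H^1_{\Iw}(K_\infty, V)\right)^{-1}.
\]

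Second, both $\cK_{\Lt}(G) \otimes_{\Lambda_L(G)} H^1_{\Iw}(K_\infty, V)$ and $\cK_{\Lt}(G) \otimes_L \Dcris(V)$ are free $\cK_{\Lt}(G)$-modules of rank $d$, the first by the preceding proposition and the second since $\Dcris(V)$ is $L$-free of dimension $d$. The regulator $\cL^G_{V, \xi}$ extends by linearity to a $\cK_{\Lt}(G)$-linear morphism between these two free modules of the same rank, so to conclude it suffices to show that the determinant of this morphism is a unit in $\cK_{\Lt}(G)$.

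Third, I invoke Proposition \ref{prop:det of regulator}: choose any $x_1, \dots, x_d \in H^1_{\Iw}(K_\infty, V)$ whose image in the rank-$d$ free $\cK_{\Lt}(G)$-module $\cK_{\Lt}(G) \otimes H^1_{\Iw}(K_\infty, V)$ is a basis. The quotient $Q = H^1_{\Iw}(K_\infty, V) / \langle x_1, \dots, x_d \rangle$ is then $\Lambda_L(G)$-torsion, and for any $L$-basis $v_1, \dots, v_d$ of $\Dcris(V)^\vee$, the determinant of the matrix $\langle \cL^G_{V, \xi}(x_i), v_j\rangle$ lies in $\ell(V) \cdot f_Q \cdot \cH_{\Lt}(G)^\times$. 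Now $\ell(V)$ is a product of powers of the non-zero-divisors $\ell_j$ (and their inverses) in $\cH_{\Qp}(\Gamma) \subseteq \cH_{\Lt}(G)$, and $f_Q$ is a non-zero element of $\Lambda_L(G)$; both are therefore units in $\cK_{\Lt}(G)$ by definition of the total ring of quotients. Hence the determinant is a unit in $\cK_{\Lt}(G)$.

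Combining these three steps, the regulator induces an isomorphism of free rank-$d$ modules over $\cK_{\Lt}(G)$, and thus an isomorphism of their determinants. Composing with the identification from the first step yields the claimed isomorphism. I do not foresee a genuine obstacle here: the substantive input is entirely contained in Proposition \ref{prop:det of regulator} together with the rank/torsion statement of the preceding proposition, and the remaining argument is a formal manipulation of determinants and localisations.
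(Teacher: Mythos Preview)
Your argument is correct, but it takes a heavier route than the paper's own proof. The paper simply observes that the regulator map $\cL^G_{V,\xi}$ is injective on $\Lt \htimes_L H^1_{\Iw}(K_\infty, V)$ (this was recorded as part of the theorem recalling the two-variable regulator); since $\cK_{\Lt}(G)$ is a finite product of fields, an injective linear map between two free $\cK_{\Lt}(G)$-modules of the same rank $d$ is automatically an isomorphism, and the determinant statement follows immediately.

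Your approach instead invokes Proposition~\ref{prop:det of regulator} to compute the determinant of the regulator matrix as $\ell(V)\cdot f_Q$ up to a unit of $\cH_{\Lt}(G)$, and then checks that both factors are units in the total ring of quotients. This is valid and not circular (Proposition~\ref{prop:det of regulator} is established earlier), but it is considerably more than what the present proposition requires: the fine information about $\ell(V)$ and $f_Q$ is precisely what is needed for the \emph{next} step, the descent to $\Lambda_{\Lt}(G)$ in Theorem~\ref{thm:existence of Theta}, not for the coarse statement over $\cK_{\Lt}(G)$. In effect you have anticipated the hard part of the descent argument; for the proposition at hand, the bare injectivity of the regulator suffices.
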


  \begin{proof}
   From the previous proposition we have
   \[ \Det_{\mathcal{K}_{\Lt}(G)}\left(\mathcal{K}_{\Lt}(G) \otimes_{\Lambda_L(G)} R\Gamma_{\Iw}(K_\infty, V)\right) \cong \Det_{\mathcal{K}_{\Lt}(G)}\left(\mathcal{K}_{\Lt}(G) \otimes_{\Lambda_L(G)} H^1_{\Iw}(K_\infty, V)\right)^{-1}.\]
   However, the regulator map is injective on $\Lt \htimes_L H^1_{\Iw}(K_\infty, V)$, so it is an isomorphism after tensoring with $\cK_{\Lt}(G)$.
  \end{proof}

  \begin{definition}
   \label{def:isooverK}
   We define an isomorphism
   \[ \Theta_{\mathcal{K}_{\Lt}(G), \xi}(V): \Det_{\mathcal{K}_{\Lt}(G)}(0) \rTo^\cong \Det_{\mathcal{K}_{\Lt}(G)}\left(\mathcal{K}_{\Lt}(G) \otimes_{\Lambda_L(G)} R\Gamma_{\Iw}(K_\infty, V)\right) \cdot \Det_{\mathcal{K}_{\Lt}(G)}\left(\mathcal{K}_{\Lt}(G) \otimes_{L} \Dcris(V)\right)\]
   by
   \[ \Theta_{\mathcal{K}_{\Lt}(G), \xi}(V) = \ell(V)^{-1} \Det\left(\cL^G_{V, \xi}\right)\]
   where $\ell(V) \in \mathcal{K}_{L}(G)^\times$ is as defined in \S \ref{sect:gamma} above.
  \end{definition}


 \subsection{Descent to \texorpdfstring{$\Lambda_{\Lt}(G)$}{the Iwasawa algebra}}
  \label{sect:descenttoLambda}

  We now use Proposition \ref{prop:det of regulator} to show that the isomorphism $\Theta_{\mathcal{K}_{\Lt}(G), \xi}(V)$ descends to $\Lambda_{\Lt}(G)$:

  \begin{theorem}
   \label{thm:existence of Theta}
   There exists a canonical isomorphism in $\underline{\Det}(\Lambda_{\Lt}(G))$
   \begin{multline*}
    \Theta_{\Lambda_L(G), \xi}(V) :
    \Det_{\Lambda_{\Lt}(G)} (0) \rTo^\cong \\
    \left[ \Det_{\Lambda_{\Lt}(G)} \left(\Lt \htimes_L R\Gamma_{\Iw}(K_\infty, V)\right)\right]
    \left[ \Det_{\Lambda_{\Lt}(G)} \left(\Lambda_{\Lt}(G) \otimes_L \Dcris(V) \right) \right]
   \end{multline*}
   with the property that the isomorphism in $\underline{\Det}(\cK_{\Lt}(G))$ obtained by extending scalars is the isomorphism of Definition \ref{def:isooverK}.
  \end{theorem}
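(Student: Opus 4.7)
\emph{Proof proposal.} The plan is to construct $\Theta_{\Lambda_L(G),\xi}(V)$ explicitly using a matrix presentation of the regulator, and then to reduce the descent from $\cK_{\Lt}(G)$ to $\Lambda_{\Lt}(G)$ to Proposition \ref{prop:det of regulator}.

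First, I would fix auxiliary data: elements $x_1,\dots,x_d \in H^1_{\Iw}(K_\infty,V)$ whose images generate a full-rank $\Lambda_L(G)$-submodule (possible since $H^1_{\Iw}$ has rank $d$ and the other Iwasawa cohomology groups are torsion), together with an $L$-basis $v_1,\dots,v_d$ of $\Dcris(V)$. Let $Q = H^1_{\Iw}(K_\infty,V)/\langle x_1,\dots,x_d\rangle$, which is $\Lambda_L(G)$-torsion with characteristic element $f_Q$. The basis $(v_j)$ trivialises $\Det_{\Lambda_{\Lt}(G)}\bigl(\Lambda_{\Lt}(G)\otimes\Dcris(V)\bigr) \cong \Lambda_{\Lt}(G)$, while the $(x_i)$, combined with the fact that $\Lambda_{\Lt}(G)$ has trivial $SK_1$ and a standard analysis of determinants of perfect complexes in terms of the characteristic elements of the torsion cohomology groups $H^0_{\Iw}$, $H^2_{\Iw}$, and $Q$, identify $\Det_{\Lambda_{\Lt}(G)}\bigl(\Lt\htimes_L R\Gamma_{\Iw}(K_\infty,V)\bigr)$ with a concrete fractional ideal of $\cK_{\Lt}(G)$ generated by $f_Q \cdot f_{H^0_{\Iw}}^{-1} \cdot f_{H^2_{\Iw}}$ (up to the sign conventions of $\underline{\Det}$).

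Under these trivialisations, the image of $1$ under $\Theta_{\cK_{\Lt}(G),\xi}(V) = \ell(V)^{-1}\Det(\cL^G_{V,\xi})$ becomes the element $\ell(V)^{-1}\cdot \det\bigl(\langle \cL^G_{V,\xi}(x_i),v_j\rangle\bigr) \in \cK_{\Lt}(G)^\times$. Proposition \ref{prop:det of regulator} says precisely that this element equals $f_Q \cdot u$ for some $u \in \cH_{\Lt}(G)^\times$. Comparing with the description of the target fractional ideal, the existence of the desired descended isomorphism reduces to the statement that $u \cdot f_{H^0_{\Iw}}^{-1} \cdot f_{H^2_{\Iw}}$, which a priori lies only in $\cH_{\Lt}(G)^\times$, is actually a unit in $\Lambda_{\Lt}(G)$.

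The main obstacle is this last upgrade from an $\cH_{\Lt}(G)^\times$-unit to a $\Lambda_{\Lt}(G)^\times$-unit. Since the defect is canonically well-defined modulo $\Lambda_{\Lt}(G)^\times$ (the choices $(x_i,v_j)$ only modify it by such units), it suffices to show the class in $\cH_{\Lt}(G)^\times/\Lambda_{\Lt}(G)^\times$ is trivial. By an application of Proposition \ref{prop:jacobson} along the lines of the proof of Proposition \ref{prop:det of regulator} itself, this can be checked after specialising at every continuous character $\tau$ of $U$; at each such $\tau$, the compatibility of the two-variable regulator with cyclotomic twists (Proposition \ref{prop:regulator twist invariance}) reduces the claim to the corresponding one-variable unitality, which is provided by Proposition \ref{prop:det of cyclo regulator} applied to $V(\tau^{-1})$ and the $\Lambda_L(\Gamma)$-versions of the characteristic elements involved (as computed in Lemma \ref{lem:compatibleunderprojection}). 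The canonicity of the construction then follows from the independence of the choices of $x_i$ and $v_j$, and the compatibility with $\Theta_{\cK_{\Lt}(G),\xi}(V)$ under extension of scalars holds tautologically.
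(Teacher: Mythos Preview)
Your overall strategy---choose bases, use Proposition \ref{prop:det of regulator} to pin down the matrix determinant up to an $\cH_{\Lt}(G)^\times$-ambiguity, then argue that this ambiguity actually lives in $\Lambda_{\Lt}(G)^\times$---is exactly the shape of the paper's argument. However, the step you flag as ``the main obstacle'' is in fact trivial: one has $\cH_{\Lt}(G)^\times = \Lambda_{\Lt}(G)^\times$ as a standard fact (an entire power series on the open polydisc with no zeroes is bounded; see Lazard \cite{lazard1962}). The paper records this as a separate proposition and uses it immediately, so no further work is needed once Proposition \ref{prop:det of regulator} has delivered a unit of $\cH_{\Lt}(G)$. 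Your proposed workaround via Proposition \ref{prop:jacobson} does not apply here in any case: that proposition concerns the quotient $\cK_L(G)^\circ/\cH_L(G)^\times$, not $\cH_L(G)^\times/\Lambda_L(G)^\times$, and specialising at characters of $U$ followed by Proposition \ref{prop:det of cyclo regulator} would still only land you in $\cH_{L(\tau)}(\Gamma)^\times$, so you would be no better off.

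Two further points where the paper's argument is tighter than your sketch. First, over the two-variable Iwasawa algebra the modules $H^0_{\Iw}$ and $H^2_{\Iw}$ are not merely torsion but \emph{pseudo-null}, so their determinants are canonically trivial and the factors $f_{H^0_{\Iw}}$, $f_{H^2_{\Iw}}$ in your fractional-ideal description are units; the paper disposes of them at the outset and works only with $H^1_{\Iw}$. Second, because it is unknown whether $\cH_{\Lt}(G)$ has trivial $SK_1$, the paper avoids using $\cH_{\Lt}(G)$ as an intermediate determinant category and instead routes the descent through $\Lambda_{\Lt}(G) \hookrightarrow Q_{\Lt}(G) \hookrightarrow \cK_{\Lt}(G)$, where $Q_{\Lt}(G)$ is the total quotient ring of $\Lambda_{\Lt}(G)$ (all three have trivial $SK_1$). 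The passage from $Q_{\Lt}(G)$ down to $\Lambda_{\Lt}(G)$ then requires a short additional diagram chase matching the torsion quotient $Q$ against the $f_Q^{-1}$-coset produced by Lemma \ref{lemma:torsiondet}; this is the content of the final paragraph of the paper's proof, and is not quite captured by your fractional-ideal bookkeeping.
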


  \begin{remark}
   By base change, we obtain a canonical isomorphism in $\underline{\Det}(\Lambda_{\Lt}(\Gamma))$,
   \begin{equation}\label{ThetaGamma}
    \Theta_{\Lambda_L(\Gamma), \xi}(V) :
    \Det_{\Lambda_{\Lt}(\Gamma)} (0) \rTo^\cong
    \left[ \Det_{\Lambda_{\Lt}(\Gamma)} \left(\Lt \htimes_L R\Gamma_{\Iw}(\QQ_{p,\infty}, V)\right)\right]
    \left[ \Det_{\Lambda_{\Lt}(\Gamma)} \left(\Lambda_{\Lt}(\Gamma) \otimes_L \Dcris(V) \right) \right].
   \end{equation}
   Since this coincides with $\ell(V)^{-1} \Det(\cL^\Gamma_{V, \xi})$ after base extension to $\cK_{\Lt}(\Gamma)$, the scalars descend from $\Lt$ to $L$: the above isomorphism is the image of an isomorphism
   \[ \Det_{\Lambda_{\Lt}(\Gamma)} (0) \rTo^\cong
    \left[ \Det_{\Lambda_{L}(\Gamma)} \left(R\Gamma_{\Iw}(\QQ_{p,\infty}, V)\right)\right]
    \left[ \Det_{\Lambda_{L}(\Gamma)} \left(\Lambda_{L}(\Gamma) \otimes_L \Dcris(V) \right) \right],
   \]
   in the category $\underline{\Det}(\Lambda_L(\Gamma))$, which we denote also by $\Theta_{\Lambda_L(\Gamma), \xi}(V)$. Note that $\Theta_{\Lambda_L(G), \xi}(V)$ does \emph{not} descend to $\underline{\Det}(\Lambda_L(G))$.
  \end{remark}

  Since for $i=0$ or $2$ the module $H^i_\Iw(K_\infty, V)$ is pseudo-null, its class in $\underline{\Det}(\Lambda_{\Lt}(G))$ is canonically isomorphic to the trivial object, by \cite[Lemma 2.2]{venjakob11}, and this canonical isomorphism is compatible under base extension with the isomorphism arising from the fact that $\cK_{\Lt}(G) \otimes_{\Lambda_{\Lt}(G)} H^i_\Iw(K_\infty, V) = 0$. Thus it suffices to construct an isomorphism
  \[ \Det_{\Lambda_{\Lt}(G)}\left( \Lt \htimes_L H^1_{\Iw}(K_\infty, V)\right) \rTo^\cong \Det_{\Lambda_{\Lt}(G)} \left(\Lambda_{\Lt}(G) \otimes_L \Dcris(V) \right).\]

  For this, we will need the following auxilliary result:

  \begin{lemma}
   \label{lemma:detcriterion}
   Let $R \into S$ be a morphism of commutative rings such that $SK_1(R) = SK_1(S) = 0$. Let $P, Q$ be free $R$-modules of equal rank $d$.

   Then any choice of $R$-bases of $P$ and $Q$ determines isomorphisms $\Isom_{\underline{\Det}(R)}(\Det_R(P), \Det_R(Q)) \cong R^\times$ and $\Isom_{\underline{\Det}(S)}(\Det_S(S \otimes_R P), \Det_S(S \otimes_R Q)) \cong S^\times$, and under these identifications, the image of $\Isom_{\underline{\Det}(R)}(\Det_R(P), \Det_R(Q))$ in $\Isom_{\underline{\Det}(S)}(\Det_S(S \otimes_R P), \Det_S(S \otimes_R Q))$ corresponds to the image of $R^\times$ in $S^\times$.
  \end{lemma}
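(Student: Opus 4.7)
\smallskip

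The plan is to use the standard description of morphisms between determinants in terms of $K_1$, and then exploit the hypothesis $SK_1 = 0$ to turn $K_1$ into the units.

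First I would recall that for any commutative ring $R$ and free $R$-module $P$ of finite rank, the automorphism group $\mathrm{Aut}_{\underline{\Det}(R)}(\Det_R(P))$ is canonically identified with $K_1(R)$ (as recalled in \S 1.1 of \cite{fukayakato06}). It follows that for free $R$-modules $P$ and $Q$ of equal rank $d$, the set $\Isom_{\underline{\Det}(R)}(\Det_R(P), \Det_R(Q))$ is a torsor under $K_1(R)$ (acting, say, on the left by composition). A choice of $R$-bases $(p_i)$ of $P$ and $(q_i)$ of $Q$ determines a preferred isomorphism $\varphi: P \to Q$ sending $p_i \mapsto q_i$, and hence a preferred element $\Det_R(\varphi) \in \Isom_{\underline{\Det}(R)}(\Det_R(P), \Det_R(Q))$. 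Using this as a base point trivializes the torsor, giving a bijection $\Isom_{\underline{\Det}(R)}(\Det_R(P), \Det_R(Q)) \cong K_1(R)$. Composing with the canonical determinant $K_1(R) \to R^\times$, which is an isomorphism by the hypothesis $SK_1(R) = 0$, yields the identification of the isomorphism set with $R^\times$; the same construction with $R$ replaced by $S$ gives the identification with $S^\times$.

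For the compatibility with base change, I would observe that the chosen bases $(p_i)$ and $(q_i)$ induce $S$-bases $(1 \otimes p_i)$ and $(1 \otimes q_i)$ of $S \otimes_R P$ and $S \otimes_R Q$, and the corresponding preferred isomorphism is $\mathrm{id}_S \otimes \varphi$. Since the base change functor $\underline{\Det}(R) \to \underline{\Det}(S)$ is a symmetric monoidal functor sending $\Det_R(\varphi)$ to $\Det_S(\mathrm{id}_S \otimes \varphi)$, and is compatible with composition, the induced map $\Isom_{\underline{\Det}(R)}(\Det_R(P), \Det_R(Q)) \to \Isom_{\underline{\Det}(S)}(\Det_S(S \otimes_R P), \Det_S(S \otimes_R Q))$ corresponds, under the trivializations, to the natural map $K_1(R) \to K_1(S)$. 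The naturality of the surjection $K_1(-) \twoheadrightarrow (-)^\times$ then identifies this with the map $R^\times \to S^\times$ induced by $R \to S$, which is what we wanted.

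The argument is essentially a formal manipulation with the definition of the Picard category $\underline{\Det}(R)$, so I do not expect any serious obstacle. The one point requiring minor care is verifying that the base change functor respects the torsor action of $K_1$ and the canonical determinant map to units; but both of these follow from general naturality properties of $K_1$ and from the construction of $\underline{\Det}$ as the universal Picard category, so no additional input is needed.
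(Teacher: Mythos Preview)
Your proposal is correct and takes essentially the same approach as the paper, which simply says the statement is ``clear from the definitions of the categories $\underline{\Det}(R)$ and $\underline{\Det}(S)$ and of the base-extension functor.'' Your argument is just a careful unpacking of this, and the paper's subsequent remark (that without the $SK_1$ hypothesis one obtains the image of $K_1(R)$ in $K_1(S)$) confirms that the authors had exactly your reasoning in mind.
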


  \begin{proof}
   This is clear from the definitions of the categories $\underline{\Det}(R)$ and $\underline{\Det}(S)$ and of the base-extension functor.
  \end{proof}

  \begin{remark}
   More generally, without the assumption on $SK_1$ we can assert that the image corresponds to the image of $K_1(R)$ in $K_1(S)$, but we shall not need this.
  \end{remark}

  \begin{lemma}
   \label{lemma:torsiondet}
   Let $M$ be a torsion $\Lambda_{L}(G)$-module. Then the morphism
   \[ Q_{L}(G)^\times \times_{\Lambda_{L}(G)^\times} \Isom_{\underline{\Det}(\Lambda_{L}(G))}(0, \Det_{\Lambda_{L}(G)} M) \cong \operatorname{Aut}_{\underline{\Det}(Q_{L}(G))}(0)\]
   given by the fact that $Q_{L}(G) \otimes_{\Lambda_{L}(G)} M = 0$ identifies $\Isom_{\underline{\Det}(\Lambda_{L}(G))}(0, \Det_{\Lambda_{L}(G)} M)$ with $f_Q^{-1} \Lambda_{L}(G)^\times$, where $f_Q$ is any characteristic element of $M$.
  \end{lemma}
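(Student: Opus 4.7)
\textit{Proof plan.} The plan is to prove this by devissage, reducing to the case of a cyclic torsion module $\Lambda_L(G)/g\Lambda_L(G)$ and then applying Lemma~\ref{lemma:detcriterion} together with a direct computation in the universal Picard category. For the devissage, I would invoke the structure theorem for finitely generated torsion modules over $\Lambda_L(G)$: decomposing according to characters of the (prime-to-$p$) torsion subgroup of $G$, each isotypical component becomes a regular local UFD, so there is a pseudo-isomorphism
\[ M \longrightarrow \bigoplus_{i=1}^r \Lambda_L(G)/g_i\Lambda_L(G), \]
where the $g_i$ are non-zero-divisors whose product is $f_Q$ up to a unit of $\Lambda_L(G)$. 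Pseudo-null $\Lambda_L(G)$-modules have canonically trivial class in the Picard category $\underline{\Det}(\Lambda_L(G))$ (by Lemma~2.2 of \cite{venjakob11}, as already invoked at the start of \S\ref{sect:descenttoLambda}), and this trivialization is compatible with base change to $Q_L(G)$, where these modules vanish. Combined with the additivity of $\Det$ along short exact sequences, this reduces both the computation of $\Isom_{\underline{\Det}(\Lambda_L(G))}(0, \Det_{\Lambda_L(G)} M)$ and its image in $Q_L(G)^\times$ to the case of a single cyclic module.

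For $M = \Lambda_L(G)/g\Lambda_L(G)$ with $g$ a non-zero-divisor, the two-term complex $C^\bullet = [\Lambda_L(G) \xrightarrow{g} \Lambda_L(G)]$ (with the left-hand term in degree $-1$) is a perfect resolution of $M$. Since $SK_1(\Lambda_L(G)) = 0$ and $Q_L(G)$ is a finite product of fields and hence also has trivial $SK_1$, Lemma~\ref{lemma:detcriterion} identifies $\Isom_{\underline{\Det}(\Lambda_L(G))}(0, \Det_{\Lambda_L(G)} C^\bullet)$ with $\Lambda_L(G)^\times$ and identifies its base-extended image inside $Q_L(G)^\times = \operatorname{Aut}_{\underline{\Det}(Q_L(G))}(0)$ via the standard basis on each copy of $\Lambda_L(G)$. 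Over $Q_L(G)$ the complex becomes acyclic since $g$ becomes a unit; unwinding the conventions of \S\ref{sect:dets} shows that the resulting canonical acyclic trivialization of its determinant is, in the standard basis, represented by multiplication by $g^{-1}$. Hence the image is precisely $g^{-1}\Lambda_L(G)^\times$.

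Assembling the two steps, for a general torsion $M$ with $f_Q = \prod_i g_i$ I obtain $\prod_i g_i^{-1}\Lambda_L(G)^\times = f_Q^{-1}\Lambda_L(G)^\times$, as required. The hard part I anticipate is not the overall strategy but rather the bookkeeping of degree and sign conventions: confirming that the canonical trivialization of $\Det(C^\bullet \otimes_{\Lambda_L(G)} Q_L(G))$ is really $g^{-1}$, rather than some cosmetic variant like $-g^{-1}$ or $(-1)^? g^{-1}$, requires a careful reference to the definitions of $\underline{\Det}(R)$ and $\Det$ recalled in \S\ref{sect:dets} (cf.\ also \cite[\S 1.2]{fukayakato06}); once this normalization is fixed the remainder of the argument is purely formal.
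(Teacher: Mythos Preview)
Your proposal is correct and follows essentially the same approach as the paper: the paper's proof also reduces via pseudo-isomorphism (invoking \cite[Lemma 2.2]{venjakob11} for pseudo-null modules and additivity in direct sums) to the case $M = \Lambda_L(G)/f$, and then observes that tensoring the short exact sequence $0 \to \Lambda_L(G) \xrightarrow{\times f} \Lambda_L(G) \to M \to 0$ with $Q_L(G)$ gives the claimed identification. Your version is more explicit about invoking Lemma~\ref{lemma:detcriterion} and about the sign bookkeeping, but the underlying argument is the same.
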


  \begin{proof}
   For pseudo-null modules this is \cite[Lemma 2.2]{venjakob11}, and since the statement is compatible with direct sums, it suffices to consider the case of $M = \Lambda_{L}(G) / f$ for a single irreducible element $f$; but this follows immediately from tensoring the short exact sequence
   \[ 0 \rTo \Lambda_L(G) \rTo^{\times f} \Lambda_L(G) \rTo M \rTo 0\]
   with $Q_{L}(G)$.
  \end{proof}

  \begin{proposition}
   We have $\cH_{\Lt}(G)^\times = \Lambda_{\Lt}(G)^\times$.
  \end{proposition}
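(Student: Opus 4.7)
The inclusion $\Lambda_{\Lt}(G)^\times \subseteq \cH_{\Lt}(G)^\times$ is immediate, so we need only prove the reverse. The plan is to decompose $G = G_{\mathrm{tors}} \times H$ with $H \cong \Zp^2$ torsion-free (available since $p$ is odd), then pass to $G_{\mathrm{tors}}$-isotypic components and restrict to one field factor of $\Lt$; this reduces us to the case $G = \Zp^2$ with $\Lt$ a field. The Amice transform then identifies $\cH_{\Lt}(\Zp^2)$ with the ring $\cO(B)$ of rigid-analytic $\Lt$-valued functions on the open bidisc $B = \{(x_1, x_2) : |x_i| < 1\}$, and $\Lambda_{\Lt}(\Zp^2)$ with its subring of bounded functions.

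The central ingredient is a one-dimensional lemma proved via Newton polygons: if $g = \sum_{n \ge 0} g_n T^n \in \cO(B^1)$ has no zeros on $B^1 = \{|T| < 1\}$, then $g_0 = g(0) \ne 0$ and $|g_n| \le |g_0|$ for every $n$ (so $g \in \Lambda_{\Lt}(\Zp)$ with $|g|_{\sup} = |g(0)|$). Indeed, the absence of zeros in $B^1$ forces every slope of the Newton polygon of $g$ to be non-negative (a negative slope would correspond to a root of strictly positive valuation), and since $a_0 \ne 0$ the point $(0, -\log|g_0|)$ is the leftmost vertex of the polygon, so $-\log|g_n| \ge -\log|g_0|$ for every $n$.

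To finish, let $f \in \cH_{\Lt}(\Zp^2)^\times$ and expand $f = \sum_{j \ge 0} f_j(T_1) T_2^j$ with $f_j \in \cO(B^1)$. For each fixed $t_1 \in B^1$, the slice $T_2 \mapsto f(t_1, T_2)$ lies in $\cO(B^1)$ and is nowhere vanishing, so the lemma gives $|f_j(t_1)| \le |f_0(t_1)|$ for every $j$. Applied to $f_0(T_1) = f(T_1, 0)$, itself nowhere vanishing on $B^1$, the lemma yields $|f_0(t_1)| \le |f(0,0)|$. Taking the supremum over $t_1 \in B^1$ bounds $|f_j|_{\sup, B^1}$, and hence every Taylor coefficient of $f$, by $|f(0,0)|$, so $f \in \Lambda_{\Lt}(\Zp^2)$. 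Running the same argument for $f^{-1}$ (also nowhere vanishing) gives $f^{-1} \in \Lambda_{\Lt}(\Zp^2)$, completing the proof.

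The only substantive subtlety is verifying the one-dimensional lemma for arbitrary, possibly unbounded, elements of $\cO(B^1)$: the Newton polygon can have infinitely many vertices accumulating toward slope zero as zeros of $g$ approach the boundary $|T| = 1$, so one must argue carefully that non-negativity of every slope plus the leftmost vertex being $(0, -\log|g_0|)$ forces the bound on \emph{all} coefficients, not merely those at vertices. Once this is in hand, the passage from one variable to two by iterating along slices is essentially a Fubini-style argument.
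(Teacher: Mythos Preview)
Your argument is correct. The paper gives no argument beyond citing Lazard \cite[(4.8)]{lazard1962}, which treats only the one-variable case; you have supplied the details that citation leaves implicit, namely the reduction to torsion-free $H \cong \Zp^2$ over a single field factor of $\Lt$, the one-variable Newton-polygon lemma (which is essentially Lazard's (4.8)), and the slicing step that promotes it to two variables. The ``subtlety'' you flag is not a genuine obstacle: Lazard's theory applies to all of $\cO(B^1)$, bounded or not, and the absence of zeros in the open disc is equivalent to the absence of negative-slope segments regardless of how many non-negative-slope segments the polygon has; since the polygon lies weakly below every point $(n,v(g_n))$ and is non-decreasing from $(0,v(g_0))$, the bound $v(g_n)\ge v(g_0)$ follows for all $n$, vertex or not. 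As an aside, a shorter route bypasses both Newton polygons and slicing: for $f\in\cO(B)^\times$ with inverse $h$ and any $r<1$, the Gauss norm $|\sum a_\alpha T^\alpha|_r=\max_\alpha |a_\alpha|r^{|\alpha|}$ on the Tate algebra of the closed polydisc of radius $r$ is multiplicative, so $|f|_r|h|_r=1$; combined with $|f|_r\ge|f(0)|$, $|h|_r\ge|h(0)|$ and $|f(0)||h(0)|=1$ this forces $|f|_r=|f(0)|$ for every $r<1$, whence every coefficient of $f$ has absolute value at most $|f(0)|$. This works uniformly in any number of variables and makes the two-dimensional case no harder than the one-dimensional one.
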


  \begin{proof}
   This is standard; see e.g.~\cite[(4.8)]{lazard1962}.
  \end{proof}

  \begin{proof}[Proof of Theorem \ref{thm:existence of Theta}]
   We will consider the base-extension maps corresponding to the successive ring extensions $\Lambda_{\Lt}(G) \into Q_{\Lt}(G) \into \cK_{\Lt}(G)$, where $Q_{\Lt}(G)$ is the total ring of quotients of $\Lambda_{\Lt}(G)$. (It would perhaps be more natural to consider $\Lambda_{\Lt}(G) \into \cH_{\Lt}(G) \into \cK_{\Lt}(G)$, but we do not know if $\cH_{\Lt}(G)$ has trivial $SK_1$.) These give homomorphisms (cf.~\cite[1.2.5]{fukayakato06})
   \begin{diagram}
    Isom_{\underline{Det}(\Lambda_{\Lt(G)})}
    \left[
    \Det_{\Lambda_{\Lt}(G)} \Big(\Lt \htimes_L H^1_{\Iw}(K_\infty, V) \Big), \Det_{\Lambda_{\Lt}(G)} \Big(\Lambda_{\Lt}(G) \otimes_L \Dcris(V)\Big)
    \right]\\
    \dTo\\
    Isom_{\underline{Det}(Q_{\Lt}(G))}
    \left[
    \Det_{Q_{\Lt}(G)} \Big(Q_{\Lt}(G) \otimes_{\Lambda_L(G)} H^1_{\Iw}(K_\infty, V) \Big), \Det_{Q_{\Lt}(G)} \Big(Q_{\Lt}(G) \otimes_L \Dcris(V)\Big)
    \right]\\
    \dTo\\
    Isom_{\underline{Det}(\cK_{\Lt}(G))}
    \left[
    \Det_{\cK_{\Lt}(G)} \Big(\cK_{\Lt}(G) \otimes_{\Lambda_{\Lt}(G)} H^1(K_\infty, V) \Big), \Det_{\cK_{\Lt}(G)} \Big(\cK_{\Lt}(G) \otimes_L \Dcris(V)\Big)
    \right]\\
   \end{diagram}
   All of these maps are evidently injective, and $\ell(V)^{-1} \Det(\cL^G_{V, \xi})$ evidently defines an element of the third group, so it suffices to show that it lies in the image of the composed map. Moreover, all three rings have trivial $SK_1$.

   We first show that it lies in the image of the second row. Since the two modules $Q_{\Lt}(G) \otimes_{\Lambda_L(G)} H^1_{\Iw}(K_\infty, V)$ and $Q_{\Lt}(G) \otimes_L \Dcris(V)$ are free (of equal ranks), it suffices to show that for any choice of bases of these modules, the determinant of the matrix of $\cL^G_{V, \xi}$ with respect to these bases is an element of $\ell(V) Q_{\Lt}(G)^\times \subset \cK_{\Lt}(G)^\times$. However, this is immediate from Proposition \ref{prop:det of regulator}, since $\cH_{\Lt}(G)^\times = \Lambda_{\Lt}(G)^\times$ is contained in $Q_{\Lt}(G)^\times$.

   To show that $\ell(V)^{-1} \Det(\cL^G_{V, \xi})$ actually lies in the image of the top row, we choose $x_i$ and $v_j$ satisfying the hypotheses of Proposition \ref{prop:det of regulator}. Writing $\Lambda$ for $\Lambda_{L}(G)$ and $\widetilde\Lambda$ for $\Lambda_{\Lt}(G)$ to lighten the notation, we have an exact sequence
   \[
    0 \rTo \widetilde\Lambda^{\oplus d} \rTo \Lt \htimes_L H^1_{\Iw}(K_\infty, V) \rTo \Lt \htimes_L N \rTo 0,
   \]
   where $N$ is torsion, from which we deduce a commutative diagram of multiplication maps (where, for sanity, we let $\Lambda = \Lambda_L(G)$ and $\widetilde\Lambda = \Lambda_{\Lt}(G)$, and $\widetilde Q = Q_{\Lt}(G)$)
   \begin{diagram}
    \left(\begin{array}{c}\Isom_{\underline{\Det}(\widetilde\Lambda)} (\Det N, 0) \\
     \times\\ \Isom_{\underline{\Det}(\widetilde\Lambda)}(\Det \widetilde\Lambda^d, \Det (\widetilde\Lambda \otimes_L \Dcris(V))
    \end{array}\right)
 & \rTo & \Isom_{\underline{\Det}(\widetilde\Lambda)} \Big(\Det \widetilde\Lambda \otimes_\Lambda H^1_{\Iw}(K_\infty, V), \Det \widetilde\Lambda \otimes_L \Dcris(V)\Big)\\
    \dInto & & \dInto\\
    \left(\begin{array}{c}
    \operatorname{Aut}_{\underline{\Det}(\widetilde Q)} (0) \\ \times \\ \Isom_{\underline{\Det}(\widetilde Q)}(\Det \widetilde Q^d, \Det \widetilde Q \otimes_{L} \Dcris(V))\end{array}\right) & \rTo & \Isom_{\underline{\Det}(\widetilde Q)} \Big(\Det \widetilde Q \otimes_{\Lambda} H^1_{\Iw}(K_\infty, V), \Det \widetilde Q \otimes_{L} \Dcris(V)\Big)
   \end{diagram}

   By Lemma \ref{lemma:torsiondet}, the image of $\Isom_{\underline{\Det}(\widetilde\Lambda)} (\Det N, 0)$ in $\operatorname{Aut}_{\underline{\Det}(\widetilde Q)} (0) \cong \widetilde Q^\times$ is $f_N^{-1} \Lambda^\times$; but, on the other hand, Proposition \ref{prop:det of regulator} shows that the element $\ell(V)^{-1} \Det \cL^G_{V, \xi}$ of $\Isom_{\underline{\Det}(\widetilde Q)} \Big(\Det \widetilde Q \otimes_{\Lambda} H^1_{\Iw}(K_\infty, V)$ is $f_Q$ times the image of an element of $\Isom_{\underline{\Det}(\widetilde\Lambda)}(\Det \widetilde\Lambda^d, \Det (\widetilde\Lambda \otimes_L \Dcris(V))$. Cancelling the factors of $f_N$ and $f_{N}^{-1}$, their product is the image of an element of the top right-hand corner, as required.
  \end{proof}


 \subsection{Integral coefficients}
  \label{sect:powerofp}

  Let $V$ be a crystalline representation with non-negative Hodge-Tate weights. We assume for now that $H^0(\Qpi, V) = 0$. Let $\NN(V)$ be the Wach module of $V$ (cf.~\cite{berger04}). Then the inclusion
  \[ \NN(V)\subset \BB^+_{\rig,\Qp} \otimes_{\BB^+_{\Qp}} \Dcris(V)\]
  induces an isomorphism of $\vp$-modules
  \[ i:\NN(V)/\pi\NN(V)\cong \Dcris(V).\]
  Let $T$ be a Galois-stable lattice in $V$; then there is a corresponding $\left(\cO \otimes \AA^+_{\Qp}\right)$-lattice $\NN(T) \subseteq \NN(V)$. We use this to define a $\cO$-lattice $\Dcris(T) \subseteq \Dcris(V)$, which is simply the image of $\NN(T)$ in $\Dcris(V)$ under the map $i$.

  \begin{proposition}\label{prop:fg}
   $\big(\vp^*\NN(T)\big)^{\psi=0} / (1-\vp)\NN(T)^{\psi=1}$ is a finitely-generated $\cO$-module.
  \end{proposition}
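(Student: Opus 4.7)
The plan is to identify $Q$, up to a bounded $\cO$-module correction, as the cokernel of a square matrix $A \in M_d(\Lambda_\cO(\Gamma))$ (with $d = \dim V$) whose determinant is, up to $\Lambda_\cO(\Gamma)^\times$, a distinguished polynomial in $\gamma - 1$. This will suffice, since a finitely-generated $\Lambda_\cO(\Gamma)$-module killed by a distinguished polynomial is finitely-generated as an $\cO$-module (it becomes a module over the quotient $\Lambda_\cO(\Gamma)/(\text{dist.~poly.})$, which is finite-free over $\cO$).

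First I would note that $(\varphi^*\NN(T))^{\psi=0}$ is a free $\Lambda_\cO(\Gamma)$-module of rank $d$: fixing an $(\cO \otimes \AA^+_{\Qp})$-basis $e_1, \dots, e_d$ of $\NN(T)$, the classical Mellin-transform isomorphism $(\AA^+_{\Qp})^{\psi=0} \cong \Lambda_\cO(\Gamma)$ identifies $\{(1+\pi)\varphi(e_j)\}_{j=1}^d$ with a $\Lambda_\cO(\Gamma)$-basis. Under this isomorphism the composition $\mathfrak{M}^{-1}\circ (1-\varphi)$ agrees with (the restriction to $\NN(T)^{\psi=1}$ of) the cyclotomic regulator $\cL^\Gamma_{V,\xi}$, which thus lands in $\Lambda_\cO(\Gamma) \otimes_\cO \Dcris(T)$. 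The hypothesis $H^0(\Qpi, V) = 0$ gives $\NN(V)^{\varphi=1} \subseteq V^{H_{\Qp}} = 0$, so $1-\varphi$ is injective on $\NN(V)^{\psi=1}$ and $Q$ is $\Lambda_L(\Gamma)$-torsion after inverting $p$. Choosing $x_1, \dots, x_d \in \NN(T)^{\psi=1}$ lifting a $\Lambda_L(\Gamma)$-basis of $\NN(V)^{\psi=1}$ yields the matrix $A$; the cokernel of $A$ differs from $Q$ only by a bounded $\cO$-correction coming from the $p$-power-torsion quotient $\NN(T)^{\psi=1}/\langle x_i\rangle_{\Lambda_\cO(\Gamma)}$.

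The determinant of $A$ is controlled by Proposition~\ref{prop:det of cyclo regulator}: up to a unit in $\cH_L(\Gamma)^\times$, one has $\det A \equiv \ell(V)\cdot f_{H^2_\Iw(\Qpi, V)}^{-1}$. Both factors are nonzero elements with only finitely many zeros on the $\Gamma$-character space: $\ell(V)$ has zeros precisely at the characters $\chi^{-j}$ for $0 \leq j < n_i$ (ranging over the Hodge--Tate weights), while $f_{H^2_\Iw(\Qpi, V)}$ is a product of linear factors of the form $\gamma - \chi^j(\gamma)$ (by the semisimplicity statement in Lemma~\ref{lemma:semisimple} applied to $V^*(1)^{G_{\Qpi}}$). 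Working isotypically over each $\Delta$-component of $\Lambda_\cO(\Gamma) \cong \cO[\Delta][[T]]$, the Weierstrass preparation theorem then shows that the a priori integral element $\det A \in \Lambda_\cO(\Gamma)$ is, up to a unit of $\Lambda_\cO(\Gamma)$, a distinguished polynomial in $T = \gamma-1$.

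The main obstacle is the bookkeeping around the integrality and the distinguished-polynomial property: one must verify carefully that the Mellin identification sends the $\cO$-lattice $(\varphi^*\NN(T))^{\psi=0}$ \emph{precisely} onto $\Lambda_\cO(\Gamma) \otimes_\cO \Dcris(T)$ (so that $A$ has integral entries), and that the $\cH_L(\Gamma)^\times$ unit appearing in Proposition~\ref{prop:det of cyclo regulator} does not conspire with the Newton polygons of $\ell(V)$ and $f_{H^2_\Iw}$ to introduce a spurious factor of $p$ that would destroy distinguishedness. Once this is settled, $Q$ is a finitely generated module over $\Lambda_\cO(\Gamma)$ annihilated (up to a finite $\cO$-correction) by a distinguished polynomial, hence finitely generated over $\cO$.
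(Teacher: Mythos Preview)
Your approach has a genuine gap, and it is precisely the ``main obstacle'' you flag but do not resolve. Proposition~\ref{prop:det of cyclo regulator} determines $\det A$ only modulo $\cH_L(\Gamma)^\times = \Lambda_L(\Gamma)^\times$, and $\Lambda_L(\Gamma)^\times$ contains all powers of $p$. Thus the rational determinant computation tells you the distinguished-polynomial part of $\det A$ but says nothing about its $\mu$-invariant: Weierstrass preparation gives $\det A = p^{\mu} \cdot (\text{dist.\ poly.}) \cdot (\text{unit of } \Lambda_\cO(\Gamma))$, and nothing in your argument rules out $\mu > 0$. But for a finitely generated torsion $\Lambda_\cO(\Gamma)$-module, finite generation over $\cO$ is \emph{equivalent} to $\mu = 0$. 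So you are attempting to deduce $\mu = 0$ from information that is, by construction, insensitive to $\mu$. The ``conspiracy'' you worry about is not a bookkeeping nuisance; it is the entire content of the proposition.

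The paper's proof is both more elementary and logically prior to the determinant machinery. It shows directly, via the convergent geometric series $y = \sum_{n \ge 0} \vp^n(x)$, that the image of $1-\vp$ contains $\vp(\pi)^{h+1}\big(\vp^*\NN(T)\big)^{\psi=0}$, where $h$ is the largest Hodge--Tate weight; the resulting quotient is then identified with $\Lambda_\cO(\Gamma)^{\oplus d}/p_{h+1}\Lambda_\cO(\Gamma)^{\oplus d}$ for an explicit element $p_{h+1}$, which is visibly finite over $\cO$. In fact the paper's logical flow is the reverse of yours: Proposition~\ref{prop:fg} is used (in Corollary~\ref{cor:charidealintegral}) to establish $\mu = 0$, which is then what pins down the \emph{integral} characteristic ideal from the rational one. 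Your proposed route would make that deduction circular.
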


  \begin{proof}
   Let $h$ be the largest Hodge-Tate weight of $V$, and let $d=\dim_LV$. By the definition of a Wach module (see \cite[Definition II.4.1]{berger04}), $\vp^*\NN(T)/\NN(T)$ is annihilated by $q^h$, where $q=\vp(\pi)/\pi$. Hence the module
   \[ M = \vp(\pi)^h \big(\vp^*\NN(T)\big)\]
   satisfies $\vp(M) \subseteq M$.

   Suppose that $x \in \vp(\pi) M$. Then by induction we see that $\vp^n(x) \in \vp^{n+1}(\pi) M$; but since $\vp^n(\pi)\rightarrow 0$ as $n \to +\infty$, this implies that the series $\sum_{n \ge 0} \vp^n(x)$ converges to an element $y \in M$ satisfying $(1 - \vp)y = x$. Moreover, if $\psi(x) = 0$, then we evidently have $\psi(y) = y$. This implies that the image of
   \[ (1 - \vp) : \NN(T)^{\psi=1} \to \big(\vp^*\NN(T)\big)^{\psi=0}\]
   contains $\left(\vp(\pi) M\right)^{\psi = 0} = \vp(\pi)^{1 + h} \big(\vp^*\NN(T)\big)^{\psi = 0}$.

   Now, as shown in the proof of \cite[Proposition 3.11]{leiloefflerzerbes10}, for any $k \ge 0$ we have an isomorphism of $\Lambda(\Gamma)$-modules
   \[ \Lambda_{\cO}(\Gamma)^{\oplus d}/p_k\Lambda_{\cO}(\Gamma)^{\oplus d}\cong \big(\vp^*\NN(T)\big)^{\psi=0}/\vp(\pi)^k\big(\vp^*\NN(T)\big)^{\psi=0},\]
   where
   \[ p_k=(1-\gamma_1)(1-\chi(\gamma_1)^{-1}\gamma_1)\cdots(1-\chi(\gamma_1)^{1-k}\gamma_1),\]
   with $\gamma_1$ a generator of $\Gamma_1 \subseteq \Gamma$. So if $k \ge 1 + h$, the quotient $\big(\vp^*\NN(T)\big)^{\psi=0}/(1-\vp)\NN(T)^{\psi=1}$ is a finitely-generated $\Lambda_{\cO}(\Gamma)$-module annihilated by the element $p_k$. However, $\Lambda_{\cO}(\Gamma) / p_k \Lambda_{\cO}(\Gamma)$ is finitely-generated over $\cO$; hence $\big(\vp^*\NN(T)\big)^{\psi=0} / (1-\vp)\NN(T)^{\psi=1}$ must also be finitely-generated over $\cO$.
  \end{proof}

  \begin{corollary}
   \label{cor:charidealintegral}
   The characteristic ideal of the quotient
   \[ \frac{\big(\vp^*\NN(T)\big)^{\psi=0}}{(1-\vp)\NN(T)^{\psi=1}}\]
   is equal to $\prod_{i = 1}^d p_{n_i}$, where $n_i$ are the Hodge--Tate weights of $V$ as above.
  \end{corollary}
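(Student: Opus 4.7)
The plan is to combine the rational determinant formula of Proposition \ref{prop:det of cyclo regulator} with a $p$-saturation argument at the integral level, by identifying the quotient with the cokernel of an inclusion of two free $\Lambda_\cO(\Gamma)$-modules of rank $d$.

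First I would observe that $(\vp^*\NN(T))^{\psi=0}$ is a free $\Lambda_\cO(\Gamma)$-module of rank $d$: the isomorphism modulo $\vp(\pi)^k$ exhibited in the proof of Proposition \ref{prop:fg} glues via the Mellin transform $\mathfrak{M}$ in the projective limit. Under our hypothesis $H^0(\Qpi,V)=0$, the Fontaine isomorphism identifies $\NN(T)^{\psi=1}$ with $H^1_\Iw(\Qpi,T)_0$, which is a torsion-free $\Lambda_\cO(\Gamma)$-module of rank $d$, and on which $1-\vp$ is injective with torsion cokernel (by Proposition \ref{prop:fg}). The characteristic ideal of the cokernel is therefore generated by $\det A$, where $A$ is the matrix of the inclusion in any pair of bases.

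Second, I would compute $\det A$ after inverting $p$. The composite $\mathfrak{M}^{-1}\circ(1-\vp)$ is by definition the cyclotomic regulator $\cL^\Gamma_{V,\xi}$. Under our hypothesis, $H^1_\Iw(\Qpi,V)$ is torsion-free, so the $y_i$ may be chosen as a $\Lambda_L(\Gamma)$-basis, yielding $Q=0$; moreover $H^2_\Iw(\Qpi,V)\cong H^0(\Qpi,V^*(1))^*$ is finite-dimensional over $L$, hence pseudo-null as a $\Lambda_L(\Gamma)$-module, so $f_{H^2_\Iw}=1$. Proposition \ref{prop:det of cyclo regulator} then gives $\det A \equiv \ell(V)=\prod_i\mu_{n_i}\pmod{\cH_L(\Gamma)^\times}$. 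A direct inspection shows that $\mu_n$ and $p_n$ have the same simple zeros on the character variety of $\Gamma$ (namely the divisor $\eta|_{\Gamma_1}=\chi^n|_{\Gamma_1}$), so their ratio is a nowhere-vanishing rigid-analytic function, hence a unit of $\Lambda_L(\Gamma)$ by the equality $\cH_L(\Gamma)^\times=\Lambda_L(\Gamma)^\times$ recalled in the paper. Thus $\det A\cdot\Lambda_L(\Gamma)=\prod_i p_{n_i}\cdot\Lambda_L(\Gamma)$.

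Third, I would upgrade to an equality of ideals in $\Lambda_\cO(\Gamma)$ by verifying that no spurious power of $p$ enters, i.e.\ that the quotient is $\cO$-torsion-free. Since $\NN(T)$ is $p$-saturated inside $\NN(V)$ by construction of the Wach module of the lattice $T$, the submodule $\NN(T)^{\psi=1}$ is $p$-saturated in $\NN(V)^{\psi=1}$; because $1-\vp$ is injective, its image $(1-\vp)\NN(T)^{\psi=1}$ is then $p$-saturated inside the free $\Lambda_\cO(\Gamma)$-module $(\vp^*\NN(T))^{\psi=0}$. Consequently the cokernel has no $\cO$-torsion, and the integral characteristic ideal coincides with its rational counterpart intersected with $\Lambda_\cO(\Gamma)$, namely $\prod_i p_{n_i}\cdot\Lambda_\cO(\Gamma)$. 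The main obstacle will be this last $p$-saturation step, where one must transfer lattice-theoretic saturation through the $\psi$-invariants and the map $1-\vp$; if the direct verification proves awkward, a cleaner fallback is a dévissage to the case of a single crystalline character $L(n)$, where the Wach module is one-dimensional and the characteristic ideal $p_n$ can be computed by hand, combined with multiplicativity in short exact sequences.
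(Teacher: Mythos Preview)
Your second step contains a genuine error that breaks the argument. You claim that $\mu_n$ and $p_n$ have the same zero divisor, so that their ratio is a unit of $\Lambda_L(\Gamma)$; this is false. Writing $\gamma_1-1=T$, the factor $\ell_k=\log(1+T)/\log\chi(\gamma_1)-k$ vanishes at every $T$ with $1+T=\zeta\,\chi(\gamma_1)^k$ for $\zeta$ a $p$-power root of unity, i.e.\ at infinitely many points of the open disc, whereas $1-\chi(\gamma_1)^{-k}\gamma_1$ has the single zero $T=\chi(\gamma_1)^k-1$. Thus $\ell(V)$ and $\prod_i p_{n_i}$ do \emph{not} generate the same ideal of $\cH_L(\Gamma)$. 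The underlying confusion is that Proposition~\ref{prop:det of cyclo regulator} computes the determinant of $\cL^\Gamma_{V,\xi}$ with respect to an $L$-basis of $\Dcris(V)$, not with respect to a $\Lambda$-basis of $(\vp^*\NN(T))^{\psi=0}$. The change of basis between these two targets is exactly the matrix $M$ of Proposition~\ref{prop1}, whose determinant $p^{m(V)}\ell(V)/\prod_i p_{n_i}$ is not a unit; so $\det A$ and $\det(\cL^\Gamma_{V,\xi})$ differ by this non-trivial factor, and you cannot read off $\det A$ from Proposition~\ref{prop:det of cyclo regulator} alone. (There is also a smaller slip: over $\Lambda_L(\Gamma)$, which has Krull dimension~$1$, a nonzero finite-dimensional $L$-vector space is torsion but \emph{not} pseudo-null, so $f_{H^2_{\Iw}}$ need not be~$1$.)

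The paper's route is quite different and avoids the regulator entirely at this point: the rational statement---that the characteristic ideal of $(\vp^*\NN(V))^{\psi=0}/(1-\vp)\NN(V)^{\psi=1}$ over $\Lambda_L(\Gamma)$ is $\prod_i p_{n_i}$---is quoted from \cite[Theorem~4.12]{leiloefflerzerbes11}. The passage to $\Lambda_\cO(\Gamma)$ is then immediate from Proposition~\ref{prop:fg}: since the quotient is finitely generated over $\cO$, its $\mu$-invariant vanishes, so no power of $p$ can appear. This is also simpler than your proposed $p$-saturation argument in step three; finite generation over $\cO$ already forces $\mu=0$ without needing to verify that $(1-\vp)\NN(T)^{\psi=1}$ is $p$-saturated.
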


  \begin{proof}
   The characteristic ideal of the $\Lambda_L(\Gamma)$-module $\frac{\big(\vp^*\NN(V)\big)^{\psi=0}}{(1-\vp)\NN(V)^{\psi=1}}$ is known to be generated by $\prod_{i = 1}^d p_{n_i}$, by \cite[Theorem 4.12]{leiloefflerzerbes11}. Hence the characteristic ideal of $\frac{\big(\vp^*\NN(T)\big)^{\psi=0}}{(1-\vp)\NN(T)^{\psi=1}}$ must be equal to this up to a power of $p$ for each $\Gamma / \Gamma_1$-isotypical component. However, this module is finitely-generated over $\cO$ by Proposition \ref{prop:fg}, so it has zero $\mu$-invariant.
  \end{proof}

  We now recall a theorem (due to Laurent Berger) giving a convenient basis of $\NN(T)$:

  \begin{theorem}[{Berger, cf.~\cite[Theorem 3.5]{leiloefflerzerbes10}}]
   There exists an $\cO \otimes \AA^+_{\Qp}$-basis $(x_1, \dots, x_d)$ of $\NN(T)$ with the property that
   \[ \big((1 + \pi) \vp(x_1), \dots, (1 + \pi) \vp(x_d)\big)\]
   is a $\Lambda_{\cO}(\Gamma)$-basis of $\left( \vp^* \NN(T)\right)^{\psi = 0}$.
  \end{theorem}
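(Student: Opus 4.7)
The plan is to combine Berger's structure theorem for Wach modules with the Mellin transform. By Berger's construction (used in the proof of \cite[Theorem 3.5]{leiloefflerzerbes10}), one can find an $\cO \otimes \AA^+_{\Qp}$-basis $(e_1, \ldots, e_d)$ of $\NN(T)$ whose reductions $\bar{e}_i$ in $\NN(T)/\pi\NN(T) \cong \Dcris(T)$ form an $\cO$-basis adapted to the Hodge filtration, and for which the matrix of $\vp$ in this basis has a specific controlled shape (upper-triangular with diagonal entries involving $q^{n_i}$).

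Given such a basis, the inclusion $(1+\pi)\vp(e_i) \in (\vp^*\NN(T))^{\psi=0}$ is immediate from the identity $\psi\bigl((1+\pi)\vp(e_i)\bigr) = \psi(1+\pi) \cdot e_i = 0$. To show these $d$ elements form a $\Lambda_\cO(\Gamma)$-basis, I would combine the Mellin transform isomorphism $\Lambda_\cO(\Gamma) \to (\cO \otimes \AA^+_{\Qp})^{\psi=0}$ defined by $\lambda \mapsto \lambda \cdot (1+\pi)$, which handles the rank-one trivial-twist case, with the isomorphism
\[ \Lambda_\cO(\Gamma)^{\oplus d}/p_k \Lambda_\cO(\Gamma)^{\oplus d} \cong (\vp^*\NN(T))^{\psi=0}/\vp(\pi)^k(\vp^*\NN(T))^{\psi=0} \]
from \cite[Proposition 3.11]{leiloefflerzerbes10}, which quantifies how $(\vp^*\NN(T))^{\psi=0}$ deviates from $d$ free copies of the trivial case.

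The proof then becomes a Nakayama-style descent along the $\vp(\pi)$-adic filtration. At each level $k$, one checks that the images of $(1+\pi)\vp(e_i)$ form a $\Lambda_\cO(\Gamma)/p_k$-basis of the quotient on the right-hand side of the LLZ isomorphism. The base case $k=1$ reduces modulo $\pi$ to the statement that $\bar{e}_1, \ldots, \bar{e}_d$ form an $\cO$-basis of $\Dcris(T)$, which is true by construction; the inductive step uses Berger's explicit form of the Frobenius matrix to track how the $\Gamma$-action propagates through successive layers. Passing to the inverse limit in $k$, together with the $\vp(\pi)$-adic separatedness of $(\vp^*\NN(T))^{\psi=0}$, then yields the claimed basis property.

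The main obstacle is the inductive step: the $\Gamma$-action on $\vp^*\NN(T)$ does not respect the decomposition into coordinate directions in a naive way, so one has to control the off-diagonal terms. Here Berger's choice of basis is essential, since it guarantees that those cross-terms have sufficiently high $\pi$-adic valuation to be absorbed at each level of the filtration; without this control, the $\Lambda_\cO(\Gamma)$-linear independence at higher $k$ cannot be concluded from the Dieudonn\'e-module statement at $k=1$.
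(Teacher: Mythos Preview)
The paper does not prove this theorem at all: it is stated with a citation to Berger via \cite[Theorem 3.5]{leiloefflerzerbes10}, and then immediately followed by a remark noting that Schneider--Sujatha--Venjakob have since shown the stronger fact that \emph{every} basis of $\NN(T)$ has this property. So there is no ``paper's own proof'' to compare against; the result is imported wholesale.

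That said, your sketch has a circularity problem you should be aware of. You propose to use the isomorphism
\[
\Lambda_\cO(\Gamma)^{\oplus d}/p_k \Lambda_\cO(\Gamma)^{\oplus d} \;\cong\; (\vp^*\NN(T))^{\psi=0}/\vp(\pi)^k(\vp^*\NN(T))^{\psi=0}
\]
from \cite[Proposition 3.11]{leiloefflerzerbes10} as the scaffolding for a Nakayama descent. But in that paper, Proposition 3.11 is established \emph{after} Theorem 3.5 and its proof uses the very basis whose existence you are trying to establish: one first knows that $(\vp^*\NN(T))^{\psi=0}$ is free of rank $d$ over $\Lambda_\cO(\Gamma)$ on the generators $(1+\pi)\vp(x_i)$, and only then can one read off the structure of the successive quotients by $\vp(\pi)^k$. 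Invoking 3.11 to prove 3.5 is therefore backwards.

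If you want to give a self-contained argument, the honest route is the one Berger actually takes: work directly with the $\Gamma$-action on $\NN(T)$, use that $\Gamma$ acts trivially on $\NN(T)/\pi\NN(T)$ (this is part of the definition of a Wach module), and show by a successive-approximation argument in the $\pi$-adic topology that the map $\Lambda_\cO(\Gamma)^{\oplus d} \to (\vp^*\NN(T))^{\psi=0}$ sending the standard basis to the $(1+\pi)\vp(x_i)$ is an isomorphism. The control on the Frobenius matrix you mention is not the crucial input here; what matters is the triviality of the $\Gamma$-action modulo $\pi$, which lets you lift congruences one power of $\pi$ at a time. This is also why the Schneider--Sujatha--Venjakob strengthening holds for an arbitrary basis.
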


  \begin{remark}
   In fact the second author together with Peter Schneider and Ramdorai Sujatha have recently shown that \emph{every} $\cO \otimes \AA^+_{\Qp}$-basis of $\NN(T)$ has this property. However, this is not immediately obvious from the definitions (as is erroneously claimed in \cite{benoisberger08}).
  \end{remark}

  \begin{proposition}\label{prop1}
   Let $x_1, \dots, x_d$ be a basis of $\NN(T)$ as in the previous theorem. Then the images $v_i$ of $x_i$ in $\Dcris(T) = \NN(T) / \pi \NN(T)$ are a basis of $\Dcris(T)$ over $\cO$. Moreover, if we define a matrix $M \in \operatorname{M}_{d \times d}(\cH_L(\Gamma))$ by
   \[ (1 + \pi) \vp(x_j) = \sum_{i = 1}^d m_{ij} \cdot (1 + \pi)\vp(v_i),\]
   then the determinant of $M$ is
   \[ \frac{p^{m(V)} \ell(V)}{\prod_{i = 1}^d p_{n_i} }\]
   up to a unit in $\Lambda_{\cO}(\Gamma)$.
  \end{proposition}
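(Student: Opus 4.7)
The first claim---that $v_1,\dots,v_d$ is an $\cO$-basis of $\Dcris(T)$---is immediate. Since $(x_i)$ is an $\cO\otimes \AA^+_{\Qp}$-basis of $\NN(T)$ and $\AA^+_{\Qp}/\pi\AA^+_{\Qp}\cong\Zp$, the reductions modulo $\pi$ give an $\cO$-basis of $\NN(T)/\pi\NN(T)=\Dcris(T)$.

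For the determinant of $M$, my strategy is to reduce to Proposition \ref{prop:det of cyclo regulator} by relating $M$ to the matrix of the cyclotomic regulator. Since $H^0(\Qpi,V)=0$, the module $\NN(T)^{\psi=1}\cong H^1_{\Iw}(\Qpi,T)_0$ is torsion-free of rank $d$ over $\Lambda_{\cO}(\Gamma)$; I choose $Y_1,\dots,Y_d\in \NN(T)^{\psi=1}$ generating it modulo $\Lambda_L(\Gamma)$-torsion. Using the Berger basis of $(\vp^*\NN(T))^{\psi=0}$ provided by the previous theorem, I expand $(1-\vp)Y_i=\sum_j n_{ji}(1+\pi)\vp(x_j)$ with $n_{ji}\in\Lambda_{\cO}(\Gamma)$. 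By Corollary \ref{cor:charidealintegral} the determinant of $N=(n_{ji})$ generates the characteristic ideal of $(\vp^*\NN(T))^{\psi=0}/(1-\vp)\NN(T)^{\psi=1}$, hence $\det N = \prod_i p_{n_i}$ up to $\Lambda_{\cO}(\Gamma)^\times$.

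Now I combine with the Mellin transform. Using $\cL^\Gamma_{V,\xi}(y)=\mathfrak{M}^{-1}((1-\vp)y)$, the identity $\mathfrak{M}^{-1}((1+\pi)\vp(v_k))=1\otimes \vp(v_k)$, and the defining relation for $M$, I get
\[ \cL^\Gamma_{V,\xi}(Y_i)=\sum_k (MN)_{ki}\otimes \vp(v_k).\]
Pairing against the $L$-basis $(v_j^\vee)$ of $\Dcris(V)^\vee$ dual to $(v_k)$ produces the matrix $M_\vp\cdot M\cdot N$, where $M_\vp$ is the matrix of the crystalline Frobenius on $\Dcris(V)$ in the basis $(v_i)$. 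Hence
\[ \det\langle\cL^\Gamma_{V,\xi}(Y_i),v_j^\vee\rangle = \det(\vp)\cdot\det M\cdot\det N.\]
Proposition \ref{prop:det of cyclo regulator} evaluates the left side as $\ell(V) f_Q/f_{H^2_{\Iw}(\Qpi,V)}$ up to $\cH_L(\Gamma)^\times$, where $Q=H^1_{\Iw}(\Qpi,V)/\langle Y_i\rangle$; substituting $\det N$ yields
\[ \det M \equiv \frac{\ell(V)\cdot f_Q}{\det(\vp)\cdot f_{H^2_{\Iw}(\Qpi,V)}\cdot\prod_i p_{n_i}}\pmod{\cH_L(\Gamma)^\times}.\]

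The main obstacle is to sharpen this congruence from $\cH_L(\Gamma)^\times$ down to $\Lambda_{\cO}(\Gamma)^\times$ and identify the resulting correction as a power $p^{m(V)}$. Three ingredients do the bookkeeping: (i) the Newton--Hodge equality for crystalline $V$ gives $v_p(\det\vp)=-\sum_i n_i$, contributing a factor of $p^{\sum n_i}$; (ii) by Lemma \ref{lemma:semisimple} the module $H^2_{\Iw}(\Qpi,V)\cong H^0(\Qpi,V^*(1))^*$ is a direct sum of Tate twists, so $f_{H^2_{\Iw}}$ is a product of factors $(\gamma_1-\chi(\gamma_1)^{-k})$ whose constant terms are units in $L$ but non-units in $\cO$, contributing an explicit further power of $p$; (iii) a parallel analysis of $f_Q$, using the injection $H^1_{\Iw}(\Qpi,V)/H^1_{\Iw}(\Qpi,V)_0\hookrightarrow\Dcris(V)^{\vp=1}(-1)$ from Berger, pins down $f_Q$ modulo $\Lambda_{\cO}(\Gamma)^\times$. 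Assembling these $p$-adic valuations identifies $m(V)$ and delivers the formula up to a unit in $\Lambda_{\cO}(\Gamma)$.
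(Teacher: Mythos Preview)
Your factorisation $\det A = \det(\varphi)\cdot\det M\cdot\det N$, with $A$ the matrix of the regulator in the bases $(Y_i)$ and $(v_j^\vee)$, is correct, and combining it with Corollary~\ref{cor:charidealintegral} and Proposition~\ref{prop:det of cyclo regulator} does recover the congruence $\det(M)\cdot\prod_i p_{n_i}/\ell(V)\in\Lambda_L(\Gamma)^\times=\cH_L(\Gamma)^\times$, which is also the paper's first step. (Strictly, your derivation needs $f_Q/f_{H^2_{\Iw}}\in\Lambda_L(\Gamma)^\times$, which you have not checked; but set this aside.)

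The genuine gap is the last paragraph. Upgrading from $\Lambda_L(\Gamma)^\times$ to $\Lambda_{\cO}(\Gamma)^\times$ means pinning down a specific power of $p$ on each $\Gamma/\Gamma_1$-component. Your items (ii) and (iii) cannot do this: Proposition~\ref{prop:det of cyclo regulator} determines $\det A$ only modulo $\cH_L(\Gamma)^\times$, and the characteristic elements $f_Q$, $f_{H^2_{\Iw}}$ are themselves only defined up to $\Lambda_L(\Gamma)^\times$. Whatever integral representatives you choose for them, the unknown unit in the congruence $\det A\equiv\ell(V)f_Q/f_{H^2_{\Iw}}\pmod{\cH_L(\Gamma)^\times}$ absorbs the choice, so no specific $p$-power can be extracted. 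What your argument would require is an \emph{integral} refinement of Proposition~\ref{prop:det of cyclo regulator}, and that is essentially the statement you are trying to prove.

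The paper's argument for this step is quite different and avoids $f_Q$, $f_{H^2_{\Iw}}$ entirely. It observes directly that $M$ specialises to the identity matrix at every character $\eta$ of $\Gamma$ trivial on $\Gamma_1$: since $v_j=x_j\bmod\pi$, reducing the defining relation for $M$ modulo $\pi$ gives $\varphi(v_j)=\sum_i m_{ij}(\eta)\,\varphi(v_i)$, hence $m_{ij}(\eta)=\delta_{ij}$ and $\det(M)(\eta)=1$. A direct valuation computation then shows $\big(p^{m(V)}\ell(V)/\prod_i p_{n_i}\big)^*(\eta)\in\Zp^\times$ for each such $\eta$, which gives the required integral unit on every component. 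This evaluation at $\eta$ is the missing integral input that your bookkeeping via $f_Q$, $f_{H^2_{\Iw}}$, and Newton--Hodge cannot supply.

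A smaller technical issue: you assume $\NN(T)^{\psi=1}$ is free over $\Lambda_{\cO}(\Gamma)$ with the $Y_i$ as a basis (needed for $\det N=\prod_i p_{n_i}$ up to $\Lambda_{\cO}(\Gamma)^\times$), but do not justify this.
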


  \begin{proof}
   The fact that the $v_i$ span $\Dcris(T)$ is clear, since $\Dcris(T) = \NN(T) / \pi \NN(T)$.

   For the second statement, we know that
   \[ \det(M) \cdot \frac{\prod_{i = 1}^d p_{n_i}}{p^{m(V)}\ell(V)}\]
   is an element of $\Lambda_L(\Gamma)^\times$, as a consequence of Corollary \ref{cor:charidealintegral} and Proposition \ref{prop:det of cyclo regulator} (or, alternatively, by Theorem A of \cite{leiloefflerzerbes11}). Let us calculate its image under a character $\eta$ of $\Gamma$ that is trivial on $\Gamma_1$. By construction, $\det(M)(\eta) = 1$, since $M$ is in $\operatorname{Mat}_{2 \times 2}(\cH_L(\Gamma_1))$ and is congruent to 1 modulo the trivial character.

   On the other hand, $(\gamma_1 - 1)$ has a simple zero at $\eta$ with derivative $\log \chi(\gamma_1)$, which has valuation 1, and for $i \ge 1$, the valuation of $1 - \chi^i(\gamma_1)$ is $1 + v_p(i)$; so the valuation of $ \left(\prod_{i = 1}^d p_k\right)^*(\eta)$
   is
   \[ m(V) + \sum_{\substack{1 \le i \le d \\ n_i \ge 1}} v_p( (n_i - 1)! ).\]
   On the other hand, $\ell(V)^*(\eta)$ is $\prod_{i = 1}^d (n_i - 1)!$ up to a sign, so the value at $\eta$ of $\frac{p^{m(V)} \ell(V)^*(\eta)}\prod_{i = 1}^d p_k $ lies in $\Zp^\times$.
  \end{proof}

  \begin{proposition}\label{prop2}
   The determinant of $\vp: \Dcris(V) \to \Dcris(V)$ is $p^{-m(V)}$ up to a $p$-adic unit.
  \end{proposition}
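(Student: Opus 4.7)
The plan is to reduce to the one-dimensional case by passing to the determinant. Since $\vp$ is $L$-linear and $\Dcris$ commutes with tensor products (and hence with exterior powers) on crystalline representations,
\[ \det{}_L\bigl(\vp : \Dcris(V) \to \Dcris(V)\bigr) = \vp\bigr|_{\det_L \Dcris(V)} = \vp\bigr|_{\Dcris(\det_L V)}. \]
By additivity of Hodge--Tate weights in exact sequences, $\det_L V$ is a one-dimensional crystalline representation with unique Hodge--Tate weight $m(V) = \sum_{i=1}^{d} n_i$. Hence it suffices to treat the case in which $V$ is itself one-dimensional with Hodge--Tate weight $m$, and to show that $\vp$ then acts on $\Dcris(V)$ as $p^{-m}$ up to a unit.

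For this I would invoke the classification of one-dimensional crystalline characters of $G_{\Qp}$: any such character factors uniquely as $\chi^{m} \eta_1$ with $\eta_1$ an unramified character valued in $\cO_L^\times$. On $\Dcris(L(\eta_1))$ the crystalline Frobenius acts by the inverse of the image of arithmetic Frobenius under $\eta_1$, which lies in $\cO_L^\times$; while on $\Dcris(L(\chi^m)) = L \cdot t^{-m} e_m$, the identity $\vp(t) = pt$ gives directly that $\vp$ acts by $p^{-m}$. Multiplicativity of $\vp$ in tensor products then delivers the claim.

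Alternatively, and perhaps more cleanly, the proposition follows at a single stroke from the weak admissibility of the filtered $\vp$-module $\Dcris(V)$: weak admissibility forces $t_N(\Dcris(V)) = t_H(\Dcris(V))$, where $t_N = v_p\bigl(\det \vp\bigr)$ and $t_H = \sum_{i} i \dim_L \gr^i \DdR(V) = -\sum_i n_i = -m(V)$, the sign reflecting the paper's convention that $\chi$ has Hodge--Tate weight $+1$ (so weight $n$ corresponds to the jump at $-n$). This yields $v_p(\det \vp) = -m(V)$ at once. I do not foresee a genuine obstacle; the only care needed is in tracking the sign convention for Hodge--Tate weights consistently with the definition of $m(V)$ used implicitly in Proposition \ref{prop1}.
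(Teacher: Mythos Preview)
Your first argument is essentially identical to the paper's: reduce to the one-dimensional case via the top exterior power, then use that a one-dimensional crystalline character is $\chi^m$ times an unramified character with unit values, and read off $\vp = p^{-m} \cdot (\text{unit})$ from $\vp(t) = pt$. Your alternative via weak admissibility ($t_N = t_H$, with the sign tracked correctly under the paper's convention) is also valid and arguably slicker, since it bypasses the explicit classification of rank-one characters; the paper does not use it, but nothing is lost either way for a statement this elementary.
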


  \begin{proof}
   It suffices to consider the case where $V$ is 1-dimensional, in which case $V$ is the product of an unramified character (mapping Frobenius to a unit) and a power of the cyclotomic character, for which the result is obvious.
  \end{proof}

  \begin{corollary}\label{corr:overGamma}
   If $H^0(\Qpi,V)=0$, then there is a unique isomorphism
   \[ \Theta_{\Lambda_\cO(\Gamma), \xi}(T): \Det_{\Lambda_{\cO}(\Gamma)} H^1_{\Iw}(K_\infty, T) \rTo \Det_{\Lambda_{\cO}(\Gamma)} (\Lambda_{\cO}(\Gamma) \otimes_{\Zp} \Dcris(T))\]
   whose base extension to $\Lambda_{L}(\Gamma)$ is the morphism $\Theta_{\Lambda_L(\Gamma), \xi}(V)$ constructed in \eqref{ThetaGamma}.
  \end{corollary}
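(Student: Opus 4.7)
The plan is to address uniqueness first (which is formal), and then existence, which reduces to an explicit integrality computation of the regulator determinant using the Wach module tools established in Propositions \ref{prop:fg}--\ref{prop2} and Corollary \ref{cor:charidealintegral}.

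For uniqueness, the base-change functor $\underline{\Det}(\Lambda_\cO(\Gamma)) \to \underline{\Det}(\Lambda_L(\Gamma))$ is injective on isomorphism sets. Since $SK_1$ vanishes for both rings, Lemma \ref{lemma:detcriterion} reduces this to the injection $\Lambda_\cO(\Gamma)^\times \hookrightarrow \Lambda_L(\Gamma)^\times$, which is evident. Hence at most one integral lift of $\Theta_{\Lambda_L(\Gamma), \xi}(V)$ can exist.

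For existence, the hypothesis $H^0(\Qpi, V) = 0$ ensures that $H^1_\Iw(\Qpi, T)$ is torsion-free. Identifying it (modulo a pseudo-null submodule contained in $\Dcris(V)^{\vp=1}(-1)$, whose determinant is canonically trivial) with $\NN(T)^{\psi=1}$ via the Fontaine isomorphism, I would combine the short exact sequence of $\Lambda_\cO(\Gamma)$-modules
\[ 0 \to \NN(T)^{\psi=1} \xrightarrow{1-\vp} (\vp^*\NN(T))^{\psi=0} \to Q \to 0 \]
with Corollary \ref{cor:charidealintegral} (giving $\Char(Q) = \prod_{i=1}^d p_{n_i}$) and Berger's basis $((1+\pi)\vp(x_i))$ of the middle term. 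Concretely, choosing $y_1,\dots,y_d$ generating $\NN(T)^{\psi=1}$ modulo pseudo-null and expanding $(1-\vp)y_i = \sum_j a_{ij}(1+\pi)\vp(x_j)$, the matrix $A=(a_{ij})$ lies in $M_d(\Lambda_\cO(\Gamma))$ with $\det(A)$ generating $\prod_i p_{n_i}$. Passing to the basis $((1+\pi)\vp(v_j))$ via Proposition \ref{prop1} introduces a factor of $\det(M) = p^{m(V)}\ell(V)/\prod_i p_{n_i}$ (modulo $\Lambda_\cO(\Gamma)^\times$), and the Mellin identification $\mathfrak{M}^{-1}((1+\pi)\vp(v_j)) = \vp(v_j) \in \Dcris(T)$ introduces a final Frobenius factor of determinant $p^{-m(V)}$ (Proposition \ref{prop2}). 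Multiplying the three determinants,
\[ \det(\cL^\Gamma_{V, \xi}) \;\equiv\; p^{-m(V)} \cdot \frac{p^{m(V)}\ell(V)}{\prod_i p_{n_i}} \cdot \prod_i p_{n_i} \;=\; \ell(V) \pmod{\Lambda_\cO(\Gamma)^\times}, \]
so $\Theta_{\Lambda_L(\Gamma),\xi}(V) = \ell(V)^{-1}\det(\cL^\Gamma_{V,\xi})$ becomes a unit in $\Lambda_\cO(\Gamma)^\times$, which is exactly the statement that it descends to an integral isomorphism.

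The main technical obstacle is the careful book-keeping around the Mellin identification and its interaction with $\vp$: one must verify that $\mathfrak{M}^{-1}((1+\pi)\vp(v_k)) = \vp(v_k)$ rather than $v_k$, since the resulting $\vp$-factor is precisely what produces the compensating $p^{-m(V)}$ needed for the three powers of $p$ arising from Propositions \ref{prop1} and \ref{prop2} to cancel exactly. Any contribution from a non-vanishing $H^2_\Iw(K_\infty, T)$ sits in a pseudo-null $\Lambda_\cO(\Gamma)$-module whose determinant is canonically trivial by \cite[Lemma 2.2]{venjakob11}, and hence does not affect the final statement.
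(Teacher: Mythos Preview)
Your approach is essentially the one taken in the paper, only unfolded more explicitly: both arguments reduce via Lemma~\ref{lemma:detcriterion} to checking that a specific element of $\Lambda_L(\Gamma)^\times$ lies in $\Lambda_\cO(\Gamma)^\times$, and both invoke Propositions~\ref{prop1} and~\ref{prop2} (together with Corollary~\ref{cor:charidealintegral}) for the actual computation.  The paper compresses this into the sentence ``for each $\Delta$-character $\eta$ the obstruction $a_\eta$ vanishes'', whereas you factor the regulator as $\mathfrak{M}^{-1}\circ(1-\vp)$ and track the three determinant contributions $\det A$, $\det M$, $\det P$ separately.  That factorisation is correct and is a useful way to see why the three auxiliary results combine to give exactly $\ell(V)$.

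There is, however, one genuine inaccuracy in your write-up.  You assert that the cokernel $H^1_{\Iw}(\Qpi,T)/\NN(T)^{\psi=1}$ (embedded in $\Dcris(T)^{\vp=1}(-1)$) and the module $H^2_{\Iw}$ are \emph{pseudo-null} over $\Lambda_\cO(\Gamma)$ and hence have canonically trivial determinant.  This is false in general: such a module is a finitely generated $\cO$-module on which $\Gamma$ acts through a single character, so it is annihilated by a height-one prime of $\Lambda_\cO(\Gamma)$ and is torsion but \emph{not} pseudo-null (its characteristic element is a non-unit distinguished polynomial).  The correct reason these contributions do not spoil integrality is that, being finitely generated over $\cO$, they have $\mu$-invariant zero: their characteristic elements lie in $\Lambda_\cO(\Gamma)$ and remain non-units after inverting $p$, so they contribute the same factor to the determinant over $\Lambda_\cO(\Gamma)$ as over $\Lambda_L(\Gamma)$ and therefore cancel when measuring the obstruction $a_\eta$.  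You should replace the pseudo-null claim by this $\mu=0$ argument (or, as the paper implicitly does, simply work component-by-component in $e_\eta$ and observe that these torsion pieces are supported away from the maximal ideal in each component).
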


  \begin{proof}
   If $\eta$ is a character of $\Gamma/\Gamma_1$, denote by $e_\eta$ the idempotent corresponding to $\eta$ in $\Lambda_{\Zp}(\Gamma)$.
   It follows from Lemma \ref{lemma:detcriterion} that for each such character $\eta$ there is a unique integer $a_\eta$ such that $p^{a_\eta} e_\eta \Theta_{\Lambda_L(\Gamma), \xi}(V)$ lies in the image of the base-extension map. Applying Propositions \ref{prop1} and \ref{prop2} shows that we must have $a_\eta = 0$ for all $\eta$.
  \end{proof}

  We can now pass to $\Lambda_{\cO}(G)$ coefficients.

  \begin{corollary}
   \label{corr:integralTheta}
   Let $V$ be an arbitrary $L$-linear crystalline representation of $G_{\Qp}$. As before, let $\Lambda = \Lambda_{\cO}(G)$ and $\widetilde\Lambda = \Lambda_{\cOt}(G)$. There is a unique isomorphism
   \[
    \Theta_{\Lambda, \xi}(T): \Det_{\widetilde\Lambda} (0) \rTo \widetilde\Lambda \otimes_{\Lambda} \left\{ \Det_{\Lambda} R\Gamma_{\Iw}(K_\infty, T) \cdot \Det_{\Lambda} (\Lambda \otimes_{\cO} \Dcris(T))\right\}
   \]
   whose base extension to $\Lambda_{\tilde L}(G)$ is the morphism $\Theta_{\Lambda_L(G), \xi}(V)$ constructed in Theorem \ref{thm:existence of Theta}.
  \end{corollary}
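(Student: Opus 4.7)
The plan is to test the integrality of $\Theta_{\Lambda_L(G), \xi}(V)$ by comparing it, via specialization at unramified characters, with the cyclotomic case already settled in Corollary \ref{corr:overGamma}. By Lemma \ref{lemma:detcriterion} applied to the ring extension $\widetilde\Lambda \hookrightarrow \Lambda_{\Lt}(G)$, together with the vanishing of $SK_1$ for both rings, existence and uniqueness of $\Theta_{\Lambda, \xi}(T)$ is equivalent to showing that, after fixing $\cO$-bases of $\Dcris(T)$ and of the free part of a perfect-complex representative of $R\Gamma_{\Iw}(K_\infty, T)$ (with pseudo-null and torsion summands absorbed via Lemma \ref{lemma:torsiondet}), the scalar in $\Lambda_{\Lt}(G)^\times$ representing $\Theta_{\Lambda_L(G), \xi}(V)$ in fact lies in $\widetilde\Lambda^\times$.

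After $\Delta$-isotypical decomposition, where $\Delta\subset G$ denotes the torsion subgroup, each component of $\widetilde\Lambda$ becomes a formal power-series ring $\cOt(\eta)[[G_0]]$ in the pro-$p$ torsion-free quotient $G_0 \cong \Zp^2$. Such a ring is a two-dimensional regular local UFD, so integrality of an element of its fraction field reduces to vanishing of the divisor at each height-one prime; these are either $(p)$ or generated by an irreducible distinguished element, the latter detected by specialization at a character of $G_0$ lying in the prime's zero set. Using the twist compatibility of Proposition \ref{prop:regulator twist invariance}, specialization of $\Theta_{\Lambda_L(G), \xi}(V)$ at an unramified character $\tau$ of $U = \Gal(K_\infty/\Qpi)$ agrees, up to explicit factors built from $b_\tau$, $\mathrm{Tw}_{\tau^{-1}}$, and the ratio $\ell(V)/\ell(V(\tau^{-1}))$ which are easily seen to be integral units in $\Lambda_{\cOt(\tau)}(\Gamma)$, with the cyclotomic isomorphism $\Theta_{\Lambda_{L(\tau)}(\Gamma), \xi}(V(\tau^{-1}))$. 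Corollary \ref{corr:overGamma} then supplies integrality of the specialization whenever $H^0(\Qpi, V(\tau^{-1})) = 0$, which holds outside a finite set of $\tau$. Zariski-density of these good specializations rules out poles or zeros at all but finitely many height-one primes away from $(p)$; integrality at $(p)$ itself is handled by reducing mod $p$ and exploiting the integrality of the regulator $\cL^G_{V, \xi}$ on $\Lambda_\cO(G)$-valued Iwasawa cohomology, which follows from its construction via Wach modules.

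The main obstacle will be covering the finitely many bad $\tau$, and in particular the case where $V$ itself satisfies $V^{G_\Qpi} \neq 0$, so that Corollary \ref{corr:overGamma} cannot be applied even generically in a consistent way. For this, I would use Lemma \ref{lemma:semisimple} to decompose $V^{G_\Qpi}$ as a direct sum of Tate twists $\Qp(j)$ with $j \le 0$, and handle these one-dimensional summands by direct computation: for $V = L(j)$ with lattice $T = \cO(j)$, the Wach module $\NN(T)$ has an explicit rank-one generator, the Iwasawa cohomology is equally explicit, and the matrix of Proposition \ref{prop1} becomes a single entry whose comparison with $\mu_j = \ell(L(j))$ can be carried out by hand, following the pattern of Propositions \ref{prop1} and \ref{prop2}. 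The general case would then follow by applying this together with an integral refinement of the multiplicativity Lemma \ref{lem:SEScompatible} for $\Theta$ over $\widetilde\Lambda$ to the short exact sequence $0 \to V^{G_\Qpi} \to V \to V/V^{G_\Qpi} \to 0$. This integral multiplicativity — i.e.\ that $\Theta_{\Lambda, \xi}$ respects the distinguished triangle of Iwasawa cohomology complexes together with the associated filtration on $\Dcris$ — is the key technical step and the hardest part of the argument: unlike the rational statement of Proposition \ref{prop:SEScompatible}, one must rule out the appearance of non-trivial powers of $p$ in the cancellation between the three factors $\Det R\Gamma_{\Iw}$, and doing so requires precisely tracking the integral structures involved, and in particular verifying that the canonical trivialization of $\Det$ on the pseudo-null $H^0_{\Iw}$ and $H^2_{\Iw}$ terms is compatible with the analogous trivializations for the sub- and quotient-representations.
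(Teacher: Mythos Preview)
Your approach is considerably more involved than necessary, and what you flag as ``the hardest part'' is in fact entirely avoidable. The crucial simplification you are missing is that $\Lambda_{\Lt}(G) = \widetilde\Lambda[1/p]$, so an element of $\Lambda_{\Lt}(G)^\times$ already has no zeros or poles at any height-one prime of $\widetilde\Lambda$ other than $(p)$: on each $\Delta$-isotypical component (a regular local UFD in which $p$ is a prime element) such an element is of the form $p^{a} u$ with $u \in \widetilde\Lambda^\times$ and $a \in \ZZ$. Thus the whole divisor-theoretic apparatus you set up --- Zariski density at the distinguished primes, the finitely many bad $\tau$, the separate mod-$p$ argument --- is superfluous: integrality is equivalent to $a = 0$ on each component, and this integer is detected by projection along \emph{any single} character of $U$, in particular the trivial one.

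The paper's proof is accordingly two lines. First, the twist-compatibility of $\Theta_{\Lambda_L(G), \xi}$ under crystalline characters (which follows from Proposition~\ref{prop:regulator twist invariance} together with the definition $\Theta = \ell(V)^{-1}\Det(\cL^G_{V,\xi})$) lets one replace $V$ by a twist with $H^0(\Qpi, V) = 0$ and non-negative Hodge--Tate weights; such a twist exists since only finitely many unramified characters $\eta_1$ can satisfy $V(\eta_1)^{G_{\Qpi}} \neq 0$, and one may enlarge $L$ if needed. Second, the base-change of $\Theta_{\Lambda_L(G), \xi}(V)$ along $G \twoheadrightarrow \Gamma$ is $\Theta_{\Lambda_L(\Gamma), \xi}(V)$, whose integrality is exactly Corollary~\ref{corr:overGamma}. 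Your plan to treat the case $V^{G_{\Qpi}} \neq 0$ via the filtration $0 \subset V^{G_{\Qpi}} \subset V$, one-dimensional direct computations, and a putative integral multiplicativity of $\Theta$ in short exact sequences is therefore unnecessary; that multiplicativity statement remains unproven in your proposal and would indeed be delicate, so as written there is a genuine gap --- but it lies on a detour the paper never takes.
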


  \begin{proof}
   Since the isomorphisms $\Theta_{\Lambda_L(G), \xi}(V)$ are compatible under crystalline twists, we can replace $V$ with a suitable crystalline twist and assume without loss of generality that $H^0(\Qpi,V) = 0$ and that the Hodge--Tate weights of $V$ are $\ge 0$. As the morphism $\Theta_{\Lambda_L(\Gamma), \xi}(V)$ is obtained from $\Theta_{\Lambda_L(G), \xi}(V)$ by base change, the result is now immediate from Corollary \ref{corr:overGamma}.
  \end{proof}

 \subsection{Definition of the epsilon-isomorphism}

  Having constructed the isomorphism $\Theta_{\Lambda_\cO(G), \xi}(T)$ of Corollary \ref{corr:integralTheta}, it is an easy step to define the epsilon-isomorphism whose construction is the main purpose of this paper. We need only the following result, which is essentially a restatement of a theorem of Berger:

  \begin{theorem}
   Let $V$ be a crystalline $L$-linear Galois representation, $T \subseteq V$ a $G_{\Qp}$-stable $\cO$-lattice, and $\Dcris(T)$ the corresponding $\cO$-lattice in $\Dcris(V)$ defined in \S \ref{sect:powerofp} above. Then there is a unique isomorphism
   \[ \varepsilon_{\cO, \xi, \dR}(T): \Det_{\cOt}(\cOt \otimes_\cO \Dcris(T)) \rTo^\cong \Det_{\cOt}(\cOt \otimes_{\cO} T) \]
   whose image under base-extension $\Lt \otimes_{\cOt} -$ is the isomorphism $\varepsilon_{L, \xi, \dR}(V)$ of section \ref{sect:deRhamepsilon}.
  \end{theorem}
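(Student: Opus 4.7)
The plan is to establish uniqueness and existence separately. Uniqueness will follow immediately from the injectivity of $\cOt^\times \hookrightarrow \Lt^\times$, while existence is essentially a reinterpretation of Berger's integral theory of Wach modules.

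First I would handle uniqueness. The $\cOt$-module $\cOt \otimes_\cO T$ is free of rank $d = \dim_L V$ by hypothesis, while $\cOt \otimes_\cO \Dcris(T)$ is free of the same rank because the theorem of Berger cited before Proposition \ref{prop1} provides an $(\cO \otimes \AA^+_{\Qp})$-basis of $\NN(T)$ whose reductions modulo $\pi$ form an $\cO$-basis of $\Dcris(T)$. Hence both determinants are invertible $\cOt$-modules, and any two isomorphisms between them differ by a unit of $\cOt^\times$. Since $\cOt$ is an integral domain with field of fractions $\Lt$, the inclusion $\cOt^\times \hookrightarrow \Lt^\times$ is injective, so the condition of base-changing to $\varepsilon_{L, \xi, \dR}(V)$ pins down this unit uniquely.

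Next I would reduce to the case of non-negative Hodge--Tate weights. Applied with $\eta$ trivial, the proposition after Proposition \ref{prop:gaussums} gives $\varepsilon_L(\Dpst(V), \xi) = 1$, so $\varepsilon_{L, \xi, \dR}(V)$ is simply multiplication by $t^w$ with $w = \sum_i n_i$. Using the multiplicativity of $\varepsilon_{L,\xi,\dR}$, of the lattice $\Dcris(T)$, and of the Wach module under tensor products with one-dimensional crystalline characters, I would twist by a suitable power of $\chi$ to arrange that all Hodge--Tate weights of $V$ are $\ge 0$, so that Berger's Wach module theory applies directly to $T$. For existence it then suffices to show that multiplication by $t^w$ sends the $\cOt$-lattice $\cOt \otimes_\cO \det \Dcris(T)$ onto $\cOt \otimes_\cO \det T$ inside $\BdR \otimes_{\Qp} \det_L V$. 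Fixing a basis $(x_1, \ldots, x_d)$ of $\NN(T)$ as before, the $v_i := x_i \bmod \pi \NN(T)$ form an $\cO$-basis of $\Dcris(T)$, and Berger's integral comparison theorem \cite{berger04} identifies $\NN(T)$ inside $\Brig \otimes_L \Dcris(V)$ and recovers $T$ integrally after a suitable completion and passage to $\vp$-invariants. Taking top exterior powers and tracking the shift by the Hodge--Tate filtration then yields that $t^w(v_1 \wedge \cdots \wedge v_d)$ generates $\cOt \otimes_\cO \det T$ up to a unit in $\cOt^\times$.

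The hard part will be verifying that the discrepancy between the two lattices is an integral unit in $\cOt^\times$, and not merely in $\Lt^\times$. This is precisely the refined content of Berger's construction: because $\NN(T)$ is defined over the unramified ring $\cO \otimes_{\Zp} \AA^+_{\Qp}$ and the entire comparison is $G_{\Qpnr}$-equivariant, all the periods that enter the identification live in the integral unramified subring $\widehat{\Zp}^{\mathrm{nr}} \otimes_{\Zp} \cO = \cOt$ of $\Bcris^+ \otimes_{\Qp} L$. Combining this integrality with the reduction step above delivers the required canonical lift of $\varepsilon_{L, \xi, \dR}(V)$.
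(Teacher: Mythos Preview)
Your proposal is correct and follows essentially the same route as the paper: reduce via Lemma~\ref{lemma:detcriterion} to checking that the determinant of the canonical comparison isomorphism $\operatorname{can}_V : \Bcris \otimes_{\Qp} \Dcris(V) \to \Bcris \otimes_{\Qp} V$, computed in $\cO$-bases of $\Dcris(T)$ and $T$, lies in $t^{m(V)}\cOt^\times$. The paper is simply more direct at the final step: rather than your heuristic about $G_{\Qpnr}$-equivariance forcing periods into $\cOt$, it cites Proposition~V.1.2 of \cite{berger04}, which is exactly this statement; you should replace the last paragraph by that reference. Your twisting reduction is also unnecessary here, since $\Dcris(T)$ as defined in \S\ref{sect:powerofp} already presupposes non-negative Hodge--Tate weights, so Berger's result applies directly.
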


  \begin{proof}
   By Lemma \ref{lemma:detcriterion}, it suffices to check that for any $\cO$-bases of $T$ and of $\Dcris(T)$, the matrix of the canonical isomorphism
   \[ \operatorname{can}_V: \Bcris \otimes_{\Qp} \Dcris(V) \cong \Bcris \otimes_{\Qp} \Dcris(V)\]
   has determinant in $t^{m(V)} \cOt^\times$. This is precisely the result of Proposition V.1.2 of \cite{berger04}.
  \end{proof}

  \begin{definition}
   We define
   \[
    \varepsilon_{\Lambda_\cO(G), \xi}(T): \Det_{\Lambda_{\cOt}(G)}(0) \rTo^\cong
    \left[ \Det_{\Lambda_{\cOt}(G)} \left(\cOt \htimes_{\cO} R\Gamma_{\Iw}(K_\infty, T)\right)\right]
    \left[ \Det_{\Lambda_{\cOt}(G)} \left(\Lambda_{\cOt}(G) \otimes_\cO T\right)\right]
   \]
   to be the isomorphism given by
   \[ (-\gamma_{-1})^d (-1)^{m(V)} \cdot \Theta_{\Lambda_{\cO}(G), \xi}(T) \cdot \varepsilon_{\cO, \xi, \dR}(T),\]
   where we regard $\varepsilon_{\cO, \xi, \dR}(T)$ as an isomorphism
   \[ \Det_{\Lambda_{\cO}(G)}(\Lambda_{\cO}(G) \otimes_{\cO} \Dcris(T)) \rTo^\cong \Det_{\Lambda_{\cO}(G)}(\Lambda_{\cO}(G) \otimes_{\cO} T)\]
   via base-extension.
  \end{definition}

  \begin{remark}
   Note that $(-1)^{m(V)} (-\gamma_{-1})^d = \Det\left( -\gamma_{-1} : \Lambda_{\cO}(G) \otimes T\right)$. The factor $-\gamma_{-1}$ is the same as in Theorem \ref{thm:localrecip}.
  \end{remark}

 \subsection{Properties of the epsilon-isomorphism}

  We note for later use some properties of the isomorphisms $\Theta_{\Lambda_{\cO}(G), \xi}(T)$ and $\varepsilon_{\Lambda_{\cO}(G), \xi}(T)$:

  \begin{proposition}[Compatibility with short exact sequences]
   \label{prop:regulatorSEScompatible}
   Let
   \[ 0 \rTo T' \rTo T \rTo T''\rTo 0\]
   be a short exact sequence of $\cO$-linear crystalline representations of $G_{\Qp}$. Then
   \[ \Theta_{\Lambda_{\cO}(G), \xi}(T)=\Theta_{\Lambda_{\cO}(G), \xi}(T')\cdot \Theta_{\Lambda_{\cO}(G), \xi}(T'')\]
   and
   \[ \varepsilon_{\Lambda_{\cO}(G), \xi}(V) = \varepsilon_{\Lambda_{\cO}(G), \xi}(V')\cdot \varepsilon_{\Lambda_{\cO}(G), \xi}(V'').\]
  \end{proposition}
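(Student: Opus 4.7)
The plan is to exploit the uniqueness characterization of $\Theta_{\Lambda_\cO(G), \xi}(T)$ and $\varepsilon_{\Lambda_\cO(G), \xi}(T)$ by their images in $\underline{\Det}(\cK_{\Lt}(G))$. By Theorem \ref{thm:existence of Theta} and Corollary \ref{corr:integralTheta}, an isomorphism in $\underline{\Det}(\Lambda_{\cOt}(G))$ of the relevant shape is determined uniquely by its base-extension to $\cK_{\Lt}(G)$, so it suffices to verify the claimed factorization after passing to this larger ring. Over $\cK_{\Lt}(G)$ the isomorphism $\Theta$ is given by the explicit formula $\Theta_{\cK_{\Lt}(G), \xi}(V) = \ell(V)^{-1} \Det(\cL^G_{V, \xi})$ of Definition \ref{def:isooverK}, which allows the problem to be split into two independent checks.

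The first check, $\ell(V) = \ell(V') \cdot \ell(V'')$, is immediate from Definition \ref{def:ellV}, since the multiset of Hodge--Tate weights of $V$ is the disjoint union of those of $V'$ and $V''$. The second check is the identity $\Det(\cL^G_{V, \xi}) = \Det(\cL^G_{V', \xi}) \cdot \Det(\cL^G_{V'', \xi})$ in $\underline{\Det}(\cK_{\Lt}(G))$. Here I would use the distinguished triangle of Iwasawa cohomology complexes attached to $0 \to T' \to T \to T'' \to 0$, together with the short exact sequence $0 \to \Dcris(V') \to \Dcris(V) \to \Dcris(V'') \to 0$ (valid because $\Dcris$ is exact on crystalline representations); functoriality of the two-variable regulator in morphisms of crystalline representations then provides a morphism of distinguished triangles linking source and target, and the standard multiplicativity of the determinant functor on distinguished triangles yields the required factorization.

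For $\varepsilon_{\Lambda_\cO(G), \xi}(T)$ the same strategy applies. By definition this isomorphism is the product of $\Theta_{\Lambda_\cO(G), \xi}(T)$, the base-extension of $\varepsilon_{\cO, \xi, \dR}(T)$, and the scalar $(-\gamma_{-1})^d (-1)^{m(V)}$. The first factor has been handled above. The second is multiplicative since, after inverting $p$, it recovers $\varepsilon_{L, \xi, \dR}(V)$, which is multiplicative by Lemma \ref{lem:SEScompatible}; integrality of the ratios between the lattices $\Dcris(T)$, $\Dcris(T')$ and $\Dcris(T'')$ follows from the exactness of the Wach module functor on lattices in crystalline representations with non-negative Hodge--Tate weights (after a preliminary twist to reduce to this case, as in Section \ref{sect:powerofp}). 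Finally, the sign factor is multiplicative because both the dimension $d$ and the sum $m(V) = \sum_i n_i$ of Hodge--Tate weights are additive in short exact sequences.

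The main technical obstacle is verifying that $\cL^G_{V, \xi}$ is functorial for morphisms of crystalline representations in a sufficiently strong sense to induce multiplicativity of the determinants. For the one-variable regulator $\cL^\Gamma_{V, \xi}$ this is essentially built into its construction via Wach modules, since the Wach module functor is exact on short exact sequences of crystalline lattices with non-negative Hodge--Tate weights and the Mellin transform formalism respects this structure; but the two-variable regulator of \cite{loefflerzerbes11} is defined by a descent procedure from the cyclotomic case, so some care is needed to confirm that functoriality survives the descent. Once this compatibility is established, the remainder of the argument is purely formal, assembling the exactness of $\Dcris$, the functoriality of Iwasawa cohomology, and the determinant formalism of \cite{fukayakato06}.
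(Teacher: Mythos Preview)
Your proposal is correct and follows essentially the same approach as the paper: reduce to $\cK_{\Lt}(G)$ via the uniqueness characterization, then use the explicit formula $\Theta_{\cK_{\Lt}(G), \xi}(V) = \ell(V)^{-1}\Det(\cL^G_{V,\xi})$ together with multiplicativity of $\ell(V)$, functoriality of the regulator in short exact sequences, and Lemma~\ref{lem:SEScompatible} for $\varepsilon_{L,\xi,\dR}$. The paper's own proof is a one-sentence version of exactly this; your additional remarks on Wach-module exactness and the descent of functoriality to the two-variable regulator are reasonable caution but not strictly needed, since once both sides are known to live in $\underline{\Det}(\Lambda_{\cOt}(G))$ the comparison over $\cK_{\Lt}(G)$ settles the matter.
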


  \begin{proof}
   It suffices to check this after arbitrary base-extension; but over $\cK_{\Lt}(G)$ the result is obvious, since the regulator map $\cL^G_{V, \xi}$ is compatible with short exact sequences, as are the factors $\ell(V)$ and $\varepsilon_{L, \xi, \dR}(V)$ (the latter by Lemma \ref{lem:SEScompatible}).
  \end{proof}

  \begin{proposition}[Change of coefficient field]
   \label{prop:regulatorbaseextension}
   Let $L'$ be a finite extension of $L$ with ring of integers $\cO'$. Then
   \[ \Theta_{\Lambda_{\cO'}(G), \xi}(\cO' \otimes_\cO T)=\cO' \otimes_\cO \Theta_{\Lambda_\cO(G), \xi}(T)\]
   and
   \[ \varepsilon_{\Lambda_{\cO'}(G), \xi}(\cO' \otimes_\cO T) = \cO' \otimes_\cO \varepsilon_{\Lambda_{\cO'}(G), \xi}(T).\]
  \end{proposition}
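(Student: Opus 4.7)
The plan is to verify compatibility with extension of the coefficient field at each stage of the construction, using the fact that every descent step in the construction of $\Theta$ and $\varepsilon$ is characterized by a uniqueness property; consequently, any morphism compatible with the characterizing data automatically inherits compatibility with base change.

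First I would observe that all the ingredients entering the construction over $\cK_{\Lt}(G)$ are manifestly compatible with replacing $L$ by $L'$. The Iwasawa cohomology $H^i_{\Iw}(K_\infty, T)$ and the module $\Dcris(T)$ commute with $\cO' \otimes_\cO -$ (the latter because Wach modules are well-behaved under flat scalar extension, so the lattice $\Dcris(\cO' \otimes_\cO T) = \cO' \otimes_\cO \Dcris(T)$). The regulator map $\cL^G_{V \otimes L', \xi}$ is obtained from $\cL^G_{V, \xi}$ by base change along $\cH_{\Lt}(G) \to \cH_{\Lt'}(G)$, and the element $\ell(V)$ depends only on the Hodge--Tate weights, which are invariant under scalar extension. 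Hence Definition \ref{def:isooverK} gives
\[ \Theta_{\cK_{\Lt'}(G), \xi}(V \otimes L') = \cK_{\Lt'}(G) \otimes_{\cK_{\Lt}(G)} \Theta_{\cK_{\Lt}(G), \xi}(V). \]

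Next I would pass to the descent over $\Lambda_{\Lt}(G)$. The isomorphism $\Theta_{\Lambda_L(G), \xi}(V)$ is, by Theorem \ref{thm:existence of Theta}, the unique lift of $\Theta_{\cK_{\Lt}(G), \xi}(V)$ to $\underline{\Det}(\Lambda_{\Lt}(G))$, and similarly for $L'$. Since $\cO' \otimes_\cO \Theta_{\Lambda_{\cO}(G), \xi}(T)$ is a lift of the base change of $\Theta_{\cK_{\Lt}(G), \xi}(V)$, which by the previous paragraph equals $\Theta_{\cK_{\Lt'}(G), \xi}(V \otimes L')$, uniqueness of the lift over $\Lambda_{\Lt'}(G)$ forces the equality at this level. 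An identical argument, using the uniqueness part of Corollary \ref{corr:integralTheta} (which ultimately rests on Propositions \ref{prop1} and \ref{prop2}, both of which concern invariants of $V$ unchanged by scalar extension, such as the Hodge--Tate weights and $\det(\vp\mid \Dcris(V))$), then yields $\Theta_{\Lambda_{\cO'}(G), \xi}(\cO' \otimes_\cO T) = \cO' \otimes_\cO \Theta_{\Lambda_\cO(G), \xi}(T)$.

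For the statement about $\varepsilon_{\Lambda_\cO(G), \xi}(T)$, I would treat each of the three factors in its defining formula separately. The scalar $(-\gamma_{-1})^d (-1)^{m(V)}$ is intrinsic to $V$ and does not change under scalar extension. The factor $\Theta_{\Lambda_\cO(G), \xi}(T)$ has already been dealt with. Finally, $\varepsilon_{\cO, \xi, \dR}(T)$ is characterized as the unique integral lift of $\varepsilon_{L, \xi, \dR}(V)$; since the de Rham $\varepsilon$-isomorphism $\varepsilon_{L, \xi, \dR}(V)$ is constructed as multiplication by $t^w \varepsilon_L(\Dpst(V), \xi)$, both factors of which are manifestly compatible with extension of $L$, the same uniqueness argument (via Lemma \ref{lemma:detcriterion}) gives base-change compatibility for $\varepsilon_{\cO, \xi, \dR}(T)$. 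Combining the three compatibilities yields the desired equality for $\varepsilon_{\Lambda_\cO(G), \xi}(T)$.

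No step presents a real obstacle: the whole argument is a mechanical unwinding of the uniqueness statements in the construction. The only point requiring a little care is to confirm that the numerical invariants appearing in the construction (the Hodge--Tate weights, the integer $m(V)$, the determinant of crystalline Frobenius, and the lattice $\Dcris(T)$) are genuinely preserved under $\cO' \otimes_\cO -$, but all of these follow from the flatness of $\cO' / \cO$ together with standard properties of the Wach module functor.
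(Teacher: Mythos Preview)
Your proposal is correct and is exactly the unpacking of the paper's one-word proof ``Clear.'' The paper regards the statement as immediate from the construction, and what you have written is precisely the routine verification that each ingredient (the regulator, $\ell(V)$, the descent steps characterized by uniqueness, the scalar factor, and $\varepsilon_{\cO,\xi,\dR}$) is compatible with the flat base change $\cO \to \cO'$.
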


  \begin{proof}
   Clear.
  \end{proof}

  The next compatibility property takes a little more notation to state. For brevity let us write $\Lambda$ for $\Lambda_{\cO}(G)$. For $\eta$ a continuous $L$-valued (hence $\cO$-valued) character of $G$, we have a twisting homomorphism $\tw_\eta: \Lambda \to \Lambda$ which maps a group element $g \in G$ to $\eta(g) g$. Hence we obtain a pullback functor $(\tw_\eta)^*$ from the category of $\Lambda$-modules to itself,
  \[ (\tw_\eta)^* M = \Lambda \otimes_{\Lambda, \tw_\eta} M.\]
  This can also be described in terms of tensoring with the $\Lambda$-bimodule $\Lambda \otimes_{\Lambda, \tw_\eta} \Lambda$, which is free of rank one as a $\Lambda$-module; hence the twisting functor extends to a functor from the category $\underline{\Det}(\Lambda)$ to itself which is compatible with the functor $\Det$.

  Note that we have an isomorphism
  \[ (\tw_\eta)^* (\Lambda \otimes_{\cO} T) \cong \Lambda \otimes_{\cO} T(\eta^{-1}),
   \; a\otimes b\otimes v\mapsto a \operatorname{Tw}_{\eta}(b)\otimes (v \otimes t_{\eta^{-1}})\]
  as $(\Lambda, G_{\Qp})$-modules, if $\Lambda$ acts on $\Lambda \otimes_{\cO} T$ via left multiplication on the left factor, while $g \in G_{\Qp}$ sends $\lambda \otimes v$ to $\lambda \bar{g}^{-1} \otimes gv$ where $\bar{g}$ denotes the image of $g$ in $G$ (and analogously for the action on $\Lambda \otimes_{\cO} T(\eta^{-1})$).

  We clearly have $(\tw_\eta)^* \circ (\tw_{\eta^{-1}})^* = \id$. Similar definitions apply to other coefficient rings than $\Lambda_{\cO}(G)$, including $\Lambda_L(G)$, $\cH_L(G)$ or $\Lambda_{\cOt}(G)$.

  Finally note that for a $\Lambda$-module $M$ we have a canonical isomorphism
  \[\Lambda \otimes_{\Lambda, \operatorname{Tw}_{\eta^{-1}} } M = M\otimes_\cO \cO t_\eta,\; \lambda\otimes m\mapsto \tw_\eta(\lambda) m \otimes t_\eta,\]
  of $\Lambda$-modules, where the $\Lambda$-module structure on the right hand side is induced by the diagonal action of $G$ upon it.

  \begin{proposition}[Invariance under crystalline twists]
   \label{prop:regulator twist invariance 2}
   If $T' = T(\eta)$ for a crystalline character $\eta$ with values in $\cO$, then
   \begin{diagram}
    \operatorname{Tw}_{\eta^{-1}} \left(\Det_{\Lambda_{\cOt}(G)} \left(\cOt \htimes_\cO R\Gamma_{\Iw}(K_\infty, T)\right)^{-1} \right)
    & \rTo^{\operatorname{Tw}_{\eta^{-1}}\left(\Theta_{\Lambda_\cO(G), \xi}(T)\right)} &
    \operatorname{Tw}_{\eta^{-1}}\left(\Det_{\Lambda_{\cOt}(G)} \left(\Lambda_{\cOt}(G)) \otimes_\cO \Dcris(T) \right) \right)\\
    \dTo^{\cong} & & \dTo^{\cong} \\
    \Det_{\Lambda_{\cOt}(G)} \left(\cOt \htimes_\cO R\Gamma_{\Iw}(K_\infty, T(\eta))\right)^{-1}
    & \rTo^{\Theta_{\Lambda_\cO(G), \xi}(T(\eta))} &
    \Det_{\Lambda_{\cOt}(G)} \left(\Lambda_{\cOt}(G) \otimes_\cO \Dcris(T(\eta)) \right)
   \end{diagram}
   where we take the left vertical map as natural identification while the right vertical map is -- up to interchanging the twist- and determinant-functors -- $\Det_{\Lambda_{\cOt}(G)}(a_\eta)$ with the notation as in Proposition \ref{prop:regulator twist invariance} above.
  \end{proposition}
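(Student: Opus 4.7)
The plan is to reduce the commutativity of the diagram to a computation over the large ring $\cK_{\tilde L}(G)$, where the isomorphism $\Theta$ has the explicit formula $\ell(V)^{-1}\Det(\cL^G_{V,\xi})$ of Definition \ref{def:isooverK}. The construction of $\Theta_{\Lambda_\cO(G),\xi}(T)$ was carried out by a sequence of descents $\Lambda_\cO(G)\subset\Lambda_{\cO'}(G)\subset\Lambda_L(G)\subset\cK_{\tilde L}(G)$, each of which is injective on isomorphisms of determinants of the relevant free modules (as in the proof of Theorem \ref{thm:existence of Theta} and Corollary \ref{corr:integralTheta}). Since the twisting functors $\tw_{\eta^{-1}}^*$ are exact and compatible with all of these base extensions, it suffices to show the outer square of the diagram commutes after extending scalars all the way to $\cK_{\tilde L}(G)$.

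Over $\cK_{\tilde L}(G)$, the relevant input is the twist-compatibility of the regulator itself, namely Proposition \ref{prop:regulator twist invariance}, which says
\[
 a_\eta\circ\bigl(\cL^G_{V,\xi}\otimes 1\bigr) = (\ell_0\cdots\ell_{j-1})^{-1}\cL^G_{V(\eta),\xi},
\]
where $j\ge 0$ is the Hodge--Tate weight of $\eta$. Taking determinants, and recalling that $a_\eta$ acts on the distribution factor by $\tw_{\eta^{-1}}$ and on the crystalline period factor by the scalar isomorphism $b_\eta$, gives
\[
 \Det(a_\eta)\cdot\tw_{\eta^{-1}}\bigl(\Det\cL^G_{V,\xi}\bigr) = (\ell_0\cdots\ell_{j-1})^{-d}\,\Det\cL^G_{V(\eta),\xi}
\]
(the factor $(\ell_0\cdots\ell_{j-1})$ appears to the $d$th power because we are taking determinants of $d\times d$ matrices).

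To match this with the desired identity $\Theta(V(\eta)) = \Det(a_\eta)\cdot\tw_{\eta^{-1}}(\Theta(V))$, one must check the elementary identity
\[
 \ell(V(\eta)) = (\ell_0\cdots\ell_{j-1})^d\cdot\tw_{\eta^{-1}}\bigl(\ell(V)\bigr)
\]
in $\cH_{\tilde L}(G)^\times$. This follows from the recursive formula $\mu_{n+1}=\ell_0\tw_{\chi^{-1}}(\mu_n)$ recorded just before Proposition \ref{prop:det of cyclo regulator}: iterating gives $\mu_{n+j} = (\ell_0\cdots\ell_{j-1})\tw_{\chi^{-j}}(\mu_n)$, and since the unramified part of $\eta$ acts trivially on elements pulled back from $\cH(\Gamma)$, we have $\tw_{\eta^{-1}}(\ell_k)=\tw_{\chi^{-j}}(\ell_k)=\ell_{k+j}$. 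Taking the product over the Hodge--Tate weights $n_1,\dots,n_d$ of $V$ yields the displayed identity. Dividing the previous displayed equation for the regulators by this gives exactly the diagram asserted in the proposition.

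The main obstacle is purely bookkeeping: one must be careful with the identification of the left vertical arrow (the canonical isomorphism commuting $\tw_{\eta^{-1}}^*$ past $\Det$ and past the tensor product $\Lambda\otimes_\cO T$) and the precise way in which $\Det(a_\eta)$ decomposes as a twist on the distribution factor times the scalar $b_\eta$ on periods; however, once these identifications are made explicit, both sides of the diagram are expressed as morphisms between free $\cK_{\tilde L}(G)$-modules whose matrix representatives agree by the two identities above.
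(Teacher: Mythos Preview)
Your proposal is correct and follows essentially the same route as the paper: reduce to the large ring $\cK_{\tilde L}(G)$ by injectivity of base extension, take determinants in the twist-compatibility diagram of Proposition \ref{prop:regulator twist invariance}, and cancel against the identity $\tw_{\eta^{-1}}(\ell(V)) = \ell(V(\eta))\cdot \ell(L(\eta))^{-d}$ (equivalently your $\ell(V(\eta)) = (\ell_0\cdots\ell_{j-1})^d\,\tw_{\eta^{-1}}(\ell(V))$). Your derivation of the $\ell(V)$ identity via the recursion $\mu_{n+1}=\ell_0\,\tw_{\chi^{-1}}(\mu_n)$ is slightly more explicit than the paper's, but otherwise the arguments coincide.
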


  \begin{proof}
   It suffices to check the statement after base-extension to $L$. First note that
   \[ \operatorname{Tw}_{\eta^{-1}}(m_\lambda)=m_{\operatorname{Tw}_{\eta^{-1}}(\lambda)}\]
   where $m_\lambda$ denotes multiplication by $\lambda\in \Lambda$. In particular we have
   \[\operatorname{Tw}_{\eta^{-1}}(\ell(V))=\ell(V(\eta))\ell(L(\eta))^{-d}\]
   where we omit the $m$ for simplicity again. Hence we obtain, by applying the determinant functor to the diagram in Proposition \ref{prop:regulator twist invariance} and after base extension to $\cK_{\Lt}(G)$,
   \begin{align*}
    \Det(a_\eta) \operatorname{Tw}_{\eta^{-1}}\left(\frac{\Det\cL^G_{V, \xi}}{\ell(V)}\right)
    &= \operatorname{Tw}_{\eta^{-1}}\left(\frac{1}{\ell(V)}\right)\Det(a_\eta) \operatorname{Tw}_{\eta^{-1}}\left(\Det\cL^G_{V, \xi}\right)\\
    &= \frac{\ell(L(\eta))^d}{\ell(V(\eta))}\ell(L(\eta))^{-d} \Det\cL^G_{V, \xi} \\
    &= \ell(V(\eta))^{-1} \Det(\cL^G_{V(\eta), \xi})
   \end{align*}
   which is just the claimed statement. Here, for the second equality we used the compatibility of twisting with taking determinants.
  \end{proof}

  \begin{proposition}
   If $\eta$ is crystalline, then we have $\tw_{\eta^{-1}}\left( (-\gamma_{-1})^d (-1)^{m(V)}\right) = (-\gamma_{-1})^d (-1)^{m(V(\eta))}$, where $m(V)$ denotes the sum of the Hodge--Tate weights of $V$ (and similarly for $V(\eta)$).
  \end{proposition}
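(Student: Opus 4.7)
The plan is to reduce this to a direct computation of how $\tw_{\eta^{-1}}$ acts on $\gamma_{-1}$ and how the sum of Hodge--Tate weights transforms under twisting by $\eta$. Since $\tw_{\eta^{-1}}$ is the $\cO$-algebra homomorphism of $\Lambda_{\cO}(G)$ sending $g \mapsto \eta(g)^{-1} g$, we have
\[ \tw_{\eta^{-1}}\!\left((-\gamma_{-1})^d (-1)^{m(V)}\right) = \eta(\gamma_{-1})^{-d}\, (-\gamma_{-1})^d (-1)^{m(V)}.\]
Thus the claim reduces to the scalar identity $\eta(\gamma_{-1})^{-d} (-1)^{m(V)} = (-1)^{m(V(\eta))}$ in $\cO^\times$.

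Next, I would use that a crystalline character of $\Gal(K_\infty/\Qp)$ is of the form $\eta = \chi^j \eta_1$ with $j \in \ZZ$ the Hodge--Tate weight of $\eta$ and $\eta_1$ an unramified character. Since $\gamma_{-1}$ lies in $\Gamma = \Gal(K_\infty/F_\infty) \subseteq G$ (the inertia part), $\eta_1(\gamma_{-1}) = 1$, and so
\[ \eta(\gamma_{-1}) = \chi(\gamma_{-1})^j = (-1)^j, \qquad \eta(\gamma_{-1})^{-d} = (-1)^{dj}.\]

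Finally, I would combine this with the additivity of Hodge--Tate weights under tensor products: if $V$ has Hodge--Tate weights $n_1, \dots, n_d$, then $V(\eta) = V \otimes L(\eta)$ has weights $n_1+j, \dots, n_d+j$, so that $m(V(\eta)) = m(V) + dj$. Hence $(-1)^{m(V(\eta))} = (-1)^{dj}(-1)^{m(V)} = \eta(\gamma_{-1})^{-d}(-1)^{m(V)}$, proving the claim. There is no substantive obstacle here; the entire argument is bookkeeping, and the only point that requires a moment's care is confirming that the unramified factor $\eta_1$ of $\eta$ is trivial on the inertia element $\gamma_{-1}$, which is immediate from the construction of $G$ as a product (up to finite index) of $\Gamma$ and the unramified Galois group.
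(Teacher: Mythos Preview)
Your proof is correct and follows essentially the same approach as the paper: compute $\tw_{\eta^{-1}}(\gamma_{-1})$ using the decomposition $\eta = \chi^j \eta_1$ with $\eta_1$ unramified, obtain the factor $(-1)^{jd}$, and then use $m(V(\eta)) = m(V) + jd$. If anything, your version is slightly more careful with the sign convention, writing $\eta(\gamma_{-1})^{-1}$ rather than $\eta(\gamma_{-1})$ (which coincide here since $\eta(\gamma_{-1}) = \pm 1$).
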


  \begin{proof}
   By the definition of the twisting map, we have $\tw_{\eta^{-1}}(\gamma_{-1}) = \eta(\gamma_{-1}) \gamma_{-1} = (-1)^j \gamma_{-1}$ (since $\eta$ is $\chi^j$ times an unramified character); so
   \[ \tw_{\eta^{-1}}\left( (-\gamma_{-1})^d (-1)^{m(V)}\right) = (-1)^{jd}(-\gamma_{-1})^d (-1)^{m(V)} = (-\gamma_{-1})^d (-1)^{m(V) + jd}.\]
   Since $m(V(\eta)) = m(V) + jd$ we are done.
  \end{proof}

  Since the map $\Det(a_\eta)$ also evidently gives the twist-compatibility of the maps $\varepsilon_{L, \xi, \dR}(V)$ and $\varepsilon_{L, \xi, \dR}(V(\eta))$, we obtain the compatibilty of $\varepsilon_{\Lambda_\cO(G), \xi}(T)$ and $\varepsilon_{\Lambda_\cO(G), \xi}(T(\eta))$; i.e.,
  \[\operatorname{Tw}_{\eta^{-1}}\left(\varepsilon_{L, \xi, \dR}(V)\right)=\varepsilon_{L, \xi, \dR}(V(\eta))\]
  up to the indicated identifications. We use these results to extend the definition of $\varepsilon_{L, \xi, \dR}$ and $\Theta_{\Lambda_\cO(G), \xi}$ to lattices $T$ in arbitrary crystalline Galois representations $V$, by tensoring the corresponding maps for $T(j)$ with $\Qp(-j)$, where $j \gg 0$ is such that $V(j)$ has non-negative Hodge--Tate weights.


 \subsection{Epsilon-isomorphisms for more general modules}

  We recall the following definition from \cite[\S1.4]{fukayakato06}.

  \begin{definition}
   A ring $R$ is of
   \begin{description}
     \item[(type 1)] if there exists a two sided ideal $I$ of $R$ such that $R/I^n$ is finite of order a power of
     $p$ for any $n \geq 1$, and such that $R \cong \varprojlim_n R/I^n$.
     \item[(type 2)] if $R$ is the matrix-algebra $M_n(F)$ of some finite extension $E$ over $\Qp$ and some dimension $n\geq 1.$
   \end{description}
  \end{definition}

  By lemma 1.4.4 in (loc.\ cit.), $R$ is of type 1 if and only if the defining condition above holds with $I$ equal to the Jacobson ideal $J=J(R).$ Such rings are always semi-local and $R/J$ is a finite product of matrix algebras over finite fields. For a ring $R$ of type (1) or (2) we define
  \[ \Rt \coloneqq \widehat{\ZZ_p^{nr}} \htimes_{\Zp} R\]
  where $\widehat{\mathbb{Z}_p^{nr}}$ denotes the completion of the ring of integers of the maximal unramified extension of $\Qp.$

  Now let $T \subseteq V$ be a Galois-stable $\cO$-lattice of a crystalline representation of $G_{\Qp}$ with coefficients in some finite extension $L/\Qp$. We set $\T(T) \coloneqq \Lambda_{\cO}(G)\otimes_{\cO} T$, which we consider as $\Lambda(G)$-module by multiplication on the left tensor factor and as $G_{\Qp}$-module via $g(\lambda\otimes t)=\lambda \bar{g}^{-1}\otimes gt$. The following isomorphism (essentially a version of Shapiro's lemma) is well known:

  \begin{proposition}
   We have
   \[ R\Gamma(\Qp, \T(T)) \cong R\Gamma_{\Iw}(K_\infty, T)\]
   as $\Lambda(G)$-modules.
  \end{proposition}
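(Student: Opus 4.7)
The strategy is the classical derivation of Shapiro's lemma for Iwasawa cohomology, carried out in the derived category of $\Lambda_\cO(G)$-modules.

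First, I would observe that $\T(T)$ is naturally the inverse limit of the induced modules at finite level. Writing $K_n$ for the subfield of $K_\infty$ fixed by an open subgroup $G_n \triangleleft G$, with $\Gal(K_n/\Qp) = G/G_n$, there is a canonical identification
\[ \T(T) = \Lambda_\cO(G) \otimes_\cO T \;=\; \varprojlim_n \bigl(\cO[G/G_n] \otimes_\cO T\bigr) \;=\; \varprojlim_n \operatorname{Ind}_{G_{K_n}}^{G_{\Qp}} T, \]
where the continuous $G_{\Qp}$-action on the right of each $G/G_n$-factor is $g \cdot (\lambda \otimes t) = \lambda \bar g^{-1} \otimes g t$, and the transition maps $\cO[G/G_{n+1}] \to \cO[G/G_n]$ are the natural augmentation maps (which correspond on the induced-module side to the standard maps underlying corestriction).

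Next I would apply the classical (finite-level) Shapiro isomorphism
\[ R\Gamma(\Qp,\, \operatorname{Ind}_{G_{K_n}}^{G_{\Qp}} T) \;\cong\; R\Gamma(K_n, T) \]
in the derived category of $\cO[G/G_n]$-modules; this is standard and can be realized at the chain level via the isomorphism $C^\bullet(\Qp, \operatorname{Ind}_{G_{K_n}}^{G_{\Qp}} T) \cong C^\bullet(K_n, T)$ of continuous cochain complexes, which is explicit, functorial, and $G/G_n$-equivariant. These isomorphisms are compatible with the transition maps on both sides: the transition on the left is induced by the augmentation $\cO[G/G_{n+1}] \to \cO[G/G_n]$, and under the Shapiro identification this becomes precisely the corestriction map $C^\bullet(K_{n+1}, T) \to C^\bullet(K_n, T)$.

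Finally, I would pass to the inverse limit. By definition
\[ R\Gamma_{\Iw}(K_\infty, T) \;=\; \varprojlim_n R\Gamma(K_n, T), \]
interpreted as the complex of continuous cochains on $G_{\Qp}$ with values in $\T(T)$ (this is the standard presentation, and it is already a perfect complex of $\Lambda_\cO(G)$-modules, so no derived-functor subtleties arise). The compatibility at finite level therefore yields a quasi-isomorphism
\[ R\Gamma(\Qp, \T(T)) \;\stackrel{\sim}{\longrightarrow}\; R\Gamma_{\Iw}(K_\infty, T) \]
of complexes of $\Lambda_\cO(G)$-modules, where the right-hand $\Lambda_\cO(G)$-action is via left multiplication on the coefficient module $\T(T)$ and the left-hand one is the familiar Iwasawa-algebra action. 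The main (minor) point requiring care is the commutation of continuous cochains with the inverse limit, which follows because the transition maps in $\{C^\bullet(K_n, T)\}$ are surjective in each degree (so the $\varprojlim^1$-terms vanish) and because $\T(T)$ is a compact $\cO$-module whose continuous cochain complex is computed as the limit of those of its finite quotients.
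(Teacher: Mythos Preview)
Your argument is correct and is precisely the standard Shapiro-lemma derivation underlying the result; the paper simply cites Nekov\'a\v{r} \cite[8.4.4.2 Proposition]{nekovar06} rather than spelling it out. The only cosmetic point is that $R\Gamma_{\Iw}(K_\infty, T)$ is often \emph{defined} as $R\Gamma(\Qp, \T(T))$ (making the proposition a tautology), but your interpretation via the inverse limit of finite-level complexes, together with the $\varprojlim^1$-vanishing, is exactly what identifies the two candidate definitions.
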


  \begin{proof}
   See e.g.~\cite[8.4.4.2 Proposition]{nekovar06}.
  \end{proof}

  Let $\Lambda = \Lambda_{\cO}(G)$, which is a ring of type 1, with $\tilde\Lambda = \Lambda_{\cOt}(G)$. Then we have constructed an isomorphism
  \[ \varepsilon_{\Lambda, \xi}(T): \Det_{\tilde\Lambda}(0) \rTo^\cong
    \tilde\Lambda \otimes_{\Lambda} \left\{ \Det_{\Lambda} R\Gamma(\Qp, \T(T)) \cdot \Det_{\Lambda} \T(T)\right\}.
  \]
  We shall establish that this satisfies the properties predicted by \cite[Conjecture 3.4.3]{fukayakato06} for the module $\T(T)$. For that purpose it is convenient to write also $\varepsilon_{\Lambda, \xi}(\T(T))$ for the above $\varepsilon$-isomorphism, and to extend it to a slightly more general class of modules.

  We consider quadruples $(R, Y, T, \xi)$ where
  \begin{itemize}
   \item $R$ is a $p$-torsion-free $\cO$-algebra which is also a ring of type (1) or (2) above,
   \item $\xi$ is a compatible system of $p^n$-th roots of unity (as before),
   \item $T$ is a $G_{\Qp}$-stable $\cO$-lattice in a crystalline $L$-linear representation of $G_{\Qp}$,
   \item $Y$ is a finitely-generated projective left $R$-module, equipped with a continuous $R$-linear action of $G$.
  \end{itemize}

  Given such a quadruple, we define $\T = Y \otimes_{\cO} T$, which we equip with the obvious left $R$-module structure and an action of $G_{\Qp}$ via $g \cdot (y \otimes t) = y\bar{g}^{-1} \otimes g t$. Then $(R, \T, \xi)$ is a triple satisfying the conditions of \cite[\S 3.4.1]{fukayakato06}. Moreover, the action of $G$ on $Y$ extends to a $\Lambda$-module structure, and we have
  \[ \T = Y \otimes_{\Lambda} \T(T)\]
  where $\T(T)$ is as above. So we may define
  \[ \varepsilon_{R, \xi}(\T) \coloneqq Y\otimes_{\Lambda} \varepsilon_{\Lambda, \xi}(\T(T)),\]
  which is an isomorphism
  \[
    \Det_{\Rt}(0) \rTo^\cong \Rt \otimes_{R} \left\{ \Det_R R\Gamma(\Qp, \T) \cdot \Det_R \T \right\};
  \]
  here we have used the fact that
  \[ Y\otimes^{\mathbb{L}}_{\Lambda} R\Gamma(\Qp, \T(T))\cong R\Gamma(\Qp, Y\otimes_{\Lambda} \T(T))\]
  by \cite[1.6.5]{fukayakato06}.

  \begin{remark}
   Note that $R$ need not be commutative, and $Y$ need not be either projective or finitely-generated as a $\Lambda$-module.
  \end{remark}

  \begin{proposition}
   \label{prop:compat2}
   Suppose $R = \cO_F$ for some finite extension $F / L$, and the finite-dimensional $F$-vector space $F \otimes_{R} Y$ is de Rham as a representation of $G$. Then $F \otimes_{\cO_F} \T$ is also de Rham, and $F \otimes_{\cO_F}\varepsilon_{R, \xi}(\T)$ coincides with the canonical isomorphism $\varepsilon_{F, \xi}(F \otimes_{\cO_F} \T)$ of \S \ref{sect:deRhamepsilon}.
  \end{proposition}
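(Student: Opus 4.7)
The plan is to reduce this statement to a family of identities indexed by one-dimensional de Rham characters of $G$, and then to verify each such identity using the explicit evaluation formulae for the regulator. After enlarging the coefficient field $F$ if necessary, the de Rham $G$-representation $F \otimes_R Y$ admits a $G$-stable filtration whose graded pieces are one-dimensional de Rham characters $\eta_i$ of $G$. Combining Proposition~\ref{prop:regulatorSEScompatible} (multiplicativity of $\varepsilon_{\Lambda_\cO(G), \xi}$ in short exact sequences) with Proposition~\ref{prop:SEScompatible} (multiplicativity of $\varepsilon_{L,\xi}$), applied to a filtration chosen so that Lemma~\ref{lem:SEScompatible} applies successively---grouping together pieces whose twist preserves crystallinity of $V$ and those which render it purely non-crystalline, per the remark following that lemma---the proof reduces to the case $Y = \cO_F(\eta)$ for a single de Rham character $\eta$ of $G$, where $\T \cong T(\eta^{-1})\otimes_{\cO} \cO_F$ as $G_{\Qp}$-modules.

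For this character-level identity, write $\eta = \eta_0\eta_1\chi^j$ with $\eta_0$ finite-order, $\eta_1$ unramified, and $j \in \ZZ$, and distinguish three cases as outlined in Section~7 of the introduction. In the \emph{good} case ($\eta_0$ nontrivial, or $\eta_0$ trivial with no Frobenius eigenvalue on $\Dcris(V(\eta^{-1}))$ equal to $1$ or $p^{-1}$), the value of $\cL^G_{V,\xi}$ at $\eta$ is non-degenerate and is given by Theorem~\ref{thm:cycloregulator}, while $\theta_L(V(\eta^{-1}))$ is described in closed form by Proposition~\ref{prop:noncrystallinedet} or Proposition~\ref{prop:nophidet}; the verification then amounts to matching $\Gamma^*$-factors (packaged in $\ell(V)$), Gauss sums (via Proposition~\ref{prop:gaussums}), and the sign prefactor $(-\gamma_{-1})^d(-1)^{m(V)}$. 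In the \emph{somewhat bad} case (crystalline $\eta$, with some but not all exceptional Frobenius eigenvalues), $\cL^G_{V,\xi}$ vanishes at $\eta$ but has a leading term computed by Theorem~\ref{thm:regderiv}, while the refined description of $\theta_L$ in Theorem~\ref{thm:sigmas} through the isomorphisms~\eqref{iso1}--\eqref{iso4} (built from $\widetilde\exp, \widetilde\exp^*$ and the subspaces $H^1_a, H^1_b$) furnishes precisely the matching data. In the \emph{extremely bad} case, one inducts on $d = \dim V$: Proposition~\ref{prop:regulatorSEScompatible} reduces to smaller crystalline subrepresentations (for instance $\vp$-eigenspaces for the exceptional eigenvalues), with the base case $d = 1$ supplied by the main result of \cite{venjakob11}.

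The principal obstacle is the intricate bookkeeping in the somewhat bad case, where the correction factors $(1-p^{-1-j}\vp^{-1})^{-1}$ arising in Theorem~\ref{thm:regderiv} must be matched exactly against the modified exponential and logarithm maps of Theorem~\ref{thm:sigmas}, while keeping track of all sign factors (in particular $-\gamma_{-1}$, echoing the anti-linearity in Theorem~\ref{thm:localrecip}) and the $\Gamma^*$-factors encoded by $\ell(V)$. A secondary difficulty lies in the initial reduction step: one must arrange the filtration of $F \otimes_R Y$ so that each application of Lemma~\ref{lem:SEScompatible} falls within its stated hypotheses. This is always possible, since every one-dimensional de Rham character of $G$ is either crystalline (whose twist preserves crystallinity of $V$) or has nontrivial inertial action through $\eta_0$ (whose twist makes $\Dcris$ vanish, placing us in the purely non-crystalline regime).
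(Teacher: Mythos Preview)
Your overall strategy—reduce to a single de Rham character $\eta$ by filtering $Y$ and using multiplicativity in short exact sequences, then split into three cases—is exactly the route the paper takes (the paper packages the character-by-character analysis as Theorem~\ref{thm:compatibleunderevaluation} and defers it to \S\ref{sect:evaluation}). The tools you cite (Theorems~\ref{thm:cycloregulator}, \ref{thm:regderiv}, \ref{thm:sigmas}, Propositions~\ref{prop:noncrystallinedet}, \ref{prop:nophidet}, and the rank-one input from \cite{venjakob11}) are the right ones.

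However, your trichotomy is drawn along the wrong axis. You distinguish the cases by whether $\eta$ is crystalline and by the presence of exceptional Frobenius eigenvalues on $\Dcris(V(\eta^{-1}))$. The paper's division is instead governed primarily by the \emph{Hodge--Tate weight} $j$ of $\eta$ relative to the largest Hodge--Tate weight $h$ of $V$: ``good'' means $j \ge h$ or $j \le -1$ (together with $H^0 = H^2 = 0$), ``somewhat bad'' means $0 \le j \le h-1$ (with $H^0 = H^2 = 0$), and ``extremely bad'' means $H^0 \ne 0$ or $H^2 \ne 0$. In particular, your claim that the regulator is non-degenerate at $\eta$ whenever $\eta_0$ is nontrivial is false: for $\eta_0$ nontrivial with $0 \le j \le h-1$, Theorem~\ref{thm:cycloregulator}(1) shows $\cL^\Gamma_{V,\xi}(x)(\eta)$ lands in $\vp^n \Fil^{-j}\Dcris(V)$, a proper subspace, so the determinant vanishes and one must pass to the leading term via Theorem~\ref{thm:regderiv}—this is precisely the ``somewhat bad'' case of \S\ref{sect:somewhatbad}, which the paper treats for both $n \ge 1$ and $n = 0$. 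Conversely, your ``somewhat bad'' (crystalline $\eta$ with some exceptional eigenvalues) is actually part of the paper's ``extremely bad'' regime. Once you redraw the boundaries this way, your argument goes through and matches the paper's.

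Your secondary worry about arranging the filtration of $Y$ so that Lemma~\ref{lem:SEScompatible} applies is not something the paper addresses; it simply invokes compatibility with short exact sequences. The point you raise is legitimate (Proposition~\ref{prop:SEScompatible} is only stated for crystalline representations), but in practice the multiplicativity of the Fukaya--Kato de Rham $\varepsilon$-isomorphism holds for arbitrary de Rham short exact sequences, so no special ordering of the filtration is needed.
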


  \begin{proof}
   Since our $\varepsilon$-isomorphisms commute with base change in $L$, it suffices to assume $F = L$. We may also assume that $L$ is sufficiently large that all the Jordan--H\"older constituents of $Y$ are one-dimensional. By the compatibility with short exact sequences, it suffices to assume $Y$ is itself one-dimensional, so $Y = L(\eta)$ for a de Rham character $\eta$ of $G$. This reduces the result to Theorem \ref{thm:compatibleunderevaluation}, which we shall establish in the next section.
  \end{proof}

  \begin{corollary}
   Suppose that the pair $(R, \T)$ satisfies the following condition:
   \begin{itemize}
    \item if $\Phi_{\T}$ is the set of all $\cO$-algebra homomorphisms $\rho: R \to M_n(F)$ (where $F/L$ is a finite extension and $n$ an integer, both depending on $\rho$) such that $F^n \otimes_{R, \rho} \T$ is de Rham, then
    \[ K_1(R) \to \prod_{\rho \in \Phi_{\T}} F^\times\]
    is injective.
   \end{itemize}
   Then $\varepsilon_{R, \xi}(\T)$ depends only on $\xi$ and on the isomorphism class of $\T$ as an $R[G_{\Qp}]$-module.
  \end{corollary}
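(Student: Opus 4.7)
The plan is to compare the $\varepsilon$-isomorphisms arising from two different presentations $\T \cong Y \otimes_\cO T \cong Y' \otimes_\cO T'$ as $R[G_{\Qp}]$-modules. Given an isomorphism $\alpha$ identifying these two presentations, functoriality of $R\Gamma(\Qp,-)$ and of $\Det$ identifies the targets of the two candidate $\varepsilon$-isomorphisms, so their ratio defines an element $\delta \in \operatorname{Aut}_{\underline{\Det}(\Rt)}(\Det_{\Rt}(0)) = K_1(\Rt)$, and the goal is to show $\delta = 1$.

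First, I would argue that $\delta$ lies in the image of the natural base-extension map $K_1(R) \to K_1(\Rt)$. The essential observation is that both candidate isomorphisms have the form $Y \otimes_\Lambda \varepsilon_{\Lambda,\xi}(\T(T))$, and the $\widehat{\ZZ_p^{\mathrm{nr}}}$-coefficients enter only through the single base change $\Lambda_\cO(G) \to \Lambda_{\cOt}(G)$ already present in the definition of $\varepsilon_{\Lambda,\xi}(\T(T))$; by tracking the construction through Propositions \ref{prop:regulatorSEScompatible} and \ref{prop:regulatorbaseextension}, the discrepancy between the two presentations is visibly an $R$-linear datum, so $\delta$ descends to $K_1(R)$. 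Second, for each $\rho \in \Phi_\T$, Proposition \ref{prop:compat2} shows that both candidates specialize under $F^n \otimes_{R,\rho}(-)$ to the canonical Fukaya--Kato $\varepsilon$-isomorphism $\varepsilon_{F,\xi}(F^n \otimes_{R,\rho}\T)$ of Section \ref{sect:deRhamepsilon}, which depends only on the de Rham Galois representation $F^n \otimes_{R,\rho} \T$ and not on any choice of factorization. Consequently the image of $\delta$ in $K_1(M_n(F)) = F^\times$ is trivial for every $\rho \in \Phi_\T$; combined with the injectivity hypothesis, this forces $\delta = 1$.

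The main obstacle will be the first step: the precise verification that $\delta$ descends from $K_1(\Rt)$ to $K_1(R)$. The remainder of the argument is a formal consequence of the de Rham compatibility of Proposition \ref{prop:compat2} combined with the injectivity hypothesis. The substantive content is the structural claim that any discrepancy between different presentations of $\T$ already lives over $R$, so that the injectivity hypothesis, which is formulated at the level of $R$ rather than $\Rt$, suffices to kill $\delta$; once this descent is established, the rest is bookkeeping with base-change functors and determinants.
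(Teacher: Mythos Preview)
Your overall strategy coincides with the paper's: the paper's proof is the single sentence ``Clear from the preceding proposition, since the isomorphism $\varepsilon_{R,\xi}(\T)$ must be consistent with the de Rham $\varepsilon$-isomorphisms $\varepsilon_{F,\xi}(F^n \otimes_{R,\rho} \T)$, which are uniquely determined by $(R,\T,\xi)$.'' Both arguments reduce to specializing at every $\rho \in \Phi_\T$ via Proposition~\ref{prop:compat2} and then invoking the injectivity hypothesis.

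You are right to isolate the descent of $\delta$ from $K_1(\Rt)$ to $K_1(R)$ as the point requiring care---the paper elides it entirely, and the hypothesis is stated for $K_1(R)$, not $K_1(\Rt)$. However, your proposed justification via Propositions~\ref{prop:regulatorSEScompatible} and~\ref{prop:regulatorbaseextension} does not do the job: those concern compatibility with exact sequences and change of coefficient field, whereas here the two presentations $Y\otimes T$ and $Y'\otimes T'$ involve genuinely different crystalline lattices and hence different $\varepsilon$-isomorphisms over $\tilde\Lambda$; there is no evident ``$R$-linear datum'' to be read off from those results. A cleaner route is via the Galois-equivariance property~(iv) already established for both candidates: each satisfies $\vp(\varepsilon)=[\T,\sigma_p]^{-1}\varepsilon$ with the \emph{same} class $[\T,\sigma_p]\in K_1(R)$ (this depends only on the $R[G_{\Qp}]$-module $\T$), so $\vp(\delta)=\delta$. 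Faithfully flat descent along $\Zp\hookrightarrow\widehat{\Zp^{\mathrm{nr}}}$ gives $(\Rt)^{\vp=1}=R$, and hence $\delta$ lies in the image of $K_1(R)$. Once that is in hand, the image of $\delta$ under each $\rho$ lands in $F^\times\subset\tilde F^\times$, and your second step together with the injectivity hypothesis finishes the argument exactly as you describe.
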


  \begin{proof}
   Clear from the preceding proposition, since the isomorphism $\varepsilon_{R, \xi}(\T)$ must be consistent with the de Rham $\varepsilon$-isomorphisms $\varepsilon_{F, \xi}(F^n \otimes_{R, \rho} \T)$, which are uniquely determined by $(R, \T, \xi)$.
  \end{proof}

  \begin{remark}
   We suspect that the statement of the corollary is true for arbitrary type 1 $\cO$-algebras $R$, but this is much more difficult to prove. For instance, if $T_1, T_2$ are two $\cO$-lattices in crystalline $L$-linear representations such that $T_1 / \varpi^n \cong T_2 / \varpi^n$ for some $n \ge 1$, then on taking $R = \cO / \varpi^n$ this would imply that the $\varepsilon$-isomorphisms for $T_1$ and $T_2$ are congruent modulo $\varpi^n$. This should certainly be true, but at present we can only prove it under the assumption that the Hodge--Tate weights of the $T_i$ all lie in some interval $[a, b]$ with $b - a < p-1$; we hope to return to this problem in a subsequent paper.
  \end{remark}

  We shall now show that the association $(R, Y, T, \xi) \to \varepsilon_{R, \xi}(\T)$ satisfies properties corresponding to conditions (i)---(iv) and (vi) of \cite[Conjecture 3.4.3]{fukayakato06}.

  \subsubsection*{Property (i) (additivity)} The first condition of \emph{op.cit.} states that for any three triples $(R, \T_i, \xi)$, $i = 1,2,3$, with common $R$ and $\xi$, and an exact sequence
  \[ 0 \rTo \T_1 \rTo \T_2 \rTo \T_3 \rTo 0, \]
  we have
  \[ \varepsilon_{R, \xi}(\T_2) = \varepsilon_{R, \xi}(\T_1)\varepsilon_{R, \xi}(\T_3).\]
  By assumption our $\T_i$ are of the form $Y_i \otimes T_i$, for crystalline $\cO$-representations $T_i$ and $R$-modules $Y_i$ with $G$-action. We shall consider only the cases when the exact sequence arises from an exact sequence of $Y_i$'s with a common $T$, or an exact sequence of $T_i$'s with a common $Y$. The first case is obvious from the construction of $\varepsilon_{R, \xi}(-)$. The latter case follows from Proposition \ref{prop:SEScompatible}.

  \subsubsection*{Property (ii) (base change)} The second condition is a compatibility with base-change in $R$; this is immediate from our construction.

  \subsubsection*{Property (iii) (change of \texorpdfstring{$\xi$}{xi})} Let $c \in \Zp^\times$, and let $\gamma_c$ be any element of $G_{\Qp}$ acting trivially on $\Qpnr$ and such that $\chi(\gamma_c) = c$. Then we must show that
  \[ \varepsilon_{R, c\xi}(\T) = [T, \gamma_c] \varepsilon_{R, \xi}(\T), \]
  where $[T, \gamma_c]$ is the class in $K_1(R)$ of the $R$-linear automorphism of $\T$ given by $\gamma_c$. (This is well-defined, as $\gamma_c$ is uniquely determined up to conjugation in $G_{\Qp}$.) It suffices to check this when $R = \Lambda$ and $\T = \T(T)$; but this is immediate from the corresponding property of the regulator map $\cL^G_{V, \xi}$, which is part (1) of Proposition \ref{prop:regulatorproperties}, and of the de Rham $\varepsilon$-isomorphism $\varepsilon_{L, \xi, \dR}(V)$.

  \subsubsection*{Property (iv) (Galois equivariance)} Let $\vp$ denote the arithmetic Frobenius automorphism of $\cO$. Then we must show that
  \[
   \varepsilon_{R, \xi}(\T) \in \Isom\left(\Det_R(0), \Det_R R\Gamma(\Qp, \T) \cdot \Det_R \T\right) \times^{K_1(R)} \left\{ x \in K_1(\Rt) : \vp(x) = [T, \sigma_p]^{-1} x\right\}
  \]
  where $\sigma_p$ is the arithmetic Frobenius element of $\Gal(\Qp^{ab} / \Qpi)$. Again, it suffices to assume $(R, \T) = (\Lambda, \T(T))$ and the result is now clear from the Galois-equivariance properties of the map $\cL^G_{V, \xi}$, cf.~Proposition \ref{prop:regulatorproperties} (2), and of the de Rham $\varepsilon$-isomorphism $\varepsilon_{L, \xi, \dR}(V)$, cf.~\cite[Proposition 3.3.7]{fukayakato06}.

  \subsubsection*{Property (v) (compatibility with de Rham $\varepsilon$-isomorphisms)}

   If $R$ is the ring of integers of a finite extension $F / L$, and $F \otimes_{R} \T$ is de Rham, we must check that $\varepsilon_{R, \xi}(\T)$ is consistent with $\varepsilon_{F, \xi}(F \otimes_{R} \T)$ as defined in \S \ref{sect:deRhamepsilon} above. This is exactly Proposition \ref{prop:compat2} above.

  \subsubsection*{Property (vi) (local duality)}

  Let $\T$ be a free $R$-module with compatible $G_{\Qp}$-action as above. Then
  \[ \T^* \coloneqq \Hom_{R}(\T,R) \]
  is a free $R^\circ$-module -- for the action $h\mapsto h(-)r,$ $r$ in the opposite ring $R^\circ$ of $R$ -- with compatible $G_{{\Qp}}$-action
  given by $h\mapsto h\circ \sigma^{-1}$. Recall that in Iwasawa theory we have the canonical involution $\iota:\Lambda^\circ\to \Lambda$, induced by $g\mapsto g^{-1},$ which allows to consider (left) $\Lambda^\circ$-modules again as (left) $\Lambda$-modules, e.g.\ one has $\T^*(T)^\iota\cong \T(T^*)$ as $(\Lambda, G_{{\Qp}})$-module, where $M^\iota \coloneqq \Lambda\otimes_{\iota,\Lambda^\circ}M$ denotes the $\Lambda$-module with underlying abelian group $M,$ but on which $g\in G$ acts as $g^{-1}$ for any $\Lambda^\circ$-module $M.$

  Given ${\varepsilon}_{R^\circ,-\epsilon}(\T^*(1))$ we may apply the dualising functor $-^*$ to obtain an isomorphism
  \[
   \varepsilon_{R^\circ,-{\xi}}(\T^*(1))^* : (\Det_{R^\circ}(R\Gamma({\Qp},\T^*(1) ))_{\widetilde{R^\circ}})^* (\Det_{R^\circ}(\T^*(1) )_{\widetilde{{R^\circ}}})^* \to \u_{\widetilde{{R^\circ}}},
  \]
  while the local Tate duality isomorphism \cite[\S 1.6.12]{fukayakato06}
  \[\psi(\T): R\Gamma({\Qp},\T)\cong R\Hom_{R^\circ}(R\Gamma({\Qp},\T^*(1)),R^\circ)[-2]\]
  induces an isomorphism
  \begin{align*}
   \overline{\Det_R(\psi(\T))_{\widetilde{R }}}^{-1}:&
   \left((\Det_{R^\circ}(R\Gamma({\Qp},\T^*(1)))_{\widetilde{R^\circ}})^*\right)^{-1}\cong\\
   &\Det_R(R\Hom_{R^\circ}(R\Gamma({\Qp},\T^*(1)),R^\circ))_{\widetilde{R }}^{-1} \to
   \Det_R(R\Gamma({\Qp},\T))_{\widetilde{R }}^{-1}.\notag
  \end{align*}
  Consider the product
  \[
   {\varepsilon}_{R,{\xi}}(\T)\cdot {\varepsilon}_{R^\circ,-{\xi}}(\T^*(1))^*\cdot \overline{\Det_R(\psi(\T))_{\widetilde{R }}}^{-1}:
   \Det_R(\T(-1))_{\widetilde{R}}\cong\Det_R(\T^*(1)^*)_{\widetilde{R }}\to\Det_R(\T)_{\widetilde{R}}
  \]
  and the isomorphism $\xymatrix{ {\T(-1)} \ar[r]^{\cdot{\xi}} & \T }$ which sends $t\otimes {\xi}^{\otimes -1}$ to $t$. The following result can be seen as another version of the reciprocity law \ref{thm:localrecip}.

  \begin{proposition}[Duality]\label{duality}
   Let $\T$ be as above such that $\T\cong Y\otimes_\Lambda\T(T)$ for some $(R,\Lambda)$-bimodule $Y$, which is projective as $R$-module. Then
   \[{\varepsilon}_{R,{\xi}}(\T)\cdot {\varepsilon}_{R^\circ,-{\xi}}(\T^*(1))^*\cdot \overline{\Det_R(\psi(\T))_{\widetilde{R }}}^{-1}=
   \Det_R\left(\xymatrix{{\T(-1) } \ar[r]^(0.6){\cdot{\xi}} & \T }\right)_{\widetilde{R}}.\]
  \end{proposition}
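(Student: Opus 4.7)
The plan is to reduce everything to the ``universal'' case $(R,\T)=(\Lambda_\cO(G),\T(T))$ via the base-change property (ii), and then to pass further to $\cK_{\tilde L}(G)$, where each factor on the left-hand side admits a clean expression in terms of the two-variable regulator. Concretely, since the various base-extension maps we have used throughout Section \ref{sect:constructionG} are injective on the relevant groups of determinant isomorphisms, and both sides of the claimed equality are manifestly multiplicative in $Y$, it suffices to verify the identity in $\underline{\Det}(\cK_{\tilde L}(G))$ for $\T = \T(T)$.

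Next, I would substitute the explicit formula
\[
\varepsilon_{\Lambda,\xi}(\T(T)) = (-\gamma_{-1})^{d}(-1)^{m(V)}\cdot\ell(V)^{-1}\Det(\cL^{G}_{V,\xi})\cdot\varepsilon_{L,\xi,\dR}(V)
\]
in $\cK_{\tilde L}(G)$, and the analogous formula for $\varepsilon_{\Lambda^{\circ},-\xi}(\T(T)^{*}(1))^{*}$ in terms of $\cL^{G}_{V^{*}(1),-\xi}$, $\ell(V^{*}(1))$ and $\varepsilon_{L,-\xi,\dR}(V^{*}(1))^{*}$. The desired identity then factors as (a) an Iwasawa-theoretic identity involving the two regulator maps and the local Tate pairing $\psi(\T(T))$; (b) a de Rham identity involving $\varepsilon_{L,\xi,\dR}$ and its dual; and (c) a purely combinatorial identity accounting for the scalar prefactors and the twist $\xi\mapsto -\xi$.

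The heart of the proof is (a), which I would extract from Perrin-Riou's local reciprocity formula (Theorem \ref{thm:localrecip}). Using Proposition \ref{prop:regulatorproperties}(1) to rewrite $\cL^{G}_{V^{*}(1),-\xi}=[\gamma_{-1}]^{-1}\cL^{G}_{V^{*}(1),\xi}$, the pairing of the two regulator matrices is, on each factor of the determinant, $-\gamma_{-1}\cdot\ell_{0}$ times the Iwasawa pairing $\langle-,-\rangle_{\Qpi,\Iw}$; this is precisely the ingredient used in Section \ref{sect:descenttoLambda} to prove Proposition \ref{prop:det of regulator}. Combined with the identity $\ell(V)\iota(\ell(V^{*}(1)))=(-1)^{m(V)}\ell_{0}^{d}$ already established there, the scalar factors $(-\gamma_{-1})^{d}(-1)^{m(V)}$ on the two sides cancel against the reciprocity factors, leaving the matrix of the Iwasawa-theoretic Tate pairing, which is exactly what $\overline{\Det_{R}(\psi(\T(T)))}$ computes. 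For part (b), I would invoke the duality property of the de Rham $\varepsilon$-isomorphism recalled in \cite[Proposition 3.3.5]{fukayakato06}: the product $\varepsilon_{L,\xi,\dR}(V)\cdot\varepsilon_{L,-\xi,\dR}(V^{*}(1))^{*}$, taken relative to the canonical pairing between $\DdR(V)$ and $\DdR(V^{*}(1))$, equals the twisting isomorphism $V(-1)\xrightarrow{\cdot\xi}V$ on determinants; this reflects the sign conventions in the definition of $\varepsilon_{L,\xi,\dR}$ via the Gauss sum, where the replacement $\xi \mapsto -\xi$ corresponds exactly to duality.

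The main obstacle will be the careful bookkeeping of signs, in particular making sure that the factors $-\gamma_{-1}\cdot\ell_{0}$ coming from each copy of Theorem \ref{thm:localrecip}, the prefactors $(-\gamma_{-1})^{d}(-1)^{m(V)}$ introduced in the definition of $\varepsilon_{\Lambda,\xi}(T)$, the involution $\iota$ entering through the identification of $\T(T)^{*}(1)$ with $\T(T^{*}(1))^{\iota}$, and the switch $\xi\mapsto -\xi$ for the dual all conspire to leave exactly the map $\Det_{R}(\T(T)(-1)\xrightarrow{\cdot\xi}\T(T))$. Once the bookkeeping is carried out and parts (a), (b) are checked, the stated duality is an immediate consequence.
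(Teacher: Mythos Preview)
Your reduction to the universal case $(R,\T)=(\Lambda,\T(T))$ via base change is the same first step as in the paper, and it is correct (the paper records the needed compatibility of Tate duality and $\Hom$/$\otimes$ under base change in the lemma immediately following the proposition). From that point on, however, the paper takes a much shorter route than the direct computation you outline.

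The paper's argument is a pure density/specialization trick. Both sides of the asserted equality are isomorphisms of determinants over $\widetilde{\Lambda}=\Lambda_{\cOt}(G)$, hence differ by an element of $K_1(\widetilde{\Lambda})$. Since $G$ is abelian, $SK_1(\widetilde{\Lambda})=1$, so the map
\[
K_1(\widetilde{\Lambda})\;\longrightarrow\;\prod_{\rho\in\mathrm{Irr}(G)}K_1(\widetilde{L_{\rho}})
\]
is injective, where the product runs over irreducible $\overline{\QQ}_p$-valued characters of $G$ with finite image. It therefore suffices to check the identity after specializing at each such $\rho$, i.e.\ for the pairs $(L,V(\rho))$. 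But at that level the statement is exactly the duality property of the de Rham $\varepsilon$-isomorphism established by Fukaya and Kato themselves \cite[Prop.\ 3.3.8]{fukayakato06}; the compatibility of our $\varepsilon$-isomorphism with the Fukaya--Kato one under such specializations is Proposition \ref{prop:compat2} (equivalently Theorem \ref{thm:compatibleunderevaluation}).

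Your approach of passing to $\cK_{\Lt}(G)$ and unwinding everything via the explicit regulator formula and Perrin-Riou reciprocity is in principle viable, but note two things. First, Theorem \ref{thm:localrecip} as stated is a statement about the \emph{cyclotomic} regulator $\cL^{\Gamma}$, not about $\cL^{G}$; you would need a two-variable reciprocity law, which is not proved in the paper (Proposition \ref{prop:det of regulator} uses the cyclotomic reciprocity together with specialization at characters of $U$, it does not establish a reciprocity formula over $\cK_{\Lt}(G)$). Second, the sign bookkeeping you flag as ``the main obstacle'' is genuinely delicate, and the paper's route sidesteps it entirely by appealing to the already-verified specialization property and to \cite[3.3.8]{fukayakato06}, where that bookkeeping has been done once and for all at the level of de Rham representations. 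So your plan is not wrong, but it reproves things that the paper has arranged to take off the shelf.
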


  \begin{proof}
   First note that the statement is stable under applying $Y'\otimes_R-$, for some $(R',R)$-bimodule $Y'$ which is projective as a $R'$-module, by the functoriality of local Tate duality and the lemma below. Thus we are reduced to the case $(R, \T)=(\Lambda, \T(T))$ where $T$ is a Galois stable lattice in some crystalline representation $V$.

   Since the morphisms between $\Det_R(\T(-1))_{\widetilde{R }}$ and $\Det_R(\T)_{\widetilde{R }}$ form a $K_1(\widetilde{\Lambda})$-torsor and the kernel
   \[SK_1(\widetilde{\Lambda}) \coloneqq \ker\left(K_1(\widetilde{\Lambda})\to \prod_{\rho\in{\rm{Irr}}({G})} K_1(\widetilde{L_\rho})\right)=1\]
   is trivial, as $G$ is abelian, it suffices to check the statement for all $(L, V(\rho)),$ which is nothing else than the content of \cite[prop.\ 3.3.8]{fukayakato06}. Here $ {\rm{Irr}}({G})\ $ denotes the set of $ \overline{\QQ}_p$-valued irreducible representations of $ {G}$ with finite image.
  \end{proof}

  \begin{lemma}
  Let $Y$ be a $(R',R)$-bimodule such that $Y\otimes_R\T\cong \T'$ as $(R',G_{{\Qp}})$-module and let
  $Y^*=\Hom_{R'}(Y,R')$ the induced $(R'^\circ,R^\circ)$-bimodule. Then there is a natural
  \begin{enumerate}
   \item equivalence of functors \[Y\otimes_R\Hom_{R^\circ}(-, R^\circ)\cong \Hom_{R'^\circ}(Y^*\otimes_{R^\circ} -,R'^\circ)\] on $P(R^\circ)$;
   \item isomorphism $Y^*\otimes_{R^\circ} \T^*\cong (\T')^*$ of $(R'^\circ,G_{{\Qp}})$-modules.
  \end{enumerate}
  \end{lemma}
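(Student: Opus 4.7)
The strategy is standard tensor--hom yoga, with part (1) being the core of the argument and part (2) following by specialization.

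For part (1), I would define a natural transformation
\[ \alpha_M : Y \otimes_R \Hom_{R^\circ}(M, R^\circ) \to \Hom_{R'^\circ}(Y^* \otimes_{R^\circ} M, R'^\circ) \]
by sending $y \otimes f$ to the map $\phi \otimes m \mapsto \phi(y) \cdot f(m)$; the $(R', R)$-bimodule structure of $Y$, and the induced structure on $Y^*$, ensures that this lands in the correct $\Hom$-space and is well defined on the tensor product. Both source and target are additive functors of $M$ commuting with finite direct sums, so it suffices to check that $\alpha_{R^\circ}$ is an isomorphism and then pass to direct summands of free modules. For $M = R^\circ$ the source collapses to $Y$ and the target to $\Hom_{R'^\circ}(Y^*, R'^\circ)$, i.e.\ the double $R'$-dual of $Y$; the canonical map $Y \to Y^{**}$ is an isomorphism provided $Y$ is finitely generated projective as a left $R'$-module, which is the case of interest (the lemma is only applied in Proposition \ref{duality} to bimodules of this form).

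For part (2), I would apply part (1) with $M = \T^*$. Since $\T$ is free over $R$, its $R$-dual $\T^*$ is finitely generated projective as an $R^\circ$-module and so lies in $P(R^\circ)$; moreover, double $R$-duality identifies $\Hom_{R^\circ}(\T^*, R^\circ)$ with $\T$. The source of $\alpha_{\T^*}$ thus becomes $Y \otimes_R \T = \T'$, giving
\[ \T' \cong \Hom_{R'^\circ}(Y^* \otimes_{R^\circ} \T^*, R'^\circ). \]
Taking $R'$-duals once more and invoking double duality for $Y^* \otimes_{R^\circ} \T^*$ (which is finitely generated projective over $R'^\circ$, being a direct summand of a finite sum of copies of $\T^*$) produces the desired isomorphism $(\T')^* \cong Y^* \otimes_{R^\circ} \T^*$. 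Galois equivariance is automatic because the $G_{\Qp}$-action commutes with all $R$- and $R'$-actions on $\T$, $Y$ and their duals, and the natural transformation $\alpha$ was built purely out of $R$-linear, $R'$-linear and evaluation maps.

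The only real difficulty lies in the bookkeeping of left/right module structures across the four rings $R, R^\circ, R', R'^\circ$, and in checking that each identification used respects the bimodule conventions fixed in the preceding paragraphs. Once these conventions are tracked consistently, the content of the lemma is purely formal.
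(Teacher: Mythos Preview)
Your approach is correct and is exactly what the paper's one-line proof (``This is easily checked using the adjointness of $\Hom$ and $\otimes$'') amounts to when written out: reduce both sides to additive functors, check on $M = R^\circ$, and invoke double duality $Y \cong Y^{**}$ for $Y$ finitely generated projective over $R'$. One small slip: your explicit formula $\phi \otimes m \mapsto \phi(y)\cdot f(m)$ does not typecheck, since $\phi(y) \in R'$ and $f(m) \in R^\circ$ live in unrelated rings; the map you actually want sends $\phi \otimes m$ to $\phi\bigl(y \cdot f(m)\bigr)$, using the right $R$-action on $Y$ (or, equivalently, arises from the chain $\Hom_{R'^\circ}(Y^* \otimes_{R^\circ} M, R'^\circ) \cong \Hom_{R^\circ}(M, Y^{**}) \cong \Hom_{R^\circ}(M, Y)$ together with the evaluation map $Y \otimes_R \Hom_{R^\circ}(M, R^\circ) \to \Hom_{R^\circ}(M, Y)$). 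With that correction the rest of your argument, including part (2) and the remark on Galois equivariance, goes through as written.
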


  \begin{proof}
  This is easily checked using the adjointness of $\Hom$ and $\otimes.$
  \end{proof}

\section{Evaluation at characters}
\label{sect:evaluation}

 \subsection{Setup}

  For any de Rham character $\eta$ of $G$ (which we assume to take values in $L$), we write $\ev_{\eta}:\Lambda_{\cOt}(G) \to \cOt$ for the $\cOt$-linear ring homomorphism which sends $g$ to $\eta(g)$, and we abbreviate the functor
  \[\cOt \otimes_{\Lambda_{\cOt}(G), \ev_\eta}^\mathbb{L} (-) \quad\text{by}\quad {\rm sp}_\eta(-),\]
  where the tensor product is formed via $\ev_{\eta}$ as indicated.

  We want to study the image under $\sp_{\eta}$ of the isomorphism
  \[ \varepsilon_{\Lambda_\cO(G), \xi}(T): \Det_{\Lambda_{\cOt}(G)}(0) \rTo^\cong
    \Lambda_{\cOt}(G) \otimes_{\Lambda_{\cO}(G)} \left\{
     \Det_{\Lambda_{\cO}(G)} R\Gamma_{\Iw}(K_\infty, T) \cdot
     \Det_{\Lambda_{\cO}(G)} \left(\Lambda_{\cO}(G) \otimes_\cO T\right)\right\}
  \]
  constructed above. We have
  \[ \sp_\eta\left( R\Gamma_\Iw(K_\infty, T) \right) = R\Gamma(\Qp, T(\eta^{-1})), \]
  by \cite[1.6.5]{fukayakato06}, and
  \[ \sp_\eta\left( \Lambda_{\cOt}(G) \otimes_\cO T\right) = T(\eta^{-1}),\]
  (since clearly $T(\eta^{-1})$ is canonically isomorphic to $T$ as a $\cO$-module, although obviously not as a Galois representation). So $\sp_{\eta}$ is an isomorphism
  \[ \Det_{\cOt} (0) \rTo^\cong
      \cOt \otimes_{\cO} \left\{ \Det_{\cO} R\Gamma(\Qp, T(\eta^{-1})) \cdot \Det_{\cO} T(\eta^{-1})\right\}. \]

  \begin{theorem}
   \label{thm:compatibleunderevaluation}
    Let $V$ be a crystalline representation of $G_{\Qp}$ with coefficients in a finite extension $L$ of $\Qp$, $T$ a $\cO$-lattice in $V$, and $\eta$ an $L$-valued de Rham character of $G$, as above. Then the isomorphism
    \[ {\rm sp}_\eta\big(\varepsilon_{\Lambda_\cO(G), \xi}(T)\big)\]
    coincides with $\varepsilon_{L, \xi}(W)$ after extending scalars to $\Lt$, where $W$ is the de Rham representation $V(\eta^{-1})$ and $\varepsilon_{L, \xi}(W)$ is the canonical $\varepsilon$-isomorphism of \S \ref{sect:deRhamepsilon}.
  \end{theorem}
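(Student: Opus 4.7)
The plan is to reduce the assertion to special cases of $\eta$ and then verify each by direct computation with the regulator's evaluation formulae. Write $\eta = \eta_0 \eta_1 \chi^j$ with $\eta_0$ finite-order, $\eta_1$ unramified, and $j \in \ZZ$, so that $\eta_1 \chi^j$ is a crystalline character. First, use the twist-compatibility of $\Theta_{\Lambda_\cO(G), \xi}$ provided by Proposition \ref{prop:regulator twist invariance 2}, and the corresponding twist-compatibility of $\varepsilon_{\cO, \xi, \dR}$, to reduce the problem to the case $(V(\eta_1^{-1}\chi^{-j}), \eta_0)$. On the target side, the factors $\Gamma_L(W)$ and $\varepsilon_{L,\xi,\dR}(W)$ depend only on the Hodge--Tate weights and $\Dpst(W)$, and transform predictably under this replacement; for $\theta_L(W)$ one uses that $V(\eta_1^{-1}\chi^{-j})$ is still crystalline so that the factor $\theta_L$ does not change its analytic shape. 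This reduces us to the case where $\eta = \eta_0$ is a finite-order character of $\Gamma$.

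Next, split into two subcases according to $\eta_0$. In the \emph{trivial} case $\eta_0 = 1$, the representation $W = V$ is crystalline, and Theorem \ref{thm:sigmas} rewrites $\theta_L(V)$ in terms of $\widetilde\exp^*_{\Qp, V^*(1)}$ and $\widetilde\log_{\Qp, V}$. On the other hand, $\sp_1(\Theta_{\Lambda_\cO(G), \xi}(T))$ is, modulo $\ell(V)$, the evaluation at the trivial character of the two-variable regulator; using the compatibility with the cyclotomic regulator and parts (2) of Theorems \ref{thm:cycloregulator} and \ref{thm:strongcycloregulator} (and its analogue for negative weights), the resulting formula reproduces exactly the composition of $\widetilde\exp^*_{\Qp, V^*(1)}$ and $\widetilde\log_{\Qp, V}$ with the correct $(1-\vp)$ and $(1-p^{-1}\vp^{-1})$ factors; simultaneously $\ell(V)$ matches $\Gamma_L(V)^{-1}$ via Definition \ref{def:ellV} and the definition of $\Gamma^*$. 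In the \emph{ramified} case $\eta_0 \ne 1$, the representation $W = V(\eta_0^{-1})$ satisfies $\Dcris(W) = \Dcris(W^*(1)) = 0$, so Proposition \ref{prop:noncrystallinedet} describes $\theta_L(W)$ via $\log_{\Qp, W}$ and $\exp^*_{\Qp, W^*(1)}$. Theorem \ref{thm:cycloregulator}(1) evaluates $\cL^\Gamma_{V, \xi}$ at $\eta_0$ precisely in these terms, producing in addition the factor $\varepsilon_L(\eta_0^{-1}, -\xi)$; a Gauss-sum manipulation using Proposition \ref{prop:gaussums} and the $\Gamma$-transformation law recorded in \S \ref{sect:deRhamepsilon} converts this factor into the $\varepsilon_{L, \xi, \dR}(W)$ part of the target, and the $\Gamma^*$ factors recombine with $\ell(V)$ to give $\Gamma_L(W)$.

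The main obstacle is the \emph{extremely bad} case, where either $H^0(\Qp, W) \ne 0$ or $H^2(\Qp, W) \ne 0$; then the leading-order evaluation of the regulator vanishes, and one must invoke Theorem \ref{thm:regderiv}. The difficulty is tracking how the derivative of the regulator interacts with the degenerate cohomology determinant, since neither $\Det_L R\Gamma(\Qp, W)$ nor $\Det_L \Dcris(W)$ is freely presented any longer. The cleanest approach is induction on $d = \dim V$: by compatibility with short exact sequences (Propositions \ref{prop:regulatorSEScompatible} and \ref{prop:SEScompatible}), split off a one-dimensional subrepresentation or quotient of $V$ carrying the degeneracy (using the semisimplicity lemma \ref{lemma:semisimple} applied to $W$ and $W^*(1)$), thereby separating the problem into the bad one-dimensional constituent and a smaller-dimensional representation to which the inductive hypothesis applies. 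The base case, where $V$ itself is one-dimensional, is precisely the content of Venjakob's paper \cite{venjakob11}. Finally, the sign prefactor $(-\gamma_{-1})^d (-1)^{m(V)}$ in the definition of $\varepsilon_{\Lambda_\cO(G), \xi}(T)$ absorbs the signs introduced by Theorem \ref{thm:sigmas} and by the identities \eqref{eq:signexptilde}, \eqref{eq:signexptildestar} for $\widetilde\exp$.
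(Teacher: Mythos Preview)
Your reduction to finite-order $\eta_0$ and the induction for the extremely bad case are both in line with the paper (the paper twists away only the unramified part, but that is cosmetic). The genuine gap is in your paragraph~2: you treat the cases $\eta_0=1$ and $\eta_0\ne 1$ as if the evaluation $\ev_{\eta_0}\circ\cL^\Gamma_{V',\xi}$ were already an isomorphism whose determinant one can read off from Theorem~\ref{thm:cycloregulator}. But after your twist $V\mapsto V'=V(\eta_1^{-1}\chi^{-j})$, the character $\eta_0$ has Hodge--Tate weight $0$, so Theorem~\ref{thm:cycloregulator}(1) (and its crystalline refinement Theorem~\ref{thm:strongcycloregulator}) only produces the $\exp^*$ piece, landing in $\Fil^0\DdR(W)$; it says nothing about the $\log$ piece. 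Whenever $t(W)\ne 0$---equivalently, whenever the original $j$ lies in $[0,h-1]$, i.e.\ in the \emph{somewhat bad} range---the evaluated map kills $H^1_f(\Qp,W)$, so $\ev_{\eta_0}\circ\cL$ is not injective and its ``determinant'' does not compute $\sp_{\eta_0}\big(\Det\cL\big)$. There is no ``analogue for negative weights'' of the evaluation formula that supplies the missing $\log$ term; what supplies it is the \emph{derivative} of the regulator at $\eta_0$, i.e.\ Theorem~\ref{thm:regderiv}. You invoke that theorem only for the extremely bad case, but it is already indispensable here, where $H^0=H^2=0$.

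Concretely, the paper's argument for somewhat bad $\eta$ (Section~\ref{sect:somewhatbad}) localises at the prime $\mathfrak p$ of $\eta$, lifts a basis $y_1,\dots,y_r$ of $H^1_f(\Qp,W)$ and $y_{r+1},\dots,y_d$ of $H^1/H^1_f$ to $H^1_{\Iw}$, and replaces $\cL(x_i)$ by $\varpi^{-1}\cL(x_i)$ for $i\le r$; the reduction mod $\mathfrak p$ of this modified map is then computed via Theorem~\ref{thm:regderiv} on the first $r$ coordinates (giving $\log$ or $\widetilde\log$) and via Theorems~\ref{thm:cycloregulator}/\ref{thm:strongcycloregulator} on the last $d-r$ (giving $\exp^*$ or $\widetilde\exp^*$). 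The factor $\varpi^{-r}$ cancels against the order-$r$ zero of $\ell(V)$ at $\eta$ (Proposition~\ref{prop:factorials}). Your write-up needs exactly this step; without it, the argument is incomplete precisely on the characters that are neither good nor extremely bad.
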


  It is clear that $\ev_{\eta}$ extends to a homomorphism $\Lambda_{\Lt}(G) \to L$, and that the composition of the exact functor $\Lt \otimes_{\cOt} (-)$ with $\sp_{\eta}$ coincides with the derived tensor product
  \[ \Lt \otimes_{\Lambda_{\Lt}(G), \ev_\eta} (-),\]
  which (in a slight abuse of notation) we shall also denote by $\sp_{\eta}$. So it suffices to show that
  \[ {\rm sp}_\eta\big(\varepsilon_{\Lambda_L(G), \xi}(V)\big) = \varepsilon_{L, \xi}(W);
  \]
  this implies Theorem \ref{thm:compatibleunderevaluation} for all lattices $T \subset V$. By the invariance of $\varepsilon_{\Lambda_L(G), \xi}$ under twists by crystalline characters, it suffices to prove the theorem under the additional hypothesis that $V$ has non-negative Hodge--Tate weights.

 We shall divide the de Rham characters of $G$ into the following three classes. Suppose $h$ is the largest Hodge--Tate weight of $V$.

 \begin{itemize}
  \item {\em Good characters:} these are characters of $G$ whose Hodge--Tate weights are $\ge h$ or $\le -1$, and such that the twisted representation $W = V(\eta^{-1})$ satisfies $H^0(\Qp,V(\eta^{-1}))=H^2(\Qp,V(\eta^{-1}))=0$.
  \item {\em Somewhat bad characters:} these are characters whose Hodge--Tate weights lie in $[0, h-1]$, but still such that $H^0(\Qp,V(\eta^{-1}))=H^2(\Qp,V(\eta^{-1}))=0$.
  \item {\em Extremely bad characters:} $\eta$ is extremely bad if $H^i(\Qp,V(\eta^{-1}))\neq 0$ for $i=0$ or $i=2$.
 \end{itemize}

 Note that somewhat bad characters almost always exist (they exist unless $V$ has all Hodge--Tate weights 0, in which case $V$ is unramified), but extremely bad characters are rarer; in particular, there are none if $V$ is irreducible and $d > 1$.


 \subsection{Evaluation of the Gamma factor}
  \label{sect:gamma}
  As a preliminary to the proof of Theorem \ref{thm:compatibleunderevaluation}, we need to compare the factor $\Gamma_L(V(\eta^{-1}))$ defined in \ref{sect:deRhamepsilon} with the factor $\ell(V)$ arising in the definition of $\Theta_{\Lambda_L(G), \xi}(V)$.

  \begin{proposition}
   \label{prop:factorials}
   Let $\eta$ be an $L$-valued character of $G$ which is de Rham, with Hodge--Tate weight $j$, and let $W=V(\eta^{-1})$. Then we have
   \[ \frac{\Gamma^*(1 + j)^d} {\ell(V)^*(\eta)} = (-1)^{\sum n_i + jd + r}\, \Gamma_L(W),\]
   where $r = \#\{ i : n_i > j\} = \dim_L t(W)$.
  \end{proposition}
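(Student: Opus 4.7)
The plan is to reduce the identity to two per-factor statements, each verifiable by elementary case analysis.

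\textbf{Step 1.} First, I establish the uniform identity
\[ \mu_n^*(\eta) = \frac{\Gamma^*(1+j)}{\Gamma^*(1+j-n)} \]
valid for every integer $n$ (with the convention $\mu_0 = 1$). This is a direct check from the definitions: since $\eta$ has Hodge--Tate weight $j$, equation \eqref{derivation} yields $\ell_k(\eta\langle\chi\rangle^s) = j + s - k$, so each factor of $\mu_n = \ell_0 \cdots \ell_{n-1}$ (for $n \geq 1$) vanishes at $\eta$ to order at most one, with simple zero precisely when $0 \leq j \leq n-1$. Splitting into the subcases $j \geq n$, $0 \leq j \leq n-1$, $j < 0$ for $n \geq 1$ (and the analogous ones for $n \leq 0$), and using the piecewise definition of $\Gamma^*$, one evaluates both sides explicitly and verifies their agreement in each case. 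Taking the product over $i = 1, \dots, d$ gives
\[ \ell(V)^*(\eta) = \frac{\Gamma^*(1+j)^d}{\prod_{i=1}^d \Gamma^*(1+j-n_i)}. \]

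\textbf{Step 2.} Next I check the reflection identity for integer $m$: the product $\Gamma^*(m)\,\Gamma^*(1-m)$ equals $(-1)^{m+1}$ if $m \geq 1$, equals $1$ if $m = 0$, and equals $(-1)^m$ if $m \leq -1$. Equivalently, it equals $(-1)^{m + \epsilon(m)}$, where $\epsilon(m) = 1$ for $m \geq 1$ and $\epsilon(m) = 0$ for $m \leq 0$. This is immediate from the piecewise formula for $\Gamma^*$. Applying this with $m = n_i - j$, taking the product over $i$, and using $\Gamma_L(W) = \prod_i \Gamma^*(n_i - j)^{-1}$ together with $\sum_i \epsilon(n_i - j) = \#\{i : n_i > j\} = r$ (which is exactly the Hodge--Tate formula $r = \dim_L t(W)$ in the statement), I obtain
\[ \prod_{i=1}^d \Gamma^*(1+j-n_i) = (-1)^{\sum_i(n_i-j) + r}\,\Gamma_L(W). \]

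\textbf{Combining.} The two displayed formulas yield $\Gamma^*(1+j)^d/\ell(V)^*(\eta) = (-1)^{\sum_i(n_i-j) + r}\,\Gamma_L(W)$. Since $\sum_i(n_i - j) = \sum n_i - jd \equiv \sum n_i + jd \pmod{2}$, this is precisely the claimed identity. The main obstacle is the bookkeeping in Step 1: verifying the per-factor formula $\mu_n^*(\eta) = \Gamma^*(1+j)/\Gamma^*(1+j-n)$ requires considering several subcases depending on the sign of $n$ and the location of $j$ relative to $0$ and $n$. Once this is in hand, Step 2 and the final combination are one-line computations with signs.
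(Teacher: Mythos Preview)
Your proof is correct and follows essentially the same route as the paper: both reduce to evaluating $\prod_{k}(j-k)$ by case analysis on the position of $j$ relative to $[0,n-1]$ and then matching the result against the piecewise definition of $\Gamma^*$. Your packaging via the uniform identity $\mu_n^*(\eta)=\Gamma^*(1+j)/\Gamma^*(1+j-n)$ and the reflection formula $\Gamma^*(m)\Gamma^*(1-m)=(-1)^{m+\epsilon(m)}$ is a slightly cleaner way to organise the sign bookkeeping (and incidentally covers negative $n$ as well), but the underlying computation is the same as the paper's.
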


  \begin{proof}
   For any $n \ge 0$, we have
   \[ (\ell_0 \dots \ell_{n-1})^*(\eta) = \prod_{\substack{0 \le k \le n-1 \\ k \ne j}} (j - k) =
    \begin{cases}
     \frac{j!}{(j-n)!} & \text{if $j \ge n$,}\\
     j! (n - 1 - j)! (-1)^{n - 1 - j} & \text{if $0 \le j \le n-1$,} \\
     (-1)^n \frac{(n-1-j)!}{(-1-j)!} & \text{if $j \le -1$.}
    \end{cases}
   \]
   Hence
   \begin{align*}
    \frac{1}{\Gamma^*(1 + j)} (\ell_0 \dots \ell_{n-1})^*(\eta) &=
    \begin{cases}
     \frac{1}{(j-n)!} & \text{if $j \ge n$,}\\
     (-1)^{n-1-j} (n - 1 - j)! & \text{if $j \le n-1$.} \\
     \end{cases}\\
    &= \begin{cases}
     (-1)^{n-j} \Gamma^*(n-j) & \text{if $j \ge n$,}\\
     (-1)^{n-1-j} \Gamma^*(n - j) & \text{if $j \le n-1$.} \\
    \end{cases}
   \end{align*}

   Taking $n$ to be each of the Hodge--Tate weights of $V$ in turn and multiplying, we obtain
   \[ \frac{1}{\Gamma^*(1 + j)^d} \ell(V)^*(\eta) = (-1)^{\sum n_i + jd + r} \prod_{i = 1}^d \Gamma^*(n_i - j) = (-1)^{\sum n_i + jd + r} \Gamma_L(W)^{-1},
   \]
   since the Hodge--Tate weights of $W$ are $\{n_i - j\}_{i = 1, \dots, d}$.
  \end{proof}

 \subsection{The good characters}\label{sect:good}

  In this section, we prove Theorem \ref{thm:compatibleunderevaluation} for good characters of $G$. As remarked above, it suffices to assume that $V$ has non-negative Hodge--Tate weights, and that the character $\eta$ takes values in $L^\times$. We write $W = V(\eta^{-1})$.

  \begin{proposition}\label{descent-good}
   Let $\eta$ be an $L$-valued de Rham character of $G$ whose Hodge--Tate weight $j$ does not lie in $[0, h-1]$, and such that $H^0(\Qp, W) = H^2(\Qp, W) = 0$.

   Then
   \begin{enumerate}[(i)]
    \item The corestriction map $H^1_{\Iw}(K_\infty, V)_{G = \eta} \to H^1(\Qp, W)$ is an isomorphism of $L$-vector spaces, so
    \[ \Det_L R\Gamma(\Qp, W) \cong \left(\Det_L H^1_{\Iw}(K_\infty, V)_{G = \eta}\right)^{-1};\]

    \item composing the regulator with the evaluation map $\ev_\eta$ induces an isomorphism of free $\Lt$-modules
    \[ \ev_{\eta} \circ \cL^G_{V, \xi} : \Lt \otimes_L H^1_\Iw(K_\infty, V)_{G = \eta} \rTo^\cong \Lt \otimes_L \Dcris(V);\]

    \item the isomorphism
    \[ \Lt \otimes_L \Det_L H^1_{\Iw}(K_\infty, V)_{G = \eta} \rTo \Lt \otimes_L \Det_L(V)\]
    coming from $\sp_{\eta} \big(\varepsilon_{\Lambda_L(G), \xi}(V)\big)$ via (i) is given by the map
    \[ (-\eta(\gamma_{-1}))^d \ell(V)(\eta)^{-1} \Det_{\Lt}(\operatorname{ev}_{\eta} \circ \cL^G_{V, \xi})) \cdot \varepsilon_{L, \xi, \dR}(V);\]

    \item the isomorphism of (iii) coincides with the canonical isomorphism $\varepsilon_{L, \xi}(W)$ of Section \ref{sect:deRhamepsilon} above, so Theorem \ref{thm:compatibleunderevaluation} holds for $\eta$.
   \end{enumerate}
  \end{proposition}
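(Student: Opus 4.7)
The plan is to prove the four assertions in order, each reducing to already-established tools, with the bulk of the work concentrated in (iv).

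For (i), I would invoke the base-change identity $\sp_\eta R\Gamma_\Iw(K_\infty, V) \cong R\Gamma(\Qp, W)$ from \cite[1.6.5]{fukayakato06}. The derived specialization is computed by the spectral sequence associated to a projective resolution of $L(\eta)$ over $\Lambda_L(G)$. Since $V$ has non-negative Hodge--Tate weights and the hypothesis excludes $j\in[0,h-1]$, one checks that $H^0_\Iw(K_\infty,V) = V^{G_{K_\infty}}$ contributes trivially; and the vanishing of $H^0(\Qp,W)$ and $H^2(\Qp,W)$ forces degeneration of the spectral sequence, giving a canonical isomorphism $H^1_\Iw(K_\infty,V)_{G=\eta} \cong H^1(\Qp,W)$ and hence the asserted identification of determinants.

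For (ii), I would use the twist-compatibility of Proposition \ref{prop:regulator twist invariance} to rewrite $\ev_\eta \circ \cL^G_{V,\xi}$, up to the explicit isomorphism $b_\eta$ on the coefficient side, in terms of the cyclotomic regulator $\cL^\Gamma_{W,\xi}$ evaluated at the trivial character. By Theorem \ref{thm:cycloregulator} and its refinement Theorem \ref{thm:strongcycloregulator}, this value equals $\exp^*_{\Qp,W^*(1)}(-)$ when $j\geq h$ (so all Hodge--Tate weights of $W$ are non-positive) or $\log_{\Qp,W}(-)$ when $j\leq -1$ (so all weights are positive), in each case multiplied by an explicit scalar built from $\Gamma^*(1+j)$, a power of Frobenius, and a local $\varepsilon$-factor. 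The fundamental exact sequence \eqref{eq:fundseq} combined with the vanishing of $H^0(\Qp,W)$ and $H^2(\Qp,W)$ forces the non-active half of the target to vanish, so $\exp^*$ (resp.\ $\log$) is an isomorphism in its respective case.

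For (iii), I would unwind the definition $\varepsilon_{\Lambda_\cO(G),\xi}(T) = (-\gamma_{-1})^d (-1)^{m(V)} \cdot \Theta_{\Lambda_\cO(G),\xi}(T) \cdot \varepsilon_{\cO,\xi,\dR}(T)$ and apply $\sp_\eta$ term by term, using $\sp_\eta((-\gamma_{-1})^d) = (-\eta(\gamma_{-1}))^d$ and, after invoking (i) to identify $\Det_L R\Gamma(\Qp,W)$ with $(\Det_L H^1_\Iw(K_\infty,V)_{G=\eta})^{-1}$, the identity $\sp_\eta(\Theta_{\Lambda_\cO(G),\xi}(T)) = \ell(V)(\eta)^{-1}\,\Det_{\Lt}(\ev_\eta \circ \cL^G_{V,\xi})$. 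The sign $(-1)^{m(V)}$ is absorbed into the compatibility between $\varepsilon_{\cO,\xi,\dR}(T)$ and $\varepsilon_{L,\xi,\dR}(V)$ (the factor $t^{m(V)}$ appearing in Berger's comparison), leaving the claimed formula.

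For (iv), I would compare the formula from (iii) term by term with the factorization $\varepsilon_{L,\xi}(W) = \Gamma_L(W)\cdot\varepsilon_{L,\xi,\dR}(W)\cdot\theta_L(W)$. By Propositions \ref{prop:noncrystallinedet} and \ref{prop:nophidet}, $\theta_L(W)$ is essentially $\log_{\Qp,W}$ or $-\exp^*_{\Qp,W^*(1)}$ (possibly with a crystalline correction factor $(1-\vp)(1-p^{-1}\vp^{-1})^{-1}$), so step (ii) directly identifies $\Det(\ev_\eta\circ\cL^G_{V,\xi})$ with $\theta_L(W)^{-1}$ times explicit scalars. Proposition \ref{prop:factorials} converts $\Gamma^*(1+j)^d\ell(V)(\eta)^{-1}$ into $\pm\Gamma_L(W)$. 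The remaining scalars from the regulator formula, namely $\varepsilon_L(\eta^{-1},-\xi)$, $\vp^n$, and the elements $t^{-j}e_j$, are precisely what is needed to convert $\varepsilon_{L,\xi,\dR}(V)$ into $\varepsilon_{L,\xi,\dR}(W)$: this uses Proposition \ref{prop:gaussums} to express the ratio of the underlying $\varepsilon$-factors $\varepsilon_L(\Dpst(V),\xi)/\varepsilon_L(\Dpst(W),\xi)$ in terms of a Gauss sum, a power of $p$, and $\det\vp$. The main obstacle is precisely this last step: reconciling the many signs, the powers of $t$ coming from the Hodge--Tate weights of the twist, the Frobenius determinant, the change of sign in $\xi$ versus $-\xi$, and the $\eta(\gamma_{-1})$ prefactor, so that the two expressions agree exactly; the subdivision into the sub-cases $j\geq h$ and $j\leq -1$, and the further subdivision according to whether $W$ is crystalline, makes this a careful but ultimately finite computation.
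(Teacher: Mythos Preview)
Your overall strategy matches the paper's: derive (i) from the Tor spectral sequence, compute $\ev_\eta\circ\cL^G_{V,\xi}$ explicitly as $\exp^*$ or $\log$ times scalars, and then in (iv) match factor by factor against $\Gamma_L(W)\cdot\varepsilon_{L,\xi,\dR}(W)\cdot\theta_L(W)$ using Proposition~\ref{prop:factorials} and Propositions~\ref{prop:noncrystallinedet}/\ref{prop:nophidet}. Two points need correction, though.

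First, in (ii) you propose to reduce to the cyclotomic regulator $\cL^\Gamma_{W,\xi}$ evaluated at the trivial character. This does not make sense when $\eta$ has conductor $n\geq 1$: then $W=V(\eta^{-1})$ is not crystalline, so $\cL^\Gamma_{W,\xi}$ is not defined. Proposition~\ref{prop:regulator twist invariance} only handles crystalline twists. The paper instead invokes the direct evaluation formula for the two-variable regulator from \cite[Theorem~4.16]{loefflerzerbes11}, which gives a commutative diagram over $\QQ_{p,n}\otimes\Lt$ involving the map $b_{\eta^{-1}}$ and the operator $\Phi^n$ (or $(1-p^j\eta(\sigma_p)\Phi)(1-p^{-1-j}\eta(\sigma_p)^{-1}\Phi^{-1})^{-1}$ in the unramified case). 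You can repair your approach by twisting only by the unramified part $\eta_1$ of $\eta$, then projecting to $\Gamma$ to land in $\cL^\Gamma_{V(\eta_1^{-1}),\xi}$ evaluated at the finitely-ramified character $\eta_0\chi^j$; but this is not what you wrote.

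Second, in (iii) your claim that $(-1)^{m(V)}$ is ``absorbed into the compatibility between $\varepsilon_{\cO,\xi,\dR}(T)$ and $\varepsilon_{L,\xi,\dR}(V)$'' is incorrect: by definition $\varepsilon_{\cO,\xi,\dR}(T)$ base-changes exactly to $\varepsilon_{L,\xi,\dR}(V)$ with no extra sign. The factor $(-1)^{m(V)}$ is carried through into (iv), where it combines with the $(-1)^{\sum n_i+jd+r}$ from Proposition~\ref{prop:factorials} and the change from $\xi$ to $-\xi$ in the $\varepsilon$-factor (via $\varepsilon_L(W,-\xi)=\eta(\gamma_{-1})^{-d}\varepsilon_L(W,\xi)$ and $m(W)=m(V)-jd$) to produce the final cancellation. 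The paper tracks this explicitly in the chain of equalities leading to $(-1)^{m(V)}(-\eta(\gamma_{-1}))^d\Gamma_L(W)\theta_L(W)\varepsilon_{L,\xi,\dR}(W)$.
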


  \begin{proof}
   For (i), we have the exact sequence
   \begin{multline*}
    0 \to H^1(G, H^1_{\Iw}(K_\infty, W)) \to H^0(\Qp, W) \to H^2_{\Iw}(K_\infty, W)^G \\ \to H^1_{\Iw}(K_\infty, W)_G \rTo^{\operatorname{cores}} H^1(\Qp, W) \to H^1(G, H^2_{\Iw}(K_\infty, W)) \to 0.
   \end{multline*}
   given by the Tor spectral sequence for $\sp_{\eta}$. By Tate duality, $H^2_{\Iw}(K_\infty, W)^G\cong H^0(\Qp,W^*(1))$, which is zero by assumption. However, since $H^2_{\Iw}(K_\infty, W)$ is finite-dimensional, it decomposes as a finite direct sum of primary submodules corresponding to characters of $G$; if the $G$-invariants are zero then the trivial character cannot appear, and any other direct summands have zero $G$-cohomology in all degrees, so we also have $H^1(G, H^2_{\Iw}(K_\infty, W)) = 0$. Thus corestriction is an isomorphism $H^1_{\Iw}(K_\infty, W)_G \cong H^1(\Qp, W)$, and since $W$ and $V$ are isomorphic as $G_{K_{\infty}}$-representations, we have $H^1_{\Iw}(K_\infty, W)_G = \left(H^1_{\Iw}(K_\infty, V)(\eta^{-1})\right)_G = H^1_{\Iw}(K_\infty, V)_{G=\eta}$.

   Now let us suppose that $n \ge 1$, where $n$ is the conductor of $\eta$. By \cite[Theorem 4.16]{loefflerzerbes11} we have a commutative diagram of free $\Lt$-modules:
   \begin{diagram}[width=3cm, height=3.5cm]
    \Lt \otimes_L H^1_{\Iw}(\Qp, V)_{G = \eta} & \rTo^{\operatorname{ev}_{\eta} \circ \cL^G_V} & \Lt \otimes_L \Dcris(V) \\
    & \rdTo_{\Gamma^*(1 + j) \varepsilon_L(\eta^{-1}, -\xi) \Phi^n \genfrac{\{}{.}{0pt}{}{\exp^*}{\log}} & \dTo \\
    && \Qpn \otimes_{\Qp} \Lt \otimes_L \DdR(W)
   \end{diagram}
   Here the vertical map is given by the isomorphism
   \[ b_{\eta^{-1}} : \QQ_{p, n} \otimes_{\Qp} \Lt \otimes_L\Dcris(V) \cong \QQ_{p, n} \otimes_{\Qp} \Lt \otimes_L \DdR(W)\]
   given by multiplication by $t^j$ in $\BdR \otimes V$ (which depends on the choice of $\xi$, since $\xi$ determines $t$); $\Phi$ is the unique $\BdR$-linear endomorphism of $\BdR \otimes V$ coinciding with the crystalline Frobenius on $\Dcris(V)$; and the bracket in the diagonal map denotes either $\exp^*_{\Qp, W^*(1)}$ (if $j \ge h$) or $\log_{\Qp, W}$ (if $j \le -1$).

   In either case, the diagonal map is clearly an isomorphism, which proves (ii). Part (iii) now follows from the definition of $\varepsilon_{\Lambda_L(G), \xi}(V)$ together with the compatibility of determinant functors and Tor spectral sequences, cf.~\cite{venjakob12} (since the Tor spectral sequence for $\sp_{\eta}$ collapses in this case). Let us prove (iv). By Proposition \ref{prop:noncrystallinedet}, for $j \ge h$ the determinant of $\log$ is $\theta_L(W)$, and for $j \le -1$ the determinant of $\exp^*$ is $(-1)^d \theta_L(W)$. We write this as $(-1)^{d - r}\theta_L(W)$, where $r = \dim t(W)$ as in Proposition \ref{prop:factorials}. Passing to determinants and dividing through by the factor $\ell(V)(\eta) \in L^\times$, the diagonal arrow becomes
   \begin{gather*}
    \frac{\Gamma^*(1 + j)^d}{\ell(V)(\eta)} \varepsilon_L(\eta^{-1}, -\xi) \det(\vp)^d (-1)^{d - r} \theta_L(W)\\
    = (-1)^{(j+1)d + \sum n_i} \Gamma_L(W) \varepsilon_L(W, \xi^{-1}) \theta_L(W)\\
    = (-1)^{d + m(W)} \Gamma_L(W) \varepsilon_L(W, \xi^{-1}) \theta_L(W)
   \end{gather*}
   where we have used the formula for $\ell(V)(\eta)$ from the previous section, and written $m(W)$ for the sum of the Hodge--Tate weights of $W$.

   Hence the following diagram commutes:
   \begin{diagram}
    \Det_{(\BdR \otimes L)} (\BdR \otimes_{\Qp} H^1(\Qp, W)) & \rTo^{\operatorname{sp}_{\eta} \Theta_{\Lambda_L(G)}(V)} & \Det_{(\BdR \otimes L)} (\BdR \otimes_{\Qp} \Dcris(V)) & \rTo^{t^{m(V)} \cdot \operatorname{can}} & \Det_{\BdR \otimes L} (\BdR \otimes_{\Qp} V)\\
    & \rdTo_{(-1)^{d + m(W)} \Gamma_L(W) \varepsilon_L(W, \xi) \theta_L(W)} & \dTo & & \dEq\\
    & & \Det_{\Lt} \left(\BdR \otimes_{\Qp} \DdR(W)\right) & \rTo^{t^{m(W)} \cdot \operatorname{can}} & \Det_{\BdR \otimes L} (\BdR \otimes_{\Qp} W)
   \end{diagram}
   where the middle vertical map is multiplication by $t^{dj}$. Both the left-hand triangle and the right-hand square clearly commute. But $t^{m(V)} \operatorname{can} = \varepsilon_{L, \xi, \dR}(V)$, since $V$ is crystalline and hence the $\varepsilon$-factor $\varepsilon_L(W, \xi)$ is 1. So the composition of the two arrows on the top row is
   \[ \operatorname{sp}_{\eta}\left(\Theta_{\Lambda_L(G)}(V)\right) \varepsilon_{L, \xi, \dR}(V) = (-1)^{m(V)} (-\eta(\gamma_{-1}))^d \operatorname{sp}_{\eta}\left(\varepsilon_{\Lambda_L(G), \xi}(V)\right)\]
   by definition. On the other hand, the composition of the diagonal arrow and $t^{m(W)} \cdot \operatorname{can}$ is
   \begin{align*}
    (-1)^{d + m(W)} \Gamma_L(W) \varepsilon_L(W, -\xi) \theta_L(W) t^{m(W)} \cdot \operatorname{can} &=
    (-1)^{d + m(V)} \eta(\gamma_{-1})^d \Gamma_L(W) \varepsilon_L(W, \xi) \theta_L(W) t^{m(W)} \cdot \operatorname{can} \\
    &= (-1)^{m(V)} (-\eta(\gamma_{-1}))^d \Gamma_L(W) \theta_L(W) \varepsilon_{L, \xi, \dR}(W).
   \end{align*}
   Cancelling out the factor $(-1)^{m(V)} (-\eta(\gamma_{-1}))^d$, we deduce that $\operatorname{sp}_{\eta}\left(\varepsilon_{\Lambda_L(G), \xi}(V)\right) = \varepsilon_{L, \xi}(W)$ as required.

   We now consider the case $n = 0$, so $\eta$ is $\eta^j$ times an unramified character. In this case one obtains a diagram very similar to the above, but with $\Phi^{n}$ in the diagonal map replaced by the operator
   \[ (1 - p^j \eta(\sigma_p) \Phi) (1 - p^{-1-j} \eta(\sigma_p)^{-1} \Phi^{-1})^{-1} .\]
   Since $p^j \eta(\sigma_p) \Phi$ coincides with the Frobenius $\vp$ of $\Dcris(W)$, the determinant of
   \[ (1 - p^j \eta(\sigma_p) \Phi) (1 - p^{-1-j} \eta(\sigma_p)^{-1} \Phi^{-1})^{-1} \genfrac{\{}{.}{0pt}{}{\exp^*}{\log}\]
   is the base-extension to $\Lt$ of $(-1)^{d-r} \theta_L(W)$, by Proposition \ref{prop:nophidet}, and the proof goes through as before.
  \end{proof}


 \subsection{The somewhat bad characters}
  \label{sect:somewhatbad}

  Let us now suppose $\eta$ is ``somewhat bad'' in the sense above (recall that ``somewhat bad'' excludes ``extremely bad'', so $H^0(\Qp, W) = H^2(\Qp, W) = 0$). By the twist-compatibility of the $\varepsilon$-isomorphisms we may assume that $\eta$ factors through $\Gamma$. Let $\mathfrak{p}$ be the ideal of $\Lambda_L(\Gamma)$ corresponding to $\eta$, and define
  \[ \varpi = \frac{\gamma - \eta(\gamma)}{\eta(\gamma) \log \chi(\gamma)},\]
  so $\varpi$ is the unique uniformizer of $\mathfrak{p}$ such that $\varpi'(\eta) = 1$. Note that $\varpi$ is not a zero-divisor in $\Lambda(\Gamma)$.

  We also denote by $\mathfrak{p}$ the ideal of $\cH_L(\Gamma)$ above $\eta$. The inclusion $\Lambda_L(\Gamma) \into \cH_L(\Gamma)$ induces an isomorphism after localization at $\mathfrak{p}$ and completion (both completions are isomorphic to $L[[\varpi]]$).

  Since $\eta$ is \emph{not} ``extremely bad'', we know that the localization of $H^2_{\Iw}(\Qpi, V)$ at the prime ideal $\mathfrak{p}$ corresponding to $\eta$ is zero; the localization of $H^1_{\Iw}(\Qpi, V)$ at $\mathfrak{p}$ is free of rank $d$; and reduction modulo $\mathfrak{p}$ determines an isomorphism
  \[ \Lambda(\Gamma) / \mathfrak{p} \otimes_{\Lambda(\Gamma)} H^1_{\Iw}(\Qpi, V) \cong H^1(\Qp, W).\]
  Let $y_1, \dots, y_r$ be any basis of $H^1_f(\Qp, W)$, and $y_{r+1}, \dots, y_d$ any basis of the quotient $H^1(\Qp, W) / H^1_f(\Qp, W)$. Then there exists a lifting $x_1, \dots, x_d$ of $y_1, \dots, y_d$ to a basis of the localization $H^1_{\Iw}(\Qpi, V)_{\mathfrak{p}}$.

  By Theorem \ref{thm:cycloregulator} (resp.~Theorem \ref{thm:strongcycloregulator} if $\eta$ is crystalline) we know that for $1 \le i \le r$ we have $\cL^\Gamma_{V, \xi}(x_j)(\eta) = 0$, and hence (by the definition of $\varpi$) we have
  \[ \cL^\Gamma_{V, \xi}(x_j) = \varpi \cL^\Gamma_{V, \xi}(x_j)'(\eta) \bmod{\mathfrak{p}^2}.\]

  Let $A$ denote the unique $\Lambda(\Gamma)_{\mathfrak{p}}$-linear map
  \[ H^1_{\Iw}(\Qpi, V)_{\mathfrak{p}} \rTo \Lambda(\Gamma)_{\mathfrak{p}} \otimes \Dcris(V)\]
  such that
  \[ A(x_j) =
   \begin{cases}
    \frac{1}{\varpi} \cL^\Gamma_{V, \xi}(x_j) & \text{if $1 \le j \le r$,}\\
    \cL^\Gamma_{V, \xi}(x_j) & \text{if $r+1 \le j \le d$.}
   \end{cases}
  \]
  This is well-defined, since $x_1, \dots, x_d$ are a free basis of $H^1_{\Iw}(\Qpi, V)_{\mathfrak{p}}$ over $\Lambda(\Gamma)_{\mathfrak{p}}$. We write $B$ for the morphism obtained by reducing modulo $\mathfrak{p}$.

  \begin{proposition}
   The determinant of $A$ is equal to the image of $\varpi^{-r} \Det(\cL^\Gamma_{V, \xi})$ under localization at $\mathfrak{p}$.
  \end{proposition}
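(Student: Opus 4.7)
The plan is essentially a bookkeeping computation: the map $A$ is defined column by column on the basis $x_1,\dots,x_d$, and differs from $\cL^\Gamma_{V,\xi}$ (after localization at $\mathfrak{p}$) only in that the first $r$ columns are divided by $\varpi$.

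First I would check that $A$ is well-defined as a $\Lambda(\Gamma)_{\mathfrak{p}}$-linear map with values in $\Lambda(\Gamma)_{\mathfrak{p}}\otimes\Dcris(V)$. Since $\mathfrak{p}$ is not extremely bad, the localization $H^1_{\Iw}(\Qpi,V)_{\mathfrak{p}}$ is free of rank $d$ on the basis $x_1,\dots,x_d$, so it suffices to prescribe $A$ on these basis elements and verify that the prescribed values lie in $\Lambda(\Gamma)_{\mathfrak{p}}\otimes\Dcris(V)$. For $j>r$, this is obvious, since the localization of $\cH_L(\Gamma)$ at $\mathfrak{p}$ agrees with the localization of $\Lambda_L(\Gamma)$ at $\mathfrak{p}$ (both have the same completion $L[\![\varpi]\!]$, and this is enough to identify the images of the regulator, which come from a finitely-generated submodule). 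For $1\le j\le r$, Theorem \ref{thm:cycloregulator} (or Theorem \ref{thm:strongcycloregulator} in the crystalline case) gives $\cL^\Gamma_{V,\xi}(x_j)(\eta)=0$, so $\cL^\Gamma_{V,\xi}(x_j)$ lies in $\mathfrak{p}\cdot\big(\Lambda_L(\Gamma)_{\mathfrak{p}}\otimes\Dcris(V)\big)$; since $\varpi$ is a uniformizer of the height-one prime $\mathfrak{p}$ and is not a zero-divisor, the quotient $\tfrac{1}{\varpi}\cL^\Gamma_{V,\xi}(x_j)$ is a well-defined element of $\Lambda_L(\Gamma)_{\mathfrak{p}}\otimes\Dcris(V)$.

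Next I would fix any $L$-basis $v_1,\dots,v_d$ of $\Dcris(V)$ and compare matrices. Let $M$ be the matrix of $\cL^\Gamma_{V,\xi}$ with respect to $(x_i)$ and $(v_j)$, viewed in $\mathrm{Mat}_{d\times d}(\cH_L(\Gamma)_{\mathfrak{p}})$; by Proposition \ref{prop:det of cyclo regulator} and the absence of $H^2$-contribution at $\mathfrak{p}$, this matrix actually lies in $\mathrm{Mat}_{d\times d}(\Lambda_L(\Gamma)_{\mathfrak{p}})$. By construction, the matrix $N$ of $A$ with respect to the same bases is obtained from $M$ by multiplying each of the first $r$ columns by $\varpi^{-1}$ and leaving the remaining $d-r$ columns unchanged. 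Taking determinants, the multilinearity and column-scaling property of $\det$ give $\det N = \varpi^{-r}\det M$, which is the desired identity.

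The only real content is the divisibility statement in the first paragraph, namely that $\cL^\Gamma_{V,\xi}(x_j)\in \mathfrak{p}\cdot(\Lambda_L(\Gamma)_{\mathfrak{p}}\otimes\Dcris(V))$ for $j\le r$; but this is immediate from the vanishing at $\eta$ together with the fact that $\Lambda_L(\Gamma)_{\mathfrak{p}}$ and $\cH_L(\Gamma)_{\mathfrak{p}}$ have a common completion in which $\varpi$ is a uniformizer. No further ingredients are needed.
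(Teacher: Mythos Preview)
Your proposal is correct and follows the same approach as the paper, which simply says ``Clear from the definition of the map $A$.'' Your detailed unpacking of this---that $A$ is obtained from the localized regulator by scaling $r$ of the $d$ basis vectors' images by $\varpi^{-1}$, whence the determinant picks up a factor of $\varpi^{-r}$---is exactly the intended argument; note only that the well-definedness of $A$ and the vanishing $\cL^\Gamma_{V,\xi}(x_j)(\eta)=0$ for $j\le r$ were already established in the paragraph preceding the proposition, so you need not re-prove them.
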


  \begin{proof}
   Clear from the definition of the map $A$.
  \end{proof}

  We shall show that the reduction $B$ is an isomorphism; it follows that $A$ is also an isomorphism, and that the image of the determinant of $\Det(A)$ modulo $\mathfrak{p}$ is just $\Det(B)$.

  \begin{proposition}
   The image of $H^1_f(\Qp, W)$ under $B$ is a subspace of $\Dcris(V)$ complementary to the subspace
   \[
    M \coloneqq \begin{cases}
     \vp^n \Fil^{-j} \Dcris(V) & \text{if $n \ge 1$,}\\
     (1 - p^j \vp)(1 - p^{-1-j} \vp^{-1})^{-1} \Fil^{-j} \Dcris(V) & \text{if $n = 0$.}
    \end{cases}
   \]
   Moreover, the induced morphism $H^1_f(\Qp, W) \rTo^B \Dcris(V) \rTo \frac{\Dcris(V)}{M}$ is an isomorphism, and it is given explicitly by
   \[ \Gamma^*(1 + j)\varepsilon(\eta^{-1}, -\xi) \cdot
    \begin{cases}
     \vp^n \left[\log_{\Qp, W} \otimes t^{-j} e_j\right] & \text{if $n \ge 1$,}\\[2mm]
     -(1 - p^{-1-j}\vp^{-1})^{-1} \left[\widetilde\log_{\Qp, W}\otimes t^{-j} e_j\right] & \text{if $n = 0$.}
    \end{cases}
   \]
  \end{proposition}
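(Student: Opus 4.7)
The strategy is direct computation using the explicit evaluation formulas of Theorems \ref{thm:cycloregulator}, \ref{thm:strongcycloregulator}, and \ref{thm:regderiv}. By the construction of $A$ and $B$, we have $B(x_i) = \cL^\Gamma_{V,\xi}(x_i)(\eta)$ for $r+1 \le i \le d$, while for $1 \le i \le r$, the hypothesis $y_i \in H^1_f(\Qp, W)$ forces $\cL^\Gamma_{V,\xi}(x_i)(\eta) = 0$ (by Theorem \ref{thm:cycloregulator}, respectively Theorem \ref{thm:strongcycloregulator} when $n=0$), and together with $\varpi'(\eta) = 1$ this yields $B(x_i) = \cL^\Gamma_{V,\xi}(x_i)'(\eta)$ in this range.

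For the first group $r+1 \le i \le d$, Theorem \ref{thm:cycloregulator} expresses $B(x_i)$ as a unit scalar multiple of $\vp^n[\exp^*_{\Qp, W^*(1)}(y_i) \otimes t^{-j}e_j]$, respectively the variant with $(1-p^j\vp)(1-p^{-1-j}\vp^{-1})^{-1}$ replacing $\vp^n$ for $n=0$; hence all these values lie in $M$. Since $H^2(\Qp, W) = H^0(\Qp, W^*(1))^* = 0$, the dual exponential induces an isomorphism $H^1(\Qp, W)/H^1_f(\Qp, W) \rTo^\cong \Fil^0\DdR(W)$; combined with the natural identification $\Fil^0\DdR(W) \otimes t^{-j}e_j \cong \Fil^{-j}\Dcris(V)$ and the invertibility of the multiplier on $\Fil^{-j}\Dcris(V)$ (using $H^0 = H^2 = 0$ in the $n=0$ case), we conclude that $B(x_{r+1}), \dots, B(x_d)$ form a basis of $M$. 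For the second group $1 \le i \le r$, Theorem \ref{thm:regderiv} gives $B(x_i) \bmod M$ as precisely the formula claimed in the proposition. When $n \ge 1$ the character $\eta$ is non-crystalline so $W$ is non-crystalline, $\Dcris(W) = 0$, and the fundamental exact sequence reduces $\log_{\Qp, W}$ to an isomorphism $H^1_f(\Qp, W) \rTo^\cong t(W)$; when $n = 0$, the map $\widetilde\log_{\Qp, W}$ is an isomorphism $H^1_f(\Qp, W) \rTo^\cong \Dcris(W)/(1-\vp)\Fil^0\Dcris(W)$ by construction. Under the appropriate identifications with subquotients of $\Dcris(V)$ and the invertibility of the leading factor on $\Dcris(V)/M$, the images $B(x_1), \dots, B(x_r)$ project to a basis of $\Dcris(V)/M$.

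Combining the two analyses, $B(x_1), \dots, B(x_d)$ form a basis of $\Dcris(V)$, the image of $H^1_f(\Qp, W)$ under $B$ is complementary to $M$, and the induced map to $\Dcris(V)/M$ is given by the stated formula. The main obstacle is the crystalline case $n = 0$: one must carefully verify that the operators $(1-p^j\vp)$ and $(1-p^{-1-j}\vp^{-1})$ act with the expected invertibility properties on the relevant subquotients of $\Fil^{-j}\Dcris(V)$ given $H^0(\Qp, W) = H^2(\Qp, W) = 0$, and track how the identifications relating $\Dcris(W)$, $\DdR(W)$, and $\Dcris(V)$ via multiplication by $t^{-j}e_j$ interact with these factors.
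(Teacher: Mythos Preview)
Your proposal is correct and follows essentially the same approach as the paper: invoke Theorem \ref{thm:regderiv} (using $\varpi'(\eta)=1$) to obtain the explicit formula for $B$ on $H^1_f(\Qp,W)$, then verify the composite $H^1_f(\Qp,W)\to\Dcris(V)/M$ is an isomorphism by factoring it as the known isomorphism $\log_{\Qp,W}$ (respectively $(1-p^{-1}\vp^{-1})^{-1}\widetilde\log_{\Qp,W}$, using that $H^1_b=H^1_f$ since $H^0=H^2=0$) followed by the twisting identification $t(W)\cong\Dcris(V)/M$ (respectively $s(W)\cong\Dcris(V)/M$). Your treatment of the elements $y_{r+1},\dots,y_d$ landing in $M$ is not needed for this proposition and is in fact the content of the next proposition in the paper, so you have merged the two arguments; this is harmless but worth noting.
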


  \begin{proof}
   The fact that the composite map is given by the formula above follows directly from Theorem \ref{thm:regderiv}, since the uniformizer $\varpi$ is chosen such that $\varpi'(\eta) = 1$. It remains to check that the composite is an isomorphism. For $n \ge 1$, the map
   \[ x \mapsto \Gamma^*(1 + j) \varepsilon(\eta^{-1}, -\xi)\vp^n \left(t^{-j} x \otimes e_j\right)\]
   defines an isomorphism $t(W) \cong \frac{\Dcris(V)}{M}$, and the map of the proposition is the composite of this and
   \[ \log_{\Qp, W} : H^1_f(\Qp, W) \rTo^\cong t(W).\]
   Similarly, in the case $n = 0$ the map $H^1_f(\Qp, W) \rTo^B \Dcris(V) \rTo \frac{\Dcris(V)}{M}$ is the composite of the twisting isomorphism $s(W) \cong \frac{\Dcris(V)}{M}$ given by tensoring with $t^{-j} e_j$ and the morphism
   \[ -(1 - p^{-1}\vp^{-1})^{-1} \widetilde\log_{\Qp, W} : H^1_f(\Qp, W) \to s(W)\]
   of \eqref{iso4} above.
  \end{proof}

  \begin{proposition}
   The image of the subspace $N \subseteq H^1(\Qp, W)$ spanned by $y_{r+1}, \dots, y_d$ under $B$ is $M$, and the composite isomorphism
   \[ \frac{H^1(\Qp, W)}{H^1_f(\Qp, W)} \rTo^\cong N \rTo^B M\]
   is given by
   \[ \Gamma^*(1 + j)\varepsilon(\eta^{-1}, -\xi) \cdot
    \begin{cases}
     \vp^n \left[\exp^*_{\Qp, W^*(1)} \otimes t^{-j} e_j\right] & \text{if $n \ge 1$,}\\[2mm]
     -(1 - p^{j}\vp) \left[\widetilde\exp_{\Qp, W^*(1)} \otimes t^{-j} e_j\right] & \text{if $n = 0$.}
    \end{cases}
   \]
  \end{proposition}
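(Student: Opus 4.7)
The plan is to run the same strategy as in the previous proposition, but without the divided-difference maneuver. For $r+1 \le j \le d$ the class $y_j$ does not lie in $H^1_f(\Qp,W)$, so by the characterization of $\ker \cL^\Gamma_{V,\xi}(-)(\eta)$ recalled just before Theorem \ref{thm:regderiv}, the value $\cL^\Gamma_{V,\xi}(x_j)(\eta)$ is already nonzero. Since $A(x_j) = \cL^\Gamma_{V,\xi}(x_j)$ for such $j$ by construction, reducing modulo $\mathfrak{p}$ yields $B(y_j) = \cL^\Gamma_{V,\xi}(x_j)(\eta)$, with no need to pass to derivatives or to divide by $\varpi$.

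Next I would substitute the explicit special-value formulae. When $n \ge 1$, Theorem \ref{thm:cycloregulator}(1) applied with $j \ge 0$ produces exactly the $\exp^*_{\Qp,W^*(1)}$ formula claimed. When $n = 0$ we have $\eta = \chi^j$, and the sharpened form of Theorem \ref{thm:strongcycloregulator} produces the $\widetilde\exp^*_{\Qp,W^*(1)}$ formula with the Euler-type factor $-(1-p^j\vp)$; these are precisely the two branches in the statement.

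It then remains to verify that $B$ descends to the quotient $H^1(\Qp,W)/H^1_f(\Qp,W)$, which is immediate since both $\exp^*_{\Qp,W^*(1)}$ and $\widetilde\exp^*_{\Qp,W^*(1)}$ kill $H^1_f$ by construction; that its image is contained in $M$; and that the resulting map is a bijection. For the inclusion in $M$, the key input is the identification $\Fil^0\DdR(W) \cong \Fil^{-j}\Dcris(V)$ induced by the twist $t^{-j} e_j$: this immediately handles $n \ge 1$, where $\vp^n$ then lands in $M = \vp^n \Fil^{-j}\Dcris(V)$; for $n = 0$, the target $(1-p^{-1}\vp^{-1})^{-1}\Fil^0\Dcris(W)$ of $\widetilde\exp^*$, after twisting and multiplying by $(1-p^j\vp)$, reproduces exactly the subspace $M = (1-p^j\vp)(1-p^{-1-j}\vp^{-1})^{-1}\Fil^{-j}\Dcris(V)$. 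For bijectivity, the hypotheses $H^0(\Qp,W) = H^2(\Qp,W) = 0$ make the isomorphisms \eqref{iso1}--\eqref{iso4} degenerate, so that $\exp^*$ (resp.\ $\widetilde\exp^*$) is itself an isomorphism from $H^1/H^1_f$ onto the subspace appearing in the statement; combined with the nonvanishing scalar $\Gamma^*(1+j)\varepsilon_L(\eta^{-1},-\xi)$ and the invertibility of $\vp^n$ or of $(1-p^j\vp)$ on the relevant quotient, this yields the required isomorphism onto $M$. The main obstacle is purely bookkeeping: making sure the twists by $t^{-j} e_j$ intertwine the filtrations correctly and that in the $n=0$ branch the Euler factor $(1-p^j\vp)$ produces $M$ on the nose. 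Both facts are directly verifiable from the definitions of $M$ and of $\widetilde\exp^*$ in Section \ref{sect:specialcases}.
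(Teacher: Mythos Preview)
Your proposal is correct and follows essentially the same approach as the paper's proof: the explicit formula is read off from Theorems \ref{thm:cycloregulator} and \ref{thm:strongcycloregulator}, and the bijectivity comes from equation \eqref{iso1} (applied to $W$, with $H^1_a(\Qp,W) = H^1_f(\Qp,W)$ since $H^0(\Qp,W)=0$). The paper's proof is two sentences to this effect; you have simply unpacked the details, including the verification that the twist $t^{-j}e_j$ carries $(1-p^{-1}\vp^{-1})^{-1}\Fil^0\Dcris(W)$ to $(1-p^{-1-j}\vp^{-1})^{-1}\Fil^{-j}\Dcris(V)$ so that multiplication by $(1-p^j\vp)$ lands exactly in $M$.
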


  \begin{proof}
   The explicit formula follows from Theorems \ref{thm:cycloregulator} and \ref{thm:strongcycloregulator}, and it follows from equation \eqref{iso1} that the composite morphism is an isomorphism (via an argument very similar to the previous proposition).
  \end{proof}

  Combining these two propositions we have the following:

  \begin{proposition}
   \label{prop:somewhatbad}
   The image of $\Det(A)$ modulo $\mathfrak{p}$ is
   \begin{multline*}
      \Gamma^*(1 + j)^d \varepsilon_L(\eta^{-1}, -\xi)^d \cdot \Det_L \left(\varphi: \Dcris(V) \to \Dcris(V)\right)^n \\ \Det_L\left(\log : H^1_f(\Qp, W) \rTo t(W)\right) \cdot \Det_L\left(\exp^* : \frac{H^1(\Qp, W)}{H^1_f(\Qp, W)} \rTo \Fil^0 \DdR(W)\right)
   \end{multline*}
   if $n \ge 1$; and if $n = 0$ it is
   \begin{multline*}
     \Gamma^*(1 + j)^d \Det_L\left(-(1 - p^{-1}\vp^{-1})^{-1} \widetilde\log : H^1_f(\Qp, W) \rTo \frac{\Dcris(W)}{(1 - \vp)(1 - p^{-1}\vp^{-1})^{-1} \Fil^0 \Dcris(W)}\right) \\ \cdot \Det_L\left(-(1 - \vp)\widetilde\exp^* : \frac{H^1(\Qp, W)}{H^1_f(\Qp, W)} \rTo \Fil^0 \DdR(W)\right)
   \end{multline*}
  \end{proposition}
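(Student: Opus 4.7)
The plan is to deduce this proposition directly from the two preceding ones by the multiplicativity of determinants. First I will observe that by construction $\Det(A) \bmod \mathfrak{p}$ coincides with $\Det(B)$, where $B$ is the reduction mod $\mathfrak{p}$ already introduced. The two preceding propositions say that $B$ carries the subspace $H^1_f(\Qp,W)$ to a subspace of $\Dcris(V)$ that is complementary to $M$, and carries the chosen complement $N = \langle y_{r+1},\dots,y_d\rangle$ isomorphically onto $M$. Consequently $B$ itself is an isomorphism, and under the decomposition $\Dcris(V) = B(H^1_f(\Qp,W)) \oplus M$ it is block diagonal, so
\[
\Det(B) \;=\; \Det\!\bigl(B|_{H^1_f(\Qp,W)}:\ H^1_f(\Qp,W)\to \Dcris(V)/M\bigr)\;\cdot\;\Det\!\bigl(B|_N:\ N\to M\bigr).
\]

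For the case $n\ge 1$, I will substitute the explicit formulas from the two previous propositions. Each one-dimensional summand contributes the scalar $\Gamma^*(1+j)\varepsilon_L(\eta^{-1},-\xi)$, so after taking determinants of rank $r$ and $d-r$ one gets the total factor $\Gamma^*(1+j)^d\varepsilon_L(\eta^{-1},-\xi)^d$. The operator $\vp^n$ appears on both pieces and, because $B(H^1_f(\Qp,W))$ and $M$ together span $\Dcris(V)$, the two contributions multiply to $\det_L(\vp\colon \Dcris(V)\to\Dcris(V))^n$. What remains is precisely the product $\Det_L(\log)\cdot\Det_L(\exp^*)$, once one absorbs the fixed twisting identification given by $t^{-j}e_j$ into the implicit identifications of the targets $t(W)$ and $\Fil^0\DdR(W)$ with $\Dcris(V)/M$ and $M$ respectively.

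For the case $n=0$ the argument is parallel, substituting the formulas of the propositions that are phrased via $\widetilde\log$ and $\widetilde\exp^*$. The only non-trivial bookkeeping is the translation of Frobenius operators from $\Dcris(V)$ to $\Dcris(W)$ under the twisting identification by $t^{-j}e_j$: one has $\vp_V \leftrightarrow p^{-j}\vp_W$, whence $(1-p^j\vp_V)\leftrightarrow (1-\vp_W)$ and $(1-p^{-1-j}\vp_V^{-1})^{-1}\leftrightarrow (1-p^{-1}\vp_W^{-1})^{-1}$. With this translation the product of the two determinants matches the stated formula, the $\Gamma$-factors combining to $\Gamma^*(1+j)^d$.

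The main obstacle I anticipate is not conceptual but organizational: one must keep track carefully of the canonical identifications induced by tensoring with $t^{-j}e_j$, and verify that the resulting Frobenius-translation formulas reproduce exactly the expressions $(1-\vp)$ and $(1-p^{-1}\vp^{-1})^{-1}$ appearing in the statement. Beyond this, the argument is a routine multiplicativity computation on top of the two preceding propositions.
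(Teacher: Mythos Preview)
Your proposal is correct and matches the paper's approach exactly: the paper gives no explicit proof here, introducing the proposition only with the words ``Combining these two propositions we have the following,'' and your argument is precisely the routine determinant-multiplicativity computation that this phrase leaves to the reader. Your handling of the $\vp^n$ factor (via multiplicativity of $\Det(\vp^n)$ along the short exact sequence $0\to \Fil^{-j}\Dcris(V)\to\Dcris(V)\to\Dcris(V)/\Fil^{-j}\Dcris(V)\to 0$) and of the Frobenius translation $p^j\vp_V\leftrightarrow\vp_W$ in the $n=0$ case are both correct.
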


  We now combine this with the result of Proposition \ref{prop:factorials}, which shows that $\ell(V)$ has a zero of degree $r$ at $\eta$, and
  \[ \frac{\Gamma^*(1 + j)^d}{\ell(V)(\eta)} = (-1)^{m(V) + jd + r} \Gamma_L(W) \varpi^{-r} \bmod{\varpi^{1-r}}. \]
  This shows that for $n \ge 1$, the image of $\Theta_{\Lambda_L(G), \xi}(V) = \Det \frac{\cL^\Gamma_{V, \xi}}{\ell(V)}$ modulo $\mathfrak{p}$ is
  \begin{multline*}
   (-1)^{m(V) + jd + r} \Gamma_L(W) \varepsilon_L(\eta^{-1}, -\xi)^d \cdot \Det_L \left(\varphi: \Dcris(V) \to \Dcris(V)\right)^n \\
   \Det_L\left(\log : H^1_f(\Qp, W) \rTo t(W)\right) \cdot \Det_L\left(\exp^* : \frac{H^1(\Qp, W)}{H^1_f(\Qp, W)} \rTo \Fil^0 \DdR(W)\right)
  \end{multline*}
  or (grouping the $(-1)$'s differently)
  \begin{multline*} (-1)^{d + m(W)} \Gamma_L(W) \varepsilon_L(W, -\xi) \cdot \Det_L\left(\log : H^1_f(\Qp, W) \rTo t(W)\right) \\ \cdot \Det_L\left(-\exp^* : \frac{H^1(\Qp, W)}{H^1_f(\Qp, W)} \rTo \Fil^0 \DdR(W)\right)
  \end{multline*}
  In the case $n = 0$ the result becomes
  \begin{multline*} (-1)^{d + m(W)} \Gamma_L(W) \varepsilon_L(W, -\xi) \\ \cdot \Det_L\left(-(1 - p^{-1}\vp^{-1})^{-1}\log : H^1_f(\Qp, W) \rTo \frac{\Dcris(W)}{(1 - \vp)(1-p^{-1}\vp^{-1})^{-1} \Fil^0 \Dcris(W)}\right) \\ \cdot \Det_L\left((1 - \vp)\exp^* : \frac{H^1(\Qp, W)}{H^1_f(\Qp, W)} \rTo (1 - \vp)(1-p^{-1}\vp^{-1})^{-1} \Fil^0 \Dcris(W)\right).
  \end{multline*}

  Using Proposition \ref{prop:noncrystallinedet} for $n \ge 1$, and Theorem \ref{thm:sigmas} in the case $n = 0$, and invoking again the compatibility of determinants with Tor spectral sequences, we see that in both cases the specialization of $\ell(V)^{-1} \Det_{\cK(\Gamma)} \cL^\Gamma_{V, \xi}$ at $\eta$ is
  \[ (-1)^{d + m(W)} \Gamma_L(W) \varepsilon_L(W, -\xi) \theta_L(W),\]
  as in the case of good characters in the previous section. The remainder of the proof continues exactly as before, and we deduce that
  \[ \operatorname{sp}_{\eta}(\varepsilon_{\Lambda_L(G), \xi}(V)) = \varepsilon_{L, \xi}(W). \]

 \subsection{The extremely bad characters}
  \label{sect:extremelybad}

  Let $\eta$ be an extremely bad character of $G$, and let $W=V(\eta^{-1})$. Recall that our aim is to prove the following statement:

  \begin{proposition}\label{prop:restatement}
   We have
   \[ {\rm sp}_\eta\big(\Theta_{\Lambda_L(G), \xi}(V)\big)\cdot \varepsilon_{L,\xi,\dR}(V)=\varepsilon_{L(\eta),\xi}(W),\]
   where ${\rm sp}_\eta$ denotes specialisation at $\eta$.
  \end{proposition}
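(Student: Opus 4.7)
The proof proceeds by induction on $d = \dim_L V$, with the base case $d = 1$ being the main content of \cite{venjakob11}. The key structural input is that extreme badness of $\eta$ for $V$ forces $V$ to be reducible, with a one-dimensional crystalline sub- or quotient representation whose character is dictated by the exceptional cohomology group.

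In more detail, recall $W = V(\eta^{-1})$. If $H^0(\Qp, W) \ne 0$, then there is a nonzero $G_{\Qp}$-equivariant embedding $L(\eta) \hookrightarrow V$, and since the category of crystalline $G_{\Qp}$-representations is closed under subquotients we obtain a short exact sequence $0 \to L(\eta) \to V \to V'' \to 0$ of crystalline $L$-linear representations with $\dim V'' = d-1$. If instead $H^0(\Qp, W) = 0$ but $H^2(\Qp, W) \ne 0$, then by Tate duality $H^0(\Qp, W^*(1)) \ne 0$, so $L(\eta^{-1}) \hookrightarrow V^*(1)$ and, dualising, $V \twoheadrightarrow L(\eta\chi)$ as crystalline representations, producing a short exact sequence $0 \to V' \to V \to L(\eta\chi) \to 0$ with $\dim V' = d-1$. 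In either case, one of the two extremal factors is one-dimensional with character in $\{\eta, \eta\chi\}$, and $\eta$ is automatically extremely bad for that factor.

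Now all three objects appearing in Proposition \ref{prop:restatement} are multiplicative in short exact sequences of crystalline representations: $\Theta_{\Lambda_{\cO}(G),\xi}$ by Proposition \ref{prop:regulatorSEScompatible}, $\varepsilon_{L,\xi,\dR}$ by Lemma \ref{lem:SEScompatible}, and $\varepsilon_{L,\xi}$ by Proposition \ref{prop:SEScompatible}. The specialisation functor $\sp_\eta = \cOt \otimes^{\mathbb{L}}_{\Lambda_{\cOt}(G),\ev_\eta}(-)$, being a derived tensor product, carries such a short exact sequence to a distinguished triangle of perfect complexes, and $\Det$ is multiplicative along distinguished triangles. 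Consequently the identity $\sp_\eta(\Theta_{\Lambda_L(G),\xi}(V)) \cdot \varepsilon_{L,\xi,\dR}(V) = \varepsilon_{L(\eta),\xi}(W)$ for $V$ is the product of the analogous identities for the two pieces of the chosen sequence. The one-dimensional piece is handled by the base case \cite{venjakob11}; for the remaining piece, which has dimension $d-1$, we invoke either the inductive hypothesis (if $\eta$ is still extremely bad) or else Sections \ref{sect:good}--\ref{sect:somewhatbad}.

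The main obstacle is essentially the base case itself. For $V = L(\chi_0)$ of dimension one and $\chi_0\eta^{-1} \in \{\u, \chi\}$, one must verify that the regulator-based construction of $\Theta_{\Lambda_{\cO}(G),\xi}$ given in Section \ref{sect:constructionG} specialises correctly to the Fukaya--Kato de Rham $\varepsilon$-isomorphism, matching the Coleman-map construction in \cite{venjakob11}. The exceptional cases $\chi_0 = \eta$ or $\chi_0 = \eta\chi$ are precisely the trivial-zero situations where $\cL^\Gamma_{V,\xi}$ vanishes at the character and the analysis must be carried out at the derivative (Theorem \ref{thm:regderiv}), invoking the modified maps $\widetilde\exp$ and $\widetilde\exp^*$ of Section \ref{sect:specialcases}. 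Once this comparison is secured, the inductive reduction described above is purely formal.
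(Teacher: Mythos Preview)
Your proposal is correct and follows essentially the same approach as the paper: induction on $d$, with the base case $d=1$ deferred to \cite{venjakob11} (together with the comparison in Appendix~\ref{App-comparison}), and the inductive step handled by splitting off a crystalline subrepresentation and invoking multiplicativity of $\Theta_{\Lambda_L(G),\xi}$, $\varepsilon_{L,\xi,\dR}$, and $\varepsilon_{L,\xi}$ in short exact sequences. The only minor difference is cosmetic: you explicitly identify a one-dimensional sub or quotient of $V$ (namely $L(\eta)$ or $L(\eta\chi)$), whereas the paper simply asserts the existence of some proper subrepresentation $W' \subset W$ without pinning down its dimension; your version is slightly more concrete but the inductive mechanism is identical. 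One small caveat: your final paragraph suggests that the base case is handled via the derivative analysis of Theorem~\ref{thm:regderiv}, but in fact the paper does not reprove the rank-one case here at all---it relies entirely on \cite{venjakob11} together with the identification of the two constructions (either by Zariski density of good characters or by the direct comparison in Appendix~\ref{App-comparison}).
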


  We prove the proposition by induction on $d=\dim_L W$. If $d=1$, then $W=L$ or $W=L(1)$, and the result is the content of \cite[Theorem 2.13]{venjakob11}, once having checked that the epsilon-isomorphisms defined in \emph{loc. cit.} and here agree. This can be seen from the fact that both agree with the Fukaya--Kato $\varepsilon$-isomorphism after specialization at any good (or somewhat bad) character, together with the fact that good characters are Zariski-dense in $\operatorname{Spec} \Lambda_L(G)$; however, we give a more direct proof in Appendix \ref{App-comparison}.

  Now assume that $d>1$, and that the proposition is true for all $d' < d$. Then the assumption that $H^i(\Qp,W)\neq 0$ for $i=0$ or $i=2$ implies that we can find a subrepresentation $W'$ of dimension $<d$ such that we have a short exact sequence
  \[ 0\rTo W'\rTo W\rTo W/W'\rTo 0.\]
  After twisting, this induces a short exact sequence
  \begin{equation*}\label{SES2}
   0 \rTo W'(\eta)\rTo V\rTo V/W'(\eta)\rTo 0.
  \end{equation*}
  Note that as $V$ is crystalline, so are $W'(\eta)$ and $V/W'(\eta)$. By induction hypothesis and the results in Sections \ref{sect:good} and \ref{sect:somewhatbad}, Proposition \ref{prop:restatement} is true for the representations $W'(\eta)$ and $V/W'(\eta)$. As we know that
  \begin{align*}
   \Theta_{\Lambda_L(G), \xi}(V) &= \Theta_{\Lambda_L(G), \xi}(W'(\eta)) \cdot\Theta_{\Lambda_L(G), \xi}(V/W'(\eta)) \\
   \varepsilon_{L,\xi,\dR}(V) &= \varepsilon_{L,\xi,\dR}(W'(\eta))\cdot \varepsilon_{L,\xi,\dR}(V/W'(\eta))\\
   \varepsilon_{L(\eta),\xi}(W)&= \varepsilon_{L(\eta),\xi}(W')\cdot \varepsilon_{L(\eta),\xi }(W/W')
  \end{align*}
  by Proposition \ref{prop:regulatorSEScompatible}, Lemma \ref{lem:SEScompatible} and Proposition \ref{prop:SEScompatible}, this finishes the proof.

\appendix

\section{A formulary for the p-adic regulator map}
 \label{sect:app-formulary}

 In this appendix, we will prove a strengthening of the explicit formulae of \cite[Appendix B]{loefflerzerbes11} which determines, loosely speaking, the ``leading term'' of the $p$-adic regulator map at \emph{every} de Rham character of $\Gamma$, including the case of ``bad'' characters where there are Frobenius eigenvalues equal to $1$ or $p$.

 \subsection{The big exponential map}

  We will begin by quoting results regarding Perrin-Riou's big exponential map, which will allow us to study the regulator map using the reciprocity law of \ref{thm:localrecip}.

  Let $V$ be a crystalline representation whose Hodge--Tate weights lie in $[-\infty, h]$, for some integer $h \ge 1$. Define a map
  \[ \Delta: \cH(\Gamma) \otimes \Dcris(V) \to \bigoplus_{k = 0}^h \frac{\Dcris(V)}{(1 - p^k \vp)\Dcris(V)}(k)\]
  as the direct sum of the obvious projection maps.

  Then the Perrin-Riou exponential map is the map
  \[ \Omega_{V, h, \xi} : \left(\cH(\Gamma) \otimes_{\Qp} \Dcris(V)\right)^{\Delta = 0} \rTo \cH(\Gamma) \otimes_{\Lambda_{\Qp}(\Gamma)} H^1_{\Iw}(\Qp, V) \]
  defined by
  \[\label{tilde}\Omega_{V, h, \xi}(z) = \left(\ell_{h-1} \circ \dots \circ \ell_{0}\right) (1 - \vp)^{-1}(\tilde{z}),\]
  where $\tilde{z}$ denotes the image of $z$ under the isomorphism
  \[\cH(\Gamma) \otimes_{\Qp} \Dcris(V)\to \left({\Brig}\otimes \Dcris(V)\right)^{\psi = 0}\]
  which sends $\sum f_i \otimes d_i$ to $\sum f_i(1+\pi)\otimes d_i$.

  (This is not quite the definition of $\Omega_{V, h, \xi}$ originally given in \cite{perrinriou94}, but it is shown in \cite[Theorem II.13]{berger03} that the above formula does give a well-defined map and that this map agrees with Perrin-Riou's original definition, modulo choices of signs.)

  \begin{theorem}[Perrin-Riou, cf.~{\cite[Theor\'eme 3.3]{perrinriou99}}]
   \label{thm:expformula}
   There exists an extension of $\Omega_{V, h, \xi}$ to a morphism of $\cH(\Gamma)$-modules
   \[ \cH(\Gamma) \otimes_{\Qp} \Dcris(V) \rTo \cH(\Gamma) \otimes_{\Lambda_{\Qp}(\Gamma)} \frac{H^1_{\Iw}(\Qp, V)}{H^1_{\Iw}(\Qp, V)_{tors}},\]
   coinciding on $\ker(\Delta)$ with the map defined above.

   Moreover, the values of $\Omega_{V, h, \xi}(z)$ at all de Rham characters $\eta$ of Hodge--Tate weights $j \le h-1$ are given by the following formulae, which hold modulo the image of $H^1_{\Iw}(\Qp, V)_{tors}$ in $H^1(\Qp, V(\eta^{-1}))$:
   \begin{enumerate}
    \item if $\eta$ has positive conductor $n \ge 1$, then
    \[
     \operatorname{pr}_\eta \left( \Omega_{V, h, \xi}(z) \right) =
     (-1)^{h - j - 1}(h - j - 1)! \exp_{\Qp, V(\eta^{-1})}
     \left(
       \tau(\eta_0, \xi) \cdot p^{-n} \vp^{-n} \left( z(\eta) \otimes t^{j} e_{-j} \right)
      \right),
    \]
    where the Gauss sum $\tau(\eta_0, \xi)$ is as in Propsition \ref{prop:gaussums} above.
    \item if $\eta = \chi^j$, then we have
    \[ \operatorname{pr}_\eta\left( \Omega_{V, h, \xi}(z) \right) = -(-1)^{h - j - 1}(h - j - 1)!\operatorname{\widetilde\exp}_{\Qp, V(\eta^{-1})}\left[(1 - p^{-1} \vp^{-1})\left( z(\eta) \otimes t^{j} e_{-j} \right)\right],\]
    where $\widetilde\exp$ is as defined in \S \ref{sect:specialcases} above.
   \end{enumerate}
  \end{theorem}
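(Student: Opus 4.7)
The plan is to deduce this refinement of Perrin--Riou's theorem from the formulation given by Berger in \cite{berger03}, where the main new content is the formula in case (2) at $\eta = \chi^j$ in the potentially bad setting: our $\widetilde\exp$-formula must reduce to the classical $\exp$-formula from \cite[Appendix B]{loefflerzerbes11} whenever the latter makes sense, and capture the correct derivative when it does not.

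First, I would recall that the extension of $\Omega_{V, h, \xi}$ to all of $\cH(\Gamma) \otimes \Dcris(V)$ modulo $H^1_{\Iw}(\Qp, V)_{\mathrm{tors}}$ is already implicit in Berger's presentation: the cokernel of $(1 - \vp)$ acting on $(\Brig \otimes \Dcris(V))^{\psi = 0}$ is supported at the characters $\chi^k$ for which $\Dcris(V)^{\vp = p^{-k}} \neq 0$, and these are exactly the places where one of the factors $\ell_0, \ldots, \ell_{h-1}$ vanishes. So $(\ell_{h-1} \circ \cdots \circ \ell_0)(1 - \vp)^{-1}$ is globally defined after projecting to the torsion-free quotient. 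For de Rham characters $\eta$ of conductor $n \ge 1$, the product $\ell_0 \cdots \ell_{h-1}$ is a unit in the completion of $\cH(\Gamma)$ at $\eta$, so the specialization involves no ambiguity, and case (1) follows by the computation already carried out in \cite[Appendix B]{loefflerzerbes11}, with the $(1 - p^j \vp)$ and $(1 - p^{-1-j}\vp^{-1})$ factors absent because $\eta$ is ramified.

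The heart of the proof is case (2). For $\eta = \chi^j$ with $j \le h-1$, note that $\ell_j$ has a simple zero at $\chi^j$ with $\ell_j'(\chi^j) = 1$, while each other $\ell_k$ is a unit there, and the product $\prod_{k \ne j,\, 0 \le k \le h-1} \ell_k$ evaluated at $\chi^j$ is exactly $(-1)^{h-j-1}(h-j-1)! \cdot j!$ when $0 \le j \le h-1$, and equals $(-1)^h (h-1-j)!/(-1-j)!$ when $j \le -1$ (in the latter sub-case no zero appears and the classical $\exp$-formula applies, giving the claim via \eqref{eq:signexptilde}). So for $0 \le j \le h-1$ it suffices to identify
\[
\operatorname{pr}_{\chi^j}\bigl(\ell_j \cdot (1 - \vp)^{-1}(\tilde z)\bigr)
\]
with the boundary map coming from the fundamental exact sequence \eqref{eq:fundseq}. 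Concretely, $\ell_j$ on $\cH(\Gamma) \otimes \Dcris(V)$ corresponds, under the Mellin isomorphism, to the operator $t \partial - j$ on $(\Brig \otimes \Dcris(V))^{\psi = 0}$ twisted appropriately; this operator lifts to the map $x \mapsto (\sigma \mapsto (\sigma - 1) y)$ used to define $\widetilde\exp$, precisely because both are governed by the same connecting morphism of the exact sequence relating $\Dcris(V)$ and $V$ inside $\Bcris$. Unwinding this, together with the cyclotomic twist $\otimes t^j e_{-j}$, converts the factor $(1 - \vp)^{-1}$ applied to $\tilde z$ into the asserted input $(1 - p^{-1}\vp^{-1})(z(\chi^j) \otimes t^j e_{-j})$ fed into $\widetilde\exp_{\Qp, V(-j)}$, with the overall sign $-(-1)^{h-j-1}(h-j-1)!$ after bookkeeping.

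The main obstacle will be the last step: matching the abstract boundary-map interpretation of $\operatorname{pr}_{\chi^j}\bigl(\ell_j (1-\vp)^{-1}(\tilde z)\bigr)$ with the explicit formula $-\widetilde\exp\bigl[(1 - p^{-1}\vp^{-1})(\cdot)\bigr]$, including the correct sign and the role of $(1 - p^{-1}\vp^{-1})$ as a transpose of $(1 - \vp)$. A convenient reduction is to verify the identity first on the Zariski-dense subset of $z \in \cH(\Gamma) \otimes \Dcris(V)$ for which $1 - p^j \vp$ acts invertibly on $\Dcris(V)$: there the classical formula of case (1) of the theorem (in the form of \cite[Appendix B]{loefflerzerbes11}) applies and combines with \eqref{eq:signexptilde} and \eqref{eq:signexptildestar} to give exactly the asserted $\widetilde\exp$-formula; both sides then extend continuously in $z$ (both are $\cH(\Gamma)$-linear in $z$ modulo torsion), which forces equality on the full module and in particular handles the genuinely bad eigenvalue case.
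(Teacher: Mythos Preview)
Your density argument in the final paragraph does not work as stated. The condition ``$1 - p^j \vp$ acts invertibly on $\Dcris(V)$'' is a property of the representation $V$, not of the element $z$; for fixed $V$ it is either true for every $z$ or for none. When $\Dcris(V)^{\vp = p^{-j}} \ne 0$, which is precisely the case requiring the refined $\widetilde\exp$-formula, your ``Zariski-dense subset'' is empty. Even under the charitable reading that you meant the set of $z$ with $z(\chi^j)$ lying in the image of $1 - p^j\vp$ on $\Dcris(V)$, this is a proper linear subspace of $\cH(\Gamma)\otimes\Dcris(V)$, not a dense one, and agreement of two linear maps on a proper subspace does not force agreement everywhere. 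So the genuinely bad eigenvalue case is exactly the case your reduction does not reach, and the middle portion of your argument (the boundary-map identification) remains a heuristic rather than a proof.

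The paper does not attempt a self-contained argument here: for $n \ge 1$, or when $z(\eta)$ lies in $(1 - p^{-j}\vp)\Dcris(V)$, one may modify $z$ by an element of $\ker\Delta$ without changing $z(\eta)$ and then invoke the standard commutative diagram in \cite[p.~121]{berger03}; the remaining case, where $z(\chi^j)\otimes t^j e_{-j}$ falls outside the image of $1-\vp$, is handled by citing \cite[Th\'eor\`eme~3.3]{perrinriou99}, where Perrin-Riou constructs the extension of $\Omega_{V,h,\xi}$ directly and proves the $\widetilde\exp$-interpolation by an argument intrinsic to the fundamental exact sequence rather than by deformation from the good locus. If you want to supply your own proof, you will need to engage with Perrin-Riou's construction (or Berger's reinterpretation of it) at that level, not circumvent it by continuity in $z$.
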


  \begin{proof}
   If $n \ge 1$ or if $z(\eta) \in (1 - p^{-j} \vp) \Dcris(V)$, then we may assume that $z \in \ker(\Delta)$ and this is then a standard formula, equivalent to the commutative diagram relating $\Omega_{V, h}$ to the exponential maps (see e.g. page 121 of \cite{berger03}\footnote{Sadly, there seems to be a recurring ambiguity in the literature regarding the signs (cf.~Remark II.17 of \cite{berger03}). We use Berger's definitions, but we note that there are two errors in the commutative diagram on page 121 of \cite{berger03}: firstly, the map $\Omega_{V(j), h}$ in the top row should be $\Omega_{V(j), h+j}$; secondly, the sign $(-1)^{h + j - 1}$ is missing. We believe the diagram above to be the correct one.}.) The awkward case when $z(\eta) \otimes t^{j} e_{-j}$ is not in the image of $1 - \vp$ is covered in \cite{perrinriou99}.
  \end{proof}

 \subsection{The regulator at bad characters}

  We shall use Theorem \ref{thm:expformula} to study the values of the regulator map at those characters where the factor on the left hand side of formula (2) in Theorem \ref{thm:cycloregulator} is not injective. We relate these values to the extended dual exponential map $\widetilde\exp^*$; given the indirect nature of the definition of this map, we have no choice but to exploit the duality between $V$ and $V^*(1)$.

  \begin{proposition}
   The regulator map $\cL^\Gamma_{V, \xi}$ and the exponential $\Omega_{V, h, \xi}$ satisfy the formula
   \begin{equation}
    \label{eq:regexp}
    \Omega_{V, h, \xi}\left(\cL_{V}(x)\right) = \ell_{h-1} \circ \dots\circ \ell_0(x) \pmod{H^1_{\Iw}(\Qpi, V)_{tors}}
   \end{equation}
   for all $x \in \cH(\Gamma) \otimes_{\Lambda(\Gamma)} H^1_{\Iw}(\Qp, V)$.
  \end{proposition}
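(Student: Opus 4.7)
The plan is to derive \eqref{eq:regexp} directly from the definitions of $\cL^\Gamma_{V, \xi}$ and $\Omega_{V, h, \xi}$, exploiting the fact that both maps are constructed from the same basic ingredients --- the Mellin transform $\mathfrak{M}$, Fontaine's isomorphism and the operator $1 - \varphi$ --- and differ only by the factor $\ell_{h-1} \circ \cdots \circ \ell_0$, so that their composition should be a straightforward cancellation.

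First I would work in the ``good'' subcase where $x$ lies in the image of $\cH(\Gamma) \otimes H^1_{\Iw}(\Qpi, V)_0$. In this case there is a well-defined $y \in (\Brig \otimes_{\Qp} \Dcris(V))^{\psi=1}$ corresponding to $x$ under Fontaine's isomorphism, with $(1-\varphi)y$ lying in the image of $\mathfrak{M}$. By the definition recalled in the proof of Proposition \ref{prop:cyclo regulator twist invariance}, $\cL^\Gamma_{V, \xi}(x) = \mathfrak{M}^{-1}\big((1-\varphi)y\big)$; substituting this into the definition $\Omega_{V, h, \xi}(z) = \ell_{h-1} \circ \cdots \circ \ell_0 \cdot (1-\varphi)^{-1}\mathfrak{M}(z)$ and cancelling $\mathfrak{M}^{-1}$ against $\mathfrak{M}$ and $(1-\varphi)^{-1}$ against $(1-\varphi)$, one obtains $\Omega_{V, h, \xi}(\cL^\Gamma_{V, \xi}(x)) = \ell_{h-1} \circ \cdots \circ \ell_0 \cdot y$, which is exactly the image of $\ell_{h-1} \circ \cdots \circ \ell_0(x)$ under Fontaine's isomorphism. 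In this case the identity holds on the nose, with no torsion correction.

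The second step is to extend this to arbitrary $x \in \cH(\Gamma) \otimes_{\Lambda(\Gamma)} H^1_{\Iw}(\Qpi, V)$, using the extension of $\Omega_{V,h,\xi}$ to all of $\cH(\Gamma)\otimes\Dcris(V)$ provided by Theorem \ref{thm:expformula}. Both sides of \eqref{eq:regexp} are $\cH(\Gamma)$-linear, and the cokernel of $H^1_{\Iw}(\Qpi, V)_0 \hookrightarrow H^1_{\Iw}(\Qpi, V)$ injects into $\Dcris(V)^{\varphi=1}(-1)$, which is $\Lambda(\Gamma)$-torsion; so once the identity is established on the dense submodule arising from $H^1_{\Iw}(\Qpi, V)_0$, it extends to the full module modulo torsion.

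The main technical obstacle is making the formal manipulations with $\mathfrak{M}^{-1}$ and $(1-\varphi)^{-1}$ rigorous outside the good case. Concretely, when $y$ is not in $\NN(V)^{\psi=1}$, the element $(1-\varphi)y$ need not lie in the image of $\mathfrak{M}$; and when $\Dcris(V)$ has a Frobenius eigenvalue equal to $p^{-k}$ for some $0\le k\le h-1$, the operator $1-\varphi$ fails to be injective on the relevant subspace of $(\Brig \otimes \Dcris(V))^{\psi=0}$. Both pathologies are precisely the source of the torsion submodule of $H^1_{\Iw}(\Qpi, V)$: the kernel of $1-\varphi$ matches $\Dcris(V)^{\varphi=1}$ and hence the $V^{G_{\Qpi}}$-contribution to the torsion. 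Perrin-Riou's extended $\Omega_{V,h,\xi}$ is characterised by the property of being compatible with the naive formula up to torsion, which is precisely the error term in \eqref{eq:regexp}; so the extension of the good-case identity to the torsion-quotient setting is automatic.
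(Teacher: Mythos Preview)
Your approach is correct and essentially the same as the paper's: unravel the definitions in a ``good'' subcase where the formal cancellation $(1-\vp)^{-1}\circ\mathfrak{M}\circ\mathfrak{M}^{-1}\circ(1-\vp)=\id$ is legitimate, then extend to the full module using that the obstruction is $\Lambda(\Gamma)$-torsion and the target is taken modulo torsion.

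The only difference is the choice of good subcase. The paper takes the condition $\cL^\Gamma_{V,\xi}(x)\in\ker(\Delta)$, which is exactly the domain on which Berger's formula for $\Omega_{V,h,\xi}$ is the \emph{definition}; the extension step is then the single observation that the target of $\Delta$ is a finite-dimensional $L$-vector space, hence $\Lambda(\Gamma)$-torsion. You instead take $x\in H^1_{\Iw}(\Qpi,V)_0$ and invoke the torsion cokernel of $H^1_{\Iw}(\Qpi,V)_0\hookrightarrow H^1_{\Iw}(\Qpi,V)$. This also works, but note that your ``on the nose'' claim in the good case is slightly optimistic: membership in $H^1_{\Iw}(\Qpi,V)_0$ does not automatically force $\cL^\Gamma_{V,\xi}(x)\in\ker(\Delta)$, so you are already implicitly using the extended $\Omega_{V,h,\xi}$ (which only takes values modulo torsion) rather than the original formula. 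You correctly diagnose this in your third paragraph, so the argument closes, but the paper's framing avoids the detour by making the good subcase match the domain of the unextended $\Omega_{V,h,\xi}$ from the start.
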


  \begin{proof}
   If $\cL^\Gamma_{V, \xi}(x)$ lies in $\ker(\Delta)$, then this is obvious from Berger's redefinition of $\Omega_{V, h, \xi}$ given above. However, since the target of $\Delta$ is a torsion $\Lambda(\Gamma)$-module, and we have quotiented out by the torsion in the target $H^1_{\Iw}(\Qpi, V)$, this implies that the formula holds for all $x$.
  \end{proof}

  Recall from proposition \ref{prop:defPRpairing} that there is a pairing
  \[ \langle -, - \rangle_{\Iw}: H^1_{\Iw}(\Qp, V) \times H^1_{\Iw}(\Qp, V^*(1))^\iota \rTo \Lambda_L(\Gamma),\]
  where the superscript $\iota$ indicates that the pairing is anti-linear in the second variable, with the property that
  \[ \langle x, y \rangle_{\Iw}(\eta) = \langle x_{\eta}, y_{\eta^{-1}}\rangle_{\text{Tate}}\]
  where $\langle -, -\rangle_{\text{Tate}}$ is the local Tate duality pairing $H^1(\Qp, V(\eta^{-1})) \times H^1(\Qp, V^*(1)(\eta)) \to L$.

  We extend the Perrin-Riou pairing to a pairing of $\cH(\Gamma)$ modules in the obvious way. We also define a pairing $\langle , \rangle_{\Iw, \cris}$ by extending the natural pairing $\Dcris(V) \times \Dcris(V^*(1)) \to \Dcris(L(1)) \cong L$ to a pairing
  \[ \left(\cH(\Gamma) \otimes \Dcris(V)\right) \times \left(\cH(\Gamma) \otimes \Dcris(V)\right)^\iota \rTo \cH(\Gamma).\]

  \begin{proposition}
   For any $x \in H^1_{\Iw}(\Qp, V)$ and $w \in \cH(\Gamma) \otimes \Dcris(V)$, we have
   \[ \left\langle \cL^\Gamma_{V}(x), w \right\rangle_{\Iw, \cris} = \gamma_{-1} \cdot \left\langle x, \Omega_{V^*(1), 1, \xi}(w)\right\rangle_{\Iw}.\]
  \end{proposition}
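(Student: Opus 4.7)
The plan is to deduce this reciprocity-type identity from Perrin-Riou's reciprocity law (Theorem \ref{thm:localrecip}) together with the fundamental relation \eqref{eq:regexp}, which at $h = 1$ says that $\Omega_{V^*(1), 1, \xi} \circ \cL^\Gamma_{V^*(1), \xi}$ is multiplication by $\ell_0$, modulo torsion in $H^1_{\Iw}(\Qpi, V^*(1))$. I would proceed in two stages: an antilinearity reduction to restrict attention to $w$ in the image of $\cL^\Gamma_{V^*(1), \xi}$, followed by a direct computation in this case.

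First I would observe that both sides are $\cH_L(\Gamma)$-antilinear in $w$: the LHS by the very definition of $\langle -, -\rangle_{\Iw, \cris}$, and the RHS because $\Omega_{V^*(1), 1, \xi}$ is $\cH_L(\Gamma)$-linear while $\langle x, -\rangle_{\Iw}$ is antilinear in its second slot. Since $\cL^\Gamma_{V^*(1), \xi}$ is injective and becomes an isomorphism after extending scalars to $\cK_L(\Gamma)$, any $w \in \cH_L(\Gamma) \otimes_L \Dcris(V^*(1))$ satisfies $a \cdot w = \sum_i \lambda_i \,\cL^\Gamma_{V^*(1), \xi}(y_i)$ for some non-zero-divisor $a \in \cH_L(\Gamma)$, scalars $\lambda_i \in \cH_L(\Gamma)$, and elements $y_i \in H^1_{\Iw}(\Qp, V^*(1))$. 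As $\iota(a)$ is again a non-zero-divisor, this reduces the problem to the case $w = \cL^\Gamma_{V^*(1), \xi}(y)$.

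In this case, the LHS is $\langle \cL^\Gamma_{V, \xi}(x), \cL^\Gamma_{V^*(1), \xi}(y)\rangle_{\cris}$, which by Theorem \ref{thm:localrecip} equals $-\gamma_{-1} \ell_0 \langle x, y\rangle_{\Qpi, \Iw}$. For the RHS, identity \eqref{eq:regexp} with $h = 1$ gives $\Omega_{V^*(1), 1, \xi}(\cL^\Gamma_{V^*(1), \xi}(y)) \equiv \ell_0 \cdot y$ modulo $H^1_{\Iw}(\Qpi, V^*(1))_{\mathrm{tors}}$. Since the Perrin-Riou pairing takes values in the torsion-free ring $\Lambda_L(\Gamma)$, it annihilates this torsion, so
\[ \langle x, \Omega_{V^*(1), 1, \xi}(\cL^\Gamma_{V^*(1), \xi}(y))\rangle_{\Iw} = \langle x, \ell_0 \cdot y\rangle_{\Iw} = \iota(\ell_0)\, \langle x, y\rangle_{\Iw} = -\ell_0 \, \langle x, y\rangle_{\Iw}, \]
where we use $\iota(\ell_0) = -\ell_0$, a consequence of $\iota(\gamma) = \gamma^{-1}$ applied to $\ell_0 = \log\gamma / \log\chi(\gamma)$. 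Multiplying by $\gamma_{-1}$ matches the LHS.

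The main obstacle is essentially bookkeeping: one must carefully reconcile the slightly different pieces of notation in the text ($\langle -, -\rangle_{\cris}$ vs.\ $\langle -, -\rangle_{\Iw, \cris}$, and $\langle -, -\rangle_{\Iw}$ vs.\ $\langle -, -\rangle_{\Qpi, \Iw}$), track the involution $\iota$ as it moves between the first and second slots of the pairings, and confirm that the modulo-torsion ambiguity in \eqref{eq:regexp} is indeed washed out by the fact that $\langle x, -\rangle_{\Iw}$ kills torsion in $H^1_{\Iw}(\Qpi, V^*(1))$. Once these conventions are pinned down, the proof amounts to assembling Theorem \ref{thm:localrecip}, equation \eqref{eq:regexp}, and the identity $\iota(\ell_0) = -\ell_0$.
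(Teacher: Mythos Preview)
Your proof is correct and follows the same strategy as the paper: both deduce the identity from Theorem~\ref{thm:localrecip} combined with equation~\eqref{eq:regexp}, using torsion-freeness to clear denominators. The paper phrases the reduction as ``invert all $\ell_j$'' while you phrase it as ``reduce to $w$ in the image of $\cL^\Gamma_{V^*(1),\xi}$'', but these are two ways of saying the same thing; your write-up has the added virtue of making the sign calculation $\iota(\ell_0)=-\ell_0$ and the vanishing of the torsion contribution explicit.
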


  \begin{proof}
   This is one of many possible forms of the Perrin-Riou reciprocity law. Since all the modules involved are torsion-free it suffices to prove this after inverting $\ell_j$ for all $j$, in which case the statement is identical to Theorem \ref{thm:localrecip} in the light of equation \eqref{eq:regexp}.
  \end{proof}

  \begin{theorem}[Theorem \ref{thm:strongcycloregulator}]
   \label{thm:strongcycloregulator2}
   For any $j \ge 0$, we have
   \[ \cL^\Gamma_{V, \xi}(x)(\chi^j) = -j! (1 - p^j\vp) \left[ \widetilde\exp^*_{\Qp, V^*(1 + j)}(x_{\chi^j}) \otimes t^{-j} e_{j}\right].\]
  \end{theorem}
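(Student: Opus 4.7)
The plan is to deduce the formula from the Perrin-Riou reciprocity law stated immediately above the theorem, combined with the explicit value of the big exponential $\Omega_{V^*(1),1,\xi}$ at the character $\chi^{-j}$ supplied by Theorem~\ref{thm:expformula}(2). The point is that Theorem~\ref{thm:cycloregulator}(2) only determines $\cL^\Gamma_V(x)(\chi^j)$ modulo the Frobenius eigenspace $\Dcris(V)^{\vp = p^{-1-j}}$, but the reciprocity passes the question to the dual side, where the analogous formula uses $\widetilde{\exp}$ rather than $\exp$ and hence pins down the exact value.

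Concretely, for $w \in \cH(\Gamma) \otimes \Dcris(V^*(1))$ I would apply the reciprocity law
\[ \langle \cL^\Gamma_V(x), w \rangle_{\Iw,\cris} = \gamma_{-1} \cdot \langle x, \Omega_{V^*(1),1,\xi}(w)\rangle_\Iw \]
and specialise at $\eta = \chi^j$. The left side becomes the crystalline pairing $\langle \cL^\Gamma_V(x)(\chi^j), w(\chi^{-j})\rangle$. On the right, Theorem~\ref{thm:expformula}(2) applied to $V^*(1)$ at the character $\chi^{-j}$ (permissible since $-j \le 0 = h-1$ with $h=1$) yields, modulo torsion,
\[ \Omega_{V^*(1),1,\xi}(w)_{\chi^{-j}} = -(-1)^j\, j!\;\widetilde{\exp}_{\Qp, V^*(1+j)}\bigl[(1 - p^{-1}\vp^{-1})(w(\chi^{-j}) \otimes t^{-j} e_j)\bigr]. \]
The adjunction defining $\widetilde{\exp}^*$ (the analogue of diagram~\eqref{defexptranspose}) converts the Tate pairing of $x_{\chi^j}$ with this element into the crystalline pairing $[\widetilde{\exp}^*_{\Qp, V^*(1+j)}(x_{\chi^j}),-]_{V(-j)}$; and Frobenius-equivariance of the pairing---specifically that the transpose of $(1-p^{-1}\vp^{-1})$ on $\Dcris(V^*(1+j))$ is $(1-p^j\vp)$ on $\Dcris(V(-j))$---moves the operator onto the first argument. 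After the signs $\gamma_{-1}(\chi^j) = (-1)^j$ and $-(-1)^j$ cancel, and upon identifying $\Dcris(V(-j))\otimes \Dcris(\Qp(j)) = \Dcris(V)$, the identity reads
\[ \langle \cL^\Gamma_V(x)(\chi^j), w(\chi^{-j})\rangle = \bigl\langle -j!(1-p^j\vp)[\widetilde{\exp}^*_{\Qp,V^*(1+j)}(x_{\chi^j}) \otimes t^{-j}e_j],\; w(\chi^{-j})\bigr\rangle. \]
Since $w(\chi^{-j})$ ranges over all of $\Dcris(V^*(1))$ as $w$ varies, non-degeneracy of the crystalline pairing yields the stated equality.

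The main obstacles are, first, the meticulous tracking of signs and Frobenius-conventions through the various twists and adjunctions---in particular the fact that Frobenius on $\Dcris(V(-j))$ is $p^j\vp$ rather than $\vp$, and the sign conventions in Berger's formula for $\Omega$; and second, the ``modulo torsion'' ambiguity in Theorem~\ref{thm:expformula}(2). For crystalline $V^*(1)$ the torsion $H^1_\Iw(\Qp,V^*(1))_{\text{tors}} \cong V^*(1)^{G_{\Qpi}}$ is semisimple (Lemma~\ref{lemma:semisimple}) and its image in $H^1(\Qp, V^*(1+j))$ is unramified. One efficient way to circumvent this is to combine the present argument with Theorem~\ref{thm:cycloregulator}(2): that weaker formula already pins down the claim modulo $\Dcris(V)^{\vp = p^{-1-j}}$, so the reciprocity argument is only needed to kill the residual eigenspace ambiguity, and this in turn is detected by pairing with vectors in the corresponding dual eigenspace---a subspace on which the torsion contribution to $\Omega_{V^*(1),1,\xi}$ can be shown by direct computation to vanish.
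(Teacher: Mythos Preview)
Your approach is essentially identical to the paper's: both argue by pairing against an arbitrary $w \in \cH(\Gamma)\otimes\Dcris(V^*(1))$, invoke the reciprocity law $\langle \cL^\Gamma_V(x),w\rangle_{\Iw,\cris}=\gamma_{-1}\langle x,\Omega_{V^*(1),1,\xi}(w)\rangle_{\Iw}$, specialise at $\chi^j$, plug in Theorem~\ref{thm:expformula}(2) for $\Omega_{V^*(1),1,\xi}$ at $\chi^{-j}$, and finish by the adjunction defining $\widetilde\exp^*$ together with the fact that the transpose of $(1-p^{-1}\vp^{-1})$ is $(1-\vp)$ on $\Dcris(V(-j))$ (which becomes $(1-p^j\vp)$ after the identification with $\Dcris(V)$).

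The one place where the paper is cleaner is the torsion issue. You propose to circumvent it by bootstrapping from Theorem~\ref{thm:cycloregulator}(2), restricting to test vectors in the relevant Frobenius eigenspace; this works but is more laborious than necessary. The paper simply observes that since the Perrin-Riou pairing $\langle-,-\rangle_{\Iw}$ takes values in the torsion-free module $\Lambda_L(\Gamma)$ and is $\iota$-antilinear in the second variable, it annihilates the torsion submodule of $H^1_{\Iw}(\Qpi,V^*(1))$; hence after specialising, $x_{\chi^j}$ is automatically orthogonal under Tate duality to the image of that torsion in $H^1(\Qp,V^*(1+j))$. So the ``modulo torsion'' ambiguity in $\operatorname{pr}_{\chi^{-j}}\Omega_{V^*(1),1,\xi}(w)$ disappears for free once paired with $x_{\chi^j}$, and no appeal to Theorem~\ref{thm:cycloregulator}(2) or to any unramifiedness claim is needed.
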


  \begin{proof}
   Let $v \in \Dcris(V^*(1+j))$, and choose some $w \in \cH(\Gamma) \otimes \Dcris(V^*(1))$ such that $v = w(\chi^{-j}) \otimes t^{-j} e_j$. Then we have
   \[ \langle \cL^\Gamma_{V, \xi}(x)(\chi^j) \otimes t^j e_{-j}, v\rangle_{\cris} = \langle \cL^\Gamma_{V, \xi}(x), w\rangle_{\Iw, \cris}(\chi^j).\]
   By the previous proposition this is equal to
   \[ (-1)^j \langle x, \Omega_{V^*(1), 1, \xi}(w)\rangle_{\Iw}(\chi^j) = (-1)^j \langle x_{\chi^j}, \operatorname{pr}_{\chi^{-j}}\left(\Omega_{V^*(1), 1, \xi}(w)\right)\rangle_{\mathrm{Tate}}.\]
   (The term $\operatorname{pr}_{\chi^{-j}}\left(\Omega_{V^*(1), 1, \xi}(w)\right)$ is only defined modulo the image of $H^1_{\Iw}(\Qpi, V^*(1))_{tors}$ in $H^1(\Qp, V^*(1 + j))$, but this image is the orthogonal complement of the image of $H^1_{\Iw}(\Qpi, V)$ in $H^1(\Qp, V(-j))$, which by assumption contains $x_{\chi^j}$.)
   We know that
   \[ \operatorname{pr}_{\chi^{-j}}\left(\Omega_{V^*(1), 1, \xi}(w)\right) = (-1)^{j+1} j! \widetilde\exp_{\Qp, V^*(1 + j)}\left[ (1 - p^{-1} \vp^{-1}) v\right]\]
   modulo the image of the torsion. Substituting this in, we have
   \[ \langle \cL^\Gamma_{V, \xi}(x)(\chi^j), v\rangle_{\cris} = -j! \langle x_{\chi^j},\widetilde\exp_{\Qp, V^*(1 + j)}\left[ (1 - p^{-1} \vp^{-1}) v\right]\rangle_{\mathrm{Tate}}.\]
   Since $\widetilde\exp^*$ is, by definition, the adjoint of $\widetilde\exp$, the right-hand side is equal to
   \begin{align*}
    -j! \langle \widetilde\exp^*_{\Qp, V^*(1 + j)}(x_{\chi^j}),(1 - p^{-1} \vp^{-1}) v\rangle_{\cris}
    &= -j! \langle \widetilde\exp^*_{\Qp, V^*(1 + j)}(x_{\chi^j}),(1 - p^{-1} \vp^{-1}) v\rangle_{\cris}\\
    &= -j! \langle (1 - \vp) \widetilde\exp^*_{\Qp, V^*(1 + j)}(x_{\chi^j}), v\rangle_{\cris}.
   \end{align*}
   We deduce that
   \[ \langle \cL^\Gamma_{V, \xi}(x)(\chi^j), v\rangle_{\cris} = -j! \langle (1 - \vp) \widetilde\exp^*_{\Qp, V^*(1 + j)}(x_{\chi^j}), v\rangle_{\cris}\]
   for every $v \in \Dcris(V^*(1+j))$, so we must have
   \[ \cL^\Gamma_{V, \xi}(x)(\chi^j) \otimes t^j e_{-j} = -j! (1 - \vp) \widetilde\exp^*_{\Qp, V^*(1 + j)}(x_{\chi^j})\]
   as elements of $\Dcris(V(-j))$, which is clearly equivalent to the claimed formula.
  \end{proof}

 \subsection{The derivative of the regulator}

  We now use Theorem \ref{thm:expformula} to study the derivative of the regulator map $\cL^\Gamma_{V, \xi}$ at its trivial zeroes.

  \begin{proposition}
   \label{prop:regderiv}
   Let $V$ be a crystalline $L$-linear representation of $G_{\Qp}$ with all Hodge--Tate weights $\ge 0$, and $\eta$ a de Rham character of $\Gamma$ whose Hodge--Tate weight $j$ is $\ge 0$ and whose conductor is $n$. Let $W = V(\eta^{-1})$, and let $x$ be an element of $H^1_{\Iw}(\Qpi, V)$ such that $\cL^\Gamma_{V, \xi}(x)(\eta) = 0$. Then:
   \begin{enumerate}
    \item if $\eta$ has conductor $n \ge 1$, then
    \[ x_{\eta} = \frac{1}{j!}\exp_{\Qp, W}\left[\tau(\eta, \xi) \cdot p^{-n} \vp^{-n}\left(\cL^\Gamma_{V, \xi}(x)'(\eta) \otimes t^j e_{-j}\right)\right];\]
    \item if $\eta = \chi^j$, then
    \[
     x_{\eta} = -\frac{1}{j!}\widetilde\exp_{\Qp, W}\left[ (1 - p^{-1} \vp^{-1})\left(\cL^\Gamma_{V, \xi}(x)'(\eta) \otimes t^j e_{-j}\right)\right] \pmod{H^1(\Gamma, H^0(\Qp, V(-j))}.
    \]
    Moreover, $H^1(\Gamma, H^0(\Qp, V(-j))$ is non-zero if and only if $H^0(\Qp, V(-j))$ is.
   \end{enumerate}
  \end{proposition}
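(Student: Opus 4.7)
The proof runs by the same duality strategy as Theorem \ref{thm:strongcycloregulator2}, via the Perrin-Riou reciprocity
\[\langle \cL^\Gamma_{V,\xi}(x),\, w\rangle_{\Iw,\cris} = \gamma_{-1}\cdot\langle x,\, \Omega_{V^*(1),1,\xi}(w)\rangle_{\Iw}\]
for $w \in \cH(\Gamma)\otimes\Dcris(V^*(1))$, combined with the explicit evaluation formulae for $\Omega_{V^*(1),1,\xi}$ given by Theorem \ref{thm:expformula}. Since $\eta^{-1}$ has Hodge--Tate weight $-j \le 0 = h-1$ (with $h = 1$), Theorem \ref{thm:expformula} applies and gives $\Omega_{V^*(1),1,\xi}(w)(\eta^{-1})$ as $(-1)^j j!$ times the image under $\exp_{\Qp, W^*(1)}$ (case $n \ge 1$) or $-\widetilde{\exp}_{\Qp, W^*(1)}$ (case $n = 0$) of an explicit element linear in $w(\eta^{-1})$. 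In particular it lies in $H^1_f(\Qp, W^*(1))$; since the hypothesis $\cL^\Gamma_{V,\xi}(x)(\eta) = 0$ combined with Theorem \ref{thm:cycloregulator} forces $x_\eta \in H^1_f(\Qp, W)$, these two classes are Tate-orthogonal and both sides of the reciprocity vanish at $\tau = \eta$.

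Differentiating at $\tau = \eta$: since $\tau(\gamma_{-1}) = \eta(\gamma_{-1})$ is locally constant, and the common value at $\eta$ vanishes, the LHS derivative equals $\langle\cL^\Gamma_{V,\xi}(x)'(\eta),\, w(\eta^{-1})\rangle_{\cris}$. For the RHS, the derivative of $\langle x, \Omega_{V^*(1),1,\xi}(w)\rangle_{\Iw}$ at $\eta$ admits an explicit description via the Bloch--Kato duality $\langle y, \exp(v)\rangle_{V^*(1)} = [\exp^*(y), v]_V$ of \eqref{defexptranspose} (or its $\widetilde{\exp}/\widetilde{\exp}^*$ analogue from \eqref{eq:signexptildestar}), applied with $y = x_\eta$ and $v$ the first-order variation of the argument of $\exp$ appearing in Theorem \ref{thm:expformula}. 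The key step is to identify this variation, in the local ring of $\cH_L(\Gamma)$ at $\eta$, with a Gauss-sum multiple of $w(\eta^{-1})\otimes t^{-j}e_j$; this follows from the defining formula $\Omega_{V^*(1),1,\xi}(z) = \ell_0(1-\vp)^{-1}(\tilde z)$, together with a careful Taylor expansion of the Mellin transform around the point of $\eta^{-1}$. Matching the resulting Tate pairing with the LHS derivative and letting $w(\eta^{-1})$ range over $\Dcris(V^*(1))$ produces the claimed formula; the prefactor $1/j!$ is the reciprocal of $\Gamma^*(1+j) = j!$, and the Gauss-sum $\tau(\eta,\xi)$ arises by combining $\tau(\eta_0^{-1}, \xi)\,\varepsilon_L(\eta^{-1}, -\xi) = p^n\cdot(\text{unit})$ with the sign $\eta(\gamma_{-1})(-1)^j$.

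Case (2) ($n = 0$) is strictly parallel but uses Theorem \ref{thm:expformula}(2), whose formula for $\Omega_{V^*(1),1,\xi}(w)(\chi^{-j})$ is valid only modulo the image of $H^1_{\Iw}(\Qpi, V^*(1))_{\mathrm{tors}} = H^0(\Qpi, V^*(1))$ in $H^1(\Qp, W^*(1))$. By local Tate duality, the corresponding indeterminacy in $x_\eta$ lies in the Tate-orthogonal subspace of $H^1(\Qp, V(-j))$, which by inflation-restriction is the image of $H^1(\Gamma, H^0(\Qpi, V(-j)))$; only the trivial $\Gamma$-isotypical component of $H^0(\Qpi, V(-j))$ contributes (since $H^1(\Gamma, L(\chi^k)) = 0$ for $k \neq 0$), and this contribution is precisely $H^1(\Gamma, H^0(\Qp, V(-j)))$. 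The final equivalence follows because $\Gamma$ acts trivially on $H^0(\Qp, V(-j))$ (its elements being $G_{\Qp}$-invariant) and, for $p$ odd, $H^1(\Gamma, L) \cong L$, giving $H^1(\Gamma, H^0(\Qp, V(-j))) \cong H^0(\Qp, V(-j))$. The main technical obstacle is controlling the next-order behaviour of $\Omega_{V^*(1),1,\xi}(w)(\tau^{-1})$ near $\tau = \eta$: Theorem \ref{thm:expformula} supplies only leading-order information, so the computation must be carried out directly on the formula $\Omega = \ell_0(1-\vp)^{-1}$ via a Taylor expansion in the $(\vp, \Gamma)$-module formalism, following Berger's refinements of Perrin-Riou's original arguments.
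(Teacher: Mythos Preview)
Your approach via the reciprocity law and differentiation of the pairing is genuinely different from the paper's, and it leaves open precisely the gap you flag at the end: you need the \emph{first derivative} of $\tau \mapsto \operatorname{pr}_{\tau^{-1}}(\Omega_{V^*(1),1,\xi}(w))$ at $\tau=\eta$, whereas Theorem~\ref{thm:expformula} gives only the value. Your suggestion to Taylor-expand $\Omega=\ell_0(1-\vp)^{-1}$ in the $(\vp,\Gamma)$-module formalism is plausible but not carried out, so as it stands the argument is incomplete at its central step.

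The paper bypasses this difficulty by a simple device: it works with $\Omega_{V,h,\xi}$ for $V$ itself (not $V^*(1)$), and uses the identity $\Omega_{V,h,\xi}(\cL^\Gamma_{V,\xi}(x))=\ell_{h-1}\cdots\ell_0\cdot x$ modulo torsion. Writing $\cL^\Gamma_{V,\xi}(x)=(\gamma-\eta(\gamma))\,g$, so that $\cL^\Gamma_{V,\xi}(x)'(\eta)=\eta(\gamma)\log\chi(\gamma)\,g(\eta)$, one obtains
\[
\Omega_{V,h,\xi}(g)=A_{h,\eta}\cdot x \quad\text{(mod torsion)},\qquad A_{h,\eta}=\frac{\ell_{h-1}\cdots\ell_0}{\gamma-\eta(\gamma)}\in\cH(\Gamma).
\]
The point is that $A_{h,\eta}$ does \emph{not} vanish at $\eta$ (the simple zero of the numerator at $\eta$ is cancelled by the denominator), so evaluating at $\eta$ requires only the \emph{leading-order} formula of Theorem~\ref{thm:expformula} applied to $z=g$. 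One computes $A_{h,\eta}(\eta)=(-1)^{h-j-1}(h-j-1)!\,j!\,/\,(\eta(\gamma)\log\chi(\gamma))$, and comparison with $\operatorname{pr}_\eta(\Omega_{V,h,\xi}(g))$ from Theorem~\ref{thm:expformula} immediately yields the claimed formulae. The torsion analysis then proceeds as in your last paragraph (identifying $H^1_{\Iw}(\Qpi,V)_{\mathrm{tors}}\cong H^0(\Qpi,V)$ and its image in $H^1(\Qp,V(-j))$ with $H^1(\Gamma,H^0(\Qpi,V)(-j))$), and your final equivalence is correct. In short: the paper divides out the vanishing factor \emph{before} applying $\Omega$, so no second-order information is needed; your route would recover the same conclusion only after establishing that second-order information, which you have not done.
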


  \begin{proof}
   Since $\cL_{V}(x)(\eta) = 0$, we may write
   \begin{equation}
    \label{eq:kappa}
    \cL_{V}(x) = (\gamma - \eta(\gamma)) g
   \end{equation}
   for some $g \in \cH(\Gamma) \otimes \Dcris(V)$. Using equation \eqref{derivation}, one checks that this implies
   \begin{equation}
    \label{eq:deriv1}
    \cL_{V}(x)'(\eta) = g(\eta) \eta(\gamma) \log \chi(\gamma).
   \end{equation}

   We shall now find a formula for $\exp_{\Qp, V(\eta^{-1})}(g(\eta))$ (respectively $\widetilde\exp(g(\eta))$ if $n = 0$); comparing this with \eqref{eq:deriv1} will then give the proposition. We choose an integer $h > j$ such that all Hodge--Tate weights of $V$ lie in $[0, h]$, so the Perrin-Riou exponential map $\Omega_{V, h}$ is well-defined. By enlarging $h$ if necessary, we may also assume that $\Dcris(V)^{\vp = p^{-h}} = 0$.

   Applying $\Omega_{V, h}$ to both sides of equation \eqref{eq:kappa} and using equation \eqref{eq:regexp}, we obtain
   \begin{equation}
    \label{eq:omegaxg}
    \Omega_{V, h}(g) = A_{h, \eta}(x) \text{ modulo torsion,}
   \end{equation}
   where the element
   \[ A_{h, \eta} = \frac{\ell_{h-1} \circ \dots\circ \ell_0}{\gamma - \eta(\gamma)} \in \cH(\Gamma),\]
   since $0 \le j < h$. Crucially, $A_{h, \eta}$ does not vanish at $\eta$, although it vanishes at every other locally algebraic character of degree $\le h-1$.

   We now apply Theorem \ref{thm:expformula} to the element $z = g$, which is valid since $h - j \ge 1$. This tells us that the image of $\Omega_{V, h}(g)$ in $H^1(\Qp, V(\eta^{-1}))$ is given, modulo the image of the torsion in $H^1_{\Iw}(\Qpi, V)$, by
   \[ \operatorname{pr}_\eta\left(\Omega_{V, h}(g)\right) = (-1)^{h - j - 1}(h-j-1)! e_\eta(g(\eta)),\]
   where we write $e_\eta(v)$ as a shorthand for
   \[ \begin{cases}
                              \exp_{\Qp, V(\eta^{-1})}\left[\tau(\eta_0, \xi) \cdot p^{-n} \vp^{-n}\left(v \otimes t^j e_{-j}\right)\right] & \text{if $n \ge 1$,}\\
                              -\widetilde\exp_{\Qp, V(\eta^{-1})} \left[(1 - p^{-1} \vp^{-1}) \left(v \otimes t^j e_{-j}\right)\right] & \text{if $n = 0$.}
                             \end{cases}
   \]
   Plugging in equation \eqref{eq:omegaxg}, this becomes
   \[ \operatorname{pr}_\eta\left(A_{h, \eta}\cdot x \right) = (-1)^{h - j - 1}(h - j - 1)! e_\eta(g(\eta)) \pmod{H^1_{\Iw}(\Qpi, V)_{tors}}\]
   The left-hand side is easy to deal with: it is simply $A_{h, \eta}(\eta) x_\eta$, where $A_{h, \eta}(\eta)$ is a non-zero constant (which we shall evaluate shortly), and $x_\eta$ is the image of $x$ in $H^1(\Qp, V(\eta^{-1}))$ as before. Thus we have
   \[ \frac{A_{h, \eta}(\eta)}{(-1)^{h - j - 1}(h-j-1)!} x_\eta = e_\eta(g(\eta)) \pmod{H^1_{\Iw}(\Qpi, V)_{tors}}.
   \]

   Let us now evaluate the ``fudge factor'' $A_{h, \eta}(\eta)$. We have $\ell_r(\eta) = j - r$ for $r \ne j$, while for $j = r$, we obtain
   \[
    \left(\frac{\ell_j}{\gamma-\eta(\gamma)}\right)(\eta)= \lim_{s \to 0} \frac{\dfrac{\log (\eta(\gamma) \tilde{\chi}(\gamma)^s)} {\log \chi(\gamma)} - j} {\eta(\gamma) \tilde{\chi}(\gamma)^s - \eta(\gamma)}.\]
   The denominator is easily seen to be $s \eta(\gamma) \log \chi(\gamma) + O(s^2)$, while the numerator is simply $s$.

   Hence
   \[ A_{h, \eta}(\eta) = \frac{1}{\eta(\gamma) \log \chi(\gamma)} \prod_{\substack{r = 0 \\ r \ne j}}^{h-1}(j - r) = \frac{(-1)^{h-j-1}(h-j-1)!j!}{\eta(\gamma) \log \chi(\gamma)}.\]

   Putting all the pieces together, we have shown that
   \[
    \frac{j!}{\eta(\gamma) \log \chi(\gamma)} x_\eta = e_\eta(g(\eta)) = \frac{1}{\eta(\gamma) \log \chi(\gamma)} e_\eta\left(\cL^\Gamma_{V, \xi}(x)'(\eta)\right)\pmod{H^1_{\Iw}(\Qpi, V)_{tors}},
   \]
   so
   \[
    x_\eta = \frac{1}{j!} e_\eta\left(\cL^\Gamma_{V, \xi}(x)'(\eta)\right) = \frac{1}{j!}\begin{cases}
                              \exp_{\Qp, V(\eta^{-1})}\left[\tau(\eta_0, \xi) \cdot p^{-n} \vp^{-n}\left(v \otimes t^j e_{-j}\right)\right] & \text{if $n \ge 1$,}\\
                              -\widetilde\exp_{\Qp, V(\eta^{-1})} \left[(1 - p^{-1} \vp^{-1}) \left(v \otimes t^j e_{-j}\right)\right] & \text{if $n = 0$,}
                             \end{cases}\]
   again modulo the image of $H^1_{\Iw}(\Qpi, V)_{tors}$ in $H^1(\Qp, V(\eta^{-1}))$.

   We now analyse the torsion term. We know that $H^1_{\Iw}(\Qpi, V)_{tors}$ is isomorphic as a $\Gamma$-module to $H^0(\Qpi, V)$, and in particular it is crystalline as a $G_{\Qp}$-representation, and thus contains no no non-crystalline characters in its support. Thus its image in $H^1(\Qp, V(\eta^{-1}))$ is zero if $n \ge 1$. If $\eta = \chi^j$, then the image of $H^1_{\Iw}(\Qpi, V)_{tors}$ in $H^1(\Qp, V(-j))$ is precisely $H^1(\Gamma, H^0(\Qpi, V)(-j))$, and in particular its dimension is the same as that of $H^0(\Gamma, H^0(\Qpi, V)(-j)) = H^0(\Qp, V(-j))$. This completes the proof.
  \end{proof}

  \section{Proof of Theorem \ref{thm:sigmas}}
  \label{appendix:diagrams}

  In this appendix, we prove Theorem \ref{thm:sigmas}. We start with some remarks on signs.

  \begin{remark}
   \label{app-signs}
   Recall that by \cite[\S 4.3]{deligne87} to any exact sequences $C: 0 \to X_1 \to X \to X_2 \to 0$ of $L$-vector spaces there is attached a canonical isomorphism
   \[
    \Det(C):\Det(X)\cong\Det(X_1)\cdot \Det(X_2),
   \]
   which is compatible with the commutativity in $\underline{\Det}(L)$ in the following sense. If $X=X_1\oplus X_2$ and if $C_1: 0 \to X_1 \to X \to X_2 \to 0$ and $C_2: 0 \to X_2 \to X \to X_1 \to 0$ are the natural exact sequences, then we have a commutative triangle
   \[
    \xymatrix{
       & {\Det(X)}  \ar[dr]^{\Det(C_2)}  \ar[dl]_{\Det(C_1)}  \\
      {\Det(X_1)\Det(X_2)} \ar[rr]^{\psi_{\Det(X_1)\Det(X_2)}} & &     {\Det(X_2)\Det(X_1)}
     }
   \]
   where
   \[\psi_{\Det(X_1),\Det(X_2)}:\Det(X_1)\Det(X_2)\to\Det(X_2)\Det(X_1) \]
   denotes the commutativity constraint.\footnote{
    Recall that by \cite[Example in \S 4.1]{deligne87} over a field $L$ we can take the category of (graded) line bundles, i.e., one dimensional vector spaces (plus a dimension parameter), for the Picard category $\underline{\Det}(L)$ in which the determinant functor $\Det_L$ takes its values. Then the commutativity constraint is given as
    \[\psi_{\Det_L(V),\Det_L(W)}:\Det_L(V)\Det_L(W)\to\Det_L(W)\Det_L(V), \;\;\;\nu\omega\mapsto (-1)^{\dim_L(V)\dim_L(W)}\omega\nu. \]
    Moreover, by the natural isomorphism \[\Det_L(V)\Det_L(V^*)\cong\Det_L(V)\Det_L(V)^*\cong\Det_L(0)\] we may identify the inverse $\Det_L(V)^{-1}$ of $\Det_L(V)$ with $\Det_L(V^*)$. But note that it differs from the identification using the natural isomorphism \[\Det_L(V^*)\Det_L(V)\cong\Det_L(V)^*\Det_L(V)\cong\Det_L(0)\]
    by the sign $(-1)^{\dim(V)}\id_{\Det_L(0)}$.
   }
   Hence, usually these commutativity constraints do not give rise to any sign ambiguities -- and we often suppress them from the notation -- except in the case that $X_1=X_2$ or if inverses $\Det(X)^{-1}$ are involved, for the latter see e.g.\ (4.1.1) and 4.11 (b) in (loc.\ cit.).
   In particular, for every $L$-vector space $X$, the symmetry automorphism   $\Det(S_X):\Det(X\oplus X)\cong \Det(X\oplus X)$ corresponds to $\Det_L(-1|X)=(-1)^{\dim_L(X)}$ under the isomorphism $\mathrm{Aut}_{\underline{\Det(L)}}(\Det_L(X\oplus X))\cong\mathrm{Aut}_{\underline{\Det}(L)}(\Det_L(0))\cong\mathrm{Aut}_{\underline{\Det}(L)}(\Det_L(X))$, see \S 4.9 of (loc.\ cit.), and one immediately checks the commutativity of the following diagram
   \begin{diagram}
    \Det(X \oplus X) & \rTo^{\Det(C_1)} & \Det(X) \cdot \Det(X) \\
    \dTo^{\Det(S_X)} & \rdTo^{\Det(C_2)} & \dTo_{\psi_{\Det(X), \Det(X)}}\\
    \Det(X \oplus X) & \rTo^{\Det(C_1)} & \Det(X) \cdot \Det(X)
   \end{diagram}
   where $C_i$ correspond to the above short exact sequences for $X_1=X_2=X$.

   Upon replacing $\dim_L(V)$ by the Euler-Poincar\'e characteristic $\chi(C) \coloneqq \sum_i (-1)^i\dim_L(C^i)$ these remarks extend immediately to (perfect) complexes $C$ of $L$-vector spaces.
  \end{remark}

  Let $V$ be a crystalline $L$-linear representation of $G_{\Qp}$. Consider the four filtered $L$-vector spaces $\mathcal{D}_i=(D_i, F_i)$, with $D_i = \Dcris(V)$ for each $i$, and the subspaces $F_i$ defined by
  \begin{align*}
   F_3 &= \Fil^0 \Dcris(V)\\
   F_1 &= h^{-1}(F_3)\\
   F_4 &= g(F_3)\\
   F_2 &= g(F_1) = h^{-1}(F_4)
  \end{align*}
  where $g=1-\vp$ and $h=1-p^{-1}\vp^{-1}$. We obtain a commutative square of filtered $L$-vector spaces \begin{equation}
   \label{square}
   \begin{diagram}
    \cD_1 & \rTo^g & \cD_2 \\
    \dTo^h & & \dTo^h \\
    \cD_3 & \rTo^g & \cD_4
   \end{diagram}
  \end{equation}

  \begin{lemma}
   \label{lemma-squares}
   \mbox{~}
   \begin{enumerate}
    \item Let
    \[
     \xymatrix{
      A' \ar[d]_{ h_A} \ar[r]^{g' } & B' \ar[d]^{h_B } \\
      A \ar[r]^{g } & B
     }
    \]
    be a commutative square of $K$-vector spaces considered also as complexes (concentrated in degree $0$). Then this can be extended to a ``$3\times 3$''-diagram in the derived category of $K$-vector spaces
    \[
     \xymatrix{
      A' \ar[d]_{ h_A} \ar[r]^{g' } & B' \ar[d]^{h_B } \ar[r]^{u } & C(g') \ar[d]_{ H} \ar[r]^{v } & TA' \ar[d]^{Th_A } \\
      A \ar[r]^{g }\ar[d]_{\nu } & B   \ar[d]_{ } \ar[r]^{ } & C(g) \ar@{.>}[d]_{ } \ar[r]^{ } & TA \ar[d]^{T\nu } \\
      C(h_A) \ar[d]_{\omega } \ar[r]^{G } & C(h_B) \ar[d]_{ } \ar[r]^{ } & C(G) \ar@{.>}[d]_{ } \ar[r]^{ }\ar @{} [dr] |{-} & TC(h_A) \ar[d]^{T\omega } \\
       TA'   \ar[r]^{Tg' } & TB'   \ar[r]^{ Tu} & TC(g')   \ar[r]^{Tv } & T^2A'
     },
    \]
    such that the following diagram of determinants commutes (with the obvious commutativity and associativity constraints which we have suppressed)
    \[
     \xymatrix{
      [B] \ar[d]_{ } \ar[r]^{ } & [B'][C(h_B)] \ar[d]^{ } \\
      [A][C(g)] \ar[r]^{ } & [A'][C(g')][C(h_A)][C].
     }
    \]
    Here $C(f)$ denotes the mapping cone of a map $f$, $T$ is the shift by one functor and the right lower square above anti-commutes. Equivalently the following natural diagram commutes:
    \[
     \xymatrix{
      {\phantom{[C(h_B)]^{-1}}}[B']\phantom{[C(h_B)]^{-1}} \ar[d]_{ } \ar[r]^{ } & {\phantom{[C(h_B)]^{-1}}}[A'][C(g')]\phantom{([C(h_A)][C])^{-1}} \ar[d]^{ } \\
      {\phantom{[C(h_B)]^{-1}}}[B][C(h_B)]^{-1} \ar[r]^{ } & {\phantom{[C(h_B)]^{-1}}}[A][C(g)]([C(h_A)][C])^{-1}
     }
    \] Moreover, in the above diagram all continuous arrows arise naturally from the cone-construction, while the dotted arrows arise from the isomorphism of complexes between
    \[
     C(H): A' \rTo^{(h_A, -g')}A \oplus B \rTo^{\langle g, h_B\rangle} B
    \]
    starting in degree $-2$ on the left and
    \[
     C(H): A' \rTo^{(g', -h_A)} B' \oplus A \rTo^{\langle h_B, g\rangle} B
    \]
    which is given by $\id_{A'}$ in degree $-2$ and by the identity (and permutation of summands) otherwise. Alternatively, one can replace $C(G)$ by $C(H)$ and instead adjust the horizontal arrows ending or starting in it.
    \item Applying the first item to each of the squares occurring in the ``$2\times 2\times 2$''-cube \eqref{square} we obtain an (anti-)commutative ``$3\times 3\times 3$''-cube, some faces/sheets (consisting of distinguished triangles) of which are given as follows:
    \begin{subequations}
     \begin{equation}
      \label{face1}
      \xymatrix{
       F_1 \ar[d]_{g } \ar@^{^(->}[r]^{ } & D_1 \ar[d]_{g } \ar[r]^{ } & D_1/F_1 \ar[d]^{ } \\
       F_2 \ar[d]_{ } \ar@^{^(->}[r]^{ } & D_2 \ar[d]_{ } \ar[r]^{ } & D_2/F_2 \ar[d]^{ } \\
       F_{12} \ar[r]^{ } & D_{12} \ar[r]^{ } & (D/F)_{12}
      }
     \end{equation}

     \begin{equation}
      \label{face2}
      \xymatrix{
       D_1/F_1 \ar[d]_{g } \ar[r]^{ h} & D_3/F_3 \ar[d]_{ g} \ar[r]^{ } & (D/F)_{13} \ar[d]^{ } \\
       D_2/F_2 \ar[d]_{ } \ar[r]^{ h} & D_4/F_4 \ar[d]_{ } \ar[r]^{ } & (D/F)_{24} \ar[d]^{ } \\
       (D/F)_{12} \ar[r]^{ } & (D/F)_{34} \ar[r]^{ } & 0
      }
     \end{equation}

     \begin{equation}
      \label{face3}
      \xymatrix{
       F_3 \ar[d]_{ g} \ar@^{^(->}[r]^{ } & D_3 \ar[d]_{g } \ar[r]^{ } & D_3/F_3 \ar[d]^{ } \\
       F_4 \ar[d]_{ } \ar@^{^(->}[r]^{ } & D_4 \ar[d]_{ } \ar[r]^{ } & D_4/F_4 \ar[d]^{ } \\
       F_{34} \ar[r]^{ } & D_{34} \ar[r]^{ } & (D/F)_{34}
      }
     \end{equation}

     \begin{equation}
      \label{face4}
      \xymatrix{
       F_1 \ar[d]_{h } \ar[r]^{g } & F_2 \ar[d]_{h } \ar[r]^{ } &  F_{12} \ar[d]^{ } \\
       F_3 \ar[d]_{ } \ar[r]^{g } & F_4 \ar[d]_{ } \ar[r]^{ } & F_{34} \ar[d]^{ } \\
       F_{13} \ar[r]^{ } & F_{24} \ar[r]^{ } & 0
      }
     \end{equation}
    \end{subequations}
    Moreover we have canonical isomorphisms
    \begin{align*}
     F_{12} &\rTo_{\cong}^h F_{34},\\
     F_{13} &\rTo_{\cong}^g F_{24},\\
     D_{34} &\rTo_{\cong}^\id D_{12},\\
     (D/F)_{12} &\rTo_{\cong}^h (D/F)_{34}.
    \end{align*}
   \end{enumerate}
  \end{lemma}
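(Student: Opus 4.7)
The plan is to establish part~(1) by explicit manipulation of mapping cones at the level of complexes, and then to derive part~(2) by systematic application of part~(1) to each of the coordinate squares of the cube~\eqref{square}.

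For part~(1), I view the commutative square as a morphism of two-term complexes $H : [A' \xrightarrow{g'} B'] \to [A \xrightarrow{g} B]$. Its mapping cone $C(H)$ is the total complex of the associated double complex, realised (with the sign convention in the statement) as
\[ A' \xrightarrow{(h_A,\,-g')} A \oplus B' \xrightarrow{\langle g,\,h_B\rangle} B \]
in degrees $-2, -1, 0$. The two natural filtrations of this double complex (by rows and by columns) produce two short exact sequences of complexes, one expressing $C(H)$ as an extension of $C(h_A)[1]$ by $C(g')$ and the other as an extension of $C(g')[1]$ by $C(h_A)$. These are exactly the horizontal and vertical distinguished triangles forming the rows and columns of the $3 \times 3$ diagram, with the identifications $C(G) \simeq C(H)[1]$ in the third row and column. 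The dotted arrows are the explicit isomorphism of complexes between the two natural presentations of $C(H)$ (with middle term $A \oplus B'$ versus $B' \oplus A$), and the anticommutativity sign in the bottom-right square is precisely the sign arising from this transposition. The compatibility of determinants then follows because both routes around the outer square express $\Det C(H)$ as an ordered product of the determinants of the ``corners'' $A'$, $C(g')$, $C(h_A)$, $C$ in two different orderings: functoriality of $\Det$ on distinguished triangles, together with the commutativity constraint bringing $\Det C(g')$ past $\Det C(h_A)$, yields the claim.

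For part~(2), each of the four named faces arises by applying part~(1) to one of the coordinate squares of the $2 \times 2 \times 2$ cube. Faces~\eqref{face1} and~\eqref{face3} come from the squares with horizontal arrows the inclusions $F_i \hookrightarrow D_i$ and vertical arrows $g$, using the inclusions $g(F_1) \subseteq F_2$ and $g(F_3) \subseteq F_4$; the cones of the horizontal inclusions are then the quotients $D_i/F_i$. Face~\eqref{face4} is the restriction of \eqref{square} to the subspaces $F_i$, and face~\eqref{face2} its image in the quotients $D_i/F_i$; both are well-defined thanks to the defining relations $F_1 = h^{-1}(F_3)$, $F_4 = g(F_3)$, $F_2 = g(F_1) = h^{-1}(F_4)$. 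The canonical isomorphisms $F_{12} \xrightarrow{h} F_{34}$, $F_{13} \xrightarrow{g} F_{24}$, $D_{34} \xrightarrow{\id} D_{12}$, and $(D/F)_{12} \xrightarrow{h} (D/F)_{34}$ then follow from a direct computation of the cohomology of the relevant cones using these defining relations together with the commutativity of $g$ and $h$ on $\Dcris(V)$; for instance, $F_{12}$ and $F_{34}$ are quasi-isomorphic to $\ker(g|_{F_1})$ and $\ker(g|_{F_3})$ respectively (since $g : F_1 \twoheadrightarrow F_2$ and $g : F_3 \twoheadrightarrow F_4$ by construction), and $h$ induces a bijection between these kernels because the filtrations are defined precisely so as to make this so.

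The main obstacle will be the careful bookkeeping of signs and commutativity constraints in the determinant category, in particular matching the anticommutativity marker on the bottom-right square of the $3 \times 3$ diagram with the sign conventions chosen when setting up the cone construction, and checking that the sign ambiguity noted in Remark~\ref{app-signs} does not produce an unwanted extra factor. Once the explicit description of $C(H)$ and its two filtrations is in hand, everything else is a matter of chasing the resulting identifications and invoking the functoriality of $\Det$ on distinguished triangles.
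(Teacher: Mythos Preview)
Your proposal is correct and follows essentially the same approach as the paper. The paper's own proof is little more than a sketch: it cites \cite[Lem.~3.9]{breuning11} and \cite[Cor.~1.10]{knudsen02} for the general triangulated-category statement (proved there via the octahedral axiom), remarks that in the present elementary setting one can instead verify everything explicitly, and notes that the vanishing in (ii) follows from the particular choice of the $F_i$. Your argument is precisely that explicit verification---viewing the square as a map of two-term complexes, realising $C(H)$ as the total complex, and extracting the two distinguished triangles from the row and column filtrations---so you have filled in the details the paper leaves to the reader, and your treatment of the sign/commutativity bookkeeping is the right place to focus care.
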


  \begin{proof}
   A similar statement as in (i) can be found in \cite[lem.\ 3.9]{breuning11} (see also \cite[cor. 1.10]{knudsen02}) for general triangular categories, but with mapping cones possibly replaced by quasi-isomorphic complexes. This is usually proved using the octahedral axiom. In our specific simple setting one can alternatively verify both statements explicitly. The zeroes in (ii) are the consequence of the specific choice of the $F_i$'s.
  \end{proof}

  \begin{proposition}
   Using canonical isomorphisms induced by the above cube we obtain the following commutative diagram
   \[ \xymatrix{
     \u \ar[d]_{ } \ar[r]^{ } & [F_{34}] [F_1][(D/F)_{13}]^{-1} [F_4]^{-1} \ar[d]^{ } \\
     \u   \ar[r]^{ } & [F_{12}] [F_1][(D/F)_{24}]^{-1} [F_4]^{-1}  }
   \]
   where the the isomorphism in the first line arises by using the first lines of the above faces \eqref{face1}, \eqref{face2}, \eqref{face3} and the second line of \eqref{face4} while the isomorphism in the second line arises by using the second lines of the above faces \eqref{face1}, \eqref{face2}, \eqref{face3} and the first line of \eqref{face4}.
  \end{proposition}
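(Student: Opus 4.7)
The plan is to regard both isomorphisms $\u \rTo [F_{34}] [F_1][(D/F)_{13}]^{-1} [F_4]^{-1}$ and $\u \rTo [F_{12}] [F_1][(D/F)_{24}]^{-1} [F_4]^{-1}$ as two ``diagonal traversals'' of the $3\times 3\times 3$ cube of Lemma \ref{lemma-squares}(ii), identified at their endpoints via the canonical isomorphisms $h: F_{12} \rTo^\cong F_{34}$ and $h: (D/F)_{12} \rTo^\cong (D/F)_{34}$ (together with $g: F_{13} \rTo^\cong F_{24}$ applied within the third column of face \eqref{face4}). The key point is that the cube is built by applying the $3\times 3$ construction of Lemma \ref{lemma-squares}(i) in two compatible ways: once ``rows-first'' (producing the first lines of the faces \eqref{face1}--\eqref{face3} and the second line of \eqref{face4}), and once ``columns-first'' (the second lines of \eqref{face1}--\eqref{face3} and the first line of \eqref{face4}). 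The commutative square of determinants in part (i) of Lemma \ref{lemma-squares} is precisely the compatibility between these two orders for a single $3\times 3$ diagram.

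First I would unpack each of the two isomorphisms as an explicit product of $\Det(C)$'s attached to the short exact sequences read off the relevant three faces, using that the third columns of \eqref{face4} resp. the third rows of \eqref{face2} are zero, together with $D_{12}\cong D_{34}$, so that the $[D_i]$-terms cancel and only the indicated $[F_i]$ and $[(D/F)_{ij}]$ terms remain. Next I would apply the compatibility from Lemma \ref{lemma-squares}(i) face by face: applied to face \eqref{face1} it converts ``rows of \eqref{face1}'' into ``columns of \eqref{face1}'' up to the cube terms $F_{12}$ and $(D/F)_{12}$; similarly for faces \eqref{face2}, \eqref{face3}, \eqref{face4}. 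Composing these four face-compatibilities, the intermediate objects telescope, leaving exactly the required identification between the two paths.

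The main obstacle will be controlling the commutativity constraints (signs) alluded to in Remark \ref{app-signs}: each application of Lemma \ref{lemma-squares}(i) introduces a sign $(-1)^{\chi(C(g'))\chi(C(h_A))}$ (or similar) coming from the anti-commutative corner square in the $3\times 3$ diagram, and the two traversals pass through these corners in different orders. Here I would exploit the crucial fact recorded in Lemma \ref{lemma-squares}(ii) that $F_{13}\cong F_{24}$ and $(D/F)_{34}\cong 0$, which force several of the Euler characteristics entering these signs to be equal modulo $2$, and hence the accumulated signs from the two traversals coincide. Keeping track of the identifications $\Det(X)^{-1}\cong\Det(X^*)$ versus $\Det(X^*)$ (which differ by $(-1)^{\dim X}$, as noted in Remark \ref{app-signs}) is a secondary bookkeeping issue that reduces to the same parity check.

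Finally, having verified the sign analysis, the commutativity of the stated square is a purely formal consequence of iterated application of the face compatibilities. The verification is essentially combinatorial diagram-chasing, which is why we relegate the formal write-up to this appendix rather than including it in the main text; the result itself feeds into Theorem \ref{thm:sigmas} by allowing the two isomorphisms defining $\theta_L(V)$ (via \eqref{iso1}--\eqref{iso4} on the one hand, and via $\eta(\Qp,V)$, $\eta(\Qp,V^*(1))^*$, $\Psi_f$ on the other) to be compared modulo the explicit sign $(-1)^{\dim_L V}$.
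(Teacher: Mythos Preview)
Your approach is essentially the same as the paper's: both prove the proposition by applying the determinant-compatibility square of Lemma \ref{lemma-squares}(i) to each of the four faces \eqref{face1}--\eqref{face4} and then assembling. The paper's execution is, however, more direct than what you outline. It simply writes down the four commutative squares of determinants furnished by Lemma \ref{lemma-squares}(i) (one for each face), multiplies them together, and cancels. After cancelling the $[D_i]$'s against each other via the identity, what remains is the desired square together with an ``error term'' $[D_{12}]^{-1}[D_{34}]\,[F_{34}]^{-1}[F_{12}]\,[F_{13}]^{-1}[F_{24}]$, which is canonically trivial by the isomorphisms $D_{12}\cong D_{34}$, $F_{12}\cong F_{34}$, $F_{13}\cong F_{24}$ recorded at the end of Lemma \ref{lemma-squares}(ii).

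Your main departure is the elaborate sign analysis, and here you are making your life harder than necessary. The point of Lemma \ref{lemma-squares}(i) is precisely that the square of \emph{determinants} already commutes (with the commutativity and associativity constraints absorbed); the anti-commutativity lives only in the underlying $3\times 3$ diagram in the derived category, not in the induced determinant square. So when you multiply four genuinely commutative squares of determinants, you get a commutative square---no parity bookkeeping is required at this stage. The sign $(-1)^{\dim_L V}$ that you anticipate does appear, but only later, in the proof of the subsequent Corollary, where one compares the first line of the resulting diagram with $\theta_L(V)$ and the factors $[-\id_{t(V)}]$, $[-\id_{t(V^*(1))^*}]$ arise from the symmetry automorphisms discussed in Remark \ref{app-signs}. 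For the present Proposition those considerations are not needed.
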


  \begin{proof}
   Applying Lemma \ref{lemma-squares} to each of the four involved faces gives the following commutative diagrams:
   \[\xymatrix{
     [D_1] \ar[d]_{ } \ar[r]^{ } & [F_1][D_1/F_1] \ar[d]^{ } \\
     [D_2]([D_{12}])^{-1} \ar[r]^{ } & [F_2][D_2/F_2]([F_{12}][(D/F)_{12}])^{-1}   }
   \]
   \[ \xymatrix{
     [D_3/F_3]^{-1} \ar[d]_{ } \ar[r]^{ } & [(D/F)_{13}]^{-1}[D_1/F_1]^{-1} \ar[d]^{ } \\
     [D_4/F_4]^{-1}([(D/F)_{34}]) \ar[r]^{ } & [(D/F)_{24}]^{-1}[D_2/F_2]^{-1} ([(D/F)_{12}])   }
   \]
   \[\xymatrix{
     [F_3][D_3/F_3] \ar[d]_{ } \ar[r]^{ }& [D_3] \ar[d]^{ } \\
    [F_4][D_4/F_4]([F_{34}][(D/F)_{34}])^{-1} \ar[r]^{ } &[D_4]([D_{34}])^{-1}    }
   \]
   \[\xymatrix{
     [F_3]^{-1}[F_{34}]^{-1} \ar[d]_{ } \ar[r]^{ }& [F_4]^{-1} \ar[d]^{ } \\
     [F_1]^{-1}[F_{12}]^{-1}([F_{13}] )^{-1} \ar[r]^{ } & [F_2]^{-1}([F_{24}])^{-1}
    }
   \]

   Multiplying all the diagrams up and cancelling corresponding objects leads in the first line to the isomorphism
   \[[D_1][F_{34}]^{-1}\cong [F_1][(D/F)_{13}]^{-1}[D_3][F_4]^{-1}\]
   while in the second line we obtain
   \[[D_2][F_4][F_1]^{-1}[F_{12}]^{-1}\cong [(D/F)_{24}]^{-1}[D_4]\]
   times the error terms in parentheses
   \[[D_{12}]^{-1}[F_{34}]^{-1}[F_{13}]^{-1}\cong [F_{12}]^{-1}[D_{34}]^{-1}[F_{24}]^{-1}.\]
   Multiplication with the inverse of the left vertical map and cancellation of the $[D_i]=[\Dcris(V)]$ among each other using the identity map leads to the commutative diagram
   \[
    \xymatrix{
    {\u} \ar[d]_{ } \ar[r]^{ } & [F_{34}] [F_1][(D/F)_{13}]^{-1} [F_4]^{-1} \phantom{([D_{12}]^{-1}[D_{34}][F_{34}]^{-1}[F_{12}][F_{13}]^{-1}[F_{24}])}\ar@<-3cm>[d]^{ } \\
    {\u}   \ar[r]^{ } & [F_{12}] [F_1][(D/F)_{24}]^{-1} [F_4]^{-1} \left([D_{12}]^{-1}[D_{34}][F_{34}]^{-1}[F_{12}][F_{13}]^{-1}[F_{24}]\right)^{-1} }
   \]
   Using the canonical identities
   \[[D_{12}]^{-1}[D_{34}]=[F_{34}]^{-1}[F_{12}]=[F_{13}]^{-1}[F_{24}]\cong \u\] we see that this error term is canonically isomorphic to the unit object $\u$, whence the claim.
  \end{proof}

  \begin{corollary}
   The isomorphism $\u \cong [R\Gamma(\Qp,V)][\Dcris(V)]$ given by isomorphisms \eqref{iso1}--\eqref{iso4} in \S \ref{sect:specialcases} above coincides with
   \[  (-1)^{\dim V} \theta(V).\]
  \end{corollary}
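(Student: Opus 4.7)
The plan is to express both sides of the identity in the shape of the commutative diagram produced in the preceding proposition, and then carefully account for the overall sign.

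First I would unpack $\theta_L(V)$ at the level of $\Dcris(V)$. Using the quasi-isomorphism $C_f(\Qp,V) \simeq [\Dcris(V) \xrightarrow{(1-\vp,\,1)} \Dcris(V) \oplus t(V)]$ coming from the fundamental exact sequence \eqref{eq:fundseq}, together with the analogous description of $C_f(\Qp,V^*(1))$ and Tate duality, I would rewrite the product $\eta(\Qp,V) \cdot \{\eta(\Qp,V^*(1))^*\}^{-1} \cdot \{\Det_L\Psi_f(\Qp,V^*(1))^*\}^{-1}$ defining $\theta_L(V)$ as an isomorphism fitting into the top row of the preceding proposition's commutative diagram, under the identifications $F_3 = \Fil^0\Dcris(V)$, $F_4 = (1-\vp)\Fil^0\Dcris(V)$, $D_1 = \Dcris(V)$, and so on. The point is that the distinguished triangles \eqref{face1}--\eqref{face4} are exactly those obtained by applying $g = 1-\vp$ and its transpose $h = 1-p^{-1}\vp^{-1}$ to \eqref{eq:fundseq} for $V$ and $V^*(1)$, and Tate duality at the $\Dcris$ level is encoded in the $F_{ij}$, $(D/F)_{ij}$ identifications listed at the end of Lemma \ref{lemma-squares}.

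Next I would show that the composition of the determinants of \eqref{iso1}--\eqref{iso4} matches the bottom row of the same diagram. The four isomorphisms identify the successive graded pieces of the filtration $H^1_b \subset H^1_f \subset H^1_a \subset H^1(\Qp,V)$ with, respectively, the subquotient $(1-\vp)(1-p^{-1}\vp^{-1})^{-1}\Fil^0 \Dcris(V)$, the group $H^0(\Qp,V)$, the group $H^2(\Qp,V)$, and the quotient $\Dcris(V)/(1-p^{-1}\vp^{-1})^{-1}(1-\vp)\Fil^0$. Under the canonical maps provided by \eqref{face1}--\eqref{face4}, these pieces correspond to $F_{12}$, the $H^0$/$H^2$ contributions to $\Det R\Gamma(\Qp,V)$, and $(D/F)_{24}$; after multiplying in the remaining $[F_1]$ and $[F_4]^{-1}$ factors coming from the filtration square, the composition gives the bottom-row isomorphism. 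The preceding proposition then forces $\theta_L(V)$ and the composition of \eqref{iso1}--\eqref{iso4} to agree up to signs arising from the various identifications.

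The hard part will be the sign bookkeeping, and this is where I would spend most of my effort. There are two distinct sources of signs. First, the maps $\widetilde\exp^*$ and $\widetilde\exp$ differ from the usual ones by signs: $\exp^* = -(1-p^{-1}\vp^{-1})\widetilde\exp^*$ and $\widetilde\exp\circ(1-\vp) = -\exp$, see equations \eqref{eq:signexptilde}--\eqref{eq:signexptildestar}. Second, identifying the two rows of the diagram requires several applications of the commutativity constraint of the Picard category $\underline{\Det}(L)$, each contributing a factor $(-1)^{ab}$ when swapping determinant factors of dimensions $a$ and $b$, as recalled in Remark \ref{app-signs}. Using Tate's local Euler-characteristic identity $\dim H^1(\Qp,V) = \dim_L V + \dim H^0(\Qp,V) + \dim H^2(\Qp,V)$ to sum up the contributions, I would expect the net discrepancy to work out to exactly $(-1)^{\dim_L V}$, establishing the corollary. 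A useful sanity check along the way would be the one-dimensional case $V = L$ or $V = L(1)$, where every object in sight can be written down explicitly and the sign verified by hand.
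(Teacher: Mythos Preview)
Your overall plan matches the paper's: both rows of the preceding proposition's diagram are identified with, respectively, $\theta_L(V)$ (up to sign) and the composition of \eqref{iso1}--\eqref{iso4}, and commutativity of the square forces agreement up to a scalar. The identifications you sketch for the bottom row via the filtration $H^1_b \subset H^1_f \subset H^1_a \subset H^1(\Qp,V)$ and the triangles in faces \eqref{face1}--\eqref{face4} are exactly what the paper does.

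However, your sign analysis is too loose, and this is where the real content lies. The relations \eqref{eq:signexptilde}--\eqref{eq:signexptildestar} are already absorbed into the signs appearing in \eqref{iso1}--\eqref{iso4}, so they do not contribute a residual factor when you match the bottom row; and Tate's Euler-characteristic formula is a red herring here --- the factor $(-1)^{\dim V}$ does not arise from dimension-counting on the $H^i(\Qp,V)$. In the paper the sign appears entirely on the $\theta_L(V)$ side. When one unwinds $\eta(\Qp,V)$ using the complex $[\Dcris(V)\xrightarrow{(1-\vp,\,1)} \Dcris(V)\oplus t(V)]$ and compares it with the top-row trivialization (which instead uses the triangles $F_3\to D_3\to D_3/F_3$, $F_3\to F_4\to F_{34}$, $F_4\to D_4\to D_4/F_4$), one is forced through the commutativity constraint $\psi_{[t(V)],[t(V)]}$ on the repeated factor $[t(V)]$; by Remark~\ref{app-signs} this contributes $[-\id_{t(V)}]$. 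The dual manipulation for $\eta(\Qp,V^*(1))^*$ likewise contributes $[-\id_{t(V^*(1))^*}]$. Their product is $[-\id_{\Dcris(V)}]=(-1)^{\dim_L V}$ by the exact sequence \eqref{tangent}. This is the mechanism you need to make explicit; without it, the sign is not actually pinned down by your argument.
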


  \begin{proof}
   Using the canonical isomorphism $[F_4]\cong[D_4][D_4/F_4]^{-1}$ the Proposition induces also a
   commutative diagram
   \begin{equation}
    \label{comparison-diagram}
    \xymatrix{
     {\u} \ar[d]_{ } \ar[r]^{ } & [F_{34}] [F_1][(D/F)_{13}]^{-1} [D_4]^{-1}[D_4/F_4] \ar[d]^{ } \\
     {\u}   \ar[r]^{ } & [F_{12}] [F_1][(D/F)_{24}]^{-1} [D_4]^{-1}[D_4/F_4]
    }
   \end{equation}
   Now we claim that the (inverse of the) upper line defines $\theta(V)$ times $[-\id_{\Dcris(V)}]$ while the (inverse of the) lower one corresponds to the isomorphism in the Conjecture. Indeed, first note that we have natural isomorphisms
   \[ TF_{12}\rTo^{h}_{\cong} TF_{34} \cong \H^0(\Qp,V),\]
   \[ (D/F)_{13 }\rTo^{g}_{\cong} (D/F)_{24 }\cong\H^2(\Qp,V),\]
   \[-\widetilde{\exp}^*_{\Qp,V^*(1)}:\H^1(\Qp,V)/\H^1_f(\Qp,V)\cong F_1\]
   (the sign has the same origin as that in Proposition \ref{prop:noncrystallinedet})
   and
   \[\widetilde{\log}_V:\H^1_f(\Qp,V)\cong D_4/F_4.\]
   Secondly, up to the identification $\widetilde{\exp}^*_{\Qp,V^*(1)}$, the exact sequence
   \[ \Sigma(\Qp, V):\quad 0 \rTo H^0(\Qp, V) \rTo \frac{H^1(\Qp, V)}{H^1_f(\Qp, V)} \rTo^{-(1-\vp)\circ\widetilde\exp^*} \Dcris(V) \rTo \frac{\Dcris(V)}{s(V)} \rTo 0\]
   (where $s(V) = (1- \vp)(1 - p^{-1} \vp^{-1})^{-1} \Fil^0 \Dcris(V)$) corresponds to the combination of the triangles
   \[F_1\rTo^g F_2 \rTo F_{12}\]
   and
   \[F_2 \hookrightarrow D_2 \to D_2/F_2\]
   using that $F_2=s(V)$. Similarly, the exact sequence
   \[ \Sigma(\Qp, V^*(1))^*:\quad 0 \rTo s(V) \rTo \Dcris(V) \rTo^{\widetilde\exp \circ (1 - p^{-1}\vp^{-1})} H^1_f(\Qp, V) \rTo H^2(\Qp, V) \rTo 0,\]
   corresponds to
   \[D_2/F_2\rTo^h D_4/F_4 \rTo (D/F)_{24}.\]
   Altogether we just obtain the identifications used for the second line. Concerning the first line consider the following commutative diagram
   \begin{diagram}
    0 & \rTo & \H^0(\Qp,V) & \rTo & F_3       & \rTo^{1-\vp}        & D_4                  & \rTo^{\widetilde{\exp}} & \H^1_f(\Qp,V) & \rTo & 0 \\
      &      &  \dTo^{\id} &      & \dTo      &                     & \dTo^{(\id,0)}                 &                         & \dTo^\id \\
    0 & \rTo & \H^0(\Qp,V) & \rTo & \Dcris(V) & \rTo^{(1-\vp, \bar{\id})} & \Dcris(V)\oplus t(V) & \rTo^{\widetilde{\exp} \oplus \exp} & \H^1_f(\Qp,V) & \rTo & 0 \\
      &      &             &      & \dTo      &                     & \dTo^{<0,\id>} \\
      &      &             &      & t(V)      & \rTo^\id            & t(V) \\
   \end{diagram}

   or in the derived category
   \begin{diagram}
    C_3:  & &  C_4: \\
    C_1:\phantom{mm}    F_3 \phantom{mmm}  & \rTo^{1-\vp} & D_4 & \rTo & C(1-\vp) & \rTo \\
    \dTo & & \dTo^{(\id,0)} & & \dTo^{\widetilde{\exp}} \\
    C_2:   \phantom{m} \Dcris(V)\phantom{m} & \rTo^{(1-\vp, \bar{\id})} & \Dcris(V)\oplus t(V) & \rTo & C\Big( (1 - \vp, \bar{\id}) \Big) & \rTo \\
    \dTo & & \dTo^{<0,\id>} & & \dTo \\
    t(V)      & \rTo^\id            & t(V) & \rTo & 0 & \rTo\\
    \dTo & & \dTo & & \dTo \\
   \end{diagram}

  For simplicity we identify $C(1-\vp)\cong C\Big( (1 - \vp, \bar{\id})$. It follows again from Lemma \ref{lemma-squares} that there is a commutative diagram
   \[\xymatrix{
     [\Dcris\oplus t(V)] \ar[d]_{ [C_2]} \ar[r]^{[C_4] } & [\Dcris][t(V)]   \ar[r]^{[C_1]\id_{[t(V)]} }\ar[dl]_{\id_{[D]}\theta_1} & [F_3] [C(1-\phi)] [t(V)]\ar[d]^{\id_{[F_3]}\psi_{[C(1-\phi)], [t(V)]}} \\
     [\Dcris][C(1-\phi )] \ar[rr]^{[C_3] {\id_{[C(1-\phi )]}} } && [F_3]  [t(V)] [C(1-\phi)]    }\]
   where $\theta_1$ is defined by the commutativity of the left and right subdiagrams. Now it is easy to check that the  right half of the diagram can be described also by
   \[\xymatrix{
     [\Dcris][t(V)]\ar[drr]^{(\psi_{[F_3],[t(V)]}\circ [C_3])\theta_1} \ar[d]_{\id_{[D]}\theta_1} \ar[rr]^{\psi_{[\Dcris],[t(V)]}} && [t(V)][\Dcris]  \ar[d]^{\id_{[t(V)]}[C_1]}     \\
     [\Dcris][C(1-\phi))] \ar[r]^{[C_3]\id_{[C(1-\phi )]}} &[F_3][t(V)] [C(1-\phi)]\ar[r]^{\psi_{[F_3],[t(V)]}\id_{[C(1-\phi)]}} &  [t(V)][F_3] [C(1-\phi)]}\]
   i.e., by looking at the diagonal we see that $\id_{[t(V)][F_3]}\theta_1$ equals the composite of the upper line in the following commutative diagram
   \[\xymatrix{
     [t(V)][F_3] [t(V)]\ar[r]^{\psi_{[t(V)],[F_3]}\id_{[t(V)]}}  & [F_3][t(V)] [t(V)] \ar[d]^{\id_{[F_3]}\psi_{[t(V)],[t(V)]}} \ar[r]^{[C_3]^{-1}\id_{[t(V)]}} & [\Dcris][t(V)] \ar[r]^{\psi_{[\Dcris],[t(V)]} } & [t(V)][\Dcris] \ar[d]^{\id_{[t(V)]} [C_3]} \ar[r]^{\id_{[t(V)]}[C_1]} & [t(V)][F_3] [C(1-\phi)]\\
    & [F_3][t(V)] [t(V)]\ar[rr]^{\psi_{[F_3],[t(V)]}\id_{[t(V)]}} &  & [t(V)][F_3][t(V)]. }\]

    Since  on the other hand the following diagram commutes by Remark \ref{app-signs}
    \[ \xymatrix{
      [t(V)][F_3] [t(V)]\ar@{=}[d]\ar[r]^{\psi_{[t(V)],[F_3]}\id_{[t(V)]}}  & [F_3][t(V)] [t(V)]  \ar[r]^{\id_{[F_3]}\psi_{[t(V)],[t(V)]}} & [F_3][t(V)] [t(V)] \ar[r]^{\psi_{[F_3],[t(V)]}\id_{[t(V)]}} & [t(V)][F_3] [t(V)] \ar@{=}[d] \\
      [t(V)][F_3] [t(V)] \ar[rrr]^{[-\id_{t(V)}]}  &   &   & [t(V)][F_3] [t(V)],   }\]

    we see that $\id_{[t(V)][F_3]}\theta_1$ is just given by

    \[\xymatrix{
      [t(V)][F_3] [t(V)] \ar[r]^{\id_{[t(V)]}[C_3]^{-1}} & [t(V)][D] \ar[r]^{\id_{[t(V)]}[C_1]} & [t(V)][F_3] [C(1-\phi)],   }\]

   i.e., upon identifying $D_3$ and $D_4$ the map $\theta_1$ is induced by the triangles
   \[F_3\to D_3\to D_3/F_3\to,\]
   \[F_3\to F_4\to F_{34}\to, \]
      \[F_4\to D_4\to D_4/F_4\to \]
   and the factor $[-\id_{t(V)}]$. Dually there exists a map $\theta_2:[F_3]\to [C(1-p^{-1}\phi^{-1})^*]$  which is induced by
   \[F_3\to D_3\to D_3/F_3\to,\]
   \[D_1/F_1\to D_3/F_3\to (D/F)_{13}\to, \]
      \[F_1\to D_1\to D_1/F_1\to  \]
   and the factor $[-\id_{t(V^*(1))^*}]$, and such that $\theta(V)$ is induced by $\theta_1$ and $\theta_2$
   together with the canonical exact sequence
   \[0\to\H^1_f(\Qp,V)\to \H^1(\Qp,V) \to \H^1(\Qp,V)/\H^1_f(\Qp,V)\to 0\]
   and cancellation of the $\Dcris$'s. Altogether we see that $\theta(V)$ equally can be expressed upon cancelling the various $D_i$'s by the combination of the triangles
   \[F_1\to D_1\to D_1/F_1\to ,\]
   \[D_1/F_1\to D_3/F_3\to (D/F)_{13}\to, \]
   \[F_3\to D_3\to D_3/F_3\to,\]
   \[F_3\to F_4\to F_{34}\to, \]
   \[F_4\to D_4\to D_4/F_4\to, \]
   which altogether just define the first line of \eqref{comparison-diagram}, times $[-\id_{t(V^*(1))^*}][-\id_{t(V)}]=[-\id_{\Dcris(V)}]$. This completes the proof.
  \end{proof}

\section{Comparison with Kato's rank one epsilon-isomorphisms}
 \label{App-comparison}

 In this section we explain why the construction of the epsilon-isomorphism in \cite{venjakob11} actually turns out to be the same as that in this paper. This relies on the fact that roughly speaking both the regulator map in \cite{loefflerzerbes11} and the epsilon-isomorphism in \cite{venjakob11} arise by taking inverse limits in the unramified direction from objects defined over cyclotomic $\Zp$-extensions.

 Let $K$ be a finite, unramified extension of $\Qp$, $K_\infty \coloneqq K(\mu(p))$ and $G=G(K_\infty/\Qp)$, $H=<\tau_p>=G(K/\Qp)$, $\Gamma=G(\Qpi / \Qp)$, $\Zp(r)=\Zp t_r$, $e_r \coloneqq t^{-r}\otimes t_r\in \Dcris(\Qp(r))$, $r\geq 0$. Then we have the following commutative diagram

\[
\xymatrix{
  {\lu(K_\infty)(r-1)} \ar[d]_{D\log\; g_K} \ar[rr]^{Kummer(r-1)} && {\H^1(\Qp,\T_K(\Zp(r)))} \ar[d]^{\cong} \\
   {(\cO_K[[X]]\otimes t_r)^{\psi=1}} \ar[d]_{(1-\varphi)\otimes \id} \ar@{^(->}[r]^{ } & (X^{-r}\cO_K[[X]]\otimes t_r)^{\psi=1}\ar@{=}[r] \ar[d]^{(1-\varphi)\otimes \id} & N(\Zp(r))^{\psi=1} \ar[d]^{1-\varphi}\\
  {\cO_K[[X]]^{\psi=0}\otimes t_r} \ar[d]_{\partial^{-r}\otimes t^{-r}} \ar@{^(->}[r]^{ } & {\varphi(X)^{-r}\cO_K[[X]]^{\psi=0}\otimes t_r} \ar[d]^{t^r\otimes t^{-r}}\ar@{=}[r] & {\varphi^*(N(\Zp(r)))^{\psi=0}\ar[d]^{comp}}\\
    {\cO_K[[X]]^{\psi=0}\otimes e_r}\ar[dd]_{\mathfrak{M}^{-1}\otimes\id} \ar[r]_(0.45){=t^r \partial^r\otimes\id }^(0.45){\ell_0 \cdots \ell_{r-1}\otimes \id} & {\phantom{m}\left(\frac{t}{\varphi(X)}\right)^r\cO_K[[X]]^{\psi=0}\otimes e_r}\ar@{^(->}[r] & {\left( \BB^+_{rig,K}\right)^{\psi=0}\otimes \Dcris(\Qp(r))} \ar[d]^{ \mathfrak{M}^{-1}\otimes\id}\\
      & &{\mathcal{H}_K(\Gamma)\otimes_{\Qp} \Dcris(\Qp(r))} \ar[d]_{\mu_r^{-1}= \ell(\Qp(r))^{-1}} \\
  {\cO_K[[\Gamma]]\otimes e_r} \ar[dd]_{\Theta_r} \ar[rr]^{ } && {\mathcal{H}_K(\Gamma)\otimes_{\Qp} \Dcris(\Qp(r))} \ar[d]_{\omega\otimes \id } \\
      & & {\mathcal{H}(\Gamma)\otimes S_K \otimes_{\Qp} \Dcris(\Qp(r)) }\ar[d]_{\epsilon_{\Qp,\xi,dR}(\Qp(r))\otimes (\mathcal{H}(\Gamma)\otimes S_K) } \\
   {\T(\Zp(r))\otimes_\Lambda \Lambda_{\tau_p}} \ar[rr]^{ } & &{\T(\Zp(r))\otimes_\Lambda \mathcal{H}(\Gamma)\otimes S_K } } \]

  where $\ell_i \coloneqq t \partial - i$, $\partial= (1+X)\frac{d}{dX}$, $t=\log(1+X)$ and the twisted ring \[\Lambda_{\tau_p}=\{x\in \Lambda(G)\hat{\otimes}\cO_K | (\tau_p\otimes 1)\cdot x= (\id\otimes \tau_p)(x)\}=S_K\] should be compared to $S_n$ in \cite{loefflerzerbes11}. Furthermore
  \[\T(\Zp(r))=\Lambda(G)^\natural\otimes_{\Zp} \Zp(r)\] and the map $\Theta$ is defined by
  \[\Theta_r(\lambda\otimes e_r)=(1\otimes t_r)\otimes \sum_{i=0}^{\#H-1} \tau_p^i\;\tau_p^{-i}(\lambda)\] while
\[\omega: \mathcal{H}_K(\Gamma)=\mathcal{H}(\Gamma)\otimes \cO_K\cong \mathcal{H}(\Gamma)\otimes S_K\] sends $h\otimes o$ to $h\otimes \sum_{i=0}^{\#H-1} \tau_p^i\;\tau_p^{-i}(o)$. Finally the map
\[comp: \varphi^*(N(\Zp(r)))^{\psi=0}\to \left( \BB^+_{rig,K}\right)^{\psi=0}\otimes_K \Dcris(\Qp(r))\]
is induced from the inverse of Berger's comparison isomorphism
\[ \BB^+_{rig,K}[\tfrac{1}{t}]\otimes_K \Dcris(\Qp(r))=\BB^+_{rig,K}[\tfrac{1}{t}]\otimes_{\BB_K^+} N(\Qp(r)),\]
 (cf.~\cite{berger04}), where $\BB^+_K=\cO_K[[X]][\frac{1}{p}]$.

Taking the limit over $K$ within some unramified extension $K'/\Qp$ with $G(K'(\mu(p))/\Qp)$ being of dimension $2$ and embedding $\mathcal{H}(\Gamma)\hat{\otimes} S_\infty $ into $\mathcal{H}_{\hat{K'}}(G)$ we obtain the commutative diagram

\[\xymatrix{
  {\lu(K'_\infty)(r-1)} \ar[d]_{\tilde{\cL}_\xi(\T_{K'}(\Zp(r))) } \ar[r]^{Kummer(r-1)}_{\cong} & {\H^1(\Qp,\T_{K'}(\Zp(r)))} \ar[d]^{(\epsilon_{\Qp,\xi,dR}(\Qp(r))\otimes \mathcal{H}_{\hat{K'}}(G) ) \ell(\Qp(r))^{-1}\mathcal{L}^G_{\Qp(r),\xi} } \\
   {\T_{K'}(\Zp(r))\otimes_\Lambda \Lambda_{\tau_p}} \ar[r]^{ } & {\T_{K'}(\Zp(r))\otimes_\Lambda \mathcal{H}_{\hat{K'}}(G) } }
    \]

In particular
\[ \epsilon_{\Qp,\xi,dR}(\Qp(r)) \cdot \ell(\Qp(r))^{-1}\mathcal{L}^G_{\Qp(r),\xi} = -{\epsilon'}_{\Lambda(\Gamma),\xi^{-1}}(\mathbb{T}(\Zp(r))),\]
where the latter $\epsilon$-isomorphism is the one defined in \cite[Def. 2.5]{venjakob11}. For this we use also \cite[lemma A.4]{venjakob11}, note the signs $-\mathcal{L}_{\epsilon^{-1}}$ in the definition (2.13) in (loc.\ cit.). Multiplying by $(-1)^r \gamma_{-1}$, which gives the action of $\gamma_{-1}$ on $\T_{K'}(\Zp(r))$, has the effect of replacing $\xi$ with $-\xi$ on the right-hand side, and thus we obtain
\[
 (-1)^{r + 1} (\gamma_{-1})^r \epsilon_{\Qp,\xi,dR}(\Qp(r)) \cdot \ell(\Qp(r))^{-1}\mathcal{L}^G_{\Qp(r),\xi}
 = {\epsilon'}_{\Lambda(\Gamma),\xi}(\mathbb{T}(\Zp(r))).
\]
The quantity $(-1)^{r + 1} (\gamma_{-1})^r$ is precisely the factor appearing in the definition of the $\varepsilon$-isomorphism $\varepsilon_{\Lambda_L(G), \xi}(V)$ in the present paper for $V = \Qp(r)$, so the isomorphisms in the present paper and in \cite{venjakob11} coincide as required.

\section*{Acknowledgements}

 We are grateful to Marco Schlichting for his explanations of $K_1$ and $SK_1$.

\newcommand{\etalchar}[1]{$^{#1}$}
\providecommand{\bysame}{\leavevmode\hbox to3em{\hrulefill}\thinspace}
\renewcommand{\MRhref}[2]{%
  \href{http://www.ams.org/mathscinet-getitem?mr=#1}{#2}.
}

\end{document}